\def\Re{\textrm{Re}} 
\def\pasdegrille{\let\grille = \pasgrille}
\def\aat#1#2#3{
\divide \dimen1 by 48 \dimen3=\dimen1 \multiply \dimen1 by #1
\advance \dimen1 by -\dimen3 \divide \dimen1 by 101 \multiply
\dimen1 by 100 \divide \dimen2 by \count11 \multiply \dimen2 by #2
\setbox0=\hbox{#3}\ht0=0pt\dp0=0pt
   \rlap{\kern\dimen1 \vbox to0pt{\kern-\dimen2\box0\vss}}\dimen1= \wd1
\dimen2=\ht1}
\def\pasgrille{
\count12= \dimen1 \divide \count12 by 50 \divide \dimen2 by
\count12 \count11 =\dimen2 \ \divide \dimen1 by 48
\setlength{\unitlength}{\dimen1} \smash{\rlap{\ }} \dimen1= \wd1
\dimen2=\ht1 }
\def\grille{
\count12= \dimen1 \divide \count12 by 50 \divide \dimen2 by
\count12 \count11 =\dimen2 \ \divide \dimen1 by 48
\setlength{\unitlength}{\dimen1}
\smash{\rlap{\graphpaper[1](0,0)(50, \count11)}} \dimen1= \wd1
\dimen2=\ht1 }
\def\Re{\textrm{Re}} 
\def\11{{\rm 1~\hspace{-1.4ex}l} }
\def\C{\mathbb C}
\def\R{\mathbb R}
\def\X{\overline{X}}
\def\N{\mathbb N}
\def\W{\mathcal W}
\def\e{e}
\def\eps{\epsilon}
\def\dis{\displaystyle}    
\newtheorem{thm}{Theorem}[section]
 \newtheorem{lem}[thm]{Lemma}
 \newtheorem{prop}[thm]{Proposition}
 \theoremstyle{definition}
 \theoremstyle{remark}
 \numberwithin{equation}{section}
\newcommand{\rien}[1]{\relax}
\definecolor{gr}{rgb}   {0.,   0.69,   0.23 }
\definecolor{bl}{rgb}   {0.,   0.5,   1. }
\definecolor{mg}{rgb}   {0.85,  0.,    0.85}
\definecolor{gy}{rgb}   {0.8,  0.8,   0.8}
\definecolor{yl}{rgb}   {0.8,  0.7,   0.}
\definecolor{or}{rgb}  {0.7,0.2,0.2}
\begin{document}

\thanks{N. Burq and L. Thomann were partially supported by the ANR project "SMOOTH" ANR-22-CE40-0017.}

\author{Nicolas Burq}
\address{Laboratoire de Math\'ematiques d'Orsay,
CNRS, Universit\'e Paris--Saclay, B\^atiment 307, F-91405 Orsay Cedex, and Institut Universitaire de France}
\email{nicolas.burq@math.u-psud.fr}

\author{Aur\'elien Poiret}
\address{Lycée Jeanne d'Albret,  6 rue Giraud Teulon, 78100 Saint-Germain-en-Laye}
\email{aurelien.poiret@gmail.com}

\author{ Laurent Thomann }
\address{Université de Lorraine, CNRS, IECL, F-54000 Nancy, France}
\email{laurent.thomann@univ-lorraine.fr}

\title[Bilinear Strichartz estimates and a.s. global solutions for NLS]{Bilinear Strichartz estimates and almost sure global solutions for the nonlinear Schr\"odinger equation}
\subjclass{35Q55 ; 35R60 ;  35P05} 

\keywords{Bourgain type bilinear estimates, global solutions, harmonic oscillator, random data, super-critical non linear Schr\"odinger equations, smoothing effect.}

\begin{abstract}
The purpose of this article is to construct global solutions, in a probabilistic sense, for the nonlinear  Schr\"odinger equation posed on $\R^d$, in a supercritical regime. Firstly, we establish Bourgain type bilinear estimates for the harmonic oscillator     which yields a gain of half a derivative in space for the local theory with randomised initial conditions, for the cubic equation in $\R^3$. Then, thanks to the lens transform, we are able to obtain global in time solutions for the nonlinear Schr\"odinger equation without harmonic potential. Secondly,  we prove a Kato type smoothing estimate for   the  linear  Schr\"odinger equation with harmonic potential. This allows us to consider the  Schr\"odinger equation  with a nonlinearity of odd degree in a supercritical regime, in any dimension~$d\geq 2$.  

\end{abstract} 
\begin{altabstract}
L'objectif de cet article est de construire des solutions globales, en un sens probabiliste, pour l'\'equation  de Schr\"odinger non lin\'eaire pos\'ee sur $\mathbb{R}^d$, en r\'egime surcritique.  Tout d'abord, nous \'etablissons des estimations bilin\'eaires de type Bourgain pour l'oscillateur harmonique. Celles-ci permettent un gain d'une demi-d\'eriv\'ee en l'espace pour la th\'eorie locale avec des conditions initiales al\'eatoires, en ce qui concerne  l'\'equation cubique dans $\mathbb{R}^3$.
Puis,  gr\^ace \`a la transform\'ee de lentille, nous sommes en mesure d'obtenir des solutions globales en temps pour l'\'equation de Schr\"odinger non lin\'eaire sans potentiel harmonique. Dans un second temps, nous prouvons un effet r\'egularisant de type Kato pour l'\'equation de Schr\"odinger lin\'eaire avec potentiel harmonique. Ceci nous permet de consid\'erer l'\'equation de Schr\"odinger avec une non-lin\'earit\'e de degr\'e impair dans un r\'egime surcritique, dans toute dimension~$d\geq 2$.  
\end{altabstract}

\maketitle

\newpage
 
\tableofcontents


\section{Introduction and results} 
\subsection{Introduction}
In this article, by the means of random initial conditions, we  construct global solutions to the nonlinear Schr\"odinger equation in a supercritical regime. Namely, in the following we will consider the equation
 \begin{equation} \label{1-sch}  
  \left\{
      \begin{aligned}
    &    i \frac{ \partial U  }{ \partial t } +  \Delta  U    =   \kappa | U |^{p-1} U, \qquad (t,x)\in \R \times \R^d \\
     &  U(0,x) =u_0(x),
      \end{aligned}
    \right.
\end{equation}
where $ \kappa \in \lbrace -1,1 \rbrace $, where $p\geq 3$ is an odd integer, and $d\geq 1$.  \medskip

 The starting points of this  article were two unpublished papers  from the PhD thesis of the second author (in French), see \cite{poiret1,poiret2}. The first result of the present paper concerns the cubic Schr\"odinger equation in dimension $d=3$. In this case the well-posedness is obtained using regularizing properties of random series combined with bilinear estimates for the harmonic oscillator. Then, in the second part of the article we prove global existence results for the nonlinear  Schr\"odinger equation in dimension $d\geq 2$, with a nonlinearity of degree $p \geq 5$. This result is also obtained by taking benefit from stochastic properties of  random series, but combined with the Kato smoothing effect which is established here for the Schr\"odinger equation with harmonic potential. \medskip
 
  Since the pioneering works of Bourgain  \cite{Bourgain1, Bourgain2} and the works of   Tzvetkov~\cite{Tzv, Tzv2} on the Schr\"odinger equation,  the papers of Burq-Tzvetkov \cite{burq4, burq5,burq7} on the wave equation, there have been many contributions to the study of dispersive equations with random initial conditions. Among them, there are results on  the Schr\"odinger equation  by Colliander-Oh~\cite{CoOh}, Benyi-Oh-Pocovnicu~\cite{BOP-1, BOP-survey}, Bourgain-Bulut~\cite{Bourgain-Bulut2, Bourgain-Bulut3}, Nahmod-Oh-Rey Bellet-Staffilani~\cite{NaOhRBSt}, Nahmod-Staffilani~\cite{ NaSt}, Dodson-L\"uhrman-Mendelson~\cite{DoLuMen}, Kilip-Murphy-Visan~\cite{KMV}, Oh-Tzvetkov-Zhang~\cite{4NLS}, Deng-Nahmod-Yue~\cite{DNY1,DeNaYu},  on the wave equation by Oh-Pocovnicu-Tzvetkov~\cite{OhPoTzv}, Burq-Lebeau~\cite{bBL}, de Suzzoni~\cite{suzzoni1, suzzoni2}, Bourgain-Bulut~\cite{Bourgain-Bulut3}, Bringmann~\cite{Br1, Br2, Br3}, Sun-Tzvetkov~\cite{bSuTz}, on  the Benjamin-Ono equation  by Tzvetkov-Visciglia~\cite{TzvVi1, TzvVi2}, Deng-Tzvetkov-Visciglia~\cite{DeTzvVi}, Deng~\cite{De2}, and many others. \medskip


 In some of the more recents works on the topic, some of the material from  \cite{poiret1,poiret2} was applied, extended or generalised, see {\it e.g.} \cite{PRT2, PTV}. Therefore it appeared relevant to rework the material from these papers  and to gather them in a reader accessible version, which is the aim of the present article. \medskip

 The strategy of the proof, already used in \cite{thomann2, BTT, De} and more recently in \cite{BTh, Latocca}, is the following. We first study a nonlinear Schr\"odinger equation with harmonic potential, for which we are able to prove local existence results. Then thanks to the lens transform, this latter equation is conjugated to the equation~\eqref{1-sch} for which we are able to deduce global existence results. \medskip

For $d\geq 1$, we define the harmonic oscillator by
$$ H = -\Delta_{^d} + |x|^2=\sum_{j=1}^d\big(- \frac{\partial^2}{dx^2_j}+x_j^2\big), $$ 
  and we denote  by $\{h_{n},\;n\geq 0\}$ an orthonormal basis of $L^{2}(\R^{d})$ of eigenvectors  of~$H$ (the Hermite functions).   The eigenvalues of $H$ are the $\big\{2(\ell_{1}+\dots+\ell_{d})+d,\; \ell\in \N^{d}\big\}$, and we can order them in a  non decreasing sequence $\{\lambda^2_n, \;n \geq0\}$, repeated according to their multiplicities, and so that    
\begin{equation*}
H h_n = \lambda_n^2 h_n , \quad \quad \forall n \in \N.
\end{equation*}
 It is then easy to observe that $\lambda_n \sim n^{1/(2d)}$, when $n \longrightarrow +\infty$. \medskip

In the following, $ H^s(\R^d)$ and  $ W^{s,p}(\R^d)$ denote the  usual Sobolev spaces. We also   define the harmonic Sobolev spaces, associated to the harmonic oscillator.  The harmonic Sobolev space~$\mathcal{H}^s( \R^d ) $  is defined as the closure of the Schwartz space for the norm
\begin{equation*}
\| u \| _{ \mathcal{H} ^s( \R^d ) } = \| H^{s/2} u \|_ {L^2( \R^d ) }.
\end{equation*}
 Similarly, the space $ \W^{s,p}( \R^d ) $ is defined as the closure of the Schwartz space for the norm
\begin{equation*}
\| u \| _{ \W ^{s,p}( \R^d ) } = \| H^{s/2} u \|_ {L^p( \R^d ) }.
\end{equation*}
 In fact, this latter  norm is a weighted Sobolev norm, because from \cite{sobolev} :
for all ${ 1 < p < + \infty} $ and $ s \geq 0 $, there exists a constant $ C > 0 $ such that 
\begin{equation}\label{1-comparaison} 
\frac{1}{C} \| u \| _{ \W^{s,p}( \R^d ) } \leq \| (-\Delta)^{s/2} u \|_{L^p(\R^d)} + \| \langle x\rangle ^s u \|_{L^p(\R^d ) } \leq C \| u \| _{ \W^{s,p}( \R ^d ) }.
\end{equation}
Since the family  $\{h_{n},\;n\geq 0\}$ forms a Hilbertian basis of $L^2(\R^d)$,  any $ u \in \mathcal{H}^\sigma(\R^d) $ can be written 
\begin{equation*}
u = \sum_{n \in \N } c_n h_n \quad \mbox{where} \quad  \|u\|^2_{\mathcal{H}^\sigma(\R^d)}=\sum_{n \in \N} \lambda_n^{2\sigma} |c_n|^2 < + \infty.
\end{equation*}

Next,  assume that $ (\Omega , \mathcal{T} , P) $ is a given   probability space and that $ (g_n ( \omega ) )_{n \in \N} $ is an identically distributed sequence of independent  complex Gaussian random variables  
$$ g_n \sim \mathcal{N}_\C(0,1),$$
namely,  the density of $ g_n $ is given by $ \frac{1}{\pi} e^{-|z|^2}dL$ where $ dL $ denotes the Lebesgue measure on $ \C $.  Then, for  $ u_0 \in \mathcal{H}^\sigma(\R^d) $, which can be expended as
$$ u_0(x) = \displaystyle{ \sum_{n \in \N} } c_n  h_n(x),$$
we can consider  the application $ \omega \longmapsto u_0( \omega,.) $, from $ ( \Omega , \mathcal{T} ) $ to $ \mathcal{H}^\sigma ( \R^d ) $ that we equip with its Borelian $\sigma$-algebra, defined by 
$$ u_0^{\omega}(x) = \displaystyle{ \sum_{n \in \N} } c_n g_n( \omega ) h_n(x) .$$
 We can check that the application $ \omega \longmapsto u_0^{\omega} $ is in $ L^2 ( \Omega , \mathcal{H}^\sigma( \R^d ) ) $ and we define the probability measure $ \mu $ as the distribution of this random variable. By definition, we have the following equality:
\begin{equation*}
P\big( \omega \in \Omega \;: \Psi ( u_0^{\omega} ) \in A \big) = \mu \big( f \in \mathcal{H}^\sigma ( \R^3) \;: \Psi (f) \in A \big),
\end{equation*}
for any measurable   $\Psi: (\mathcal{H}^\sigma(\R^d) , \mathcal{B}( \mathcal{H}^\sigma(\R^d) ) )\rightarrow (\R, \mathcal{B}(\R) ) $ and any set $ A \in \mathcal{B}(\R) $. \medskip
 
 Let us first state a result which shows that the randomisation does not improve the initial condition in the Sobolev scale, and that it does not improve its spatial localisation.
\begin{thm} \label{2sobolev2} 
For all  $ s \geq 0 $, if  $u_0  \notin \mathcal{H}^s(\R^d)$ then  
$$u^\omega_0 \notin H^s(\R^d)\quad  \text{and}  \quad \langle x\rangle ^s u^\omega_0 \notin L^2(\R^d),    \quad \omega- \text{a.s.}$$
\end{thm}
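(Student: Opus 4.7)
The plan is to combine two-sided estimates of the form $\|\langle x\rangle^s h_n\|_{L^2}\sim \|(-\Delta)^{s/2}h_n\|_{L^2}\sim \lambda_n^s$ with a Fubini second-moment identity and a tail zero--one law for the Gaussian series. The upper bounds $\|\langle x\rangle^s h_n\|_{L^2}, \|(-\Delta)^{s/2}h_n\|_{L^2}\lesssim \lambda_n^s$ are immediate from \eqref{1-comparaison} applied to $u=h_n$, since $\|h_n\|_{\mathcal H^s}=\lambda_n^s$. The corresponding lower bounds rest on the Fourier-invariance of the Hermite functions ($|\widehat{h_n}|=|h_n|$ in any dimension, since in dimension one $\widehat{h_n}=(-i)^n h_n$ and higher dimensional Hermite functions are tensor products), which by Plancherel gives the identity $\|(-\Delta)^{s/2}h_n\|_{L^2}=\||x|^s h_n\|_{L^2}$; combined with the pointwise inequalities $|x|^s\le \langle x\rangle^s\le C_s(1+|x|^s)$ and a second application of \eqref{1-comparaison}, one deduces the matching lower bounds $\|\langle x\rangle^s h_n\|_{L^2},\|(-\Delta)^{s/2}h_n\|_{L^2}\gtrsim \lambda_n^s$.

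With these estimates in hand, write $u_0=\sum c_n h_n$ with $\sum |c_n|^2\lambda_n^{2s}=+\infty$ and $u_0^\omega=\sum c_n g_n h_n$. Fubini on the non-negative integrand, independence and $\mathbb{E}[g_n\overline{g_m}]=\delta_{nm}$, together with Plancherel and Fourier-invariance for the second identity, yield
$$\mathbb{E}\,\|\langle x\rangle^s u_0^\omega\|_{L^2}^2=\sum_n |c_n|^2\,\|\langle x\rangle^s h_n\|_{L^2}^2,\qquad \mathbb{E}\,\|(-\Delta)^{s/2}u_0^\omega\|_{L^2}^2=\sum_n |c_n|^2\,\|(-\Delta)^{s/2}h_n\|_{L^2}^2,$$
and both right-hand sides are bounded below by $c\sum_n|c_n|^2\lambda_n^{2s}=+\infty$.

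To conclude, I observe that for every $N$ the partial sum $\sum_{n\le N}c_n g_n h_n$ is a finite linear combination of Hermite functions and hence belongs both to $H^s(\R^d)$ and to $\{f:\langle x\rangle^s f\in L^2(\R^d)\}$. Therefore each of the two events $\{u_0^\omega\in H^s\}$ and $\{\langle x\rangle^s u_0^\omega\in L^2\}$ is measurable with respect to $\sigma(g_n:n>N)$ for every $N$, so it belongs to the tail $\sigma$-algebra of the independent sequence $(g_n)$; by Kolmogorov's zero--one law it has probability $0$ or $1$. Were either equal to $1$, Fernique's theorem applied to the corresponding measurable seminorm of the $L^2$-valued Gaussian random variable $u_0^\omega$ would force its square to belong to $L^1(\Omega)$, contradicting the infinite second moments computed above. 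Both probabilities therefore vanish.

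The only genuinely non-trivial input in this scheme is the lower bound from the first paragraph: extracting \emph{separately} a lower bound of order $\lambda_n^s$ for $\|\langle x\rangle^s h_n\|_{L^2}$ and for $\|(-\Delta)^{s/2}h_n\|_{L^2}$ from the single scalar identity $\|h_n\|_{\mathcal H^s}=\lambda_n^s$ is precisely where the Fourier self-duality of the Hermite functions becomes essential.
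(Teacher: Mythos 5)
Your argument is correct in substance, but it follows a genuinely different route from the paper's, so let me compare. The paper also starts from the two-sided bounds \eqref{2propre2}--\eqref{3propre2}, $\|(-\Delta)^{s/2}h_n\|_{L^2}\sim\|\langle x\rangle^{s}h_n\|_{L^2}\sim\lambda_n^{s}$, but then argues through the truncations $\chi(H/N^2)u_0^\omega$: a Kolmogorov zero--one law for $\sup_N\|\chi(H/N^2)u_0^\omega\|_{H^s}$ (Lemma~\ref{1-kolmogorov}), a Paley--Zygmund argument with a fourth-moment (Wiener chaos type) bound to get positive probability of divergence (Lemma~\ref{1-divergence}), and finally the uniform $H^s$-boundedness of the spectral cut-offs (Proposition~\ref{1-continuous}, itself proved by semiclassical functional calculus) to reach the contradiction. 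You instead compute the second moment exactly by Tonelli and independence, apply the zero--one law directly to the two membership events (which are indeed tail events, since finitely many modes are Schwartz), and invoke Fernique's theorem for measurable seminorms --- applied to the lower semicontinuous, $[0,+\infty]$-valued norms $f\mapsto\|f\|_{H^s}$ and $f\mapsto\|\langle x\rangle^s f\|_{L^2}$ on $L^2(\R^d)$ --- to exclude probability one. This buys you a shorter proof: no fourth-moment computation and no need for Proposition~\ref{1-continuous}. The price is that Fernique uses the Gaussian structure in an essential way, whereas the paper's Paley--Zygmund route only needs moment/chaos estimates and thus persists for the more general i.i.d.\ coefficients alluded to after Theorem~\ref{1-thm1}.

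One caveat on your first step. The theorem concerns the eigenbasis $\{h_n\}$ fixed in the introduction which, because the eigenvalues of $H$ are degenerate for $d\ge2$, need not consist of tensor products; this is precisely why the paper proves \eqref{2propre2}--\eqref{3propre2} for a general eigenbasis via semiclassical defect measures, after dismissing the tensor case as the ``easy'' one by the very Fourier argument you use. As written, your justification of $|\widehat{h_n}|=c\,|h_n|$ (``higher dimensional Hermite functions are tensor products'') covers only the tensor basis. The gap is easily repaired, and in a way that keeps your proof simpler than the paper's: $\mathcal{F}$ commutes with $H$, and on the eigenspace of eigenvalue $2k+d$ (spanned by the tensor Hermite functions of degree $k$) it acts as the scalar $(-i)^k$ times the normalization constant; hence every eigenfunction $h$ of $H$ satisfies $|\widehat h|=(2\pi)^{d/2}|h|$, your Plancherel identity $\|(-\Delta)^{s/2}h\|_{L^2}=\||x|^s h\|_{L^2}$ holds for an arbitrary eigenbasis, and the lower bounds follow exactly as you argue.
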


By \eqref{1-comparaison} it follows that one therefore has $u^\omega_0  \notin \mathcal{H}^s(\R^d)$, $\omega- \text{a.s.}$

\subsection{Almost sure global existence for the cubic Schr\"odinger equation in dimension 3}

Our first results concern the cubic equation 
\begin{equation} \label{1-schrodinger} 
  \left\{
      \begin{aligned}
      &  i \frac{ \partial U  }{ \partial t } +  \Delta  U    =   \kappa | U |^{2} U, \qquad (t,x)\in \R \times \R^3 \\
    &   U(0,x) =u_0(x),
      \end{aligned}
    \right.
\end{equation}
where $ \kappa \in \lbrace -1,1 \rbrace $. Let us first recall some deterministic results concerning \eqref{1-schrodinger}. There exists $T>0$ and a space $ X ^ s_T $ continuously embedded into $ C ^ 0 ([-T, T] ; H ^ s (\R ^ 3)) $ such that: \medskip

$\bullet$  If $ s>1/2 $ (subcritical case) then for all $ R >0$, there exists $ T_R> 0 $ such that if $ \|u_0\|_{H^s(\R^3)} \leq R $, then there exists a unique local solution $ u \in X^s_T $ to the equation~\eqref{1-schrodinger}, see \cite{Ginibre-Velo,ca-wei}. Moreover, the mapping $ u_0 \in B_{H^s}(0,R) \rightarrow u \in X^s_{ T } $ is continuous. This means that the problem is locally well-posed. If $T$ can be chosen equal to $ + \infty $, we say that the problem is globally well-posed.

In the case where the problem is globally well-posed, it is natural to study the behavior of the solution in $ + \infty $ : if for all $ u_0 \in H^s(\R^3) $,  there exists  $ u_+ \in H^s(\R^3) $ such that $ \underset{t \rightarrow + \infty }{\lim} \| u(t) - e^{it \Delta} u_+ \|_{ H^s( \R^3 ) } = 0 $, which is known as scattering. According to \cite{colliander3} it is the case, for $\kappa=1$, as soon as $ s > \frac{4}{5} $.  

$\bullet$ If $ s=1/2$ (critical case), one can prove the local existence of a unique solution for each initial data as in the subcritical case, but the existence time of the solution depends on $u_0$ and not   only on $ \|u_0\|_{H^s(\R^3)} $. Therefore    the globalization problem is a complex problem. See \cite{ca-wei}. \medskip

$\bullet$  If $ s <1/2$ (supercritical case), according to \cite{CCT}, we know that there exists a sequence of real numbers $ t_n \in \R $ which tends to 0 and a sequence of functions $ u_n \in H^s(\R^3) $ such that $ \underset{n \rightarrow + \infty }{\lim} \|u_n\|_{H^s(\R^3)} = 0 $ and $ \underset{n \rightarrow + \infty }{\lim} \|u_n(t_n)\|_{H^s(\R^3)} = + \infty $, where $ u_n(t) $ denotes a solution of the equation~\eqref{1-schrodinger} with initial data $ u_n $. Therefore, the flow of the equation \eqref{1-schrodinger} is not continuous in\;0, which implies that the problem is not well posed and the usual methods do not allow to study  the equation in this situation.  \medskip

 We assume in this section that the eigenfunctions of the harmonic oscillator are given by the tensor eigenfunctions, {\it i.e.} for $n\geq 0$, there exist $(n_1,n_2,n_3) \in \N^3$ such that for $x=(x_1,x_2,x_3) \in \R^3$
\begin{equation} \label{1-tensor}
 h_n (x) = e_{n_1} (x_1)e_{n_2} (x_2) e_{n_3} (x_3) ,
\end{equation}
where  $ ( - \partial^2_{x_j} +x_j^2 ) e_{n_k} = (2n_k+1) e_{n_k} $ and $\lambda_n ^2=2(n_1+n_2+n_3)+3 $. Here  $ ( e_n ) _{ n \in \N } $ denotes the basis of the eigenfunctions of the harmonic oscillator in dimension $ 1 $ and $ (\mu_n^2)_{n \in \N} $ the associated eigenfunction sequence, so that we have  $ \mu _n^2 = 2 n+1 $. Indeed,~$e_{n}$ are given by the formula 
\begin{equation*} 
e_{n}(x)=(-1)^{n}c_{n}\,\e^{x^{2}/2}\frac{{d}^{n}}{{d}x^{n}}
\big(\,\e^{-x^{2}}\,\big),\;\;\text{with}\;\;\;\frac1{c_{n}}
=\big(n\,!\big)^{\frac12}\,2^{\frac{n}2}\,\pi^{\frac14}.
\end{equation*}

Our main result then reads : 

 \begin{thm} \label{1-thm1}
Let $ \sigma \in ]0, \frac{1}{2} [ $ with $ u_0 \in \mathcal{H} ^{\sigma} ( \mathbb{R}^3 ) $ and $ s \in ] \frac{1}{2} , \frac{1}{2} +\sigma [ $, then there exists a set $ \Omega ' \subset \Omega $ satisfying the following conditions:
\begin{enumerate}[$(i)$]
\item $ P( \Omega ' ) > 0$.
 \item For any element $ \omega \in \Omega'$, there exists a unique global solution $ U$ to the equation~\eqref{1-schrodinger} with initial data $ u_0^{\omega} $ such that 
 $$U(t)- e^{it\Delta} u_0^{\omega} \in \mathcal{C}\big(\R; \mathcal{H}^s(\R^3)\big).$$
 \end{enumerate}
Moreover, for any element $ \omega \in \Omega'$, there exist $ L_+, L_ - \in \mathcal{H}^s(\mathbb{R}^3) $ such that 
\begin{equation}\label{scat11}
\begin{aligned}
&\lim_{t \rightarrow + \infty } \big\| U(t) - e^{it\Delta} \big( u_0^{\omega} +L_+\big) \big\|_{ H^s(\mathbb{R}^3) } = 0, \\
&\lim_{t \rightarrow -\infty } \big\| U(t) -  e^{it\Delta} \big( u_0^{\omega} +L_-\big) \big\|_{ H^s(\mathbb{R}^3) } = 0,
\end{aligned}
\end{equation}
and 
\begin{equation}\label{scat12}
\begin{aligned}
&\lim_{t \rightarrow + \infty } \big\| e^{-it\Delta}U(t) -  (u_0^{\omega} + L_+) \big\|_{ \mathcal{H}^s(\mathbb{R}^3) } = 0, \\
&\lim_{t \rightarrow -\infty } \big\| e^{-it\Delta}U(t) - (u_0^{\omega} + L_-) \big\|_{ \mathcal{H}^s(\mathbb{R}^3) } =0.
\end{aligned}
\end{equation}
Finally, when $\eta \longrightarrow 0$ we have 
\begin{equation} \label{1-proba1}
 \mu \big( u_0 \in \mathcal{H}^\sigma {(\mathbb{R}^3)}  \ :\ \mbox{we have global existence and scattering } \big| \; \|u_0\|_{\mathcal{H}^\sigma (\mathbb{R}^3)}   \leq \eta \big) \longrightarrow 1.
\end{equation}
\end{thm}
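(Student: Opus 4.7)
The plan is to transport the problem to an equation with a confining harmonic potential via the lens transform, where the relevant time interval is the bounded segment $(-\pi/4,\pi/4)$, and to prove local existence on that whole interval using the probabilistic regularisation of the linear flow together with the bilinear Strichartz estimates for $H$. Concretely, if $V$ solves
$$i\partial_t V - HV = \kappa|V|^2 V, \qquad V(0,y) = u_0^\omega(y)$$
on $(-\pi/4,\pi/4)\times\R^3$, then the lens transform produces a solution $U$ of \eqref{1-schrodinger} on all of $\R\times\R^3$, and the limits of $V$ at $t=\pm\pi/4$ give scattering states for $U$ at $t=\pm\infty$, yielding \eqref{scat11}--\eqref{scat12}. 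Global existence and scattering for $U$ thus reduce to solving the harmonic NLS on the full open interval $(-\pi/4,\pi/4)$.

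To handle the supercritical regime $u_0\in \mathcal{H}^\sigma$ with $\sigma<1/2$, I would perform a Da Prato--Debussche type decomposition $V=\psi+W$, where $\psi(t)=e^{-itH}u_0^\omega$ is the linear randomised evolution and the unknown $W$ solves
$$i\partial_t W - HW = \kappa|\psi+W|^2(\psi+W), \qquad W(0)=0.$$
The aim is to produce $W$ at the higher regularity $s\in(1/2,\,1/2+\sigma)$, which is subcritical for the cubic equation, in a Bourgain-type space $X^{s,b}$ built on the spectrum of $H$. By Theorem~\ref{2sobolev2} the free part $\psi$ only inherits the $\mathcal{H}^\sigma$ regularity of $u_0^\omega$, but the randomness decouples the Hermite modes and standard Wiener-chaos large-deviation estimates yield, with overwhelming probability, strong $L^q_t L^p_x$ and $L^q_t \mathcal{W}^{\sigma,p}$ bounds on $\psi$ over the compact interval $(-\pi/4,\pi/4)$.

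The core of the argument is then a contraction in $X^{s,b}$ for the Duhamel map
$$\Phi(W)(t) = -i\kappa \int_0^t e^{-i(t-t')H}\bigl(|\psi+W|^2(\psi+W)\bigr)(t')\,dt'.$$
Terms containing at least one factor $W$ are controlled via the bilinear Strichartz estimates for the harmonic oscillator announced in the abstract: these provide a gain of essentially half a derivative when pairing a low-regularity wave with a high-regularity one, precisely compensating the deficit $1/2-\sigma$ between the regularities of $\psi$ and $W$. The purely stochastic source term $|\psi|^2\psi$ is placed in $X^{s,b-1}$ using only the probabilistic $L^q_t L^p_x$ integrability of $\psi$. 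With a careful choice of $(s,b)$, this produces a contraction on a ball of $X^{s,b}_{(-\pi/4,\pi/4)}$ on an event $\Omega'$ of positive probability, and of probability tending to $1$ as $\|u_0\|_{\mathcal{H}^\sigma}\to 0$ by Gaussian concentration, which is exactly \eqref{1-proba1}.

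The main obstacle, and the reason the bilinear estimates are indispensable, is the borderline nature of $s=1/2$ for the cubic problem on $\R^3$: with only linear Strichartz bounds one loses exactly at this threshold and the trilinear terms mixing $\psi$ and $W$ cannot be closed, while with the bilinear gain the ``resonant'' interactions between two random factors and one deterministic factor become summable. Once the fixed point is obtained, pulling everything back through the lens transform converts $W\in \mathcal{C}\bigl((-\pi/4,\pi/4);\mathcal{H}^s(\R^3)\bigr)$ into a nonlinear correction to $e^{it\Delta}u_0^\omega$ that admits limits in $\mathcal{H}^s(\R^3)$ as $t\to\pm\infty$; these limits furnish the scattering states $L_\pm$ and give both \eqref{scat11} and \eqref{scat12} simultaneously.
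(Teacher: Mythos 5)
Your overall strategy is the same as the paper's: conjugate \eqref{1-schrodinger} by the lens transform to the harmonic NLS with the $\cos(2t)$ weight on $(-\frac\pi4,\frac\pi4)$, split the solution as free randomized evolution plus a smoother remainder $W$ at regularity $s>\frac12$, close a contraction for $W$ in a Bourgain space $\X^{s,b}_{\pi/4}$ using the bilinear estimates \eqref{1-bilibourgain1}--\eqref{1-bilibourgain2} together with probabilistic $L^q_tL^p_x$ bounds on $e^{-itH}u_0^\omega$ obtained from Wiener chaos estimates, and then read off global existence and scattering from the continuity of $W$ up to $t=\pm\frac\pi4$ (Proposition~\ref{1-bourgain7}) transported back by the lens transform. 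Up to the organisation of the nonlinear estimates, this is exactly Sections~\ref{Sect6} and~\ref{Sect7}.

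However, there is a genuine gap in your treatment of item $(i)$. The contraction only runs when the randomized data lies in the event $E_0(\lambda)$ with $\lambda$ below an \emph{absolute} threshold (in the paper, $\lambda<\frac1{2\sqrt C}$ in Theorem~\ref{1-existencebis}), and the Gaussian large-deviation bound you invoke gives $P(\Omega_\Lambda^c)\leq C\exp\bigl(-c\Lambda^2/\|u_0\|^2_{\mathcal H^\sigma}\bigr)$, which is vacuous once $\|u_0\|_{\mathcal H^\sigma}$ is large compared with that fixed threshold. Since Theorem~\ref{1-thm1} makes no smallness assumption on $u_0$ (and the paper explicitly stresses it is not a small-data result), "positive probability by Gaussian concentration" does not suffice. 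The missing idea is the frequency splitting $u_0=[u_0]_K+[u_0]^K$ used in Section~\ref{Sect7}: by independence of the Gaussians the probability factorizes; the tail $[u_0]^K$ has arbitrarily small $\mathcal H^\sigma$ norm, so the deviation estimate applies to it; and the finite-dimensional block $[u_0]_K$ satisfies all the required smallness conditions on the event where the finitely many $|g_n(\omega)|$, $\lambda_n\leq K$, are small, an event of positive probability. A related, smaller point: the conditional statement \eqref{1-proba1} does not follow from the unconditional tail bound alone; one needs a deviation estimate conditioned on $\{\|u_0^\omega\|_{\mathcal H^\sigma}\leq\eta\}$ (the paper adapts the argument of \cite[Appendix A.2]{burq7} for this).
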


 Notice that since $e^{\pm i t \Delta}$ does not act on $\mathcal{H}^{\sigma}(\R^3)$, the  properties \eqref{scat11} and~\eqref{scat12}  are different. It is interesting to notice that Theorem~\ref{1-thm1} is not a small data result in the critical $H^{1/2}(\R^3)$ space. More precisely, for $0<\sigma <1/2$, let $u_0 \in \mathcal{H}^\sigma(\R^3) \backslash H^{1/2}(\R^3)$. Then for $\omega \in \Omega'$, $u_0^{\omega}$ satisfies the conclusion of Theorem~\ref{1-thm1} and by Theorem \ref{2sobolev2} we have $u_0^{\omega} \notin H^{1/2}(\R^3)$. Similarly,  for $K\geq 1$ we can define $ \dis [u_0]_K = \sum_{\lambda_n < K } c_n h_n \in \bigcap_{s \geq 0} \mathcal{H}^{s}(\R^3)$. Then Theorem~\ref{1-thm1} applies to this initial condition and we have $\big\| [u_0^{\omega}]_K\big\|_{H^{1/2}(\R^3)} \gg 1$ for ${K \gg 1}$ (we refer to Proposition~\ref{prop54} for a precise statement). This is the counterpart  with some global existence results obtained for the Navier-Stokes equation, see \cite{galla1, galla2, galla3}.

 We can obtain a more quantitative statement in the case of small initial conditions $u_0 \in \mathcal{H}^{\sigma}(\R^3)$.  For ${0 <\alpha <1}$, there exists  a  set $ \Omega_{\alpha} \subset \Omega'$ satisfying $ P( \Omega_{ \alpha} ) \geq 1 - \alpha $, under the condition 
 \begin{equation}\label{quanti}
  \alpha = \dis C_1 \exp\big( -\frac{C_2}{  \|u_0\|^2_{ \mathcal{H}^\sigma(\mathbb{R}^3) } } \big),
  \end{equation} for some universal constants $C_1,C_2>0$.

It is likely that our approach can be extended to any dimension $d\geq 2$. In this case we also gain a half-derivative compared to the deterministic problem. We will not give more details. \medskip

Our result can be extended to more general i.i.d. random variables $(g_n)_{n \in \N}$. Actually, we need the $g_n$ to be centered, that they admit moment at any order and satisfy multilinear chaos estimates as in Proposition~\ref{prop-chaos}. \medskip

 To establish our results, we use the ideas of N. Burq  and N. Tzvetkov developed in \cite{burq4,burq5}   by randomizing the    initial data. This allows us to gain derivatives in $ L^p $ spaces,  for $ p > 2 $.\medskip
 
  The proof essentially relies on two intermediate results:
  \begin{enumerate}[$(i)$]
\item The lens transformation (introduced in \cite{niederer,carles} and used in \cite{BTT,tao2}) which allows us to reduce to proving a local existence theorem on $ ] - \frac{\pi}{4} ; \frac{\pi}{4} [ $ for the Schrödinger equation with harmonic potential (see Section \ref{1-214}).
 \item The existence of a bilinear estimate for the harmonic oscillator (see Section \ref{Sect3}) which allows to gain the half-derivative on the first order terms in $ u_0 $. This estimate is the analogue of the bilinear estimate for the usual Laplacian proved by Bourgain in \cite{bilinear}.
   \end{enumerate}

The results of Theorem \ref{1-thm1} extend the results of \cite{BTT, De} obtained in space dimension $d=1$ and $d=2$. In \cite{PRT2} similar results where obtained in space dimension $d=3$, but the randomisation of the initial condition was different, which implied scattering results in stronger harmonic Sobolev norms.  In the papers \cite{Br1, Br2, BOP,Camps} some scattering results are obtained using a randomisation based on the Littlewood-Paley decomposition of the initial condition.

The existence of a bilinear estimate for the harmonic oscillator being of interest in itself, we state it below in any dimension ${d\geq 2}$. Let us first define the dyadic localisation operators. Let $ \eta \in C_0^\infty( \R ) $ such that $ \eta_{[0,1]} = 1 $ and $ \eta_{[2, + \infty [ } =0$. Set $\psi(x)= \eta(x)-\eta(4x)$.  For dyadic numbers $ N= 2^j $, we define  the following sequence of operators:
\begin{equation}\label{dya}
\Delta_N (u ) = \left\{
    \begin{aligned}
    &      \psi ( \frac{H}{N^2} )  u & \ \mbox{for} \ N \geq 1,   \\
     &    \ \ 0  & \ \mbox{else}.
    \end{aligned}
\right.
\end{equation}
 Notice that if $ \lambda_n \notin [ \frac{N}{2} , \sqrt{2} N ] $ then $ \Delta_N ( h_n ) = 0 $ and that $ \underset{N \; dyadic}{\sum} \Delta_N (u ) = u $.

  \begin{thm} \label{1-bilis} Let $d\geq 2$. Then for any $ \delta \in ]0 , \frac{1}{2} ] $, there exists a constant $ C > 0 $ such that for all dyadic numbers $ N,M \geq 1$ and $u, v  \in \mathcal{S}'(\R^d)$,
 \begin{multline}\label{bilinear}
   \big\| e^{it H} \Delta_N ( v ) \, e^{it  H} \Delta_M (u) \big\|_{L^2 ( [-1 , 1 ] ; L^2 ( \R^d ))  }  \leq \\
 \leq    C  \min  ( N,M)^{ \frac{d-2}{2} }   \left( \frac{ \min ( N,M)  }{ \max ( N,M )}  \right) ^{1/2-\delta }    \| \Delta_N(v) \|_{L^2(\R^d)} \| \Delta_M(u) \|_{L^2(\R^d)}.
\end{multline}
\end{thm}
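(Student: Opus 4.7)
The plan is to use the lens transform, together with a periodicity argument, in order to convert the bilinear estimate for the harmonic oscillator propagator $e^{itH}$ into a weighted bilinear estimate for the free Schr\"odinger group $e^{is\Delta}$, and then to prove the latter by combining Bourgain's classical bilinear Strichartz estimate from \cite{bilinear} with the phase-space localisation of the spectral cluster $\Delta_N$.

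First I would perform a periodicity reduction. Since $\lambda_n^2=2|n|+d$ are integers of fixed parity, one has $e^{i(\pi/2)H}=i^d\mathcal{P}$, where $\mathcal{P}$ is the spatial parity operator. It follows that $t\mapsto\|e^{itH}\Delta_N(v)\cdot e^{itH}\Delta_M(u)\|_{L^2_x}$ is $\pi/2$-periodic in $t$, so that the left-hand side of~\eqref{bilinear} is controlled by the same quantity integrated over $[-\pi/4,\pi/4]$, which is precisely the interval on which the lens transform is well defined.

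Next, using the lens transform identity $e^{itH}u_0(x)=(\cos 2t)^{-d/2}e^{-i|x|^2\tan(2t)/2}(e^{is\Delta}u_0)(x/\cos 2t)$ with $s=\tan(2t)/2$, together with the Jacobians $dt=(1+4s^2)^{-1}ds$ and $dx=(1+4s^2)^{-d/2}dy$ under $y=x/\cos 2t$, a direct computation gives
\[
\int_{-\pi/4}^{\pi/4}\!\!\int_{\R^d}\!|e^{itH}\Delta_N(v)\cdot e^{itH}\Delta_M(u)|^2dx\,dt=\int_{\R}(1+4s^2)^{(d-2)/2}\|e^{is\Delta}\Delta_N(v)\cdot e^{is\Delta}\Delta_M(u)\|_{L^2_y}^2ds,
\]
in which the weight exponent matches exactly the exponent $(d-2)/2$ appearing in~\eqref{bilinear}. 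The functions $\Delta_N(v)$ and $\Delta_M(u)$ are, via~\eqref{1-comparaison}, essentially localised both in space and in frequency on their respective scales $N$ and $M$; in particular $\Delta_N(v)$ has Fourier support essentially contained in $\{|\xi|\lesssim N\}$ and satisfies $\|\Delta_N(v)\|_{L^1}\lesssim N^{d/2}\|\Delta_N(v)\|_{L^2}$, and similarly for $\Delta_M(u)$.

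I would then dyadically decompose $\Delta_N(v)$ and $\Delta_M(u)$ by ordinary Littlewood--Paley projectors $P_K$, and on each pair apply the classical Bourgain bilinear Strichartz estimate
\[
\|e^{is\Delta}P_{K_1}f\cdot e^{is\Delta}P_{K_2}g\|_{L^2(\R\times\R^d)}\lesssim\min(K_1,K_2)^{(d-1)/2}(\min/\max)^{1/2}\|f\|_2\|g\|_2,
\]
summing by Schur's test. The main obstacle is the weight $(1+4s^2)^{(d-2)/2}$: it is trivial in dimension $d=2$, in which case the conclusion follows essentially directly, but grows polynomially for $d\geq 3$ and forbids a direct use of Bourgain's global-in-time bound. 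To absorb it one interpolates, on dyadic time intervals $|s|\sim 2^\ell$, between Bourgain's bilinear estimate (which produces the full $(\min/\max)^{1/2}$ gain) and a weaker no-gain estimate derived from the dispersive inequality $\|e^{is\Delta}\Delta_N(v)\|_{L^\infty_y}\lesssim|s|^{-d/2}N^{d/2}\|\Delta_N(v)\|_{L^2}$, obtained by combining $L^1\to L^\infty$ dispersion with the spatial localisation of $\Delta_N(v)$ furnished by~\eqref{1-comparaison}. The cost of this interpolation is precisely the loss $\delta\in(0,1/2]$ in the exponent $1/2-\delta$.
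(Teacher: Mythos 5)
Your reduction is sound as far as it goes: the $\pi/2$-(quasi)periodicity of $t\mapsto\|e^{itH}\Delta_N v\cdot e^{itH}\Delta_M u\|_{L^2_x}$, the lens-transform identity, and the resulting weight $(1+4s^2)^{(d-2)/2}$ are all correct, and for $d=2$, where the weight is trivial, the argument does essentially reduce to Bourgain's estimate --- which is also the spirit of Proposition~\ref{1-steffibis} in the paper. Note, however, that you misquote the free bilinear estimate: the classical bound is $\min(K_1,K_2)^{(d-1)/2}\max(K_1,K_2)^{-1/2}\|f\|_2\|g\|_2$, i.e.\ $\min^{(d-2)/2}(\min/\max)^{1/2}$, not $\min^{(d-1)/2}(\min/\max)^{1/2}$; with the version you wrote even the $d=2$ case would lose a factor $\min(N,M)^{1/2}$.

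The genuine gap is your treatment of the weight when $d\geq 3$. Set $N=\min(N,M)$, $M=\max(N,M)$, normalize the $L^2$ norms, and look at the block $|s|\sim 2^{\ell}$. After inserting the weight $\sim 2^{\ell(d-2)}$, your two ingredients give for the squared quantity: from Bourgain restricted to the block, a contribution $\lesssim 2^{\ell(d-2)}N^{d-1}M^{-1}$; from the dispersive/no-gain bound, a contribution $\lesssim 2^{\ell(d-2)}\cdot 2^{\ell(1-d)}N^{d}=2^{-\ell}N^{d}$. Any interpolation is no better than the minimum of these two, which is largest at the crossing $2^{\ell(d-1)}\sim NM$, where it equals $N^{d}(NM)^{-1/(d-1)}$. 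The target (squared) is $N^{d-2}(N/M)^{1-2\delta}$, so you overshoot by roughly $(NM)^{(d-2)/(d-1)}$ --- e.g.\ $(NM)^{1/2}$ for $d=3$, and already $M^{1/2}$ in the simplest case $N=1$. This is a polynomial loss, not the advertised $\delta$-loss: in the intermediate regime $1\ll 2^{\ell}\ll (NM)^{1/(d-1)}$ the crude dispersive bound throws away the extra gain that is actually available there (for instance weighted local smoothing, which on $|x|\sim 2^{\ell}$ gives the high-frequency evolution an extra factor $2^{\ell/2}M^{-1/2}$ in $L^2_{s,x}$), and without an input of that type the scheme cannot close. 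This is also the structural reason the paper does not attack the whole interval $(-\pi/4,\pi/4)$ at once: it proves the estimate on a short interval $[-\epsilon,\epsilon]$ (so the lens-transform weight stays bounded) and iterates by time translation (reduction of Theorem~\ref{1-bilis} to Theorem~\ref{1-bili}); the gain $(\min/\max)^{1/2-\delta}$ for separated frequencies is then produced not by frequency localisation alone but by splitting the high-frequency factor in physical space at scale $M$: the inner piece $\chi(4|x|^2/M^2)\Delta_M u$ is shown, by a semiclassical elliptic argument, to have $H^{-1/2+\delta}$ norm $\lesssim M^{-1/2+\delta}\|u\|_{L^2}$, after which Proposition~\ref{1-steffibis} applies, while the outer piece is $O(M^{-\infty})$ by the WKB parametrix of Proposition~\ref{1-clef} together with the spatial localisation of the low-frequency factor (Proposition~\ref{1-propre}); the comparable-frequency case $N\sim M$, which is where $\min^{(d-2)/2}$ comes from, is handled simply by Strichartz and Sobolev embedding.
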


The first bilinear estimate has been obtained by Bourgain \cite{bilinear} for the Schr\"odinger group on $\R^d$. For bilinear estimates on compact manifolds and application to the well-posedness of nonlinear Schr\"odinger equation, we refer to \cite{burq1}.

We refer to   the work \cite{PTV} of F. Planchon, N. Tzvetkov, and N. Visciglia    for an alternative proof of Theorem~\ref{1-bilis} in the case $d=2$, with the improvement $\delta=0$. This bilinear estimate has   been used in   \cite{PTV} in order to obtain bounds on the growth of norms for the nonlinear Schr\"odinger equation with harmonic potential.   
\subsection{Almost sure global existence for the nonlinear Schr\"odinger equation in any dimension $d\geq 2$}

We now consider  the following Schr\"odinger equation with polynomial nonlinearity
\begin{equation} \label{2schrodinger}  
  \left\{
      \begin{aligned}
       & i \frac{ \partial U  }{ \partial t } +  \Delta  U   =   \kappa | U |^{p-1}U, \qquad (t,x)\in \R \times \R^d \\
        &  U(0,x)=u_0(x),
      \end{aligned}
    \right.
\end{equation}
where $ \kappa \in \lbrace -1,1 \rbrace $ and $ p \geq 5$ denotes an odd integer, and $d\geq 2$. \medskip

 We   prove the following results:
\begin{thm} \label{2thm1}
Let $ u_0 \in \mathcal{H}^{  \frac{d-1}{2} } (  \R ^d ) $ then there exist $ s \in ]  \frac{d}{2} - \frac{2}{p-1}  , \frac{d}{2} [ $ and a set of $ \Omega ' \subset \Omega  $ such that the following conditions are met: 
\begin{enumerate}[$(i)$]
\item  $ P (  \Omega '  ) > 0 $.
\item  For all element $ \omega \in \Omega'$, there is a unique global solution $ U $ to the equation~\eqref{2schrodinger} in the space $ e^{it\Delta} u_0^{\omega} + X ^s  $ with initial data $ u_0^{\omega} $.
\item  For all element  $ \omega \in \Omega ' $, there exists $ L_+ , L_- \in \mathcal{H}^s(\R^d) $ such that 
\begin{equation}\label{2scat11}
\begin{aligned}
&\lim_{t \rightarrow + \infty } \big\| U(t) - e^{it\Delta} \big(u_0^{\omega} +L_+\big) \big\|_{ H^s(\mathbb{R}^d) } = 0, \\
&\lim_{t \rightarrow -\infty } \big\| U(t) -  e^{it\Delta} \big(u_0^{\omega} +L_-\big) \big\|_{ H^s(\mathbb{R}^d) } = 0,
\end{aligned}
\end{equation}
and 
\begin{equation}\label{2scat12}
\begin{aligned}
&\lim_{t \rightarrow + \infty } \big\| e^{-it\Delta}U(t) -  (u_0^{\omega} + L_+) \big\|_{ \mathcal{H}^s(\mathbb{R}^d) } = 0, \\
&\lim_{t \rightarrow -\infty } \big\| e^{-it\Delta}U(t) - (u_0^{\omega} + L_-) \big\|_{ \mathcal{H}^s(\mathbb{R}^d) } =0.
\end{aligned}
\end{equation}
\end{enumerate}

Finally, when $\eta \longrightarrow 0$ we have 
\begin{equation}\label{condi}
 \mu \big( u_0 \in \mathcal{H}^\sigma {(\mathbb{R}^3)}  \ :\ \mbox{we have global existence and scattering } \big| \; \|u_0\|_{\mathcal{H}^\sigma (\mathbb{R}^3)}   \leq \eta \big) \longrightarrow 1.
\end{equation}
\end{thm}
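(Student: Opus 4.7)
The plan is to reduce \eqref{2schrodinger} to a local-in-time problem on $I := (-\pi/4, \pi/4)$ for the Schr\"odinger equation with harmonic potential via the lens transform. This transform intertwines the two flows and maps the global-in-time Cauchy problem \eqref{2schrodinger} on $\mathbb{R}$ into
\begin{equation*}
i \partial_t V - H V = \kappa \cos(2t)^{\alpha(p,d)} |V|^{p-1} V, \qquad V(0,\cdot)=u_0^\omega,
\end{equation*}
on $I$, for an explicit exponent $\alpha(p,d)$. Since $\cos(2t)$ is a smooth weight, bounded above and below on any compact subinterval of $\overline I$, and since the lens transform acts as an isometry of $\mathcal{H}^s(\R^d)$ whose boundary values at $t = \pm\pi/4$ correspond to the scattering states of $U$ as $t \to \pm \infty$, both the global existence and the scattering statements \eqref{2scat11}--\eqref{2scat12} will follow from a sharp control of $V$ on the closed interval $[-\pi/4, \pi/4]$.

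Following the strategy already used for the cubic case, I would look for $V$ in the form $V = e^{-itH} u_0^\omega + w$ and solve the Duhamel equation for $w$ by contraction in a space $X^s$ built from the harmonic-oscillator Strichartz estimates and the harmonic Kato smoothing established in the second part of the paper, at a regularity level $s$ strictly between the scaling-critical exponent $s_c = d/2 - 2/(p-1)$ and $d/2$. The random datum lives in $\mathcal{H}^{(d-1)/2}(\R^d)$, and the key point is that although $e^{-itH} u_0^\omega$ is only at Sobolev regularity $(d-1)/2$, the combination of the Kato smoothing effect (which buys half a derivative) with the multilinear Gaussian chaos bounds of Proposition~\ref{prop-chaos} (which convert $L^p$ losses into small Gaussian factors for chaos of fixed finite order) will let us control the $(p-1)$-linear expressions involving only $e^{-itH}u_0^\omega$ in the norm dual to $X^s$. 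The mixed terms containing both $e^{-itH}u_0^\omega$ and $w$ are then absorbed by standard multilinear interpolation, provided $s > s_c$.

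The set $\Omega'$ is constructed as the intersection of the large-deviation events on which the relevant random Strichartz and smoothing norms of $e^{-itH}u_0^\omega$ are bounded by a constant $R$ depending on $\|u_0\|_{\mathcal{H}^{(d-1)/2}}$. The sub-Gaussian tails of finite-order Gaussian chaos yield the quantitative estimate
\begin{equation*}
P(\Omega') \ge 1 - C_1 \exp\bigl(-C_2/\|u_0\|^2_{\mathcal{H}^{(d-1)/2}(\R^d)}\bigr),
\end{equation*}
which is strictly positive for every non-zero datum and tends to $1$ under the conditioning of \eqref{condi}. On $\Omega'$ the fixed point produces a unique $w\in X^s$, and pulling this back through the inverse lens transform gives a solution $U = e^{it\Delta}u_0^\omega + \widetilde w$ of \eqref{2schrodinger} with $\widetilde w$ in the $X^s$ appearing in the statement of the theorem, together with scattering states $L_\pm$ read off (after applying the lens transform) from the boundary values $V(\pm\pi/4, \cdot) - u_0^\omega$ in $\mathcal{H}^s(\R^d)$.

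The main obstacle I foresee is the sharp nonlinear estimate on the purely stochastic term: one must control $(p-1)$-linear expressions in $e^{-itH}u_0^\omega$ with a gain of almost $1/2$ derivative above the $\mathcal{H}^{(d-1)/2}$ regularity of the data, using only the Kato smoothing and the cancellation coming from chaos orthogonality. The lower endpoint $s > s_c$ is precisely the threshold at which the contraction closes, so no margin is left in the bookkeeping, and it is there that the hypothesis $p\geq 5$ as opposed to $p=3$ plays a role, since for larger $p$ more simultaneous smoothing and chaos cancellations are needed (and the cubic bilinear gain of Theorem~\ref{1-bilis} is no longer enough). Once this purely stochastic estimate is in place, the remaining steps (setting up the contraction on a slightly shorter subinterval and then iterating to $[-\pi/4, \pi/4]$, uniqueness, continuous dependence, passage to the limit $t\to\pm\pi/4$) are carried out by the now-standard Burq--Tzvetkov scheme.
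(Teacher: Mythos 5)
Your overall route --- lens transform, contraction for $V=e^{-itH}u_0^\omega+w$ in Strichartz-type spaces with the harmonic Kato smoothing as the key input, and a large-deviation construction of $\Omega'$ --- is the one the paper follows, but there is a genuine gap in your probability argument. Since the lens transform sends $t\to\pm\infty$ to $t=\pm\pi/4$, the contraction must cover the whole interval $[-\pi/4,\pi/4]$ in a single step, so global existence requires the random norms of $e^{-itH}u_0^\omega$ to lie below a fixed absolute threshold $\lambda<C_1$ independent of the datum. Consequently the bound $P(\Omega')\ge 1-C_1\exp\bigl(-C_2/\|u_0\|^2_{\mathcal{H}^{(d-1)/2}(\R^d)}\bigr)$ is informative only for small data; for large $\|u_0\|$ it is vacuous, and your claim that it is ``strictly positive for every non-zero datum'' is false. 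The missing idea is the frequency splitting of Section~\ref{Sect9}: write $u_0=[u_0]_N+[u_0]^N$, use independence of the Gaussians, apply the sub-Gaussian tail estimate to $[u_0]^N$ (whose $\mathcal{H}^{(d-1)/2}$ norm can be made arbitrarily small by choosing $N$ large), and note that the finite-dimensional event that the low-mode Gaussians $|g_n|$, $\lambda_n<N$, are small has positive probability; this is what yields $P(\Omega')>0$ for arbitrary data. Relatedly, your fallback of ``contracting on a shorter subinterval and iterating to $[-\pi/4,\pi/4]$'' is not available in this supercritical regime: after one step the data is no longer of randomized form and no conserved quantity controls the $\mathcal{H}^s$ norm, which is exactly why the fixed smallness threshold (and hence the splitting argument) cannot be bypassed.

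The second weak point is the purely stochastic term, which you rightly single out as the crux but propose to close via multilinear chaos orthogonality at the borderline regularity $s$ near $d/2-\frac{2}{p-1}$; that is not how the estimate closes, and you give no argument that it would. In the paper only first-order randomization enters: the good set is defined by $\|u_0^\omega\|_{\mathcal{H}^{(d-1)/2}(\R^d)}\le\lambda$ and $\|e^{-itH}u_0^\omega\|_{L^{2p}([-\pi,\pi];\W^{1/7,\infty}(\R^d))}\le\lambda$, both controlled by the Khintchine inequality \eqref{zyg1} alone, while the derivative loss is absorbed deterministically: the factor carrying $\sqrt{-\Delta}^{\,s}$ with $s=\frac{d}{2}-2\epsilon$ (so the working regularity sits near $d/2$, not near the critical index) is placed in the weighted $L^2_{t,x}$ space of the smoothing estimate \eqref{2effectregularisant2}, and the weight $\langle x\rangle^{(1/2-\epsilon)/(p-1)}$ together with the time integrability is distributed over the remaining $p-1\ge 4$ factors measured in $L^{2(p-1)}_t$ with a tiny derivative in $L^\infty_x$; this distribution is precisely where $p\ge5$ is used. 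As written, your proposal leaves the decisive nonlinear estimate unproved and points to a mechanism (chaos cancellations at the critical threshold) that does not match the argument that actually works.
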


The proof of this result relies on the Kato smoothing effect for the linear Schr\"odinger equation with harmonic potential. In dimension 1, the smoothing effect was used in \cite{BTT} in order to obtain local existence results. We observe here that this approach can be extended in high dimension, in the case of a nonlinearity of degree $ p \geq 5 $ and an initial condition $ u_0 \in \mathcal{H}^{(d-1)/2}(\R^d)$.  \medskip

We refer to \cite{DoLuMen, KMV} for almost sure scattering results for NLS. \medskip

Let us state the smoothing effect for the harmonic oscillator, which has his own interest.

 \begin{thm}\label{thm15}
Let  $ \epsilon \in ] 0 , \frac{1}{2} [ $ then there exists a constant $ C > 0 $ such that for all $ u_0 \in L^2(\R^d) $,
\begin{equation} \label{2effectregularisant1} 
\Big\| \frac{1}{\langle x\rangle ^{1/2-\epsilon} }\sqrt H ^{1/2- 2 \epsilon} e^{itH} u_0 \Big\|_{L^2([- \pi,\pi] \times \R^d)} \leq C \|u_0\|_{L^2(\R^d)},
\end{equation}
and for all $ u_0 \in  \mathcal{H}^{   \frac{d-1}{2} } (\R^d) $,
\begin{equation} \label{2effectregularisant2} \Big\| \frac{1}{\langle x\rangle ^{1/2-\epsilon} } \sqrt{-\Delta} ^{d/2- 2 \epsilon} e^{itH} u_0 \Big\|_{L^2([- \pi,\pi]\times \R^d)} \leq C \|u_0\|_{\mathcal{H}^{   \frac{d-1}{2} } (\R^d)}.
\end{equation}
\end{thm}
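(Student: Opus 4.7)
The overall strategy is to establish \eqref{2effectregularisant1} by a spectral reduction that exploits the integer-valuedness of the eigenvalues $\lambda_n^2 = 2(n_1+\cdots+n_d)+d$, and then to deduce \eqref{2effectregularisant2} from \eqref{2effectregularisant1} by comparing $(-\Delta)^{d/4-\epsilon}$ with $H^{d/4-\epsilon}$ via the norm equivalence \eqref{1-comparaison} together with a commutator estimate.

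\textbf{Proof of \eqref{2effectregularisant1}.} Decompose $u_0 = \sum_\mu \Pi_\mu u_0$, where $\Pi_\mu$ is the orthogonal projector onto the eigenspace of $H$ with eigenvalue $\mu^2\in 2\N+d\subset\Z$. Then
\begin{equation*}
\sqrt{H}^{1/2-2\epsilon}\,e^{itH}u_0 \;=\; \sum_\mu \mu^{1/2-2\epsilon}\,e^{it\mu^2}\,\Pi_\mu u_0,
\end{equation*}
and since $\mu^2 \in \Z$, the family $\{e^{it\mu^2}\}$ is pairwise orthogonal in $L^2([-\pi,\pi])$. Integrating first in $t$ and using Parseval yields
\begin{equation*}
\Big\|\frac{1}{\langle x\rangle^{1/2-\epsilon}}\sqrt{H}^{1/2-2\epsilon}\,e^{itH}u_0\Big\|_{L^2([-\pi,\pi]\times\R^d)}^{2} \;=\; 2\pi\sum_\mu \mu^{1-4\epsilon}\,\Big\|\frac{\Pi_\mu u_0}{\langle x\rangle^{1/2-\epsilon}}\Big\|_{L^2(\R^d)}^{2}.
\end{equation*}
Using $\|u_0\|_{L^2}^2 = \sum_\mu\|\Pi_\mu u_0\|_{L^2}^2$, the estimate \eqref{2effectregularisant1} reduces to the uniform weighted projector bound
\begin{equation*}
\Big\|\frac{1}{\langle x\rangle^{1/2-\epsilon}}\Pi_\mu\Big\|_{L^2(\R^d)\to L^2(\R^d)} \;\leq\; C\,\mu^{-(1/2-2\epsilon)},\qquad \mu\geq 1.
\end{equation*}
This is a Carleman-type bound for the Hermite spectral projector; it is obtained by combining $L^p$-bounds on $\Pi_\mu$ (of Karadzhov / Koch--Tataru type) with Hölder's inequality against the weight $\langle x\rangle^{-(1/2-\epsilon)}\in L^q(\R^d)$ for $q>2d/(1-2\epsilon)$, the slack $\epsilon>0$ providing the room needed to absorb the endpoint loss in the spectral bound.

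\textbf{Proof of \eqref{2effectregularisant2}.} Set $v_0 := H^{(d-1)/4}u_0$, so that $\|v_0\|_{L^2} = \|u_0\|_{\mathcal{H}^{(d-1)/2}}$, and use $[H^\sigma, e^{itH}]=0$. Applying \eqref{2effectregularisant1} to $v_0$ gives
\begin{equation*}
\Big\|\frac{1}{\langle x\rangle^{1/2-\epsilon}}\,H^{d/4-\epsilon}\,e^{itH}u_0\Big\|_{L^2_{t,x}} \;\leq\; C\,\|u_0\|_{\mathcal{H}^{(d-1)/2}}.
\end{equation*}
It then remains to verify the operator inequality
\begin{equation*}
\Big\|\frac{1}{\langle x\rangle^{1/2-\epsilon}}(-\Delta)^{d/4-\epsilon}f\Big\|_{L^2(\R^d)} \;\leq\; C\,\Big\|\frac{1}{\langle x\rangle^{1/2-\epsilon}}\,H^{d/4-\epsilon}f\Big\|_{L^2(\R^d)}.
\end{equation*}
Writing $g:=H^{d/4-\epsilon}f$ and $A := (-\Delta)^{d/4-\epsilon}H^{-(d/4-\epsilon)}$, this amounts to the $L^2$-boundedness of $\langle x\rangle^{-(1/2-\epsilon)}\,A\,\langle x\rangle^{1/2-\epsilon}$. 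By \eqref{1-comparaison} (taken with $s=d/2-2\epsilon$ and $p=2$), the operator $A$ is $L^2$-bounded. The remaining correction term $\langle x\rangle^{-(1/2-\epsilon)}[A,\langle x\rangle^{1/2-\epsilon}]$ is handled by an explicit commutator calculation, expanding $A$ as a composition of fractional powers of $H$ and $-\Delta$ and using the standard identities for $[(-\Delta)^\alpha,\langle x\rangle^s]$ and $[H^{-\alpha},\langle x\rangle^s]$, the gain $\epsilon>0$ in the weight being enough to render this term $L^2$-bounded.

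\textbf{Main obstacle.} The crux of the argument is the weighted spectral-projector estimate for $\Pi_\mu$. In dimension $d\geq 2$ the eigenspaces have dimension $\sim \mu^{2(d-1)}$, so naive Cauchy--Schwarz bounds (which would amount to integrating $\Pi_\mu(x,x)/\langle x\rangle^{1-2\epsilon}$) are far from sufficient, and one must invoke non-trivial Stein--Tomas/Koch--Tataru-type $L^p$-bounds for the Hermite projector. The small parameter $\epsilon>0$ plays a dual role throughout: it ensures integrability of $\langle x\rangle^{-(1/2-\epsilon)}$ in some $L^q$ with $q<\infty$, and it absorbs the endpoint losses both in the $L^p$-projector bound of Step~1 and in the commutator estimate of Step~2.
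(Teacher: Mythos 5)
Your reduction of \eqref{2effectregularisant1} to a weighted spectral projector bound is correct and is a nice observation: since the eigenvalues $\mu^2=2k+d$ are integers, the orthogonality of $\{e^{it\mu^2}\}$ in $L^2([-\pi,\pi])$ shows that \eqref{2effectregularisant1} is \emph{equivalent} (test on a single eigenspace for the converse) to
\begin{equation*}
\Big\|\frac{1}{\langle x\rangle^{1/2-\epsilon}}\,\Pi_\mu\Big\|_{L^2(\R^d)\to L^2(\R^d)}\;\lesssim\;\mu^{-(1/2-2\epsilon)} .
\end{equation*}
This is exactly where the genuine gap lies: you have restated the theorem, and the argument you sketch for it (H\"older with $\langle x\rangle^{-(1/2-\epsilon)}\in L^q$, $q>2d/(1-2\epsilon)$, against a Karadzhov/Koch--Tataru $L^2\to L^p$ bound for $\Pi_\mu$) cannot yield it. H\"older forces $p<\frac{2d}{d-1+2\epsilon}$, hence $p<4$ for $d\geq2$, and in that range no $L^2\to L^p$ projector bound can decay faster than $\mu^{-(1/2-1/p)}$: the normalized eigenfunction $e_k\otimes e_0^{\otimes(d-1)}$, of eigenvalue $\mu^2=2k+d$, has $\|e_k\otimes e_0^{\otimes(d-1)}\|_{L^p(\R^d)}\sim\mu^{-(1/2-1/p)}$ for $2\le p<4$ (WKB amplitude $(\mu^2-x_1^2)^{-1/4}$ in the bulk). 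So the best your route can produce is a decay of order $\mu^{-\frac{1-2\epsilon}{2d}}$, which for small $\epsilon$ (the only nontrivial regime, since for $\epsilon\ge1/4$ the bound is trivial) falls far short of the required $\mu^{-(1/2-2\epsilon)}$ for every $d\ge2$; the same failure occurs for $d=1$, where for $p\ge4$ the $1$D bound saturates near $\mu^{-1/6}$. The loss is structural, not a matter of sharpening constants: the $\mu^{-1/2}$ gain comes from the fact that Hermite eigenfunctions carry their mass where $|x|\sim\mu$, precisely where the weight is of size $\mu^{-(1/2-\epsilon)}$ — your own heuristic test with $e_k\otimes e_0^{\otimes(d-1)}$ uses exactly this — and a global H\"older step discards that spatial information. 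To close the gap you would need genuinely weighted spectral estimates (pointwise/weighted bounds for Hermite functions or a radial ODE analysis), or a different mechanism; the paper takes the latter route, proving \eqref{2effectregularisant1} by a positive-commutator (Doi-type) argument: it computes $[\frac{x\cdot\nabla}{\langle x\rangle^\alpha},H]$, obtains coercivity of its real part modulo $\|u\|_{\mathcal H^{(2-\alpha)/2}}^2$, integrates along the flow $e^{-itH}$, and then exchanges the weight with $\sqrt H$ via commutator lemmas in the Weyl--H\"ormander calculus attached to $\frac{dx^2}{1+|x|^2}+\frac{d\xi^2}{1+|\xi|^2}$.

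Your second step is essentially the paper's own deduction of \eqref{2effectregularisant2}: apply \eqref{2effectregularisant1} to $H^{(d-1)/4}u_0$, then use \eqref{1-comparaison} and commutator bounds to trade $\sqrt H^{\,d/2-2\epsilon}$ for $\sqrt{-\Delta}^{\,d/2-2\epsilon}$ across the weight. One caveat: for $d\ge3$ the commutator $[(-\Delta)^{d/4-\epsilon},\langle x\rangle^{1/2-\epsilon}]$ has positive differential order, so ``standard identities'' alone do not make your correction term $L^2$-bounded; as in the paper's Lemma 4.1 you must work in the two-weight symbol classes $S(\mu,m)$ so that the accompanying negative power of $H$ absorbs that order. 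This part is repairable; the unproved weighted projector estimate in Step 1 is the real issue.
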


The first results on the local smoothing effect were obtained in  \cite{kato,Sjolin,vega, Constantin-Saut}. Here the proof follows the  Doi method \cite{Doi1,Doi2}. See also \cite{yajima1,yajima2, thomann3} for results on the smoothing effect for the Schr\"odinger equation with confining potentials.

The result of Theorem \ref{thm15} is an extension to the well-known smoothing effect for the linear Schr\"odinger group
\begin{equation*} 
\Big\| \frac{1}{\langle x\rangle ^{1/2-\epsilon} }\sqrt{-\Delta}  ^{1/2- 2 \epsilon} e^{it\Delta} u_0 \Big\|_{L^2([- \pi,\pi] \times \R^d)} \leq C \|u_0\|_{L^2(\R^d)},
\end{equation*}
and for all $ u_0 \in  H^{   \frac{d-1}{2} } (\R^d) $,
\begin{equation*}
 \Big\| \frac{1}{\langle x\rangle ^{1/2-\epsilon} } \sqrt{-\Delta} ^{d/2-2 \epsilon} e^{it\Delta} u_0 \Big\|_{L^2([- \pi,\pi]\times \R^d)} \leq C \|u_0\|_{H^{   \frac{d-1}{2} } (\R^d)}.
\end{equation*}

\subsection{Notations}
  In this paper $c,C>0$ denote constants the value of which may change
from line to line. These constants will always be universal, or uniformly bounded with respect to the other parameters.  We denote by  $H=-\Delta+|x|^{2}$ the harmonic oscillator on $\R^d$, and   for $\sigma\in \R$ we define the   Sobolev space~${\mathcal H}^{\sigma}(\R^d)$  by the norm  $\|u\|_{{\mathcal H}^{\sigma}(\R^d)}=\|H^{\sigma/2}u\|_{L^{2}(\R^d)}$.  More generally, we define the spaces~$\W^{\sigma,p}(\R^d)$ by the norm $\|u\|_{\W^{\sigma,p}(\R^d)}=\|H^{\sigma/2}u\|_{L^{p}(\R^d)}$. The Fourier transform is defined by $\mathcal{F}f(\xi)=\int_{\R^d} e^{-ix\xi}f(x)dx$, for $f \in \mathcal{S}(\R^d)$.  We denote by $\langle x\rangle =\sqrt{1+|x|^2}$.

\subsection{Organization of the rest of the article}

 In Section \ref{Sect2}, we present some harmonic analysis results, including study of the lens transform, Strichartz estimates for the Schr\"odinger group with harmonic potential, properties of the Bourgain space, and Hermite functions estimates.   Section \ref{Sect3} is devoted to the proof of the bilinear estimates for the harmonic oscillator. Then, in Section \ref{Sect4}, we prove the local smoothing effect. In Section \ref{Sect5} we  prove that the randomized initial  data does not allow to gain derivatives in the $L^2$ scale and that it does not enjoy better spatial localisation properties (Theorem~\ref{2sobolev2}). Section~\ref{Sect6} is devoted to the fixed point argument for the cubic NLS in dimension 3. In Section \ref{Sect7} we study the regularity of the randomised initial conditions which allows to complete the proof of Theorem~\ref{1-thm1}. In Section~\ref{Sect8} and in Section~\ref{Sect9}, using a fixed point argument, we prove the results on the NLS in dimension $d\geq 2$.


\section{Preliminary results}\label{Sect2}
In this section, except in the fourth part, the dimension of space is assumed to be any $d\geq 1$.

\subsection{The lens transformation} \label{1-214}
As in \cite{thomann2,BTT}, we use the lens transform which allows to work with the Schr\"odinger equation with harmonic potential.  More precisely, suppose that $U(s,y)$ is a solution of the problem \eqref{1-sch}. Then the function  $u(t,x)$ defined  for $|t|<\frac{\pi}{4}$ and $x\in\R$ by
\begin{multline}\label{lens}
 u(t,x)=  \mathcal{L} (U)(t,x)
 :=\Big( \frac{1}{ \cos 2t } \Big) ^{d/2} U\big(\frac{\tan(2t)} 2, \frac{x}{\cos(2t)}\big)e^{-\frac{i|x|^2{\rm tan}(2t)}{2}}\\
 :=\mathcal{L}_t (U\mid_{s= \frac{\tan(2t)} 2})(x)
\end{multline}
where
\begin{equation}\label{lensbis}
 \mathcal{L}_t (G)(x)= \Big( \frac{1}{ \cos (2t) } \Big) ^{d/2}  G\big(\frac{x}{\cos(2t)}\big)
e^{-\frac{i|x|^2{\rm tan}(2t)}{2}}\,,
\end{equation}
 solves the problem
 \begin{equation}\label{C}   
  \left\{
      \begin{aligned}
         &i\partial_t u-Hu=\kappa \cos^{ \frac{d}{2}(p-1) -2 }(2t)|u|^{p-1}u,\quad |t|<\frac{\pi}{4},\, x\in\R^d,
       \\  &  u(0,\cdot)  =U_0,
      \end{aligned}
    \right.
\end{equation}
where $H=-\partial^2_x+|x|^{2}$. Similarly, if $U = e^{is \Delta_y} U_0$ is a solution of the linear Schr\"odinger equation, then
\begin{equation}\label{lens2}
u=e^{-itH} U_0=\mathcal{L}(U)
\end{equation}
 is the solution of the linear harmonic Schr\"odinger equation with the same initial data.  In other words, if we denote by $\Psi(s,s') $ the map which sends the data at time~$s'$ to the solution at time $s$ of~\eqref{C}, and by $\Phi(t,t') $ the map which sends the data at time $t'$ to the solution at time $t$ of~\eqref{2schrodinger}, the family  $  (\mathcal{L}_t)_{|t|< \frac \pi 4}$ conjugates the linear and the nonlinear flows: with $t(s) =\frac{\arctan(2s)} 2$, $s(t) = \frac{ \tan (2t)} 2$,
\begin{equation*} 
  \mathcal{L}_{t(s)} \circ e^{i(s-s')\Delta_y}   =e^{-i (t(s) -t(s'))H}\circ  \mathcal{L} _{t(s')}.
\end{equation*}
and 
\begin{equation*}
\mathcal{L}_{t(s)} \circ  \Psi(s,s')  = \Phi(t(s), t(s')) \circ \mathcal{L} _{t(s')}.
\end{equation*}
 
 By \eqref{lens2} we have
\begin{equation}\label{sym1}
e^{-itH} u ( t ,x ) = \Big( \frac{1}{ \cos (2t) } \Big) ^{d/2} e^{it\Delta} u \Big( \frac{\tan (2 t)}{2}  , \frac{x}{ \cos (2 t) } \Big) e^{ - \frac{i |x|^2 \tan (2 t) }{2} }
\end{equation}
and equivalently
\begin{equation}\label{sym2}
e^{it\Delta} u(t,x)= \Big( \frac{1}{\sqrt{1+4t^2}} \Big) ^{d/2} e^{-itH} u \Big( \frac{\arctan(2t)}{2}  , \frac{x}{\sqrt{1+4t^2} } \Big) e^{ \frac{i|x|^2t}{1+4t^2} }.
\end{equation}


\subsection{The Strichartz estimates for the harmonic oscillator}

In the following, we will say that a pair $ (q , r) \in [2, + \infty ]^2 $ is admissible if and only if
\begin{equation*}
 ( q, r, d ) \neq ( 2, + \infty , 2 ) \quad \mbox{ and }\quad  \frac{2}{q} = \frac{d}{2} - \frac{d}{r}.
\end{equation*}
We define, for $ s \in \R $ and $ T > 0$ the Strichartz spaces for the Schr\"odinger equation 
\begin{align*}
 &  X^s = \underset{ (q,r) \ admissible }{ \bigcap } L^q( \R ; W^{s,r}(\R^d)),
\\ & X_T^s = \underset{ (q,r) \ admissible }{ \bigcap } L^q( [-T,T] ;  W^{s,r}(\R^d)).
\end{align*}
Similarly, we define the Strichartz spaces for the harmonic oscillator
\begin{align*}
 &  \X^s = \underset{ (q,r)  \ admissible }{ \bigcap } L^q\big( [-\frac{\pi}4,\frac{\pi}4] ; \W^{s,r}(\R^d)\big),
\\ & \X_T^s = \underset{ (q,r)  \ admissible }{ \bigcap } L^q( [-T,T] ; \W^{s,r}(\R^d)).
\end{align*}
The  Strichartz estimates for the usual Schr\"odinger equation can be found in \cite{tao}. Let us state the Strichartz inequalities for the harmonic oscillator.

\begin{prop} \label{1-Strichartz} For any time $ T \geq 0 $, there exists a constant $ C_T > 0 $ such that for any function $ u \in \mathcal{H}^s(\R^d) $ and any function $ F \in L^{q'}([-T,T] ;  \W^{s,r'} (\R^d)) $ with $ ( q, r ) $ admissible
\begin{equation}\label{1-11}
 \| e^{-itH} u \|_{\X_T^s} \leq C_T \| u \|_{ \mathcal{H}^s(\R^d) }, 
\end{equation}
\begin{equation}\label{1-12}
 \bigg| \int _0^t e^{-i(t-s)H} F(s) ds \bigg| _{ \X^s_T} \leq C \| F \|_{ L^{q'}( [-T,T]; \W^{s,r'} (\R^d))}.
\end{equation}
\end{prop}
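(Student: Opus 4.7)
The plan is to deduce both estimates from the classical Strichartz inequalities for the free Schr\"odinger group on $\R\times \R^d$ via the lens transform. Since $H^{s/2}$ commutes with $e^{-itH}$ (both being functions of $H$) and the $\W^{s,r}$ norm equals $\|H^{s/2}\cdot\|_{L^r}$, it suffices to establish both inequalities at regularity level $s=0$ and then apply the $s=0$ bounds to $H^{s/2}u$ (respectively $H^{s/2}F$) to recover the case of general $s$.

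For short times $T<\pi/4$, the lens transform identity \eqref{sym1} provides a pointwise expression for $e^{-itH}u_0$ in terms of $e^{i\sigma \Delta}u_0$ with $\sigma = \tan(2t)/2$, modulo the dilation $x\mapsto x/\cos 2t$, a prefactor $(\cos 2t)^{-d/2}$ and a quadratic phase. Taking the $L^r_x$ norm eliminates the phase and produces an overall dilation factor $(\cos 2t)^{d/r - d/2}$; the change of variable $\sigma = \tan(2t)/2$, for which $dt = (\cos 2t)^2\, d\sigma$, adds a further $(\cos 2t)^2$ when passing to the $L^q_t$ integral. The admissibility relation $\frac{2}{q} = \frac{d}{2} - \frac{d}{r}$ ensures that the total exponent $q(d/r - d/2) + 2$ vanishes exactly, so $\|e^{-itH}u_0\|_{L^q_t([-T,T]) L^r_x}$ coincides with $\|e^{i\sigma \Delta}u_0\|_{L^q_\sigma(I) L^r_x}$ on the bounded interval $I = [-\tan(2T)/2,\,\tan(2T)/2]$. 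The classical Strichartz estimate on $\R$ then yields \eqref{1-11} for all $T<\pi/4$.

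For arbitrary $T\geq 0$, partition $[-T,T]$ into $O(T)$ subintervals of length less than $\pi/4$ and iterate, using the semigroup identity $e^{-itH} = e^{-i(t-t_0)H}\, e^{-it_0 H}$ together with the $L^2$-unitarity of $e^{-it_0 H}$; this produces the constant $C_T$. The inhomogeneous bound \eqref{1-12} then follows from \eqref{1-11} by the standard $TT^*$ duality argument combined with the Christ-Kiselev lemma, which handles the retarded character of the Duhamel operator. The main point requiring careful verification is the exact cancellation of the $\cos 2t$ weights through the admissibility relation; everything else reduces to well-established free-space Strichartz theory.
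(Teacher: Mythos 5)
Your proposal follows essentially the same route as the paper: reduce to $s=0$ by commuting $H^{s/2}$ with $e^{-itH}$, use the lens transform identity \eqref{sym1} together with the admissibility relation $\frac2q=\frac d2-\frac dr$ to see that the $\cos(2t)$ factors cancel exactly and the $L^qL^r$ norm of $e^{-itH}u$ reduces to the free-space Strichartz norm of $e^{it\Delta}u$, then extend to arbitrary $T$ by the group property and treat \eqref{1-12} by the standard duality argument (the paper simply refers to Tao's book for that step). The only minor caveat, which the paper glosses over in the same way, is that the Christ--Kiselev lemma does not cover the endpoint retarded case $q=q'=2$ (where one invokes Keel--Tao); otherwise your argument matches the paper's.
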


\begin{proof} The Strichartz estimates follow from the study of the kernel of $e^{itH}$, and we refer to \cite[Section~5]{BTT} for a proof. Alternatively, let us show here how  \eqref{1-11} also follows from the Strichartz estimates for the Schr\"odinger flow with the free Laplacian. By replacing   $ u $ by $ e^{iTH} u $, we see that it is enough to prove the estimate for some $ T> 0 $, for example for $ T = \frac{\pi}{4} $. Similarly, by replacing~$u$ by $H^{\frac{s}{2}}u$, we can restrain the proof to the case where $s= 0$. Thus, if the couple $ (q,r) $ is admissible, by \eqref{sym1} we obtain  
\begin{multline*}
 \| e^{-itH} u \|_{L^q( ] - \frac{\pi}{4} , \frac{\pi}{4} [ ; L^r (\R^d))} =\\
\begin{aligned}
&= \Big\| \Big( \frac{1}{ \cos (2t) } \Big) ^{d/2} e^{it\Delta} u \Big( \frac{\tan (2 t)}{2}  , \frac{x}{ \cos (2 t) } \Big) e^{ - \frac{i |x|^2 \tan (2 t) }{2} }    \Big\|_{L^q( ]  - \frac{\pi}{4} , \frac{\pi}{4} [ ; L^r (\R^d))}\\
&= \| e^{it\Delta} u \|_{L^q( \R ; L^r(\R^d))}.
\end{aligned}
\end{multline*}
Then, we can use the Strichartz estimates for the Laplacian to conclude.  

The proof of \eqref{1-12}, which relies on a duality argument,  is the same as the proof of the Strichartz estimates for the Laplacian that can be found in \cite[Section 2.3]{tao}.  
\end{proof}

  \begin{prop}\label{tvb1}
If $ s \geq 0$ then there exists a constant $ C > 0 $ such that for all $ p \in [1,+\infty ] $ and $ q \in [1,+\infty ] $ satisfying $ \frac{2}{p} + \frac{d}{q} - \frac{d}{2} \leq 0 $, we have for any $ u \in L^p( [-\frac{\pi}4,\frac{\pi}4] ; \W^{s,q} ( \R^d )) $,
\begin{equation*}
\| U \|_{ L^p( \R ; W^{s,q} ( \R^d ) ) } \leq C \| u \| _{L^p(  [-\frac{\pi}4,\frac{\pi}4] ; \W^{s,q} ( \R^d ) ) }.
\end{equation*}
As a consequence, let $ s \geq 0 $ and $ u \in \X^{s} $ then $ U \in X^{s} $ and there exists a constant $ C> 0 $ such that for all $ u \in \X^{s} $,
\begin{equation*}
\|U\|_{X^{s}}  \leq C \|u\|_{\X^{s}}.
\end{equation*}
\end{prop}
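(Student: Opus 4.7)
The plan is to deduce both assertions from the pointwise lens-transform identity \eqref{sym2}. Setting $\tau=\arctan(2s)/2 \in (-\pi/4,\pi/4)$, so that $\cos 2\tau = 1/\sqrt{1+4s^2}$, that identity reads
\begin{equation*}
U(s,y) \;=\; (\cos 2\tau)^{d/2}\, u\big(\tau,\, y\cos 2\tau\big)\, e^{i|y|^2 \sin(4\tau)/4}.
\end{equation*}
My strategy is first to prove a fixed-time bound of the form
$\|U(s,\cdot)\|_{W^{\sigma,q}(\R^d)} \leq C (\cos 2\tau)^{d/2-d/q}\|u(\tau,\cdot)\|_{\mathcal{W}^{\sigma,q}(\R^d)}$,
and then to raise to the $p$-th power and perform the time change of variables $ds=(\cos 2\tau)^{-2}\,d\tau$. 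This produces the weight $(\cos 2\tau)^{p(d/2-d/q)-2}$, which is uniformly bounded on the compact interval $[-\pi/4,\pi/4]$ precisely when $p(d/2-d/q)-2\geq 0$, i.e.\ exactly under the hypothesis $2/p+d/q-d/2\leq 0$.

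For $\sigma=0$ the fixed-time bound is in fact an equality, obtained by the change of variable $x=y\cos 2\tau$: one gets $\|U(s,\cdot)\|_{L^q}=(\cos 2\tau)^{d/2-d/q}\|u(\tau,\cdot)\|_{L^q}$, and the time integration above closes the case. For integer $\sigma=k\geq 1$ I would apply the Leibniz rule to $\partial_y^\alpha U$ with $|\alpha|\leq k$: each derivative either falls on the profile $u(\tau,y\cos 2\tau)$, producing a factor $\cos 2\tau$ together with a Euclidean derivative of $u$, or on the quadratic phase, producing a factor $y\sin(4\tau)/2 = y\sin 2\tau\cos 2\tau$. After the change of variable $x=y\cos 2\tau$ (so that $|y|\cos 2\tau=|x|$), all resulting terms take the form $(\cos 2\tau)^{d/2-d/q}\|x^{a}\nabla_x^{b}u(\tau,\cdot)\|_{L^q}$ with $a+b\leq k$, which \eqref{1-comparaison} controls by $\|u(\tau,\cdot)\|_{\mathcal{W}^{k,q}}$. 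Fractional $\sigma\geq 0$ then follows by complex interpolation between consecutive integer values.

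The "consequence" for the Strichartz spaces $\mathcal{X}^\sigma$ and $X^\sigma$ is then immediate: for any admissible pair $(q,r)$ one has $2/q+d/r=d/2$, which is precisely equality in the scaling condition $2/p+d/q-d/2\leq 0$ after the substitution $(p,q)\to(q,r)$, so each factor in the intersection defining $\mathcal{X}^\sigma$ is sent into the corresponding factor of $X^\sigma$ with a uniform constant.

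The step I expect to be the main obstacle is the passage to fractional $\sigma$: the quadratic phase $e^{i|y|^2\sin(4\tau)/4}$ does not commute with $(-\Delta_y)^{\sigma/2}$, so either a complex interpolation (between the integer-$\sigma$ bounds just sketched) must be set up so that the operator norms are controlled uniformly in $\tau$, or one must argue directly that the conjugated operator $e^{-i|y|^2\beta}(-\Delta)e^{i|y|^2\beta}=(-i\nabla+2\beta y)^2$ with $\beta=\sin(4\tau)/4$ has its symbol dominated by $H$ on the relevant scale, so that the Leibniz-type estimate extends to arbitrary $\sigma\geq 0$ without losing powers of $\cos 2\tau$.
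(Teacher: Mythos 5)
Your proof is correct and follows essentially the same route as the paper: a Leibniz expansion of the derivatives of $U$ for integer $s$, the space--time change of variables producing the weight $(\cos 2\tau)^{p(d/2-d/q)-2}$ whose exponent is nonnegative exactly under the hypothesis $\frac{2}{p}+\frac{d}{q}-\frac{d}{2}\leq 0$, control of the resulting weighted terms $\|x^a\nabla^b u\|_{L^q}$ via \eqref{1-comparaison}, and reduction of fractional $s$ to the integer case by interpolation, with the Strichartz consequence obtained from admissibility giving equality in the scaling condition. The fractional-$s$ step you flag as the main obstacle is handled in the paper exactly as you propose, namely by interpolating the fixed linear map $u\mapsto U$ between consecutive integer-$s$ bounds (whose constants are uniform in time), so no additional argument is required.
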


\begin{proof} By interpolation, it is sufficient to prove the result for $ s =n \in \N $. Let $ \alpha \in \N^d$ with $ | \alpha | \leq n $, then thanks to the Leibniz formula, we obtain
\begin{multline*}
 \partial ^\alpha_x U(t,x)  =  \partial ^\alpha_x \Big( \Big( \frac{1}{\sqrt{1+4t^2}}  \Big) ^{d/2}  u \big( \frac{\arctan(2t)}{2}  , \frac{x}{\sqrt{1+4t^2} } \big)    e^{ \frac{i|x|^2t}{1+4t^2}  } \Big)\\
 =\Big( \frac{1}{\sqrt{1+4t^2}}  \Big) ^{d/2}  \sum_{ 0 \leq \beta \leq \alpha} \binom{\alpha}{\beta} \partial ^\beta_x \Big( u \big( \frac{\arctan(2t)}{2} , \frac{x}{\sqrt{1+4t^2} } \big) \Big)  \partial ^{\alpha-\beta}_x \Big( e^{ \frac{i|x|^2t}{1+4t^2}  }\Big).
\end{multline*}
Then, as 
\begin{equation*}
 \big|  \partial ^{\alpha-\beta}_x ( e^{ \frac{i|x|^2t}{1+4t^2}  } ) \big| \leq  C_{\alpha,\beta} \Big( 1+ |  \frac{x}{\sqrt{1+4t^2}  } |^{|\alpha-\beta|}\Big)  ,
\end{equation*}
we establish
\begin{multline*}
 |  \partial ^\alpha_x U(t,x) | \leq \\
    \sum_{ 0 \leq \beta \leq \alpha} C_{\alpha,\beta}  \Big( \frac{1}{\sqrt{1+4t^2}}  \Big) ^{d/2 + |\beta|}  
  | \partial ^\beta_x  u | \big( \frac{\arctan(2t)}{2}  , \frac{x}{\sqrt{1+4t^2} } \big)   \Big( 1+ |  \frac{x}{\sqrt{1+4t^2}  } |^{|\alpha-\beta|}\Big)  .
\end{multline*}

Therefore, 
\begin{multline*}
\| \partial ^\alpha_x  U \|_{ L^p( \R ; L^{q} ( \R^d )  )  }  \leq \\
\begin{aligned}
   \leq& \sum_{ 0 \leq \beta \leq \alpha} C_{\alpha,\beta} \Big\| \Big( \frac{1}{\sqrt{1+4t^2}}  \Big) ^{d/2+d/q + |\beta|}   \big\| u  \big( \frac{\arctan(2t) }{2} , x  \big) \big\|_{  \W^{|\alpha|,q}(\R^d) } \Big\|_{L^p(\R)} \\
 \leq  & \sum_{ 0 \leq \beta \leq \alpha} C_{\alpha,\beta}  \Big\| \Big( \frac{1}{\sqrt{1+ \tan^2 2t }}  \Big) ^{d/2+d/q-2/p + |\beta|}   u (t,x)  \Big\|_{ L^p( [-\frac{\pi}4,\frac{\pi}4 ] ;  \W^{|\alpha|,q}(\R^d) ) } .
\end{aligned}
\end{multline*}
 
To conclude, it is enough to notice that $ \frac{d}{2} + \frac{d}{q}- \frac{2}{p} + |\beta| \geq 0 $ for $ \beta \in \N^d $. 
\end{proof}


\subsection{Some properties of Bourgain spaces}
We now define the Bourgain spaces and  recall some of their different properties.

We define the space $ \X^{s,b} = \X^{s,b}( \R \times\R^d ) $ as the closure of $ C_0^\infty ( \R \times\R^d ) $ for the norm
\begin{eqnarray*}
\| u\|^2_{ \X^{s,b} } & =& \sum_{n \in \N } \| \langle  t + \lambda_n ^2 \rangle ^b \lambda_n^s \widehat{P_n u } (t) \|^2_{ L_t^2( \R ; L^2_x(\R^d) ) } \\
& =& \sum_{n \in \N } \| H^{s/2} e^{itH} P_n u(t,.) \|^2_{ L_x^2(\R^d ; H_t^b( \R )) },
\end{eqnarray*}
where $ \widehat{P_n u } (t) $ denotes the Fourier transform, with respect to the time variable, of \linebreak $ P_n u := \langle  u , h_n \rangle _{L^2(\R^d) \times L^2(\R^d)}   h_n $.

In \cite[Corollary 2.10]{tao},  we can find the following proposition: 

  \begin{prop} \label{1-bourgain1}
For any   real $ b > \frac{1}{2} $, there exists a constant $ C > 0 $ such that for any real $ s \in \R $ and for any admissible pair $ (q,r )$, if $ u \in \X^{s,b} $ then $ u \in L^q(\R ; \W^{s,r} (\R^d)) $ and
\begin{equation*}
\|u\|_{L^q(\R ; \W^{s,r} (\R^d))} \leq C \|u\|_{ \X^{s,b} }.
\end{equation*}
\end{prop}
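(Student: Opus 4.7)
The plan is to apply the standard transference principle, as in \cite[Corollary 2.10]{tao}. The starting point is a reformulation of the Bourgain norm. Setting $v(t,x) = e^{itH}u(t,x)$ and writing $P_n u(t,x) = a_n(t)h_n(x)$, the Fourier transform in time satisfies $\widehat{e^{itH}P_n u}(\tau,x) = \widehat{a_n}(\tau-\lambda_n^2)h_n(x)$; a change of variables in $\tau$ combined with Plancherel then gives
\[
\|u\|_{\mathcal{X}^{s,b}}^2 \;=\; \int_{\R}\langle\tau\rangle^{2b}\,\|\widetilde{v}(\tau,\cdot)\|_{\mathcal{H}^s}^2\,d\tau,
\]
where $\widetilde v(\tau,\cdot)$ denotes the partial Fourier transform of $v$ in $t$.

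Next, by Fourier inversion $v(t,x) = \int e^{it\tau}\widetilde v(\tau,x)\,d\tau$, so that
\[
u(t,x) \;=\; e^{-itH}v(t,x) \;=\; \int_{\R} e^{it\tau}\,e^{-itH}\widetilde v(\tau,x)\,d\tau.
\]
I would then apply Minkowski's inequality in $\tau$, observing that the pure phase $e^{it\tau}$ leaves the $L^q_t\mathcal{W}^{s,r}_x$ norm invariant, and invoke the Strichartz estimate of Proposition~\ref{1-Strichartz} to bound $\|e^{-itH}\widetilde v(\tau,\cdot)\|_{L^q_t\mathcal{W}^{s,r}_x} \leq C\|\widetilde v(\tau,\cdot)\|_{\mathcal{H}^s}$ for each $\tau$. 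Combining these gives
\[
\|u\|_{L^q_t\mathcal{W}^{s,r}_x}\;\leq\; C\int_{\R}\|\widetilde v(\tau,\cdot)\|_{\mathcal{H}^s}\,d\tau.
\]
The proof concludes with Cauchy--Schwarz weighted by $\langle\tau\rangle^{\pm b}$: since $b>1/2$, the integral $\int\langle\tau\rangle^{-2b}\,d\tau$ converges, and the remaining factor is exactly $\|u\|_{\mathcal{X}^{s,b}}$ thanks to the identity of the first step.

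The delicate point in the argument is the application of Strichartz to $e^{-itH}\widetilde v(\tau,\cdot)$ on the full line: Proposition~\ref{1-Strichartz} only provides such a bound on finite time intervals with a constant $C_T$, and the naive global-in-time version fails since the harmonic flow is essentially $\pi$-periodic ($e^{-i\pi H}=(-1)^d\,\mathrm{Id}$). Either one interprets the left-hand side on a bounded interval (which is what is effectively used in the subsequent sections), or one exploits the extra time-decay built into $\mathcal{X}^{s,b}$ via the weight $\langle\tau+\lambda_n^2\rangle^b$ combined with the $\pi$-periodicity of $e^{-itH}$ to patch the local-in-time Strichartz bounds together. The rest of the argument is purely formal transference.
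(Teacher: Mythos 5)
Your transference mechanism (conjugating by $e^{itH}$, Fourier inversion in time, Minkowski, the linear Strichartz bound, then Cauchy--Schwarz in $\tau$ using $b>\frac12$) is exactly the standard argument behind the result the paper invokes -- the paper offers no proof, only the citation to \cite[Corollary 2.10]{tao}. But the point you flag at the end is a genuine gap, not a cosmetic one, and your write-up leaves it open. Proposition~\ref{1-Strichartz} cannot be applied with time interval $\R$: since $t\mapsto \|e^{-itH}f\|_{L^r(\R^d)}$ is periodic, one has $\|e^{-itH}\widetilde v(\tau,\cdot)\|_{L^q(\R;\W^{s,r})}=\infty$ for every $q<\infty$ unless $\widetilde v(\tau,\cdot)=0$, so the Minkowski step collapses. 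Your first escape route (reading the left-hand side on a bounded interval) proves a strictly weaker statement than the one asserted: the paper really does use the global-in-time norms for Bourgain functions that are not compactly supported in time (e.g. $\|v\|_{L^2(\R;\W^{s,6})}\lesssim\|v\|_{\X^{s,b}}$ in Section~\ref{Sect6}, and the $L^q(\R;\cdot)$ norms in the bilinear estimates of Section~\ref{Sect3}). Your second route is the right idea, but the mechanism is time localization for $b>\frac12$, not ``extra time decay'', and it has to be carried out.

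Concretely, the missing piece is a gluing argument. Take a partition of unity $\sum_k\eta(t-k)=1$ with $\eta\in C_0^\infty$. (i) Your argument, run verbatim with the cutoff inserted, gives the \emph{local} transference $\|\eta(\cdot-k)u\|_{L^q(\R;\W^{s,r})}\le C\,\|\eta(\cdot-k)u\|_{\X^{s,b}}$ with $C$ independent of $k$, because Proposition~\ref{1-Strichartz} is only needed on a fixed bounded interval and both norms are invariant under time translation. (ii) Writing $w_n(t)=e^{it\lambda_n^2}P_nu(t)$, so that $\|u\|^2_{\X^{s,b}}=\sum_n\lambda_n^{2s}\|w_n\|^2_{H^b_t(\R;L^2_x)}$, one has the almost-orthogonality bound $\sum_k\|\eta(\cdot-k)w\|^2_{H^b(\R)}\le C\|w\|^2_{H^b(\R)}$ for $0\le b\le 1$ (immediate for $b=0$ and $b=1$ by finite overlap, then interpolate), hence $\sum_k\|\eta(\cdot-k)u\|^2_{\X^{s,b}}\le C\|u\|^2_{\X^{s,b}}$. (iii) Since $q\ge 2$ for admissible pairs, $\ell^2\subset\ell^q$, and summing the local estimates over $k$ yields the global bound; the case $b>1$ follows from $b\in(\frac12,1]$ by monotonicity of the $\X^{s,b}$ norms. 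With (i)--(iii) supplied your proof is complete -- and this gluing is precisely what bridges the cited free-Laplacian statement (where Strichartz is global) to the harmonic oscillator; without it the global-in-time claim is not established.
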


Thanks to \cite[Lemma 2.4]{burq1}, we can establish the following proposition:
 
  \begin{prop} \label{1-bourgain2}
For any $ \theta \in [0,1] $, if $ b > \frac{1-\theta}{2} $ then there exists a constant $ C > 0 $ such that for any $s\in \R$ and any function $ u \in \X^{s,b} $,
\begin{equation*}
\|u\|_{ L^{ \frac{2}{\theta} } (\R; \mathcal{H}^s (\R^d))} \leq C \|u\|_{\X^{s,b}}.
\end{equation*}
\end{prop}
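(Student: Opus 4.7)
The plan is to reduce to the two endpoints $\theta=0$ and $\theta=1$, and then interpolate.

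At $\theta=1$, the claim is $\|u\|_{L^2(\R;\mathcal{H}^s(\R^d))}\le C\|u\|_{\X^{s,0}}$. This would follow directly from the definition: by Plancherel in time and Parseval in the Hermite basis,
\begin{equation*}
\|u\|_{\X^{s,0}}^2=\sum_{n\in\N}\lambda_n^{2s}\|\widehat{P_n u}\|_{L^2_\tau L^2_x}^2=\sum_{n\in\N}\lambda_n^{2s}\|P_n u\|_{L^2_t L^2_x}^2=\|u\|_{L^2(\R;\mathcal{H}^s(\R^d))}^2,
\end{equation*}
using that $\{h_n\}$ is orthonormal, so this endpoint is in fact an equality.

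At $\theta=0$, the claim is $\|u\|_{L^\infty(\R;\mathcal{H}^s(\R^d))}\le C\|u\|_{\X^{s,b_0}}$ for any $b_0>1/2$. Writing $c_n(t)=\langle u(t,\cdot),h_n\rangle_{L^2(\R^d)}$ and setting $v_n(t)=e^{i\lambda_n^2 t}c_n(t)$, the second form of the $\X^{s,b_0}$-norm gives
\begin{equation*}
\|u\|_{\X^{s,b_0}}^2=\sum_{n\in\N}\lambda_n^{2s}\|v_n\|_{H^{b_0}(\R)}^2.
\end{equation*}
For $b_0>1/2$, the one-dimensional Sobolev embedding yields $|v_n(t)|=|c_n(t)|\le C\|v_n\|_{H^{b_0}(\R)}$ uniformly in $t$, so
\begin{equation*}
\|u(t)\|_{\mathcal{H}^s(\R^d)}^2=\sum_{n\in\N}\lambda_n^{2s}|c_n(t)|^2\le C^2\sum_{n\in\N}\lambda_n^{2s}\|v_n\|_{H^{b_0}(\R)}^2=C^2\|u\|_{\X^{s,b_0}}^2,
\end{equation*}
and taking the supremum over $t$ gives the second endpoint.

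It remains to interpolate. Given $\theta\in(0,1)$ and $b>(1-\theta)/2$, I would pick $b_0>1/2$ with $b=(1-\theta)b_0$, which is possible precisely under the hypothesis. The map $u\mapsto u$ is bounded $\X^{s,0}\to L^2(\R;\mathcal{H}^s)$ and $\X^{s,b_0}\to L^\infty(\R;\mathcal{H}^s)$. Real (or complex) interpolation between these two yields a bounded map $\X^{s,(1-\theta)b_0}=\X^{s,b}\to L^{2/\theta}(\R;\mathcal{H}^s)$, which is exactly the desired estimate. Here one uses that the Bourgain spaces $\X^{s,b}$, defined via the weights $\langle\tau+\lambda_n^2\rangle^b$, form a complex-interpolation scale in $b$ (at fixed $s$), since after the unitary change of variable $u\mapsto e^{itH}u$ they become weighted $L^2$ spaces in the time Fourier variable.

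There is no genuine obstacle beyond checking the interpolation identity for $\X^{s,b}$ spaces; the two endpoint inequalities are essentially free from the definition and the trivial Sobolev embedding $H^{b_0}(\R)\hookrightarrow L^\infty(\R)$.
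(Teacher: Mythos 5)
Your proof is correct and follows essentially the same route as the paper: the endpoint $\|u\|_{L^2(\R;\mathcal{H}^s)}=\|u\|_{\X^{s,0}}$, the endpoint $\|u\|_{L^\infty(\R;\mathcal{H}^s)}\leq C\|u\|_{\X^{s,b_0}}$ for $b_0>\frac12$ (the paper obtains this by the Cauchy--Schwarz argument which is exactly your Sobolev embedding $H^{b_0}(\R)\hookrightarrow L^\infty(\R)$ applied to $v_n$), and then interpolation in $b$. Your write-up is in fact slightly more explicit than the paper's, since you justify why the $\X^{s,b}$ scale interpolates (weighted $L^2$ spaces after the change $u\mapsto e^{itH}u$), a point the paper leaves implicit.
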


\begin{proof} It is enough to consider the case $s=0$. Using the inverse Fourier transform, we have 
\begin{equation*}
P_n u(t) = \frac{1}{2 \pi} \int_\R   e^{it \tau }  \widehat{P_n u } ( \tau ) d \tau= \frac{1}{2 \pi} \int_\R \frac{\langle  \tau + \lambda_n ^2\rangle ^b}{\langle \tau+ \lambda_n^2\rangle ^b}  e^{it \tau }  \widehat{P_n u } ( \tau ) d \tau. 
\end{equation*}
Then, for $ b > \frac{1}{2} $, we deduce from the  Cauchy-Schwarz inequality that
\begin{equation*}
|P_n u (t)| \leq C \Big( \int_\R \langle \tau+\lambda_n^2\rangle  ^{2b} | \widehat{P_n u }(\tau) |^2 d \tau \Big) ^{1/2}.
\end{equation*}
Thus, by squaring and summing for $ n \in \N $, we obtain that for any  $ b > \frac{1}{2} $ and function $ u \in \X^{0,b} $,
\begin{equation*}
\|u\|_{L^\infty(\R; L^2(\R^d))} \leq C \|u\|_{ \X^{0,b} }.
\end{equation*}
On the other hand,
\begin{equation*}
\|u\|_{ L^2(\R ;L^2(\R^d))} = \|u\|_{ \X^{0,0} }
\end{equation*}
thus the result follows by interpolation. 
\end{proof}

 \begin{prop} \label{1-bourgain3} 
 For any constant $ 0<  \delta <1 $, there exist two constants $ b' < \frac{1}{2} $ and $ C > 0 $ such that for any  $ s \in \R $ and any function $ u \in L^{1+\delta}(\R; \mathcal{H}^s (\R^d)) $,
\begin{equation*}
\|u\|_{\X^{s,-b'}} \leq C \|u\|_{L^{1+\delta}(\R; \mathcal{H}^s (\R^d))}.
\end{equation*}
\end{prop}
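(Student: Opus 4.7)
The proof is a standard duality argument combining the duality of Bourgain spaces with Proposition~\ref{1-bourgain2}. Here is the plan.

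First, I would identify the dual of $\X^{s,-b'}$. Writing $u = \sum_n c_n(t) h_n(x)$, $v = \sum_n d_n(t) h_n(x)$, the $L^2_{t,x}$ pairing becomes
\[
\langle u, v\rangle_{L^2_{t,x}} = \sum_n \int_\R \hat c_n(\tau) \overline{\hat d_n(\tau)}\, d\tau,
\]
and a direct Cauchy--Schwarz on the product measure $d\tau \otimes dn$ with the weight split $\langle\tau+\lambda_n^2\rangle^{-b'}\lambda_n^s \cdot \langle\tau+\lambda_n^2\rangle^{b'}\lambda_n^{-s}$ yields $|\langle u,v\rangle_{L^2_{t,x}}|\le \|u\|_{\X^{s,-b'}}\|v\|_{\X^{-s,b'}}$. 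Density of Schwartz functions then gives the reverse identification
\[
\|u\|_{\X^{s,-b'}} = \sup\bigl\{ |\langle u,v\rangle_{L^2_{t,x}}|\ :\ \|v\|_{\X^{-s,b'}} \le 1\bigr\}.
\]

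Next, for any such test function $v$, I would bound the pairing by Hölder in time using the dual pairing $\mathcal{H}^s \times \mathcal{H}^{-s}\to \C$:
\[
|\langle u,v\rangle_{L^2_{t,x}}| \le \int_\R \|u(t)\|_{\mathcal{H}^s(\R^d)}\|v(t)\|_{\mathcal{H}^{-s}(\R^d)}\, dt \le \|u\|_{L^{1+\delta}(\R;\mathcal{H}^s)}\,\|v\|_{L^{q}(\R;\mathcal{H}^{-s})},
\]
where $q = \frac{1+\delta}{\delta}$ is the Hölder conjugate of $1+\delta$. Since $0<\delta<1$ we have $q > 2$.

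Finally, I would apply Proposition~\ref{1-bourgain2} with $\theta = \frac{2\delta}{1+\delta}\in (0,1)$, so that $\frac{2}{\theta} = q$. The hypothesis becomes
\[
b' > \frac{1-\theta}{2} = \frac{1-\delta}{2(1+\delta)},
\]
which is strictly less than $1/2$ for $\delta>0$. Thus one can fix $b'\in \bigl(\frac{1-\delta}{2(1+\delta)}, \frac12\bigr)$, and Proposition~\ref{1-bourgain2} applied to $v$ (with regularity index $-s$) yields $\|v\|_{L^q(\R;\mathcal{H}^{-s})} \le C \|v\|_{\X^{-s,b'}}$. Combining the three displays and taking the supremum over $v$ with $\|v\|_{\X^{-s,b'}}\le 1$ gives the desired estimate. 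No step looks delicate: the only thing to check carefully is that the exponent $\theta$ computed from Hölder matches the range allowed in Proposition~\ref{1-bourgain2}, and that the resulting threshold for $b'$ stays strictly below $1/2$, which is exactly what the assumption $\delta>0$ buys.
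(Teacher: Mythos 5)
Your proof is correct and follows essentially the same route as the paper: dualizing Proposition~\ref{1-bourgain2} with the choice $\theta=\frac{2\delta}{1+\delta}$, which yields the conjugate exponent $1+\delta$ and a threshold $\frac{1-\theta}{2}=\frac{1-\delta}{2(1+\delta)}<\frac12$ for $b'$. The only difference is cosmetic: you spell out the Hilbert-space duality and the $\mathcal{H}^{s}\times\mathcal{H}^{-s}$ pairing explicitly and keep general $s$ throughout, whereas the paper states the dual estimate at $s=0$ and concludes directly.
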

\begin{proof} From Proposition~\ref{1-bourgain2}, we have by duality that for any $ \theta \in [0,1] $ and $ b > \frac{1-\theta}{2} $, there exists $ C> 0 $ such that for any function $ u \in L^{ \frac{2}{2-\theta} } ( \R ; L^2( \R^d)) $
\begin{equation*}
\|u\|_{ \X^{0,-b} } \leq C \|u\|_{ L^{ \frac{2}{2-\theta} } ( \R ; L^2( \R^d))}.
\end{equation*}
Then we choose $ \theta = \frac{2 \delta}{1+ \delta} $ and $ b= \frac{1-\theta+\delta}{2} < \frac{1}{2} $ to obtain the result. 
\end{proof}

In \cite[Lemma 2.11]{tao}, we can obtain the following proposition:

 \begin{prop} \label{1-bourgain4}
If $ \psi \in C^\infty_0 ( \R ) $ then for any $ b \geq 0 $, there exists a constant $ C > 0 $ such that for any $ s \in \R $ and any function $ u \in \X^{s,b} $,
\begin{equation*}
\| \psi(t) u \|_{\X^{s,b}} \leq C \|u\|_{\X^{s,b}}.
\end{equation*}
\end{prop}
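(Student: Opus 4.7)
The operator $P_n$ acts only in the spatial variable, so multiplication by $\psi(t)$ commutes with $P_n$: we have $P_n(\psi u) = \psi(t) P_n u$. Consequently the factor $\lambda_n^s$ in the definition of the $\X^{s,b}$-norm plays no role in the argument, and it suffices to establish the inequality for $s=0$; I will then obtain the general statement mode by mode.

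Taking the time Fourier transform of $\psi(t) P_n u(t,x)$ turns the product into a convolution in $\tau$:
\begin{equation*}
\widehat{\psi P_n u}(\tau) = (\widehat{\psi} * \widehat{P_n u})(\tau) = \int_\R \widehat{\psi}(\tau-\tau')\, \widehat{P_n u}(\tau')\, d\tau'.
\end{equation*}
Hence I need to bound
\begin{equation*}
\bigl\| \langle \tau+\lambda_n^2\rangle^b (\widehat{\psi} * \widehat{P_n u})(\tau) \bigr\|_{L^2_\tau(\R; L^2_x(\R^d))}
\end{equation*}
by a constant multiple of $\| \langle \tau+\lambda_n^2\rangle^b \widehat{P_n u}(\tau) \|_{L^2_\tau(\R; L^2_x(\R^d))}$, uniformly in $n$.

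The key step is Peetre's inequality: since $b\geq 0$,
\begin{equation*}
\langle \tau+\lambda_n^2\rangle^b \leq 2^{b/2}\, \langle \tau-\tau'\rangle^b\, \langle \tau'+\lambda_n^2\rangle^b.
\end{equation*}
Inserting this inside the convolution gives
\begin{equation*}
\langle \tau+\lambda_n^2\rangle^b \bigl|(\widehat{\psi} * \widehat{P_n u})(\tau)\bigr| \leq C \int_\R \bigl(\langle \tau-\tau'\rangle^b |\widehat{\psi}(\tau-\tau')|\bigr)\, \langle \tau'+\lambda_n^2\rangle^b |\widehat{P_n u}(\tau')|\, d\tau'.
\end{equation*}
Because $\psi\in C_0^\infty(\R)$, the function $\widehat{\psi}$ is Schwartz, so $\langle \cdot\rangle^b \widehat{\psi} \in L^1(\R)$ with an $L^1$-norm depending only on $\psi$ and $b$. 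Young's convolution inequality $L^1 * L^2 \hookrightarrow L^2$ then yields, for the $L^2_x$-valued function $\tau \mapsto \langle \tau+\lambda_n^2\rangle^b \widehat{P_n u}(\tau,\cdot)$,
\begin{equation*}
\bigl\| \langle \tau+\lambda_n^2\rangle^b (\widehat{\psi} * \widehat{P_n u})(\tau) \bigr\|_{L^2_\tau L^2_x} \leq C_{\psi,b}\, \bigl\| \langle \tau'+\lambda_n^2\rangle^b \widehat{P_n u}(\tau') \bigr\|_{L^2_{\tau'} L^2_x}.
\end{equation*}
Multiplying by $\lambda_n^{2s}$, squaring, summing in $n$, and recognising both sides as the squared $\X^{s,b}$-norms concludes the proof. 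There is no real obstacle here; the only thing to be slightly careful about is verifying that the constant in Young's inequality does not depend on $n$, which is automatic because the convolution kernel $\langle\cdot\rangle^b |\widehat{\psi}|$ is independent of the mode index.
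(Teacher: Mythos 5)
Your proof is correct, and it is essentially the standard argument: the paper does not prove this proposition itself but simply cites \cite[Lemma 2.11]{tao}, whose proof is the same mode-by-mode reduction (using that $P_n$ commutes with multiplication by $\psi(t)$) combined with the Peetre-type bound $\langle \tau+\lambda_n^2\rangle^b \lesssim \langle \tau-\tau'\rangle^{b}\langle \tau'+\lambda_n^2\rangle^b$ for $b\geq 0$ and Young's inequality with the $n$-independent kernel $\langle\cdot\rangle^b|\widehat{\psi}|\in L^1$. Nothing further is needed.
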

Finally, we give a last proposition whose proof can be found in \cite[Lemma 3.2]{G} (taking $b'=1-b$ and $T=1$).

 \begin{prop} \label{1-bourgain5}
Let $\psi \in C^\infty_0 ( \R ) $ then for any $ 1 \geq b > \frac{1}{2} $, there exists a constant $ C > 0 $ such that for any $ s \in \R $ and any function $ F \in \X^{s,b-1}$,
\begin{equation*}
 \Big\| \psi(t) \int_0^t e^{-i(t-s) H } F(s) ds \Big\|_{\X^{s,b}} \leq C \|F\|_{\X^{s,b-1}}.
\end{equation*}
\end{prop}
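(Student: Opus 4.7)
The plan is to reduce the operator inequality to a scalar one via spectral decomposition in the Hermite basis, then to a single eigenvalue-independent calculus lemma on $\R$ via a phase rotation. Expand $F(t,x)=\sum_{n\geq 0} F_n(t) h_n(x)$ with $F_n(t)=\langle F(t,\cdot),h_n\rangle_{L^2(\R^d)}$. Since $e^{-itH}$ is diagonal on the basis $\{h_n\}$ with eigenvalue $e^{-it\lambda_n^2}$, the Duhamel term equals
\begin{equation*}
\psi(t)\int_0^t e^{-i(t-s)H}F(s)\,ds \;=\; \sum_{n\geq 0} G_n(t)\, h_n(x), \qquad G_n(t):=\psi(t)\int_0^t e^{-i(t-s)\lambda_n^2} F_n(s)\,ds.
\end{equation*}
By the very definition of $\X^{s,b}$, Parseval in $n$ reduces the claim to the single-eigenvalue statement
\begin{equation*}
\|\langle \tau+\lambda_n^2\rangle^{b}\,\widehat{G_n}(\tau)\|_{L^2_\tau}\;\leq\; C\,\|\langle \tau+\lambda_n^2\rangle^{b-1}\,\widehat{F_n}(\tau)\|_{L^2_\tau},
\end{equation*}
with $C$ independent of $n$.

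Next I would remove the dependence on $\lambda_n^2$ by a phase shift. Setting $\tilde F_n(s):=e^{is\lambda_n^2}F_n(s)$ we have
\begin{equation*}
G_n(t)\;=\;e^{-it\lambda_n^2}\,\psi(t)\int_0^t \tilde F_n(s)\,ds,
\end{equation*}
so the multiplication by $e^{-it\lambda_n^2}$ translates the frequency variable by $-\lambda_n^2$; after the change of variables $\tau'=\tau+\lambda_n^2$ on the Fourier side, the weighted norms become untwisted and the inequality is equivalent to the scalar estimate
\begin{equation*}
\Big\|\psi(t)\int_0^t g(s)\,ds\Big\|_{H^b(\R)}\;\leq\; C\,\|g\|_{H^{b-1}(\R)},\qquad g=\tilde F_n,
\end{equation*}
with $C$ depending only on $\psi$ and $b$.

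Finally I would establish this scalar lemma, which is the classical Bourgain-space calculus estimate for the Duhamel operator (see \cite{G}). One natural route is to replace $g$ by $\psi_1 g$ for a bump $\psi_1\equiv 1$ on an enlargement of $\mathrm{supp}(\psi)$, which is harmless because multiplication by $\psi_1$ is bounded on $H^{b-1}$ for $b-1\in(-1/2,0]$; then use the Fourier identity
\begin{equation*}
\int_0^t (\psi_1 g)(s)\,ds \;=\; c\int_\R \widehat{\psi_1 g}(\tau)\,\frac{e^{it\tau}-1}{i\tau}\,d\tau
\end{equation*}
and split the integral into $|\tau|\leq 1$ and $|\tau|>1$. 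On $|\tau|\leq 1$ the multiplier is smooth and bounded in $t$ on $\mathrm{supp}(\psi)$, and Cauchy--Schwarz gives a bound by $\|g\|_{H^{b-1}}$. On $|\tau|>1$ split $\frac{e^{it\tau}-1}{i\tau}$ into $\frac{e^{it\tau}}{i\tau}$ and $-\frac{1}{i\tau}$: the first contribution, after multiplication by $\psi(t)$, has an $H^b$-norm controlled by convolving with $\widehat \psi$ and by $\|\langle \tau\rangle^{b-1}\widehat{g}\|_{L^2}$, while the second is a constant in $t$ whose $H^b$-norm (after multiplication by $\psi$) is bounded by $\|\widehat g/\tau\|_{L^1(|\tau|>1)}\lesssim \|g\|_{H^{b-1}}$ since $b-1>-1/2$.

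The only real obstacle is the uniformity in $n$ in the first step; this is exactly what the phase-shift in the second step buys us, since after untwisting the weight $\langle\tau+\lambda_n^2\rangle$ reduces to the unweighted $\langle\tau'\rangle$ and $\lambda_n$ disappears from the problem. The restriction $b>\tfrac12$ enters only in the third step, to ensure integrability of $1/\tau$ against $\widehat g$ on $|\tau|>1$ via Cauchy--Schwarz, while the upper bound $b\leq 1$ makes the characterization $\|f\|_{H^b}^2\lesssim \|f\|_{L^2}^2+\|f'\|_{H^{b-1}}^2$ (or the above splitting) usable.
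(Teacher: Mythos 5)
Your proof is correct and is essentially the same argument the paper relies on: the paper simply cites \cite[Lemma 3.2]{G}, and your route --- diagonalizing in the Hermite basis, untwisting the phase $e^{-it\lambda_n^2}$ so that the weighted norms become plain $H^b_t$ and $H^{b-1}_t$ norms uniformly in $n$, and then proving the scalar estimate $\|\psi(t)\int_0^t g\|_{H^b(\R)}\leq C\|g\|_{H^{b-1}(\R)}$ by splitting $(e^{it\tau}-1)/(i\tau)$ into $|\tau|\leq 1$ and $|\tau|>1$, with $b>\frac12$ entering only through the integrability of $\langle\tau\rangle^{1-b}/|\tau|$ on $|\tau|>1$ --- is precisely the classical proof behind that citation. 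The one point to write out more carefully is the low-frequency piece, where you need the $H^b$-norm (not just the sup in $t$) of $\psi(t)\int_{|\tau|\leq1}\widehat g(\tau)\frac{e^{it\tau}-1}{i\tau}\,d\tau$; this is immediate from the Taylor expansion $\frac{e^{it\tau}-1}{i\tau}=\sum_{k\geq1}\frac{(i\tau)^{k-1}t^k}{k!}$, since each $t^k\psi(t)$ lies in $H^1\subset H^b$ with factorially small norms and $\|\widehat g\|_{L^1(|\tau|\leq1)}\lesssim\|g\|_{H^{b-1}}$ by Cauchy--Schwarz.
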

From now on, we set $ T = \frac{\pi}{4} $, then we define the new Bourgain space which will interest us: 
we define the space $ \X^{s,b}_T = \X^{s,b}([ - T ; T ] \times\R^d ) $ as the subset of $ \X^{s,b} $ for which the following norm
\begin{equation*} 
\| u \| _{  \X^{s,b}_T  }  = \inf_{  w \in \X^{s,b} }  \left\{  \| w \| _{\X^{s,b} }  \quad  \text{with} \quad   w |_{[-T,T]} = u \right\}
\end{equation*}
is finite. Proposition~\ref{1-bourgain2} allows us to obtain the following result:

 \begin{prop} \label{1-bourgain7}
Let $ b > \frac{1}{2} $ and $ s \in \R $, then $ \X^{s,b}_T \hookrightarrow C^0( [-T,T]; \mathcal{H}^s(\R^d) ) $.
\end{prop}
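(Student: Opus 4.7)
The plan is to reduce the statement to the case $s=0$ and then transfer the problem, via the linear harmonic flow, from the Bourgain norm to a mixed norm in which the time-continuity falls out of the Sobolev embedding $H^b_t \hookrightarrow C^0_t$.

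\textbf{Step 1 (reduction and extension).} Given $u \in \X^{s,b}_T$, pick, using the definition of the restriction norm, an extension $w \in \X^{s,b}$ of $u$ such that $\|w\|_{\X^{s,b}} \leq 2\|u\|_{\X^{s,b}_T}$. Since $H^{s/2}$ is an isometry from $\X^{s,b}$ to $\X^{0,b}$ and from $\mathcal{H}^s$ to $L^2$, it suffices to show that if $w \in \X^{0,b}$ then $w \in C^0(\R; L^2(\R^d))$ with $\|w\|_{C^0_t L^2_x} \leq C\|w\|_{\X^{0,b}}$. Restricting to $[-T,T]$ and passing to the infimum over extensions will then give the desired continuous embedding.

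\textbf{Step 2 (twisting by the linear flow).} Set $v(t,x) = e^{itH}w(t,x)$. Since $P_n v(t) = e^{it\lambda_n^2} P_n w(t)$, the Fourier transform in time satisfies $\widehat{P_n v}(\tau) = \widehat{P_n w}(\tau - \lambda_n^2)$, and a direct change of variables gives the isometric identity
\begin{equation*}
\|v\|_{L^2_x H^b_t}^2 = \sum_{n \in \N} \|\langle \tau \rangle^b \widehat{P_n v}(\tau)\|^2_{L^2_\tau L^2_x} = \sum_{n \in \N} \|\langle \tau + \lambda_n^2\rangle^b \widehat{P_n w}(\tau)\|^2_{L^2_\tau L^2_x} = \|w\|_{\X^{0,b}}^2.
\end{equation*}

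\textbf{Step 3 (continuity of $v$).} Since $b > \tfrac{1}{2}$, Sobolev embedding in one variable yields $H^b(\R) \hookrightarrow C^0_b(\R)$ with $\sup_{t} |f(t)| \leq C\|f\|_{H^b}$. Applied pointwise in $x$,
\begin{equation*}
\Big\| \sup_{t \in \R} |v(t,\cdot)| \Big\|_{L^2_x}^2 \leq C^2 \int_{\R^d} \|v(\cdot,x)\|_{H^b_t}^2 \, dx = C^2 \|w\|_{\X^{0,b}}^2 < \infty.
\end{equation*}
Hence, for a.e.\ $x$, $t \mapsto v(t,x)$ is continuous, and $|v(t+h,x) - v(t,x)|^2 \leq 4\sup_{t}|v(t,x)|^2$, which is integrable in $x$. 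Dominated convergence then gives $v \in C^0(\R; L^2(\R^d))$ with $\|v\|_{C^0_t L^2_x} \leq C\|w\|_{\X^{0,b}}$.

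\textbf{Step 4 (undoing the twist and conclusion).} Since $\{e^{-itH}\}_{t \in \R}$ is a strongly continuous unitary group on $L^2(\R^d)$, the triangle inequality
\begin{equation*}
\|e^{-itH}v(t) - e^{-it_0 H}v(t_0)\|_{L^2} \leq \|v(t)-v(t_0)\|_{L^2} + \|(e^{-itH}-e^{-it_0 H})v(t_0)\|_{L^2}
\end{equation*}
shows that $w(t) = e^{-itH} v(t)$ lies in $C^0(\R; L^2(\R^d))$. Restricting to $[-T,T]$, applying $H^{-s/2}$ to revert the reduction of Step 1, and then taking the infimum over all extensions $w$ of $u$ yields
\begin{equation*}
\|u\|_{C^0([-T,T]; \mathcal{H}^s(\R^d))} \leq C \|u\|_{\X^{s,b}_T}.
\end{equation*}
No step is genuinely delicate; the only point worth checking carefully is the isometric identity in Step~2, which is where the specific weight $\langle t + \lambda_n^2\rangle^b$ in the definition of $\X^{s,b}$ is used to convert the Bourgain norm into a flat Sobolev-in-time norm after conjugation by $e^{itH}$.
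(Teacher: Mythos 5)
Your proof is correct and takes essentially the same route as the paper: the paper deduces the embedding from Proposition~\ref{1-bourgain2} (the $L^\infty_t\mathcal{H}^s$ bound for $b>\frac12$), whose proof is exactly your Steps 2--3 in disguise, since Cauchy--Schwarz in $\tau$ against the weight $\langle\tau+\lambda_n^2\rangle^b$ is the same mechanism as conjugating by $e^{itH}$ and using $H^b(\R)\hookrightarrow L^\infty(\R)$. Your Steps 3--4 simply make explicit the continuity argument (continuous-in-$t$ representative plus strong continuity of the unitary group $e^{-itH}$) that the paper leaves implicit, so there is nothing further to fix.
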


And finally, thanks to Proposition~\ref{tvb1} and Proposition~\ref{1-bourgain1}, we obtain the following result:

  \begin{prop} \label{tvb} Let $ b > \frac{1}{2}$, $ s \geq 0 $ and $ u \in \X^{s,b}_{\pi/4} $ then $ U \in X^{s} $ and there exists a constant $ c> 0 $ such that for any $ u \in \X^{s,b}_{\pi/4} $,
\begin{equation*}
\|U\|_{X^{s}}  \leq c \|u\|_{\X^{s,b}_{\pi/4} }.
\end{equation*}
\end{prop}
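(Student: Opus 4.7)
The statement is essentially a composition of the two results that immediately precede it, so the plan is to chain Proposition~\ref{1-bourgain1} (the Bourgain-space embedding into Strichartz spaces for the harmonic flow) with Proposition~\ref{tvb1} (the passage from $\mathcal{X}^s$-norms of $u$ to $X^s$-norms of $U$ via the lens transform).

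First, unwind the definition of the restricted Bourgain norm. Given $u \in \X^{s,b}_{\pi/4}$ and any $\varepsilon > 0$, I would pick an extension $w \in \X^{s,b}$ of $u$ (so $w|_{[-\pi/4,\pi/4]} = u$) such that
\[
\|w\|_{\X^{s,b}} \leq \|u\|_{\X^{s,b}_{\pi/4}} + \varepsilon.
\]
Since $b > 1/2$, Proposition~\ref{1-bourgain1} applies to $w$ and gives, for every admissible pair $(q,r)$,
\[
\|w\|_{L^q(\R;\W^{s,r}(\R^d))} \leq C \|w\|_{\X^{s,b}}.
\]
Restricting to the time interval $[-\pi/4,\pi/4]$ and using $w|_{[-\pi/4,\pi/4]} = u$ yields
\[
\|u\|_{L^q([-\pi/4,\pi/4];\W^{s,r}(\R^d))} \leq C \|w\|_{\X^{s,b}} \leq C\bigl(\|u\|_{\X^{s,b}_{\pi/4}} + \varepsilon\bigr),
\]
with $C$ independent of the admissible pair (taking the supremum or, rather, invoking the inequality for each of the finitely many pairs entering the definition of $\X^s$). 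Taking the infimum over extensions (letting $\varepsilon \to 0$) gives $u \in \X^s$ with
\[
\|u\|_{\X^s} \leq C \|u\|_{\X^{s,b}_{\pi/4}}.
\]

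Second, apply Proposition~\ref{tvb1}: since $s\geq 0$ and $u \in \X^s$, the lens-transformed function $U$ lies in $X^s$ and satisfies $\|U\|_{X^s} \leq C \|u\|_{\X^s}$. Combining the two inequalities yields the desired bound
\[
\|U\|_{X^s} \leq c\,\|u\|_{\X^{s,b}_{\pi/4}}.
\]
There is no substantial obstacle here: all the analytic work (the Bourgain embedding on one side, and the change-of-variables computation that converts integrability on $[-\pi/4,\pi/4]$ in the harmonic Sobolev scale into integrability on $\R$ in the ordinary Sobolev scale on the other) has already been carried out in the preceding propositions, and the only care needed is the routine book-keeping involved in passing from the restricted norm $\X^{s,b}_{\pi/4}$ to the ambient norm $\X^{s,b}$ via an almost-minimising extension.
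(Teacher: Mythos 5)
Your proposal is correct and follows exactly the route the paper intends: the paper states Proposition~\ref{tvb} as an immediate consequence of Proposition~\ref{1-bourgain1} and Proposition~\ref{tvb1}, and you simply fill in the routine step of passing from the restricted norm $\X^{s,b}_{\pi/4}$ to an almost-minimising extension in $\X^{s,b}$ before restricting back to $[-\pi/4,\pi/4]$. The only small imprecision is your parenthetical about ``finitely many pairs'': the intersection defining $\X^{s}$ runs over all admissible pairs, but this is harmless since the constant in Proposition~\ref{1-bourgain1} is uniform in $(q,r)$.
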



\subsection{Basic properties of the eigenfunctions of the harmonic oscillator in dimension $d=3$}
In this fourth, we give some estimates of the tensor eigenfunctions of the harmonic oscillator.

  \begin{prop} \label{1-dispersive}
For any $ \delta > 0 $, there exists a constant $ C_\delta > 0 $ such that for all $ n,m,k \in \N$,
\begin{eqnarray}
  \| h_n \|_{L^4(\R^3)} & \leq & C \lambda_n^{-1/4} (\log \lambda_n ) ^3, \label{1-propre1} \\
   \| h_n \|_{L^\infty(\R^3)}  &\leq &C \lambda_n^{-1/6} , \label{1-propre2}
\end{eqnarray}
\end{prop}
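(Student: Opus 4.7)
The proof reduces to one-dimensional estimates thanks to the tensor product structure~\eqref{1-tensor}: since $h_n(x) = e_{n_1}(x_1) e_{n_2}(x_2) e_{n_3}(x_3)$, Fubini yields for any $p \in [1,\infty]$
\begin{equation*}
\| h_n \|_{L^p(\R^3)} \,=\, \prod_{k=1}^{3} \| e_{n_k} \|_{L^p(\R)}.
\end{equation*}
Therefore it is enough to prove the sharp one-dimensional bounds
\begin{equation*}
\| e_m \|_{L^\infty(\R)} \,\lesssim\, \mu_m^{-1/6}, \qquad \| e_m \|_{L^4(\R)} \,\lesssim\, \mu_m^{-1/4} \log(1+\mu_m),
\end{equation*}
and then to compare the $\mu_{n_k}$ with $\lambda_n$. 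The $L^\infty$ estimate is the classical Szeg\H{o} bound, which I would obtain from the Plancherel--Rotach / Airy asymptotics of the Hermite functions: in the bulk $|x|\leq \mu_m(1-c\mu_m^{-2/3})$ the WKB approximation gives $|e_m(x)| \lesssim \mu_m^{-1/2}(1-x^2/\mu_m^2)^{-1/4}$, while the peak of $|e_m|$ sits in a window of width $\sim \mu_m^{-1/3}$ around the turning points $\pm\mu_m$ and is controlled there by the Airy function, producing the $\mu_m^{-1/6}$ maximum. These are standard and can alternatively be quoted from Koch--Tataru or Thangavelu.

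For the $L^4$ estimate I would integrate the WKB bound: in the classically allowed region, writing $x=\mu_m u$,
\begin{equation*}
\int_{|x|\leq \mu_m(1-c\mu_m^{-2/3})} |e_m|^4 \,dx \,\lesssim\, \mu_m^{-1} \int_0^{1-c\mu_m^{-2/3}} \frac{du}{1-u^2} \,\lesssim\, \mu_m^{-1} \log \mu_m,
\end{equation*}
while the contribution of the two turning-point regions (each of length $\mu_m^{-1/3}$ on which $|e_m| \lesssim \mu_m^{-1/6}$) adds $\mu_m^{-1/3}\cdot \mu_m^{-2/3} = \mu_m^{-1}$; and the exponentially decaying tails $|x|\geq \mu_m(1+c\mu_m^{-2/3})$ contribute negligibly. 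Taking fourth roots yields $\|e_m\|_{L^4(\R)} \lesssim \mu_m^{-1/4}(\log\mu_m)^{1/4}$, hence in particular the looser bound with a single log.

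It remains to pass from the $\mu_{n_k}$'s to $\lambda_n$. Since $\sum_{k=1}^3 \mu_{n_k}^2 = 2(n_1+n_2+n_3)+3 = \lambda_n^2$, at least one index $j$ satisfies $\mu_{n_j} \geq \lambda_n/\sqrt{3}$; using the trivial lower bound $\mu_{n_k} \geq 1$ for the other two factors one gets, for any $\alpha>0$,
\begin{equation*}
\prod_{k=1}^3 \mu_{n_k}^{-\alpha} \,\leq\, \mu_{n_j}^{-\alpha} \,\lesssim\, \lambda_n^{-\alpha}, \qquad \prod_{k=1}^3 \log(1+\mu_{n_k}) \,\lesssim\, (\log \lambda_n)^3,
\end{equation*}
which applied to $\alpha=1/6$ and $\alpha=1/4$ yields \eqref{1-propre2} and \eqref{1-propre1}, respectively. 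The only non-routine step is the analysis of the Hermite functions near the turning points; I would treat this by direct WKB/Airy asymptotics or simply quote it from the literature.
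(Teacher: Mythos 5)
Your proposal is correct and follows essentially the same route as the paper: factor $h_n$ through the tensor structure \eqref{1-tensor}, invoke the one-dimensional bounds $\|e_m\|_{L^4(\R)}\lesssim \mu_m^{-1/4}\log\mu_m$ and $\|e_m\|_{L^\infty(\R)}\lesssim \mu_m^{-1/6}$ (the paper simply quotes Koch--Tataru \cite{koch} instead of redoing the Plancherel--Rotach/Airy analysis), and conclude using that some $\mu_{n_j}\geq \lambda_n/\sqrt{3}$ while all $\mu_{n_k}\leq \lambda_n$. One cosmetic remark on your optional WKB sketch: the scales should be matched as $\mu_m-|x|\gtrsim \mu_m^{-1/3}$, i.e. $|x|\leq \mu_m\bigl(1-c\mu_m^{-4/3}\bigr)$, against turning-point windows of width $\sim\mu_m^{-1/3}$, rather than the stated cut $|x|\leq \mu_m\bigl(1-c\mu_m^{-2/3}\bigr)$ (which leaves an annulus of width $\sim\mu_m^{1/3}$ uncovered); this does not affect the outcome, since the WKB bound remains valid up to distance $\sim\mu_m^{-1/3}$ from $\pm\mu_m$ and the integral still produces $\mu_m^{-1}\log\mu_m$.
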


\begin{proof}Denote by $\mu_n=\sqrt{2n+1}$, so that $(-\partial^2_x+x^2) e_n= \mu^2_n e_n$. In dimension 1, the following   estimates are known by the work \cite{koch}, namely
$$\| e_n \|_{L^4(\R)}  \leq C \mu_n^{-1/4} \log \mu_n  , \quad \| e_n \|_{L^\infty(\R)}  \leq C \mu_n^{-1/6}. $$
Let us prove \eqref{1-propre1}.  In dimension 3, as $ \lambda_n ^2 = \mu_{n_1}^2 + \mu_{n_2}^2 + \mu_{n_3}^2 $ then there exists $ j \in \big\{ 1,2,3 \big\} $ such that $ \mu_{n_j } ^2 \geq \frac{\lambda_n^2 }{3} $. Thus, we obtain 
\begin{eqnarray*}
\| h_n \|_{L^4(\R^3)}  & =& \| e_{n_1} \|_{L^4(\R)} \| e_{n_2} \|_{L^4(\R)} \| e_{n_3} \|_{L^4(\R)} \\
 & \leq & C \mu_{n_1}^{-1/4} \mu_{n_2}^{-1/4}  \mu_{n_3}^{-1/4} (\log \mu_{n_1}) (\log \mu_{n_2})(\log \mu_{n_3}) \\
 & \leq & C  \lambda_n^{-1/4} ( \log \lambda_n )^3,
\end{eqnarray*}
because $ \mu_{n_1} \leq \lambda_n$, $\mu_{n_2} \leq \lambda_n $ and $ \mu_{n_3} \leq \lambda_n $. The  estimate~\eqref{1-propre2} is obtained in a similar manner. 
 \end{proof}

 \begin{lem} \label{1-inegalite sobolev}
 For any $ s \in \R $, there exists a constant $ C> 0 $ such that for all $ f,g,h \in \mathcal{H}^s(\R)$,
\begin{multline*}
\| f(x_1) g(x_2) h(x_3)\|_{\mathcal{H}^s(\R^3)} \leq \\ 
\leq C   (  \| f\|_{\mathcal{H}^s (\R)}\| g\|_{L^2(\R)}  \| h\|_{L^2(\R)} + \| f \|_{L^2(\R)} \| g\|_{\mathcal{H}^s (\R)}  \| h\|_{L^2(\R)}  +\\
\hspace{5cm}+ \| f \|_{L^2(\R)} \| g\|_{L^2(\R)} \| h\|_{\mathcal{H}^s (\R)}  ).
\end{multline*}
\end{lem}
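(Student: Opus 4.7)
The plan is to reduce everything to a Hilbertian computation in the Hermite basis. Expand the one–dimensional factors as $f=\sum_{n\geq 0} a_n e_n$, $g=\sum_{n\geq 0}b_n e_n$, $h=\sum_{n\geq 0}c_n e_n$, so that by the tensor structure \eqref{1-tensor} the triple product $F(x)=f(x_1)g(x_2)h(x_3)$ can be written
\begin{equation*}
F=\sum_{n_1,n_2,n_3}a_{n_1}b_{n_2}c_{n_3}\, h_{(n_1,n_2,n_3)},
\end{equation*}
where each $h_{(n_1,n_2,n_3)}$ is an eigenfunction of $H$ on $\R^3$ with eigenvalue $\mu_{n_1}^2+\mu_{n_2}^2+\mu_{n_3}^2$. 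Since the Hermite functions form an orthonormal basis of $L^2(\R^3)$, the definition of the harmonic Sobolev norm yields
\begin{equation*}
\|F\|^2_{\mathcal{H}^s(\R^3)}=\sum_{n_1,n_2,n_3}\bigl(\mu_{n_1}^2+\mu_{n_2}^2+\mu_{n_3}^2\bigr)^{s}|a_{n_1}|^2|b_{n_2}|^2|c_{n_3}|^2.
\end{equation*}

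The heart of the matter is then the elementary scalar inequality
\begin{equation*}
(\alpha_1+\alpha_2+\alpha_3)^{s}\leq C_s\bigl(\alpha_1^{s}+\alpha_2^{s}+\alpha_3^{s}\bigr),\qquad \alpha_j>0,\ s\in\R,
\end{equation*}
which I would prove by splitting on the sign of $s$: when $s\geq 0$, bound $(\alpha_1+\alpha_2+\alpha_3)^{s}\leq 3^{s}\max_j\alpha_j^{s}$ and absorb the maximum into the sum; when $s<0$, use $\alpha_1+\alpha_2+\alpha_3\geq \max_j\alpha_j$ to get $(\alpha_1+\alpha_2+\alpha_3)^{s}\leq \max_j\alpha_j^{s}\leq \alpha_1^{s}+\alpha_2^{s}+\alpha_3^{s}$.

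Applying this with $\alpha_j=\mu_{n_j}^2$ and inserting it in the triple sum above produces exactly three factorized terms:
\begin{equation*}
\|F\|^2_{\mathcal{H}^s(\R^3)}\leq C\Bigl(\|f\|^2_{\mathcal{H}^s(\R)}\|g\|^2_{L^2(\R)}\|h\|^2_{L^2(\R)}+\|f\|^2_{L^2(\R)}\|g\|^2_{\mathcal{H}^s(\R)}\|h\|^2_{L^2(\R)}+\|f\|^2_{L^2(\R)}\|g\|^2_{L^2(\R)}\|h\|^2_{\mathcal{H}^s(\R)}\Bigr).
\end{equation*}
Taking the square root and invoking $\sqrt{A+B+C}\leq \sqrt{A}+\sqrt{B}+\sqrt{C}$ gives the announced bound.

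No serious obstacle is expected; the only point requiring a moment of care is the scalar inequality for $s<0$, but once one observes that every $\mu_{n_j}^2\geq 1$ this becomes routine.
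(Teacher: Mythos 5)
Your proposal is correct and follows essentially the same route as the paper: expand $f,g,h$ in the one-dimensional Hermite basis, use that the tensor products $e_{n}(x_1)e_{m}(x_2)e_{k}(x_3)$ are eigenfunctions of $H$ with eigenvalue $\mu_n^2+\mu_m^2+\mu_k^2$, and bound $(\mu_n^2+\mu_m^2+\mu_k^2)^s$ by $C(\mu_n^{2s}+\mu_m^{2s}+\mu_k^{2s})$ before summing. The only difference is that you spell out the elementary scalar inequality (including the case $s<0$), which the paper leaves implicit.
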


\begin{proof}
We write 
\begin{equation*}
f(x_1)= \sum_{n=0}^{+\infty} a_n e_n(x_1), \quad g(x_2)= \sum_{m=0}^{+\infty} b_m e_m(x_2), \quad h(x_3)= \sum_{k=0}^{+\infty} c_k e_k(x_3).
\end{equation*}
Then, since the family $ (e_n(x_1)e_m(x_2)e_k(x_3))_{(n,m,k)\in \N^3} $ forms a Hilbertian basis of $ L ^2( \R ^3 ) $ composed of eigenfunctions for $ H $ associated to the eigenvalue $ \mu_n^2 + \mu_m^2+ \mu_k^2 $, we obtain
\begin{multline*}
\| f(x_1) g(x_2) h(x_3)\|^2_{\mathcal{H}^s(\R^3)} = \sum_{n,m,k} |a_n|^2|b_m|^2|c_k|^2 ( \mu_n^2 + \mu_m^2+\mu_k^2 )^s  \leq \\
\begin{aligned}
& \leq  C \sum_{n,m,k \geq 0} |a_n|^2|b_m|^2|c_k|^2 \mu_n^{2s} +  C \sum_{n,m,k\geq 0} |a_n|^2|b_m|^2|c_k|^2 \mu_m^{2s} + \\
& \hspace{7cm}+ C \sum_{n,m,k\geq 0} |a_n|^2|b_m|^2|c_k|^2 \mu_k^{2s}   \\
& \leq C   ( \| f\|^2_{\mathcal{H}^s (\R)}\| g\|^2_{L^2(\R)}  \| h\|^2_{L^2(\R)} + \| f \|^2_{L^2(\R)} \| g\|^2_{\mathcal{H}^s (\R)}  \| h\|^2_{L^2(\R)} +\\
&\hspace{7cm}+C  \| f \|^2_{L^2(\R)} \| g\|^2_{L^2(\R)} \| h\|^2_{\mathcal{H}^s (\R)}  ),
\end{aligned}
\end{multline*}
which was the claim.
 \end{proof}
 
\begin{prop} \label{1-proper6} For all $ \delta> 0 $ and all $ s \in [0, 1] $, there exists a constant $ C> 0 $ such that for all $ n, m , k \in \N$,
\begin {eqnarray}
  \| h_n h_m \| _ {\mathcal{H} ^ s (\R ^ 3)} &\leq &C \max (\lambda_n, \lambda_m)^ {s-1/2 + \delta}, \label{1-propre3} \\
   \| h_n h_m h_k \| _ {\mathcal{H} ^ s (\R ^ 3)} & \leq & C \max (\lambda_n, \lambda_m, \lambda_k) ^ {s-1/2 + \delta}. \label{1-propre4}
\end {eqnarray}
\end{prop}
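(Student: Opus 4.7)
My plan is to reduce the three-dimensional bound to one-dimensional bilinear estimates through the tensor structure $h_n(x)=\prod_{j=1}^{3}e_{n_j}(x_j)$. Setting $f_j(x_j)=e_{n_j}(x_j)e_{m_j}(x_j)$, the product $h_nh_m$ factorises as $f_1(x_1)f_2(x_2)f_3(x_3)$, so Lemma~\ref{1-inegalite sobolev} gives
$$\|h_nh_m\|_{\mathcal{H}^s(\R^3)}\leq C\sum_{j=1}^{3}\|f_j\|_{\mathcal{H}^s(\R)}\prod_{i\neq j}\|f_i\|_{L^2(\R)}.$$
The whole proposition therefore reduces to two one-dimensional bilinear bounds: for $a,b\in\N$ and $s\in[0,1]$,
$$\|e_ae_b\|_{L^2(\R)}\leq C\max(\mu_a,\mu_b)^{-1/2+\delta}\qquad\text{and}\qquad \|e_ae_b\|_{\mathcal{H}^s(\R)}\leq C\max(\mu_a,\mu_b)^{s-1/2+\delta}.$$

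The first inequality is a genuine bilinear estimate and cannot be derived from a naive Cauchy--Schwarz with Proposition~\ref{1-dispersive}, since $\|e_a\|_{L^4}\|e_b\|_{L^4}$ only yields $(\mu_a\mu_b)^{-1/4}\log^2$, which is too weak in the highly unbalanced regime. I would derive it by invoking the one-dimensional harmonic bilinear estimate of \cite{BTT} (the $d=1$ analogue of Theorem~\ref{1-bilis}), or equivalently by a direct semiclassical argument exploiting that $e_ae_b$ concentrates on $|x|\lesssim\min(\mu_a,\mu_b)$ where the WKB density $|e_a(x)|^2\sim(\pi\sqrt{\mu_a^2-x^2})^{-1}$ is uniformly of size $C/\mu_a$. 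The second inequality follows by complex interpolation from the endpoint $s=1$: using the Hermite recurrences $e_a'=\sqrt{a/2}\,e_{a-1}-\sqrt{(a+1)/2}\,e_{a+1}$ and $xe_a=\sqrt{a/2}\,e_{a-1}+\sqrt{(a+1)/2}\,e_{a+1}$, one rewrites $(e_ae_b)'$ and $xe_ae_b$ as linear combinations of shifted bilinear products $e_{a\pm 1}e_b$ and $e_ae_{b\pm 1}$, each controlled by the $L^2$ bilinear bound, yielding $\|e_ae_b\|_{\mathcal{H}^1(\R)}\leq C\max(\mu_a,\mu_b)^{1/2+\delta}$.

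To assemble the three-dimensional bound I would use the geometric lower bound $\prod_{j=1}^{3}\max(\mu_{n_j},\mu_{m_j})\geq c\max(\lambda_n,\lambda_m)$: since $\sum_j\mu_{n_j}^2=\lambda_n^2$ some index $j_0$ satisfies $\mu_{n_{j_0}}\geq\lambda_n/\sqrt3$, while every Hermite frequency is $\geq 1$. Selecting $j=j_0$ in the Leibniz sum above, the $\mathcal{H}^s$ factor together with the big $L^2$ factor combine to produce $\max(\lambda_n,\lambda_m)^{s-1/2+\delta}$, whereas the remaining $L^2$ factors contribute at most $1$ (their maxima are $\geq 1$ and the exponent $-1/2+\delta$ is negative). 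The triple product bound \eqref{1-propre4} is obtained by the exact same scheme applied to $f_j=e_{n_j}e_{m_j}e_{k_j}$; one iterates the bilinear estimate once more, for instance via $\|e_ae_be_c\|_{L^2(\R)}\leq\|e_ae_b\|_{L^2(\R)}\|e_c\|_{L^\infty(\R)}\leq C\max(\mu_a,\mu_b)^{-1/2+\delta}\mu_c^{-1/6}$, and closes the argument with the analogous lower bound $\prod_j\max(\mu_{n_j},\mu_{m_j},\mu_{k_j})\geq c\max(\lambda_n,\lambda_m,\lambda_k)$. The genuinely hard step throughout is the sharp one-dimensional $L^2$ bilinear bound, since any argument relying purely on single-function $L^p$ estimates is structurally unable to recover the critical $-1/2$ exponent in the unbalanced regime.
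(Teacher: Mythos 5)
Your argument is correct and is essentially the paper's proof: both reduce matters, via the tensor structure, to the one-dimensional bilinear Hermite estimate \eqref{prod} of \cite[Lemma A.8]{BTT}, combine it with Proposition~\ref{1-dispersive}, Lemma~\ref{1-inegalite sobolev} and $L^\infty$ bounds, and interpolate between $s=0$ and $s=1$ --- the paper interpolates the three-dimensional norms after treating the two endpoints (doing $s=0$ by a direct product of one-dimensional $L^2$ norms and the triple product by peeling off the low-frequency factor in $L^\infty$), whereas you interpolate the one-dimensional bilinear bound first, and your recurrence derivation of the $\mathcal{H}^1(\R)$ endpoint is exactly the content of the $\theta=1$ case of \eqref{prod}, which the paper simply cites. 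The only phrase to tighten is ``selecting $j=j_0$'': all three terms of the sum coming from Lemma~\ref{1-inegalite sobolev} must be estimated, but this is immediate with your ingredients, since for $j\neq j_0$ the factor $\|f_{j_0}\|_{L^2(\R)}\leq C\max(\lambda_n,\lambda_m)^{-1/2+\delta}$ already supplies the decay while $\|f_j\|_{\mathcal{H}^s(\R)}\leq C\max(\lambda_n,\lambda_m)^{\max(s-1/2+\delta,\,0)}$.
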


\begin{proof} 
To begin with, recall the  following bilinear estimate which is proven in~\cite[Lemma A.8]{BTT}: for all $0 \leq \theta \leq 1$, we have
\begin{equation}\label{prod}
 \| e_n e_m\|_{\mathcal{H}^\theta(\R)}  \leq C  \max ( \mu_n, \mu_m)^{-1/2+\theta} \min\big( \log \mu_n, \log\mu_m   \big)^{1/2},
\end{equation}
with the notation $\mu_n=\sqrt{2n+1}$. We first prove \eqref{1-propre3} in the case $s=0$. We can suppose that $ \max( \lambda_n,\lambda_m) = \lambda_n $ and $ \max( \mu_{n_1},\mu_{n_2}, \mu_{n_3}) = \mu_{n_1} $, then 
$\mu^2_{n_1} \geq \frac{\lambda_n^2}{3}  \geq \frac{\lambda_m^2}{3} \geq \frac{\mu_{m_1}^2}{3}$. Thus, thanks to~\eqref{prod}, we obtain that for all $\delta >0$
\begin{eqnarray*}
\| h_n h_m \|_{L^2(\R^3)} & = &\| e_{n_1} e_{m_1} \|_{L^2(\R)} \| e_{n_2} e_{m_2} \|_{L^2(\R)} \| e_{n_3} e_{m_3} \|_{L^2(\R)}  \\
 & \leq & \| e_{n_1} e_{m_1} \|_{L^2(\R)} \| e_{n_2}\|_{L^4(\R)} \| e_{m_2} \|_{L^4(\R)} \| e_{n_3} \|_{L^4(\R)} \|  e_{m_3} \|_{L^4(\R)} \\
 & \leq & C_\delta \mu_{n_1}^{-1/2+\delta} \\
 & \leq & C_\delta \lambda_n^{-1/2+\delta}.
\end{eqnarray*}
We now consider the case $s=1$.   Then the general case $0\leq s\leq 1$ will  follow by interpolation. Using Lemma~\ref{1-inegalite sobolev}, \eqref {1-propre1} and \eqref{prod} with $ \theta = 1 $, we obtain
\begin{multline*}
\| h_n h_m \| _ {\mathcal{H} ^ 1 (\R ^ 3)} \leq \\
\begin{aligned}
&\leq C (\| e_ {n_1} e_ {m_1} \| _ {\mathcal{H} ^ 1 (\R)} + \| e_ {n_2} e_ {m_2} \| _ {\mathcal{H} ^ 1 (\R)} + \| e_ {n_3} e_ {m_3} \| _ {\mathcal{H} ^ 1 (\R)} ) \\
 & \leq  C (\max (\mu_ {n_1}, \mu_ {m_1}) ^ {1/2 + \delta} + \max (\mu_ {n_2}, \mu_ {m_2}) ^ {1 / 2 + \delta} + \max (\mu_ {n_3}, \mu_ {m_3}) ^ {1/2 + \delta}) \\
 & \leq  C \max (\lambda_n, \lambda_m)^ {1/2 + \delta},
\end {aligned}
\end{multline*}
which was the claim. 

We turn to the proof of~\eqref{1-propre4} with $s=0$.  Suppose that $ \max ( \lambda_n, \lambda_m, \lambda_k ) = \lambda_n $. Then, by~\eqref{1-propre2} and~\eqref{1-propre3}, we obtain
\begin{eqnarray*}
\| h_{n} h_{m} h_{k}  \|_{L^2(\R^3)} & \leq &\| h_{n} h_{m} \|_{L^2(\R^3)} \| h_{k} \|_{L^\infty(\R^3)} \\
 &\leq &C_\delta \lambda_n ^{-1/2+\delta} \\
 &\leq &C_\delta \max ( \lambda_n, \lambda_m, \lambda_k )^{-1/2+\delta},
\end{eqnarray*}
hence the result. Next, assume that $s=1$. Thanks to the previous inequality,
\begin {eqnarray*}
\| h_n h_m h_k \| _ {\mathcal{H} ^ 1 (\R ^ 3)} & \leq & \| h_n h_m \| _ {\mathcal{H} ^ 1 (\R ^ 3)} \| h_k \| _ {L ^ \infty (\R ^ 3)} + \| h_n h_m \| _ {L ^ 2 (\R ^ 3)} \| h_k \| _ {W ^ {1, \infty} (\R ^ 3)}\\
 & \leq & C \max (\lambda_n, \lambda_m)^ {1/2 + \delta} \lambda_k ^ {- 1/6} + C \max (\lambda_n, \lambda_m)^ {- 1/2 + \delta} \lambda_k ^ {5/6} \\
 & \leq & C \max (\lambda_n, \lambda_m, \lambda_k) ^ {1/2 + \delta}.
\end {eqnarray*}
The general case $0\leq s\leq 1$ follows by interpolation.
\end{proof}

 \begin{lem} \label{1-fast} 
Let $ \delta > 0 $, $ \ell \geq 4 $ and $ N \geq 1 $, then there exists a constant $ C_N > 0 $ such that if we assume $ \lambda_{n_1} \geq \lambda_{n_2}^{1+\delta}$ and $ \lambda_{n_2} \geq \lambda_{n_3} \geq \cdots \geq \lambda_{n_{\ell}}$, then 
\begin{equation*}
  \bigg| \int _{\R^3}  \prod_{j=1}^{\ell} h_{n_j}(x) dx \bigg| \leq C_N \lambda_{n_1}^{-N}.
\end{equation*}
\end{lem}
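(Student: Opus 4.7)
The plan is to integrate by parts using the eigenvalue equation for $h_{n_1}$ to gain a large negative power of $\lambda_{n_1}$, and then to show that the cost of these integrations by parts on the remaining factors is only polynomial in $\lambda_{n_2}$, which is much smaller than $\lambda_{n_1}$ by hypothesis.

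Set $P(x):=\prod_{j=2}^{\ell}h_{n_j}(x)$. The first step exploits $Hh_{n_1}=\lambda_{n_1}^{2}h_{n_1}$ together with the self-adjointness of $H$ on $L^2(\R^3)$: for every integer $k\geq 1$,
\[
\int_{\R^3}h_{n_1}\,P\,dx \;=\;\lambda_{n_1}^{-2k}\int_{\R^3}(H^k h_{n_1})\,P\,dx \;=\;\lambda_{n_1}^{-2k}\int_{\R^3} h_{n_1}\,H^k P\,dx,
\]
so Cauchy-Schwarz gives $|\int_{\R^3}h_{n_1}P\,dx|\leq \lambda_{n_1}^{-2k}\,\|P\|_{\mathcal{H}^{2k}(\R^3)}$.

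The second step, which is the main bookkeeping step, is to establish
\[
\|P\|_{\mathcal{H}^{2k}(\R^3)}\;\leq\; C_{k,\ell}\,\lambda_{n_2}^{2k}.
\]
By the equivalence \eqref{1-comparaison}, it suffices to bound $\|\langle x\rangle^{|\alpha|}\partial^{\beta}P\|_{L^2(\R^3)}$ for $|\alpha|+|\beta|\leq 2k$. I would expand with the Leibniz rule to distribute the derivatives among the $\ell-1$ Hermite factors, place one factor in $L^2$ and the others in $L^\infty$, and apply the polynomial estimates
\[
\|\langle x\rangle^{|\alpha|}\partial^{\beta_j}h_{n_j}\|_{L^2(\R^3)}\lesssim \lambda_{n_j}^{|\alpha|+|\beta_j|},\qquad \|\partial^{\beta_j}h_{n_j}\|_{L^\infty(\R^3)}\lesssim \lambda_{n_j}^{|\beta_j|}.
\]
Both follow from the fact that $x$ and $\partial_x$ act on one-dimensional Hermite functions $e_n$ as linear combinations of $e_{n\pm 1}$ with coefficients of size $\sqrt{n}$, combined with the tensor structure \eqref{1-tensor} and the uniform $L^\infty$ bound \eqref{1-propre2}. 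Since $\lambda_{n_j}\leq\lambda_{n_2}$ for every $j\geq 2$, the total accumulated power in $\lambda_{n_2}$ is at most $|\alpha|+|\beta|\leq 2k$, which gives the claim.

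Combining the two steps with the hypothesis $\lambda_{n_1}\geq\lambda_{n_2}^{1+\delta}$ yields
\[
\Bigl|\int_{\R^3} h_{n_1}\,P\,dx\Bigr|\;\leq\; C_{k,\ell}\,\lambda_{n_1}^{-2k}\,\lambda_{n_2}^{2k}\;\leq\; C_{k,\ell}\,\lambda_{n_1}^{-2k\delta/(1+\delta)}.
\]
Given $N$, one concludes by choosing $k$ large enough so that $2k\delta/(1+\delta)\geq N$. The only nontrivial obstacle is the polynomial-growth estimate in the middle paragraph; the rest is routine use of self-adjointness and Cauchy-Schwarz.
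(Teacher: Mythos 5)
Your proposal is correct and follows essentially the same route as the paper: both exploit $Hh_{n_1}=\lambda_{n_1}^2h_{n_1}$ and self-adjointness to move $H^k$ onto the product $\prod_{j\geq 2}h_{n_j}$, bound that product in a Sobolev norm of order $2k$ by $C_k\lambda_{n_2}^{2k}$, and conclude from $\lambda_{n_1}\geq\lambda_{n_2}^{1+\delta}$ by taking $k$ large. The only cosmetic difference is in the middle step: the paper bounds $\|\prod_{j\geq2}h_{n_j}\|_{H^{2k}}$ by $\lambda_{n_2}^{2k}\prod_j\|h_{n_j}\|_{L^{2(\ell-1)}}$ and invokes the uniform $L^p$ bounds of Proposition~\ref{1-dispersive}, whereas you do the Leibniz bookkeeping with one factor in weighted $L^2$ and the rest in $L^\infty$ via ladder-operator estimates, which is equally valid.
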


\begin{proof} Using~\eqref{1-propre1} and \eqref{1-propre2}, we obtain for all $k\geq 1$
\begin{eqnarray*}
\bigg| \int _{\R^3}  \prod_{j=1}^{\ell}  h_{n_j}(x) dx  \bigg| & \leq & \lambda_{n_1}^{-2k} \| h_{n_1} H^k (  \prod_{j=2}^{\ell} h_{n_j})  \|_{L^1(\R^3)} \\
 & \leq &  \lambda_{n_1}^{-2k } \| h_{n_1}  \|_{L^2(\R^3)}     \|  \prod_{j=2}^{\ell} h_{n_j} \|_{H^{2k}(\R^3)} \\
 & \leq & C_k  \lambda_{n_1}^{-2k } \lambda_{n_2}^{2k}  \prod_{j=2}^{\ell} \| h_{n_j} \|_{L^{2(\ell-1)}(\R^3)}  \\
 & \leq &C_k  \left( \frac{\lambda_{n_2}}{\lambda_{n_1}} \right) ^{2k}  \\
 & \leq & C_k  \lambda_{n_1}^{ -\frac{2k\delta}{1+\delta}   },
\end{eqnarray*}
which was the claim.
\end{proof}


\section{The bilinear estimate for the harmonic oscillator}\label{Sect3}
The aim of this section is to prove the bilinear estimate of Theorem~\ref{1-bilis}. We notice that   the result will follow from:

 \begin{thm} \label{1-bili}
Let $d\geq 2$. For any $ \delta \in ]0 , \frac{1}{2} ] $, there exists a constant $ C > 0 $ and  $ \epsilon > 0 $ such that for all dyadic numbers $ N,M \geq 1$ and $u, v  \in \mathcal{S}'(\R^d)$,
 \begin{multline*}
  \| e^{it H} \Delta_N ( v ) \, e^{it  H} \Delta_M (u) \|_{L^2 ( [-\epsilon , \epsilon ] ; L^2 ( \R^d ))  }  \leq \\
  \leq   C   \min ( N,M)^{ \frac{d-2}{2} }   \left( \frac{ \min ( N,M)  }{ \max ( N,M )} \right) ^{1/2-\delta }  \| \Delta_N(v) \|_{L^2(\R^d)} \| \Delta_M(u) \|_{L^2(\R^d)}.
\end{multline*}
\end{thm}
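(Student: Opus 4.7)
The plan is to reduce the harmonic oscillator bilinear estimate to Bourgain's classical bilinear Strichartz estimate for the free Schr\"odinger group via the lens transform of Section~\ref{1-214}. \textbf{Step 1 (lens transform reduction).} By the identity \eqref{sym1}, for any $V_0, U_0 \in L^2(\R^d)$,
\begin{equation*}
(e^{-itH}V_0)(x)(e^{-itH}U_0)(x) \;=\; \cos^{-d}(2t)\,(e^{is\Delta}V_0)(y)(e^{is\Delta}U_0)(y)\,e^{-i|x|^2\tan(2t)},
\end{equation*}
with $y = x/\cos(2t)$ and $s = \tan(2t)/2$. Taking the modulus squared and successively performing the changes of variable $y = x/\cos(2t)$ in $x$ (Jacobian $\cos^d(2t)$) and then $s = \tan(2t)/2$ in $t$ (Jacobian $\cos^{-2}(2t)$), the residual weight $\cos^{-(d-2)}(2t)$ remains bounded on $[-\epsilon,\epsilon]$ for $\epsilon < \pi/4$, so that
\begin{equation*}
\|e^{-itH}V_0 \cdot e^{-itH}U_0\|_{L^2_{t,x}([-\epsilon,\epsilon]\times\R^d)} \;\leq\; C_\epsilon\,\|e^{is\Delta}V_0 \cdot e^{is\Delta}U_0\|_{L^2_{s,y}([-s_0,s_0]\times\R^d)}.
\end{equation*}
Applied to $V_0 = \Delta_N(v)$ and $U_0 = \Delta_M(u)$, and using that time reversal $t\mapsto -t$ relates $e^{itH}$ to $e^{-itH}$, this reduces the theorem to an analogous bilinear $L^2$ bound for the free Schr\"odinger propagator.

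\textbf{Step 2 (bridging $H$-spectral and $-\Delta$-frequency localization).} Bourgain's bilinear estimate on $\R \times \R^d$ yields exactly the required factor $\min(N,M)^{(d-2)/2}(\min(N,M)/\max(N,M))^{1/2}$ provided the two data are Fourier-localized at scales $\sim N$ and $\sim M$. The main obstacle is that $\Delta_N(v)$ is localized with respect to the spectrum of $H$, not of $-\Delta$. To bridge this gap, fix $\chi \in C_0^\infty([0,2])$ with $\chi \equiv 1$ on $[0,1]$, and for a large constant $A>0$ decompose
\begin{equation*}
\Delta_N v \;=\; \chi\bigl(\sqrt{-\Delta}/(AN)\bigr)\,\Delta_N v \;+\; R_N v,
\end{equation*}
and similarly for $\Delta_M u$. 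Combining the weighted Sobolev equivalence \eqref{1-comparaison} with the trivial spectral bound $\|H^k \Delta_N\|_{L^2 \to L^2} \leq C_k N^{2k}$ gives
\begin{equation*}
\|(-\Delta)^k \Delta_N v\|_{L^2(\R^d)} \;\leq\; C\,\|\Delta_N v\|_{\mathcal{H}^{2k}(\R^d)} \;\leq\; C_k\,N^{2k}\,\|\Delta_N v\|_{L^2(\R^d)}, \qquad k \in \N,
\end{equation*}
from which Chebyshev's inequality on the Fourier side yields $\|R_N v\|_{L^2(\R^d)} \leq C_k N^{-k}\|\Delta_N v\|_{L^2(\R^d)}$ for every $k \geq 0$, provided $A$ is chosen large enough.

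\textbf{Step 3 (conclusion).} Apply Bourgain's bilinear estimate to the principal term $\chi(\sqrt{-\Delta}/(AN))\Delta_N v \cdot \chi(\sqrt{-\Delta}/(AM))\Delta_M u$, which is genuinely Fourier-localized at the correct scales, to obtain the claimed constant (the small loss $\delta$ in the exponent accommodates the standard endpoint issue of the bilinear Strichartz inequality). Each of the three remaining cross terms contains at least one tail factor $R_N v$ or $R_M u$; the corresponding $L^2_{s,y}$ norms are controlled by Cauchy--Schwarz in $y$ combined with a crude admissible Strichartz pair in $(s,y)$ from Proposition~\ref{1-Strichartz}, producing at most a polynomial loss in $\max(N,M)$, readily absorbed by the rapid decay of Step~2. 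The hard part of the proof lies in making Step~2 quantitative enough so that no derivative is lost in the principal contribution; beyond that, the remainder is Fourier-analytic bookkeeping that glues the lens transform identity to Bourgain's classical inequality.
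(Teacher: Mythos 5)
Your Step 1 (lens transform reduction) is sound and is essentially how the paper proves its Proposition~\ref{1-steffibis}; the problems are in Steps 2 and 3. First, the justification of Step 2 does not give what you claim: from $\|(-\Delta)^k\Delta_N v\|_{L^2}\lesssim N^{2k}\|\Delta_N v\|_{L^2}$ and Chebyshev at frequency $AN$ you only obtain $\|R_N v\|_{L^2}\leq C_k A^{-2k}\|\Delta_N v\|_{L^2}$ --- the powers of $N$ cancel, so there is no decay in $N$ at all, let alone rapid decay. (The statement itself can be rescued, but only through a semiclassical analysis beyond the classical turning point, in the spirit of Propositions~\ref{1-propre} and~\ref{1-approx} of the paper's appendix, not through the argument you give.)

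Second, and more fundamentally, Step 3 fails: truncating $\Delta_M u$ to Fourier frequencies $\leq AM$ provides an \emph{upper} bound on its frequency support, not the \emph{lower} bound $|\xi|\gtrsim M$ that Bourgain's bilinear estimate requires of the high-frequency factor (equivalently, smallness in $H^{-1/2+\delta}$). A function that is $H$-spectrally localized at $\lambda_n\sim M$ genuinely carries low Fourier frequencies: its phase-space mass lies on $|x|^2+|\xi|^2\sim M^2$, so small $|\xi|$ occurs where $|x|\sim M$ (already in dimension one, $|\widehat{e_n}(\xi)|=|e_n(\xi)|\sim\mu_n^{-1/4}$ for $|\xi|=O(1)$), and this low-frequency content is of exactly the critical size, so it cannot be discarded as an error. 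Without a frequency lower bound on the high factor, the bilinear estimate gives no $(\min/\max)^{1/2}$ gain, only a Bernstein-type bound $\min(N,M)^{d/2}$. This is precisely the crux of the theorem, and the paper resolves it with a \emph{spatial}, not frequency, decomposition of $u_M$ at scale $M$: on $|x|\lesssim M$ the symbol $\chi(4|x|^2)\phi(|x|^2+|\xi|^2)$ is elliptic in $\xi$, and a semiclassical integration by parts yields $\|\chi(4|x|^2/M^2)u_M\|_{H^{-1/2+\delta}}\lesssim M^{-1/2+\delta}\|u\|_{L^2}$, after which the lens transform plus Bourgain's estimate applies; on $|x|\gtrsim M$, where the low frequencies live, a WKB parametrix (Proposition~\ref{1-clef}) shows that for short times the flow $e^{itH}$ keeps this piece supported away from the origin, where it only meets the rapidly decaying tail of $e^{itH}v_N$ (Proposition~\ref{1-propre}, using $M\geq 10N$), giving an $O(M^{-\infty})$ contribution --- this is also the source of the restriction to $[-\epsilon,\epsilon]$ in the statement. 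Your proposal has no counterpart to either mechanism, so as written it cannot produce the factor $\left(\min(N,M)/\max(N,M)\right)^{1/2-\delta}$.
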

Indeed, we can replace $ u $ by $ e^{i\epsilon H}u $ and $ v $ by $ e^{i\epsilon H} v $ to obtain 
\begin{multline*}
 \| e^{i(t+\epsilon) H} \Delta_N ( v ) \, e^{i(t+\epsilon)  H} \Delta_M (u) \|_{L^2 ( [-\epsilon , \epsilon ] ; L^2 ( \R^d ))  }  \leq \\
 \leq   C  \min ( N,M)^{ \frac{d-2}{2} }   \left( \frac{ \min ( N,M)  }{ \max ( N,M )} \right)^{1/2-\delta }  \| \Delta_N(v) \|_{L^2(\R^d)} \| \Delta_M(u) \|_{L^2(\R^d)}.
\end{multline*}
Then, we use the change of variables $ t \longleftrightarrow t + \epsilon $ and Theorem \ref{1-bili} to obtain that
\begin{multline*}
 \| e^{it H} \Delta_N ( v ) \,  e^{i t  H} \Delta_M (u) \|_{L^2 ( [-\epsilon , 2\epsilon ] ; L^2 ( \R^d ))  } \leq  \\ 
 \leq   C  \min ( N,M)^{ \frac{d-2}{2} }  \left( \frac{ \min ( N,M)  }{ \max ( N,M )} \right) ^{1/2-\delta }  \| \Delta_N(v) \|_{L^2(\R^d)} \| \Delta_M(u) \|_{L^2(\R^d)}.
\end{multline*}
We can thus iterate the procedure $ 2 E( \frac{1}{\epsilon} ) $ times to establish Theorem \ref{1-bilis} and we are thus  reduced  to show Theorem \ref{1-bili}. \medskip

Recall that $ \eta \in C_0^\infty( \R ) $ is such that $ \eta_{[0,1]} = 1 $ and $ \eta_{[2, + \infty [ } = 0 $, and was used in the definition \eqref{dya} of $\Delta_N$. Let $ r \ll 1 $ and $ \phi \in C_0^\infty(\R) $ satisfy
\begin{equation*}
\phi(x)  = \left\{
    \begin{aligned}
    &    1  \;\;  \mbox{ for } \; x \in [1/4;2], \\
   &      0 \;\; \mbox{ for } \;  x \in [0,1/4-r] \cup [2+r , + \infty [,
    \end{aligned}
\right.
 \end{equation*}
and set $ \Delta'_N = \phi( \frac{H}{N^2} )$.  Then using that $ \phi (x) ( \eta(x) - \eta(4x) ) = \eta(x) - \eta(4x) $ for all $ x \in \R $, we notice that for all $N \geq 1$
\begin{equation}\label{1-ouf}
\Delta'_N \circ  \Delta_N = \Delta_N.
\end{equation}
Let us then observe that  to prove Theorem \ref{1-bili}, it is sufficient to show
\begin{multline}\label{bouf}
   \| e^{it H} \Delta'_N(v) \, e^{it  H} \Delta'_M(u) \|_{L^2 ( [-\epsilon , \epsilon ] ; L^2 ( \R^d ))  }\leq  \\
 \leq    C  \min ( N,M)^{  \frac{d-2}{2}  }  \left( \frac{  \min ( N,M)  }{ \max ( N,M )} \right) ^{1/2-\delta }   \| v \|_{L^2(\R^d)} \|u \|_{L^2(\R^d)}.
\end{multline}
Indeed, if the previous inequality is satisfied, we can apply it to $ v $ replaced by $ \Delta_N (v) $ and $ u $ replaced by $ \Delta_M (u) $ then we can use \eqref{1-ouf} to obtain Theorem \ref{1-bili}. \medskip

 \textbf{$\bullet$ Case $ M \sim N $ with $ M \geq N $.} For $ d=2$, we can use the Strichartz inequalities (Proposition~\ref{1-Strichartz}) to obtain that
\begin{multline*}
 \| e^{it H} \Delta'_N(v) \, e^{it  H} \Delta'_M(u) \|_{L^2 ( [-\epsilon , \epsilon ] ; L^2 ( \R^d ))  }    \leq \\
\begin{aligned}
& \leq  \| e^{it H} \Delta'_N(v)\|_{L^4{( [-\epsilon , \epsilon ] ; L^4(\R^d))}}  \| e^{it H} \Delta'_N(v)\|_{L^4{( [-\epsilon , \epsilon ] ; L^4(\R^d))} }\\
   &\leq   \| e^{it H} \Delta'_N(v)\|_{L^4{( [-\pi , \pi ] ; L^4(\R^d))} }  \| e^{it H} \Delta'_N(v)\|_{L^4{( [-\pi , \pi ] ; L^4(\R^d))} }\\
 &    \leq     C \| \Delta'_N(v) \|_{L^2(\R^d)} \| \Delta'_N(v)\|_{L^2(\R^d )}\\
& \leq   C  \| v \|_{L^2(\R^d)}   \|u \|_{L^2(\R^d)}.
\end{aligned}
\end{multline*}
For $ d \geq 3 $, using again  Proposition~\ref{1-Strichartz} and the Sobolev  embeddings, we establish that
\begin{multline*}
 \| e^{it H} \Delta'_N(v) \, e^{it  H} \Delta'_M(u) \|_{L^2 ( [-\epsilon , \epsilon ] ; L^2 ( \R^d ))  }  \leq \\
\begin{aligned}
& \leq  \| e^{it H} \Delta'_N(v)\|_{L^\infty{( [-\epsilon , \epsilon ] ; L^d(\R^d))}}  \| e^{it H} \Delta'_N(v)\|_{L^2{( [-\epsilon , \epsilon ] ; L^{  \frac{2d}{d-2} }(\R^d))} }
\\ 
& \leq \| e^{it H} \Delta'_N(v)\|_{L^\infty{( [-\pi , \pi ] ; \W^{ \frac{d-2}{2}   ,2}(\R^d))} } \| e^{it H} \Delta'_N(v)\|_{L^2{( [-\pi , \pi ] ; L^\frac{2d}{d-2}  (\R^d))} }
\\  & \leq  C \| \Delta'_N(v) \|_{\mathcal{H}^{ \frac{d-2}{2}  }(\R^d)} \| \Delta'_N(v)\|_{L^2(\R^d )}
\\ &\leq  C  N^{ \frac{d-2}{2}  } \| v \|_{L^2(\R^d)}   \|u \|_{L^2(\R^d)}.
\end{aligned}
\end{multline*}
Therefore \eqref{bouf} is proved in the case $M \sim N$. 
\medskip

 \textbf{$\bullet$ Case $ M \geq  10N $.} We now have to prove that 
\begin{equation} \label{1-biblitris}
\| e^{it H} v_N  \, e^{it  H} u_M \|_{L^2 ( [-\epsilon , \epsilon ] ; L^2 ( \R^d ))  } \leq C N^{ \frac{d-2}{2} } \left( \frac{  N  }{ M }  \right)^{1/2-\delta } \| v \|_{L^2(\R^d)} \|u \|_{L^2(\R^d)}.
\end{equation}  
We can write 
\begin{equation*}
u_M = \chi \Big( \frac{4|x|^2}{M^2} \Big) u_M  + (1-\chi ) \Big( \frac{4|x|^2}{M^2} \Big) u_M
\end{equation*}
with $ \chi \in C^\infty_0 (\R) $, $0 \leq \chi \leq 1$,  satisfying
\begin{equation*}
\chi(x) = \left\{
    \begin{aligned}
   &     1 \ & \mbox{if } \ &|x| \leq {15}/{32},  \\
   &     0 \ & \mbox{if } \ &|x| \geq   {1}/{2} .
    \end{aligned}
\right.
\end{equation*}
By the triangle inequality, we have to  estimate the following two terms:
\begin{equation} \label{1-term1}
 \big\| \Big(e^{it H} \chi \Big( \frac{4|x|^2}{M^2} \Big) u_M  \Big)   \big( e^{it  H} v_N\big) \big\|_{L^2( [-\epsilon, \epsilon] ;    L^2(\R^d))  } 
\end{equation}
and
\begin{equation} \label{1-term2}
 \big\| \Big(e^{it H} (1-\chi) \Big( \frac{4|x|^2}{M^2} \Big) u_M \Big) \big(  e^{it H } v_N \big) \big\|_{ L^2( [-\epsilon, \epsilon] ;    L^2(\R^d))  }. 
\end{equation}

\subsection{Estimate of the term (3.4)}

We have the following result.

  \begin{prop} \label{1-steffibis}
For all $ \delta > 0 $, there exists a constant $ C_\delta >0$ such that for all $ u \in H^{ -1/2 +  \delta }(\R^d) $ and $ v \in \mathcal{H}^{ \frac{d-1}{2} - \delta }(\R^d)$, 
\begin{equation*}
 \| e^{it H } u \, e^{it  H } v \|_{L^2 ( [ - \frac{\pi}{8}, \frac{\pi}{8} ] ;  L^2 ( \R^d ))  } \leq C_{\delta} \| u \|_{H^{ -1/2 +  \delta }(\R^d)} \|v \|_{\mathcal{H}^{ \frac{d-1}{2} - \delta }(\R^d)} .
 \end{equation*}
\end{prop}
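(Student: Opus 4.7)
The plan is to reduce the bilinear estimate for $e^{itH}$ to the corresponding one for the free Schr\"odinger group $e^{is\Delta}$ via the lens transform, and then invoke Bourgain's classical bilinear estimate \cite{bilinear} together with a dyadic Littlewood--Paley decomposition.

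First, I would apply the pointwise identity \eqref{sym1} to both factors of the product $e^{itH}u \cdot e^{itH}v$. The quadratic modulations $e^{-i|x|^2\tan(2t)/2}$ disappear in modulus, and the change of variables $y = x/\cos(2t)$, $s = \tan(2t)/2$ introduces the Jacobian $(1+4s^2)^{(d-2)/2}$, which is uniformly bounded on the compact interval $s \in [-1/2, 1/2]$ corresponding to $t \in [-\pi/8, \pi/8]$. This yields
\begin{equation*}
\|e^{itH}u \cdot e^{itH}v\|_{L^2([-\pi/8, \pi/8] \times \R^d)} \leq C \|e^{is\Delta}u \cdot e^{is\Delta}v\|_{L^2([-1/2, 1/2] \times \R^d)}.
\end{equation*}

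Next, I would perform dyadic Littlewood--Paley decompositions $u = \sum_M P_M u$, $v = \sum_N P_N v$ with respect to the usual Fourier projectors. For each dyadic pair with $N \leq M$, Bourgain's bilinear estimate (valid on bounded time intervals) gives, for any small $\delta_0 > 0$,
\begin{equation*}
\|e^{is\Delta}P_M u \cdot e^{is\Delta}P_N v\|_{L^2([-1/2,1/2]\times \R^d)} \leq C N^{(d-2)/2}\Bigl(\frac{N}{M}\Bigr)^{\!1/2 - \delta_0}\|P_M u\|_{L^2}\|P_N v\|_{L^2},
\end{equation*}
and symmetrically when $M \leq N$. Substituting $\|P_M u\|_{L^2} \leq M^{1/2 - \delta}\|P_M u\|_{H^{-1/2+\delta}}$ and $\|P_N v\|_{L^2} \leq N^{-((d-1)/2 - \delta)}\|P_N v\|_{H^{(d-1)/2 - \delta}}$, the powers of $N$ and $M$ combine in the case $N \leq M$ to exactly $(N/M)^{\delta - \delta_0}$. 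Choosing $\delta_0 < \delta$ makes this off-diagonal kernel summable by Schur's lemma, and Cauchy--Schwarz in the dyadic indices (together with Plancherel to recover $\|u\|_{H^{-1/2+\delta}}^2 = \sum_M \|P_M u\|_{H^{-1/2+\delta}}^2$) yields the bound $\|u\|_{H^{-1/2+\delta}}\|v\|_{H^{(d-1)/2-\delta}}$. Finally, \eqref{1-comparaison} provides $\|v\|_{H^{(d-1)/2-\delta}} \leq C\|v\|_{\mathcal{H}^{(d-1)/2-\delta}}$ to close the argument.

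The main obstacle, though routine once the setup is in place, is the exponent bookkeeping in the dyadic summation: the specific regularity split $(-1/2 + \delta, (d-1)/2 - \delta)$ is precisely the one for which Bourgain's near-$(\min/\max)^{1/2}$ gain produces a summable off-diagonal kernel, and the freedom to take $\delta_0$ arbitrarily small (so that $\delta_0 < \delta$) is exactly what turns the borderline situation into a true Schur-summable gain.
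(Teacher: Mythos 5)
Your proof is correct and follows essentially the same route as the paper: reduce to the free Schr\"odinger group via the lens transform (with the same Jacobian $(1+4s^2)^{d/2-1}$ bounded on $s\in[-1/2,1/2]$), apply Bourgain's bilinear refinement, and pass from $H^{(d-1)/2-\delta}$ to $\mathcal{H}^{(d-1)/2-\delta}$ via \eqref{1-comparaison}. The only difference is that the paper cites the Sobolev-level bilinear estimate directly (from \cite{Staffi}, Theorem 2.4), whereas you re-derive it from the frequency-localized dyadic estimate by Littlewood--Paley decomposition and Schur summation, and your exponent bookkeeping there is accurate.
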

\begin{proof} From \cite[Theorem 2.4]{Staffi} (coming from \cite{bilinear}) we have that for all $ \delta > 0 $, there exists a constant $ C_\delta >0$ such that for all $ {u \in H^{ -1/2 +  \delta }(\R^d)} $ and $ v \in H^{ \frac{d-1}{2} - \delta }(\R^d)$, 
\begin{equation*}
 \| e^{it \Delta} u  \, e^{it  \Delta} v \|_{L^2 ( \R ; L^2 ( \R^d ))  } \leq C_\delta \| u \|_{H^{ -1/2 +  \delta }(\R^d)} \|v \|_{H^{ \frac{d-1}{2} - \delta }(\R^d)} .
\end{equation*}
Using the lens transformation (see Section~\ref{1-214}) and the previous inequality, we obtain that
\begin{multline*}
\| e^{it H} u \, e^{it  H} v \|^2_{L^2 ( [-\frac{\pi}{8} , \frac{\pi}{8} ] ; L^2 ( \R^d ))  } = \\
\begin{aligned}
&= \| e^{-it H} u \, e^{-it  H} v \|^2_{L^2 ( [-\frac{\pi}{8} , \frac{\pi}{8} ] ; L^2 ( \R^d ))  } \\
& =  \int_ { ]-\frac{\pi}{8} , \frac{\pi}{8} [ }\int _{ \R ^d } \frac{1}{ | \cos(2t) |^{2d}  }  | e^{it\Delta} u \, e^{it\Delta}v |^2 \left( \frac{ \tan(2t)}{2} , \frac{x}{ \cos(2t) } \right) dx dt   \\  
&= \int_ { ]-\frac{\pi}{8} , \frac{\pi}{8} [ }\int _{ \R ^d } \frac{1}{ | \cos(2t) |^d  }   | e^{it\Delta} u \, e^{it\Delta}v |^2 \left( \frac{ \tan(2t)}{2} , x \right) dx dt \\
& = \int_ {-1/2}^{1/2}  \int _{ \R ^d } (1+(2t)^2)^{d/2-1}  | e^{it\Delta} u \,e^{it\Delta}v |^2 ( t , x ) dx dt  
\\ & \leq   C   \int_\R  \int _{ \R ^d }  | e^{it\Delta} u \, e^{it\Delta}v |^2 ( t , x ) dx dt  
 \\  &\leq   C   \| u \|_{H^{-1/2 +  \delta }(\R^d)}^2  \|v \| ^2 _{H^{ \frac{d-1}{2} - \delta }(\R^d)} 
 \\ &\leq  C   \| u \|_{H^{-1/2 +  \delta }(\R^d)}^2  \|v \| ^2 _{\mathcal{H}^{ \frac{d-1}{2}  - \delta }(\R^d)},
 \end{aligned}
 \end{multline*} 
 where in the last inequality we used  \eqref{1-comparaison}. 
 \end{proof}

We will show that for all $ \delta \in ]0 , \frac{1}{2} ] $, there exists a constant $ C_\delta > 0 $ such that for all dyadic $ N,M \geq 1$, and  $u, v \in L^2(\R^d)$, if $ M \geq N $ then
\begin{multline*}
\big\| \big(e^{itH } v_N\big) \big( e^{itH } \chi \big( \frac{4|x|^2}{M^2} \big) u_M \big) \big\|_{L^2(  [  -\frac{\pi}{8} , \frac{\pi}{8} ];  L^2(\R^d)) }  \leq \\
\leq C_\delta N^{  \frac{d-2}{2} } \left( \frac{N}{M} \right) ^{1/2-\delta} \|u\|_{L^2(\R^d)} \|v\|_{L^2(\R^d)}.
\end{multline*}  
To do this, using Proposition~\ref{1-steffibis}, it suffices to prove that for any $ \delta \in ]0 , \frac{1}{2} ] $, there exists a constant $ C_\delta > 0 $ such that for all dyadic $ M \geq 1$ and $ u \in L^2(\R^d)$
\begin{equation*} 
\| \chi  \Big( \frac{4|x|^2}{M^2} \Big) u_M \|_{H^{-1/2+\delta}(\R^d) } \leq C_\delta  M^{-1/2+\delta} \| u  \|_{L^2(\R^d)}.
\end{equation*}
We have
\begin{equation*}
\| \chi \Big( \frac{4|x|^2}{M^2} \Big) u_M \|_{L^2(\R^d)} \leq \| u_M \|_{L^2(\R^d)} \leq C \|u\|_{L^2(\R^d)}.
\end{equation*}
 Thus, by interpolation, it is sufficient to show that there exists a constant $ C > 0 $ such that for all dyadic $M\geq 1$ and $ u \in L^2(\R^d)$,
\begin{equation}\label{1-class}
\| \chi \Big( \frac{4|x|^2}{M^2} \Big) u_M \|_{H^{-1}(\R^d)} \leq C M^{-1} \|u\|_{L^2(\R^d)}.
\end{equation}
We then use the semi-classical calculus. For a function $ u $, we define $  \mathfrak{u} : x \longmapsto u( \frac{x}{     \sqrt{ h }  } ) $ where $ h = \frac{1}{M^2} $.

Observe that 
\begin{equation} \label{1-semi-classique1}
\chi \Big( \frac{4|x|^2}{M^2}  \Big)  \Big[ \frac{H}{M^2} \Big] ( u ) (x) =  [   \chi( 4|x|^2 )  (  -h^2 \Delta + |x|^2  )  ] ( \mathfrak{u} )  (  \sqrt{ h } x  )
\end{equation}
and that 
\begin{equation} \label{1-semi-classique2}
\chi \Big( \frac{4|x|^2}{M^2} \Big)  \Big[   \phi \Big(   \frac{H}{M^2} \Big) \Big] ( u ) (x) =  [ \chi( 4|x|^2 ) \phi   (  -h^2 \Delta + |x|^2  )   ] (  \mathfrak{u}  )( \sqrt{  h } x  ).
\end{equation}
Thus, to prove \eqref{1-class}, it is sufficient to show that there exists  a constant $ C > 0 $ such that for all $ h \in ]0,1] $ and $ u \in L^2(\R^d)$,
\begin{equation} \label{1-classb}
 \| \chi ( 4 |x|^2) \phi ( |x|^2 + |h\nabla|^2 ) u \|_{H^{-1}(\R^d) } \leq C h \|u\|_{L^2(\R^d)}.
\end{equation}
Indeed,
\begin{eqnarray*}
\| \chi \Big( \frac{4|x|^2}{M^2} \Big) u_M \|_{H^{-1}(\R^d)} & \leq&  \big\|  [ \chi( 4|x|^2 ) \phi   (  -h^2 \Delta + |x|^2  )   ] (\mathfrak{u}  )( \sqrt{  h } .  )  \big\|_{H^{-1}(\R^d) }\\
& \leq& h^{- \frac{1}{2} - \frac{d}{4} }  \| [ \chi( 4|x|^2 ) \phi   (  -h^2 \Delta + |x|^2  )   ] (\mathfrak{u}  )  \|_{ H^{-1}(\R^d) }\\
 & \leq &C  h^{\frac{1}{2} - \frac{d}{4} } \|  \mathfrak{u} \| _{L^2(\R^d)}\\
 & \leq& C h^{1/2}  \|u\|_{L^2(\R^d)}\\
  & \leq& C M^{-1} \|u\|_{L^2(\R^d)}.
\end{eqnarray*}

Let us then prove \eqref{1-classb}. Thanks to Proposition~\ref{1-approx}, we have 
\begin{multline*}
  \| \chi( 4 |x|^2  ) \phi \big( |x|^2 + |h\nabla|^2 \big) u \|_{H^{-1}(\R^d)}  \leq \\
   \begin{aligned}
   & \leq   \|   \chi( 4|x|^2 ) \big[ \phi ( |x|^2 + |h\nabla|^2 )    - Op_h ( \phi ( |x|^2 +  |\xi|^2 )  ) \chi_2 \big] u\|_{H^{-1}(\R^d)}  + \\
   & \hspace{6cm}  +\|\chi( 4|x|^2 ) Op_h ( \phi ( |x|^2 +   |\xi|^2 )  ) \chi_2 u \|_{H^{-1}(\R^d)} \\
& \leq  h \| u \|_{L^2(\R^d) }   + \| \chi ( 4|x|^2 ) Op_h ( \phi ( |x|^2 +  |\xi| ^2 ) ) \chi_2 u \|_{H^{-1}(\R^d)}.
 \end{aligned}
 \end{multline*}
Thus, it is sufficient to evaluate $ \| \chi ( 4|x|^2 ) Op_h ( \phi ( |x|^2 +  |\xi| ^2 ) ) \chi_2 u \|_{H^{-1}(\R^d)} $. We have
\begin{multline*}
 \chi ( 4|x|^2 )   Op_h \big( \phi ( |x|^2 +   |\xi| ^2 ) \big) \chi_2 u =\\
\begin{aligned}
&= \frac{\chi ( 4|x|^2 )}{(2 \pi)^d}  \int_{\R^d }  e^{ix \cdot \xi }   \phi ( |x|^2 + ( h \xi ) ^2 )    \mathcal{F} ( \chi_2 u ) ( \xi ) d\xi  \\  
 &= \frac{h}{(2 \pi)^d}  \int_{\R^d }  (h \xi )  \cdot \nabla ( e^{ix \cdot \xi } )  \frac{ \chi ( 4 |x|^2 )  }{i| h  \xi | ^2 }   \phi ( |x|^2 + ( h  \xi )^2 )   \mathcal{F} (\chi_2 u)( \xi ) d\xi  
\\   &= \frac{h}{(2 \pi)^d}  \left(  \int_{\R^d }  ( h \xi ) \cdot \nabla   \Big(  e^{ix\cdot  \xi }   \frac{ \chi ( 4 |x|^2 )  }{i |h \xi | ^2 }    \phi ( |x|^2 + | h  \xi |^2 )   \mathcal{F}(\chi_2 u)( \xi ) \Big) d\xi   \right.\\
& \hspace*{2cm} -  \left. \int_{\R^d }  e^{ix \cdot \xi }  (  h \xi ) \cdot \nabla \Big( \frac{ \chi ( 4 |x|^2 )  }{i| h \xi | ^2 }   \phi ( |x|^2 + | h  \xi |^2 ) \Big)   \mathcal{F} (\chi_2 u)( \xi ) d\xi  \right)
\\   &= \frac{h}{(2 \pi)^d}  \left( \textrm{div}_x   \Big(  \int_{\R^d }  ( h \xi )  e^{ix\cdot  \xi }   \frac{ \chi ( 4 |x|^2 )  }{i|h \xi | ^2 }    \phi ( |x|^2 + | h  \xi |^2 )   \mathcal{F}(\chi_2 u)( \xi ) d\xi   \Big) \right.
\\ & \hspace*{2cm} -  \left. \int_{\R^d }  e^{ix\cdot \xi }   (  h \xi ) \cdot \nabla \Big( \frac{ \chi ( 4 |x|^2 )  }{i|h \xi | ^2 }   \phi ( |x|^2 + | h  \xi |^2 ) \Big)   \mathcal{F} (\chi_2 u)( \xi ) d\xi  \right).  
\end{aligned}
\end{multline*}
Then, since $ 4 |x|^2 \leq \frac{1}{2} $ and $ \frac{1}{4}^- \leq |x|^2 +  |\xi| ^2 \leq 2^+ $ implies $ \frac{1}{8}^- \leq |\xi|^2 $, we deduce that
\begin{equation*}
( x , \xi ) \longrightarrow \xi \frac{ \chi ( 4 |x|^2 )  }{|\xi|^2}   \phi ( |x|^2 +   |\xi|^2 )  \in S^0  
\end{equation*}
and
\begin{equation*}
 ( x , \xi ) \longrightarrow  \xi \cdot \nabla \Big( \frac{ \chi ( 4 |x|^2 )  }{ |\xi| ^2 }   \phi ( |x|^2 +  |\xi|^2 ) \Big)   \in S^0.
\end{equation*}
Thus, by Theorem \ref{1-operator}, we have
\begin{multline*}
  \Big\|  \frac{h}{(2 \pi)^d}  \textrm{div}_x   \Big(  \int_{\R^d }  ( h \xi )  e^{ix\cdot \xi }   \frac{ \chi ( 4 |x|^2 )  }{i(h \xi ) ^2 }    \phi ( |x|^2 + ( h  \xi )^2 )   \mathcal{F}(\chi_2 u)( \xi ) d\xi   \Big)   \Big\|_{H^{-1}(\R^d)} \leq \\
\begin{aligned}
 & \leq  \frac{h}{(2 \pi)^d}    \Big\|  \int_{\R^d }  ( h \xi )  e^{ix\cdot \xi }   \frac{ \chi ( 4 |x|^2 )  }{i(h \xi ) ^2 }    \phi ( |x|^2 + ( h  \xi )^2 )   \mathcal{F}(\chi_2 u)( \xi )   d\xi \Big\|_{L^2(\R^d)} \\
 & \leq  C h  \| \chi_2  u  \|_{L^2(\R^d)}  \\
 & \leq  C h  \|  u  \|_{L^2(\R^d)}
\end{aligned}
\end{multline*}
and
\begin{multline*}
  \Big\| \frac{h}{(2 \pi)^d}  \int_{\R^d }  e^{ix \cdot \xi }   (  h \xi ) \cdot \nabla \left( \frac{ \chi ( 4 |x|^2 )  }{i( h \xi ) ^2 }   \phi ( |x|^2 + ( h  \xi )^2 ) \right)   \mathcal{F} (\chi_2 u)( \xi ) d\xi   \Big\|_{  H^{-1}(\R^d)  } \leq  \\
\begin{aligned}
&    \leq    \Big\|  \frac{h}{(2 \pi)^d}  \int_{\R^d }  e^{ix\cdot  \xi }   (  h \xi ) \cdot \nabla \left( \frac{ \chi ( 4 |x|^2 )  }{i( h \xi ) ^2 }   \phi ( |x|^2 + ( h  \xi )^2 ) \right)   \mathcal{F} (\chi_2 u)( \xi ) d\xi    \Big\|_{  L^2 (\R^d)  }\\
&  \leq  C h \| \chi_2  u  \|_{L^2(\R^d)} \\
& \leq  C h \|  u  \|_{L^2(\R^d)}.
\end{aligned}
\end{multline*}
This proves \eqref{1-classb} and the estimate \eqref{1-term1} follows.

\subsection{Estimate of the  term (3.5)}

 \begin{prop} \label{1-clef}
  There exists a time $ T \in ] 0 , \frac{\pi}{4} [$ such that for all $ K\geq 0 $, there exists a constant $ C_K > 0 $ such that for all $ M \geq 1 $ and $u \in L^2(\R^d) $,
\begin{equation*}
\Big\| \chi   \Big( \frac{8 |x|^2}{M^2}   \Big) e^{itH} (1-\chi)   \Big( \frac{4 |x|^2}{M^2}   \Big) u_M   \Big\|_{L^2 ( [-T , T ] ; L^2 ( \R^d ) )}  \leq C_K M^{-K} \|u\|_{L^2(\R^d)}.  
 \end{equation*}
\end{prop}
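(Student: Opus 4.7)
The key point is a classical/quantum translation. The bicharacteristic flow $\Phi_t$ of the principal symbol $p(x,\xi)=|x|^2+|\xi|^2$ of $H$ rotates each pair $(x_j,\xi_j)$ by the angle $2t$, so $x(t)=x_0\cos(2t)+\xi_0\sin(2t)$. On the phase-space support of $(1-\chi)(4|x|^2/M^2)\phi(H/M^2)$, namely $\{|x_0|\geq M\sqrt{15/128},\ |x_0|^2+|\xi_0|^2\leq (2+r)M^2\}$, we have for every $|t|\leq T$
\[
|x(t)|\geq |x_0|\cos(2T)-|\xi_0||\sin(2T)|\geq M\bigl(\sqrt{15/128}\,\cos(2T)-\sqrt{2+r}\,\sin(2T)\bigr),
\]
which exceeds $M/4$ for all $T>0$ sufficiently small. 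Since $\operatorname{supp}\chi(8|x|^2/M^2)\subseteq\{|x|\leq M/4\}$, the classical trajectories emanating from the initial support never meet it on $|t|\leq T$.

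To turn this non-intersection into the quantitative operator bound, I would reuse the semi-classical setup of the proof of \eqref{1-term1}. Set $h=1/M^2$ and conjugate by the unitary rescaling $\mathfrak u(y)=M^{d/2}u(My)$: then $H/M^2$ becomes the standard semi-classical harmonic oscillator $P_h=-h^2\Delta_y+|y|^2=\mathrm{Op}^w_h(|y|^2+|\eta|^2)$, the spatial cutoffs $\chi(c|x|^2/M^2)$ become multiplications by $\chi(c|y|^2)$, and $e^{itH}$ becomes $e^{i(t/h)P_h}$. The proposition reduces to showing that for some $T>0$ and every $N\geq 0$,
\begin{equation}\label{planredd}
\bigl\|\chi(8|y|^2)\,e^{i(t/h)P_h}\,(1-\chi)(4|y|^2)\,\phi(P_h)\bigr\|_{L^2\to L^2}\leq C_N h^N\qquad(|t|\leq T),
\end{equation}
followed by integration in $t$. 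Using Proposition~\ref{1-approx} I write $(1-\chi)(4|y|^2)\phi(P_h)=\mathrm{Op}^w_h(a)+O_{L^2\to L^2}(h)$ with $a(y,\eta)=(1-\chi)(4|y|^2)\phi(|y|^2+|\eta|^2)$, a symbol supported in the region of the first paragraph. An Egorov-type argument for the quadratic Hamiltonian $P_h$ then gives
\[
e^{i(t/h)P_h}\,\mathrm{Op}^w_h(a)\,e^{-i(t/h)P_h}=\mathrm{Op}^w_h(a\circ\Phi_{-t})+O_{L^2\to L^2}(h^\infty),
\]
uniformly in $|t|\leq T$. By the flow computation above, the symbols $\chi(8|y|^2)$ and $a\circ\Phi_{-t}$ have disjoint supports; the $h$-$\Psi$DO composition formula and the $L^2$-boundedness result (Theorem~\ref{1-operator}) then yield \eqref{planredd}.

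The delicate step, and the main obstacle, is the Egorov estimate just used: in physical time $t\in[-T,T]$ the corresponding semi-classical time is $s=t/h$, of order $h^{-1}$, well beyond the Ehrenfest window $\log(1/h)$ within which a generic Egorov theorem applies. The saving feature is the exceptional algebraic structure of the harmonic oscillator---quadratic classical Hamiltonian, linear symplectic classical flow, metaplectic quantum propagator---which makes the Heisenberg expansion $\sum_k \frac{(it/h)^k}{k!}\,\mathrm{ad}_{P_h}^{k}(\mathrm{Op}^w_h(a))$ terminate after finitely many terms at the principal level, so Egorov holds exactly on every compact $t$-interval up to $O(h^\infty)$. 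Alternatively, one can bypass the abstract Egorov theorem by inserting the Mehler kernel for $e^{-itH}$ and performing direct non-stationary-phase integrations by parts in the resulting oscillatory integral, the classical non-intersection being exactly what produces a non-vanishing gradient of the phase. Either way, this is what ultimately permits arbitrary polynomial decay in $M$.
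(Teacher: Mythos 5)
Your proposal is correct in substance and rests on the same two pillars as the paper's argument: the rescaling $h=1/M^{2}$ reducing matters to $\chi(8|x|^2)\,e^{itH_h/h}\,(1-\chi)(4|x|^2)\,\phi(H_h)$, the replacement of $\phi(H_h)$ by pseudodifferential operators via Proposition~\ref{1-approx}, and the classical observation that the harmonic-oscillator flow (rotation by angle $2t$) does not carry the region $\{|x|\geq M\sqrt{15/128}\}\cap\{|x|^2+|\xi|^2\lesssim M^2\}$ into the ball $\{|x|\leq M/4\}$ for $|t|\leq T$ small. Where you diverge is the quantum implementation of this non-intersection: the paper builds an explicit WKB parametrix $w(t,x)=\int e^{i\Phi(t,x,\xi)/h}a(t,x,\xi,h)\hat u(\xi/h)\,d\xi/(2\pi h)^d$, solving the eikonal equation (the phase $\Phi(t,x,\xi)=\tfrac{\tan 2t}{2}(|x|^2+|\xi|^2)+\tfrac{x\cdot\xi}{\cos 2t}$ is exactly the Mehler phase, obtained via the lens transform) and the transport equations, reads off from the characteristics that the amplitudes $a_j(t)$ stay supported outside $B_x(0,\tfrac14)$, and controls the $O(h^{K+1})$ Duhamel remainder by energy/Strichartz bounds; you instead conjugate the cutoff through the propagator by an Egorov argument and conclude by disjoint-support composition calculus. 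For the harmonic oscillator your route is legitimate and arguably cleaner, since Weyl quantization enjoys exact metaplectic covariance, so Egorov holds with no remainder; your "Mehler kernel plus non-stationary phase" alternative is essentially the paper's parametrix in disguise. Two corrections, neither fatal: first, your worry about an Ehrenfest-time obstruction is unfounded — $e^{i(t/h)P_h}=e^{itP_h/h}$ with $|t|\leq T$ \emph{is} the standard semiclassical propagator at bounded classical time, so the ordinary Egorov theorem already applies, quadratic structure or not (the exactness you invoke is a bonus, not a rescue); second, writing $(1-\chi)(4|y|^2)\phi(P_h)=\mathrm{Op}_h^{w}(a)+O_{L^2\to L^2}(h)$ caps the final bound at $O(M^{-2})$ — to reach $M^{-K}$ for every $K$ you must expand to arbitrary order $N$ as Proposition~\ref{1-approx} permits, note that left multiplication by $(1-\chi)(4|y|^2)$ acts exactly on the symbols, and run the Egorov/disjoint-support step for each term $h^jOp_h\big((1-\chi)(4|y|^2)\Psi_j\big)$, all of whose symbols live in the same good region; this is routine but needs to be said.
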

Let us show that Proposition~\ref{1-clef} implies the bound  \eqref{1-term2}. From the Sobolev embedding
\begin{equation*}
\|u\|_{L^\infty( [a,b] ;  L^\infty (\R^d))} \leq C \|u\|_{L^\infty ( [a,b] ; \mathcal{H}^{ \frac{d}{2}+1 }(\R^d)) } 
\end{equation*}
and Proposition~\ref{1-clef}, we deduce for $ M \geq 10 N $ that
\begin{multline*}
\Big\| \chi \big( \frac{8 |x|^2}{M^2} \big) \Big( e^{itH} (1-\chi) \big( \frac{4 |x|^2}{M^2} \big) u_M\Big) \, \big(e^{itH } v_N\big)  \Big\|_{L^2([-T, T] ; L^2(\R^d))}  \leq 
 \\
\begin{aligned}
 &\leq  \| \chi \left( \frac{8 |x|^2}{M^2} \right) e^{itH} (1-\chi) \left( \frac{4 |x|^2}{M^2} \right) u_M\|_{L^2([-T, T] ; L^2(\R^d)) }    \| e^{itH} v_N\|_{L^\infty([-T, T] ; L^\infty(\R^d))}    \\ 
& \leq   C  \| \chi \left( \frac{8 |x|^2}{M^2} \right)  e^{itH} (1-\chi) \left( \frac{4 |x|^2}{M^2} \right) u_M\|_{L^2([-T, T] ; L^2(\R^d))  }    \|  v_N\|_{ \mathcal{H} ^{\frac{d}{2} +1 } ( \R^d )  }   \\
 & \leq   C_K M^{-K} N^{ \frac{d}{2} +1 } \|u\|_{L^2(\R^d)}   \|v\|_{L^2(\R^d)}  \\
& \leq  C_K M^{-K + \frac{d}{2} +1 }   \|u\|_{L^2(\R^d)} \|v\|_{L^2(\R^d)}. 
\end{aligned}
\end{multline*}
And so, it is enough to estimate, for $ M \geq 10 N$, the following term:
\begin{equation*}
\Big\| \Big(  (1-\chi ) \big( \frac{8 |x|^2}{M^2} \big) \Big) \Big( e^{itH} (1-\chi) \big( \frac{4 |x|^2}{M^2} \big) u_M \Big) \big( e^{itH} v_N    \big)\Big\|_{L^2([-T, T] ; L^2(\R^d))  }. 
\end{equation*}  
Thanks to Proposition~\ref{1-Strichartz}, we obtain that  for all $ R \geq 1$,
\begin{multline*}
\Big\| \big(e^{itH} v_N   \big)  (1-\chi ) \left( \frac{8 |x|^2}{M^2} \right)\Big( e^{itH} (1-\chi) \big( \frac{4 |x|^2}{M^2} \big) u_M \Big) \Big\|_{L^2([-T, T] ; L^2(\R^d))  } =\\
\begin{aligned} 
& =    \Big\| \langle x\rangle  ^ R \big( e^{itH} v_N\big)    \langle x\rangle ^{-R}(1-\chi ) \left( \frac{8 |x|^2}{M^2} \right) \Big( e^{itH} (1-\chi) \big( \frac{4 |x|^2}{M^2} \big) u_M \Big) \Big\|_{ L^2([-T, T] ; L^2(\R^d)) } \\ 
&  \leq  M^{-R}   \Big\| \langle x\rangle  ^ R \big(e^{itH} v_N \big) (1-\chi ) \left( \frac{8 |x|^2}{M^2} \right)  \Big( e^{itH} (1-\chi) \big( \frac{4 |x|^2}{M^2} \big) u_M \Big) \Big\|_{ L^2([-T, T] ; L^2(\R^d)) } \\
&  \leq  M^{-R}   \Big\| (1-\chi ) \left( \frac{8 |x|^2}{M^2} \right)   \langle x\rangle  ^ R \big( e^{itH} v_N\big) \Big\|_{ L^4([-T, T] ; L^4(\R^d)) }\\ 
&  \hspace*{5cm}  \cdot \Big\|  e^{itH} (1-\chi) \left( \frac{4 |x|^2}{M^2} \right) u_M \Big\|_{ L^4([-T, T] ; L^4(\R^d)) } \\
& \leq M^{-R}  \Big\| (1-\chi )\left( \frac{8 |x|^2}{M^2} \right)  \langle x\rangle  ^ R \big( e^{itH} v_N\big) \Big\|_{ L^4([-T, T] ; L^4(\R^d)) } \\ 
& \hspace*{5cm}  \cdot \Big\|  e^{itH} (1-\chi) \left( \frac{4 |x|^2}{M^2} \right) u_M \Big\|_{ L^4([-T, T] ; \W ^{  \frac{d-2}{4} , \frac{2d}{d-1} }(\R^d)) } \\
&\leq M^{-R}  \Big\|  \langle x\rangle  ^ R (1-\chi )\left( \frac{8 |x|^2}{M^2} \right) \big( e^{itH} v_N\big)   \Big\|_{L^\infty([-T, T] ; L^4(\R^d))}  M^{ \frac{d-2}{4}} \|  u_M \|_{L^2(\R^d) }.
  \end{aligned} 
  \end{multline*}
Then, since 
$$ Supp \left\{ (1-\chi )\left( \frac{8 |x|^2}{M^2} \right) \right\} \subset \left\{ |x|^2 \geq  \frac{M^2}{8}   \frac{15}{16} \right\} \subset \left\{ |x|^2  \geq 5 N^2 \right\}$$
from Proposition~\ref{1-propre}, we deduce
\begin{equation*}
\Big\| (1-\chi ) \left( \frac{8 |x|^2}{M^2} \right) \langle x\rangle  ^ R e^{itH} v_N \Big\|_{L^4([-T, T] ; L^4(\R^d))} \leq C  \|v\|_{L^2(\R^d)}.
\end{equation*}

\begin{proof}[Proof of Proposition \ref{1-clef}]
 Using  semi-classical analysis as in~\eqref{1-semi-classique2}, it suffices to prove the existence of a time $ T \in  ]0 , \frac{\pi}{4}[ $ such that for all $ N \geq 1 $, there exists a constant $ C_N > 0 $ such that for all $ u \in L^2(\R^d) $ and $ h \in ]0,1] $,
\begin{equation*}
 \big\| \chi( 8 |x|^2 ) e^{itH_h/h} (1-\chi) ( 4 |x|^2 ) \phi ( |x|^2 + |h\nabla|^2  ) u  \big\|_{L^2( [-T , T ] ; L^2 ( \R^d ) )}  \leq C_N h^{N} \|u\|_{L^2(\R^d)}.
 \end{equation*}
Using Proposition~\ref{1-approx}, it is sufficient to establish the following result: for any function $ g(x, \xi ) \in C_0^\infty( \R^d \times\R^d ) $ satisfying  $ Supp ( g )  \subset  \lbrace \frac{1}{8} \leq  |\xi| ^2 + |x|^2 \leq 4 \rbrace $ and all integer  $ N \geq 1 $, there exists a constant $ C_N > 0 $ such that for all  $ u \in L^2(\R^d) $ and $ h \in ]0,1] $,
\begin{equation*}
\big\| \chi( 8 |x|^2 ) e^{itH_h/h} (1-\chi) ( 4 |x|^2 ) Op_h( g ) u  \big\|_{L^2 ( [-T , T] ;L^2 ( \R^d ) )}  \leq C_N h^{N}  \|u\|_{L^2(\R^d)}.
\end{equation*}

Indeed, by  Proposition~\ref{1-approx},
\begin{multline*}
\big\| \chi( 8 |x|^2 ) e^{itH_h/h} (1-\chi) ( 4 |x|^2 ) \phi ( |x|^2 + |h\nabla|^2  ) u  \big\|_{L^2( [-T , T ] ; L^2 ( \R^d ) )} \leq \\
\begin{aligned}
&\leq   \big\| \chi( 8 |x|^2 ) e^{itH_h/h} (1-\chi) ( 4 |x|^2 ) [ \phi ( |x|^2 + |h\nabla|^2  ) u  -  \sum_{j=0}^{N-1} h^j Op_h(\Psi_j)  u ]   \big\|_{L^2( [-T , T ] ; L^2 ( \R^d ) )}  
\\ 
&\hspace{2cm}+ \sum_{j=0}^{N-1} h^j  \big \| \chi( 8 |x|^2 ) e^{itH_h/h} (1-\chi) ( 4 |x|^2 ) Op_h ( \Psi_j ( x , \xi) ) u  \big\|_{L^2([-T,T]; L^2( \R ^ d)) } 
\\ 
& \leq  C_N h^{N} \|u\|_{L^2 ( \R^d ) }  + \sum_{j=0}^{N-1} h^j  \big\| \chi( 8 |x|^2 ) e^{itH_h/h} (1-\chi) ( 4 |x|^2 ) Op_h( \Psi_j  ) u  \big\|_{L^2([-T,T]; L^2 ( \R ^ d)) } 
\\
& \leq C_N h^{N} \|u\|_{L^2(\R^d)}.
\end{aligned}
\end{multline*}
\end{proof}

\begin{lem}
 There exists a time $ T \in  ] 0 , \frac{\pi}{4}[ $ such that if $ g  \in  C_0^\infty( \R^d \times\R^d )$ satisfying $ Supp ( g ) \subset \lbrace \frac{1}{8} \leq  |\xi| ^2 + |x|^2 \leq 4 \rbrace $ then for any integer $ K \geq 1 $, there exists a constant $ C_K > 0 $ such that for all $ u \in L^2(\R^d) $ and $ h \in ]0,1] $,
\begin{equation*}
\big\| \chi( 8 |x|^2 ) e^{it H_h/h } (1-\chi) ( 4 |x|^2 ) Op_h( g  ) u  \big\|_{L^2 ( [-T , T] ; L^2 ( \R^d ) )}  \leq C_K h^{K} \|u\|_{L^2(\R^d)}.
\end{equation*}
\end{lem}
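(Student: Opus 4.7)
The plan is semiclassical propagation. Under the Hamiltonian flow $\phi_t(x_0,\xi_0) = (\cos(2t)x_0+\sin(2t)\xi_0,\,-\sin(2t)x_0+\cos(2t)\xi_0)$ of the principal symbol $p(x,\xi)=|x|^2+|\xi|^2$, the effective phase-space support of $(1-\chi)(4|x_0|^2)g(x_0,\xi_0)$ cannot reach the $x$-region where $\chi(8|x|^2)\neq 0$ for short times. The quantum evolution $e^{itH_h/h}$ transports pseudodifferential symbols along $\phi_t$ modulo $O(h^\infty)$ remainders (Egorov's theorem), so the composition with the multiplier $\chi(8|x|^2)$ produces an operator whose full symbol expansion is supported away from the cut-off, yielding the $O(h^K)$ bound.

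\textbf{Step 1: geometric separation.} On the effective support $\{|x_0|^2\geq 15/128,\ |x_0|^2+|\xi_0|^2\leq 4\}$ of $(1-\chi)(4|x_0|^2)\,g(x_0,\xi_0)$, one has $|x_0|\geq\sqrt{15/128}$ and $|x_0|,|\xi_0|\leq 2$, so $|x(t)-x_0|\leq 8|t|$ for $|t|\leq 1$. Choosing $T\in\,]0,\pi/4[$ small enough yields $|x(t)|>1/4$ for all $|t|\leq T$ on this support. Since $\mathrm{supp}\,\chi(8|\cdot|^2)\subset\{|x|^2\leq 1/16\}$, the $x$-projection of $\phi_t(\mathrm{supp}(g\cdot(1-\chi)(4|\cdot|^2)))$ stays disjoint from $\mathrm{supp}\,\chi(8|\cdot|^2)$ throughout $|t|\leq T$.

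\textbf{Step 2: Egorov parametrix and conclusion.} Fix $K\geq 1$. By the standard iterative Egorov construction (solving transport equations order by order in $h$ along $\phi_t$, using that the initial symbol is compactly supported in phase space), one produces compactly supported symbols $a_j(t,\cdot,\cdot)$ for $j=0,\dots,K-1$ with $\mathrm{supp}\,a_j(t,\cdot,\cdot)\subset\phi_t(\mathrm{supp}(g\cdot(1-\chi)(4|\cdot|^2)))$, together with a remainder $R_K(t,h)$ bounded in $L^2\to L^2$ uniformly for $|t|\leq T$ and $h\in\,]0,1]$, such that
\[
e^{itH_h/h}\bigl((1-\chi)(4|x|^2)\,Op_h(g)\bigr)=\Bigl(\sum_{j=0}^{K-1}h^j Op_h(a_j(t))+h^K R_K(t,h)\Bigr)\,e^{itH_h/h}.
\]
Composing on the left by the multiplier $\chi(8|x|^2)$, Step 1 forces $\chi(8|x|^2)\cdot a_j(t,x,\xi)\equiv 0$ pointwise; moreover all derivatives of $\chi(8|\cdot|^2)$ vanish on the $x$-support of $a_j(t,\cdot,\cdot)$, so each Moyal product vanishes identically and $\chi(8|x|^2)\,Op_h(a_j(t))\equiv 0$. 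Only the remainder contributes, giving the pointwise $L^2\to L^2$ bound $C_K h^K$ and, by integration in time, the stated estimate on $[-T,T]\times\R^d$.

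\textbf{Main obstacle.} The technical heart is the parametrix construction with uniform $L^2\to L^2$ control of the remainder $R_K(t,h)$. While the Hamiltonian $p$ is quadratic, so the classical flow is exactly the rotation above (and Egorov could in principle be replaced by an explicit Mehler-type formula), one must nonetheless propagate the symbol asymptotics uniformly on the fixed time interval $[-T,T]$, tracking derivatives of the compactly supported $a_j$ and applying Calder\'on--Vaillancourt (or an equivalent $L^2$-continuity result) to $R_K$. Everything else---the geometric separation and the exact vanishing of the composed symbols---is elementary once the parametrix is set up.
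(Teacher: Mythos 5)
Your proposal is correct, but it implements the propagation argument differently from the paper. The paper does not invoke Egorov's theorem: it builds an explicit WKB/FIO parametrix for $e^{itH_h/h}(1-\chi)(4|x|^2)Op_h(g)u$, solving the Hamilton--Jacobi equation exactly (the phase is $\Phi(t,x,\xi)=\frac{\tan(2t)}{2}(|x|^2+|\xi|^2)+\frac{x\cdot\xi}{\cos(2t)}$), solving the transport equations for the amplitudes $a_j$ by characteristics, and then observing --- exactly your Step 1, with the same threshold $\sqrt{15}/(8\sqrt 2)=\sqrt{15/128}$ versus $1/4$ and a small $T$ --- that all $a_j(t,\cdot,\xi)$ vanish on the support of $\chi(8|x|^2)$ for $|t|\leq T$; the error is then the Duhamel term $h^{K+1}\chi(8|x|^2)\int_0^t e^{i(t-s)H_h/h}f(s)\,ds$, controlled by an energy/Strichartz estimate together with an induction showing $\Delta a_K\in L^1_t([-T,T];L^2_x L^2_\xi)$. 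Your route conjugates the observable instead of constructing the propagator's parametrix: this avoids the explicit phase computation, but shifts the burden onto the Egorov expansion with a remainder that is $O(h^K)$ in $L^2\to L^2$ uniformly on $[-T,T]$, which you correctly identify as the technical heart; for this quadratic Hamiltonian that step is standard (and could even be made exact at leading order via metaplectic covariance of the Weyl calculus), so the two proofs represent comparable work resting on the identical geometric non-propagation input. Two minor points: your claim that $\chi(8|x|^2)\,Op_h(a_j(t))\equiv 0$ is literally exact only for the left (standard) quantization used in the paper, where the composed symbol is $\chi(8|x|^2)a_j(t,x,\xi)$; for the Weyl calculus the disjointness of supports gives only an $O(h^\infty)$ operator, which of course still suffices after absorbing it into the remainder. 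Also, your argument yields a pointwise-in-$t$ operator bound $C_Kh^K$, slightly stronger than the stated $L^2_t([-T,T])$ estimate, which you then recover by integrating in time.
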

\begin{proof} We define
\begin{equation*}
w(s,x) = \int_{\R^d} e^{ \frac{i}{h} \Phi(s,x,\xi )  }  a ( s,x, \xi , h ) \hat{u} ( \frac{\xi}{h} ) \frac{d \xi }{ ( 2 \pi h )^d }
\end{equation*}  where
\begin{equation*}
a(s,x, \xi,h) = \sum_{j=0} ^K h^j a_j ( s,x,\xi).
\end{equation*}
Assume that
\begin{equation*}
\left\{
    \begin{aligned} \label{1-hamilton}
     &   \Phi ( 0, x, \xi )= x \cdot \xi, \\
   &     \partial_s  \Phi - | \nabla \Phi |^2 - |x|^2 = 0, 
    \end{aligned}
\right. 
\end{equation*}

\begin{equation*}
\left\{
    \begin{aligned} 
    & a_0 ( 0, x, \xi )= (1-\chi)( 4 |x|^2 ) g ( x, \xi  ), \\
    &    \partial_s a_0 - 2 \nabla a_0  \cdot \nabla \Phi -  a_0 \Delta  \Phi  = 0,
   \end{aligned} 
\right. 
\end{equation*}
and for $ 1 \leq j \leq K $  
\begin{equation*}
\left\{
 \begin{aligned} 
 &        a_j ( 0, x, \xi )= 0,  \\
    &    \partial_s a_j - 2    \nabla a_j \cdot  \nabla \Phi     - a_j  \Delta  \Phi =  -i\Delta a_{j-1}.
   \end{aligned} 
\right. 
\end{equation*}
Then 
\begin{eqnarray*}
 i h  \partial _s w +( -h^2 \Delta  + |x|^2) w & =& - h^{K+2} \int_{\R^d} e^{ \frac{i}{h} \Phi(s,x,\xi )  } \Delta ( a_K ( s,x, \xi) ) \hat{u} ( \frac{\xi}{h} ) \frac{d \xi }{ ( 2 \pi h )^d }  
 \\  
 &:=& h^{K+2 } f.
 \end{eqnarray*} 
 Since $w_0=(1-\chi) ( 4 |x|^2 ) Op_h( g ( x, \xi ) ) u$, hence 
 $$w=e^{it H_h/h } (1-\chi) ( 4 |x|^2 ) Op_h( g ( x, \xi  ) ) u- i h^{K+1} \int_0^t e^{i(t -s ) H_h/h } f(s) ds,$$
which in turn implies 
\begin{multline*} 
 \chi( 8 |x|^2 ) e^{it H_h/h } (1-\chi) ( 4 |x|^2 ) Op_h( g ( x, \xi  ) ) u = \\
=\chi(8|x|^2) w ( t, x)  - i h^{K+1} \chi ( 8|x|^2 ) \int_0^t e^{i(t -s ) H_h/h } f(s) ds.  
\end{multline*}
Let us notice that if $ \Psi $ is a solution of the equation $ \partial _t \Psi + | \nabla \Psi | ^2 = 0 $ with initial data $ \Psi ( 0,x,\xi) = x \cdot \xi $ then $ \Phi(t,x,\xi ) = \Psi (- \frac{  \tan 2 t }{2 } , \frac{x}{ \cos 2t } , \xi ) + \frac{|x|^2 \tan 2 t }{ 2 } $ is a solution of the equation $ \partial_t \Phi - | \nabla \Phi |^2 - |x|^2 = 0 $ with the same initial data.

By the method of characteristics, we obtain
\begin{equation*}
\Psi ( t ,x, \xi ) = - t | \xi |^2 +x\cdot\xi,
\end{equation*}
then we deduce that
\begin{equation*}
 \Phi ( t, x , \xi ) = \frac { \tan(2t) }{2} ( |\xi|^2  + |x|^2 ) + \frac{x \cdot\xi}{\cos( 2t) }.
\end{equation*}
Hence
\begin{equation*}
 \nabla \Phi = \frac{ \xi}{ \cos ( 2t) } + x \tan ( 2 t )  \ \mbox{ and }  \  \Delta \Phi =  d \tan( 2t ). 
\end{equation*}
Using the method of characteristics, we obtain 
\begin{equation} \label{1-a0}
 a_0 \big( t , x - 2 \int_0^t \nabla \Phi , \xi \big) = \frac{ a_0 ( 0 ,x,\xi ) }{ | \cos 2 t |^{  \frac{d}{2}  } },   
 \end{equation}
and
\begin{equation} \label{1-aj}
a_j \big( t, x - 2 \int_0^t \nabla \Phi , \xi \big) = -i \int_0^t  \bigg| \frac{ \cos ( 2 s ) }{ \cos ( 2 t )  } \bigg|^{  \frac{d}{2} }   \Delta a_{j-1} \big( s,x - 2 \int_0^s \nabla \Phi , \xi \big) ds.
\end{equation}
Now, for any $ \xi \in \R^d $ and $ | x | \leq \frac{1}{2} $,
\begin{equation*}
a_0 ( 0 ,x,\xi ) =0.
\end{equation*}
As a consequence, for all $ \xi \in \R^d $, $ t \in ] - \frac{\pi}{4} , \frac{\pi}{4}  [ $, $ | x | \leq \frac{  \sqrt{15} }{8 \sqrt{2} } $ and $ j \in \N$
\begin{equation*}
a_j \big( t, x - 2 \int_0^t \nabla \Phi , \xi \big) = 0 .
\end{equation*}
We have  $ \int _0^t \nabla \Phi =  \xi F ( t) - \frac{x \log \cos (2 t ) }{2} $ with $F$ a continuous function satisfying $ F ( 0) = 0 $. Thus, for all  $ \epsilon > 0 $, there exists a  time $ T \in ]0, \frac{\pi}{4}  ] $ such that if $ |t| \leq T $ then  $ |F(t)| \leq \epsilon $ and $ | \log \cos 2 t  | \leq \epsilon $.

Observe that
\begin{equation*}
  y   = x - 2 \int_0^t \nabla \Phi = x ( 1+ \log \cos 2t ) - 2 \xi F(t)
  \end{equation*}
can be solved by
\begin{equation*}
 x = \frac{y+2 \xi F(t)}{1+\log \cos 2t }.
  \end{equation*}
So if $ | y | \leq \frac{1}{ 4 } $, $ |\xi|^2 \leq  4 $ and $ |t| \leq T $ then  $ |x| \leq \frac{1/ 4 + 4 \epsilon}{1-\epsilon}  \leq \frac{  \sqrt{15} }{ 8 \sqrt{2}  } $, if $ \epsilon <1$ is small enough.

This implies that for any $ j \in \N$ and $ |t| \leq T $
\begin{equation*}
 Supp ( a_j( t) )  \subset  B_x \big( 0, \frac{1}{ 4 }  \big) ^c   \times B_\xi \left( 0, 2  \right) .
\end{equation*}
Therefore, if $ |t| \leq T $, since  $ Supp ( \chi(8 |x|^2) ) \subset B_x( 0, \frac{1}{ 4 }) $, we deduce that 
\begin{equation*}
\chi( 8 |x|^2 ) e^{it H_h/h} (1-\chi) ( 4 |x|^2 ) Op_h( g(x,\xi) )  u = - ih^{K+1} \chi ( 8|x|^2 ) \int_0^t e^{i(t-s) H_h/h} f(s)  ds.
\end{equation*}
Then, by Theorem \ref{1-Strichartz}, we obtain
\begin{multline*}
 \| \chi( 8 |x|^2 ) e^{itH_h/h } (1-\chi) ( 4 |x|^2 ) Op_h( g ( x,\xi ) ) u \|_{L^2( [-T,T] ;  L^2(\R^d))}   \leq \\
\begin{aligned}
& \leq  h^{K+1} \Big\|    \chi ( 8|x|^2 ) \int_0^t e^{i(t-s)  H_h/h } f(s) ds   \Big\|_{L^2( [-T,T] ; L^2(\R^d)) } \\
& \leq  h^{K+1}  \|f\|_{L^1( [-T,T  ] ;  L^2(\R^d)) }  \\
& \leq  h^{K+1} \| \Delta a_K \|_{L^1_t( [-T,T ]  ; L^2_x(\R^d ; L ^2_\xi(\R^d))) }   \|u\|_{L^2(\R^d)}.
\end{aligned}
\end{multline*}
The lemma is therefore proved if  $ \Delta a_K \in L^1_t\big( [-T,T];  L^2_x(\R^d  ; L ^2_\xi(\R^d))\big) $. \\

We prove  by induction on  $ K \in \N $, that for all  $ \alpha \in \N^d$,
$$  \partial^\alpha_x  a_K \in L^1_t\big( [-T,T ] ; L^2_x(\R^d  ; L ^2_\xi(\R^d))\big).$$
 For $ K=0 $, using \eqref{1-a0}, we see by change of variables that 
$$ \partial^\alpha_x  a_0\in L^1_t\big( [-T,T ] ; L^2_x(\R^d  ; L ^2_\xi(\R^d))\big) $$
 if $ \partial^\alpha_x a_0(0) \in L^2_x(\R^d, L ^2_\xi(\R^d)) $.  But  $ a_0 ( 0 ) \in C ^\infty (\R^d \times \R^d )$ with $ Supp ( a_0 ( 0 ) ) \subset \big\{ (x ,\xi ) \, :\,  |x|^2 \leq 1 , |\xi|^2 \leq  4  \big\} $ and the case $ K= 0 $ is obvious.
 
 Let us assume the result established at rank $ K -1 $ and show it at rank $ K $. Using~\eqref{1-aj}, we note that $ \partial^\alpha_x  a_K \in L^1_t\big( [-T,T ] ;L^2_x(\R^d  ; L ^2_\xi(\R^d))\big)$ if  $ \partial^{\alpha+2}_x  a_{K-1} \in L^1_t\big( [-T,T ] ; L^2_x(\R^d  ; L ^2_\xi(\R^d))\big)$. This last statement being clear by induction hypothesis. 
\end{proof}
\subsection{Bilinear estimates and Bourgain spaces}
The aim of this section is to write the bilinear estimate of Theorem \ref{1-bilis} in Bourgain spaces. More precisely, we establish the following result: 

 \begin{prop}
Let $ \psi \in C_0^\infty (\R) $. There exists $ \delta_0 \in ]0 , \frac{1}{2} ]$ such that for all  $ \delta \in ]0, \delta_0 ] $, there exist  $ b' < \frac{1}{2} $ and a constant $ C > 0 $ such that for all  dyadic numbers $ N,M \geq 1$ and $u_0, u, v  \in \mathcal{S}'(\R^d)$,
\begin{multline}\label{1-bilibourgain1}
  \|  \Delta_N ( v )   \Delta_M (u) \|_{L^2 ( \R ; L^2 ( \R^d ))  } \leq \\
\leq  C  \min ( N,M)^{ \frac{d-2}{2} +\delta }   \left( \frac{ \min ( N,M)  }{ \max ( N,M )}   \right) ^{1/2-\delta }   \| \Delta_N(v) \|_{  \X^{0,b'}} \| \Delta_M(u) \|_{  \X^{0,b'}}.
\end{multline}
and 
\begin{multline}\label{1-bilibourgain2}
  \|  \Delta_N \big( \psi(t)  e^{-itH}  u_0 \big)  \Delta_M (u) \|_{L^2 ( \R ; L^2 ( \R^d ))  } \leq \\
 \leq  C  \min ( N,M)^{ \frac{d-2}{2} +\delta }   \left( \frac{ \min ( N,M)  }{ \max ( N,M )} \right) ^{1/2-\delta }  \| \Delta_N(u_0) \|_{ L^2(\R^d)} \| \Delta_M(u) \|_{  \X^{0,b'}}.
\end{multline}
\end{prop}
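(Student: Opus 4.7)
The plan is to transfer the bilinear estimate of Theorem~\ref{1-bilis} to the Bourgain setting. Starting from the integral representation, valid for $u\in\X^{0,b}$,
$$u(t,x) = \frac{1}{2\pi}\int_\R e^{it\tau}\,e^{-itH} F^u_\tau(x)\,d\tau,\qquad \|u\|_{\X^{0,b}}^2 = \int_\R \langle\tau\rangle^{2b}\|F^u_\tau\|_{L^2_x}^2\,d\tau,$$
where $F^u_\tau(x)=\widehat{e^{\cdot H}u}(\tau,x)$ is the time Fourier transform of $e^{itH}u$, and the analogous decomposition for $v$, gives
$$\Delta_N u(t)\,\Delta_M v(t) = \frac{1}{(2\pi)^2}\iint e^{it(\tau+\sigma)}\,[e^{-itH}\Delta_N F^u_\tau]\,[e^{-itH}\Delta_M F^v_\sigma]\,d\tau\,d\sigma.$$
Via a standard transfer principle (Minkowski combined with smooth time-localisation using Proposition~\ref{1-bourgain4} to preserve the $\X^{0,b}$ norms, and the $\pi$-pseudo-periodicity $e^{i\pi H}=e^{i\pi d}\operatorname{Id}$ to reconstruct the global-in-time $L^2$ norm from Theorem~\ref{1-bilis}), applying the latter pointwise in $(\tau,\sigma)$ to the bracketed free flow product and then Cauchy--Schwarz in $(\tau,\sigma)$ (which requires $b>\tfrac12$) yields
$$\|\Delta_N u\cdot\Delta_M v\|_{L^2(\R\times\R^d)} \leq C\,K(N,M)\,\|u\|_{\X^{0,b}}\|v\|_{\X^{0,b}}, \qquad b>\tfrac12,$$
with $K(N,M)=\min(N,M)^{(d-2)/2}(\min(N,M)/\max(N,M))^{1/2-\delta}$.

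The main obstacle is the passage from $b>\tfrac12$ to the required $b'<\tfrac12$: the Cauchy--Schwarz step fails at the endpoint $b=\tfrac12$, since $\int\langle\tau\rangle^{-1}d\tau$ diverges. This is precisely where the strict gain $\delta>0$ in Theorem~\ref{1-bilis} becomes essential. I would close the gap by interpolating the $b>\tfrac12$ bound above with a crude bound. Proposition~\ref{1-bourgain2} at $\theta=\tfrac12$, combined with the Sobolev embedding $\mathcal{H}^{d/4}(\R^d)\hookrightarrow L^4(\R^d)$ and a Bernstein-type estimate for $\Delta_N$, gives for any $b''>\tfrac14$,
$$\|\Delta_N u\cdot\Delta_M v\|_{L^2_{t,x}} \leq \|\Delta_N u\|_{L^4_{t,x}}\|\Delta_M v\|_{L^4_{t,x}} \leq C(NM)^{d/4}\,\|u\|_{\X^{0,b''}}\|v\|_{\X^{0,b''}}.$$
Linear interpolation in $b$ between these two estimates exchanges a small amount $\eta>0$ of modulation regularity for a derivative loss of size $(NM)^{O(\eta)}$; choosing $\eta$ small enough in terms of $\delta$ so that this loss is absorbed into the factor $\min(N,M)^\delta$ appearing in \eqref{1-bilibourgain1} produces the bound at any $b'=\tfrac12-\eta<\tfrac12$, completing the proof of \eqref{1-bilibourgain1}.

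Finally, \eqref{1-bilibourgain2} follows immediately. A direct computation using the explicit form of $F^u_\tau$ shows that $\|\psi(t)\,e^{-itH}u_0\|_{\X^{0,b}}=\|\psi\|_{H^b(\R)}\|u_0\|_{L^2(\R^d)}$ for every $b\geq 0$. Substituting $u=\psi(t)\,e^{-itH}u_0$ into \eqref{1-bilibourgain1} and using that $\Delta_N$ commutes with $e^{-itH}$ replaces $\|\Delta_N(\psi\,e^{-itH}u_0)\|_{\X^{0,b'}}$ by a constant multiple of $\|\Delta_N u_0\|_{L^2}$, which is exactly \eqref{1-bilibourgain2}.
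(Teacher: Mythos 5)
Your strategy is essentially the paper's: transfer Theorem~\ref{1-bilis} to Bourgain spaces at regularity $b>\frac12$ (your Fourier representation in $\tau$ plus Cauchy--Schwarz is exactly the transfer principle the paper borrows from \cite{burq1}, and the passage from the time interval of Theorem~\ref{1-bilis} to all of $\R$ is the reduction through Proposition~\ref{1-bilibourgain5} taken from \cite{burq3}), then establish a crude bilinear bound at some $b''$ slightly above $\frac14$, and finally interpolate with a small weight to land at $b'<\frac12$, absorbing the losses into the $\delta$'s. Two caveats on your write-up. First, your crude bound $\|\Delta_N u\,\Delta_M v\|_{L^2}\le C (NM)^{d/4}\|\Delta_N u\|_{\X^{0,b''}}\|\Delta_M v\|_{\X^{0,b''}}$ loses a power of the \emph{high} frequency as well, and a factor $\max(N,M)^{O(\eta)}$ cannot be ``absorbed into $\min(N,M)^{\delta}$'' as you assert. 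It can be absorbed, but only by spending part of the gain in $(\min/\max)$: since Theorem~\ref{1-bilis} holds for every $\delta'>0$, pick $\delta'<\delta$ and $\theta$ so small that $(1-\theta)(\tfrac12-\delta')-\theta\tfrac d4\ge\tfrac12-\delta$ and $(1-\theta)\tfrac{d-2}2+\theta\tfrac d2\le\tfrac{d-2}2+\delta$; alternatively, do as the paper does and put the whole Sobolev loss on the \emph{low}-frequency factor (estimate it in $L^4_tL^\infty_x$ via $\W^{d/2+\epsilon,2}$, the other factor in $L^4_tL^2_x$), so the crude loss sits only on $\min(N,M)$. Either repair is one line, but as written the absorption step is not correct. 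Second, the globalization remark is off: by the very $\pi$-periodicity you invoke, the product of two free evolutions has \emph{infinite} global $L^2_t L^2_x$ norm, so periodicity cannot ``reconstruct'' the global norm; what yields the global-in-time estimate at $b>\frac12$ is the decomposition of $\R$ into unit intervals, time-translation invariance of the $\X^{0,b}$ norm (uniformity of the local constant), and almost orthogonality $\sum_j\|\chi(\cdot-j)u\|^2_{\X^{0,b}}\lesssim\|u\|^2_{\X^{0,b}}$, i.e.\ the reduction of Proposition~\ref{1-bilibourgain3} to Proposition~\ref{1-bilibourgain5}.

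On the other hand, your derivation of \eqref{1-bilibourgain2} is a genuine simplification relative to the paper: since $e^{itH}\big(\psi(t)e^{-itH}u_0\big)=\psi(t)u_0$, one has exactly $\|\Delta_N(\psi(t)e^{-itH}u_0)\|_{\X^{0,b'}}=c\,\|\psi\|_{H^{b'}(\R)}\|\Delta_N u_0\|_{L^2(\R^d)}$, so \eqref{1-bilibourgain2} follows at once from \eqref{1-bilibourgain1}, whereas the paper runs a separate parallel interpolation (Proposition~\ref{1-bilibourgain4} together with two crude estimates involving $\mathcal{H}^{d/2+\epsilon}$ norms of $u_0$). With the two repairs above, your proof is complete and follows the same architecture as the paper's.
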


To prove these results, we adapt the proof of \cite[Lemma 4.4]{burq3}. Let us begin by noting that it is sufficient to prove the following two propositions:

 \begin{prop} \label{1-bilibourgain3}
For all $ b \in ] \frac{1}{2} , 1 ] $ and $ \delta \in ]0, \frac{1}{2} ] $, there exists a constant $ C > 0 $ such that for all  dyadic numbers $ N,M \geq 1$ and $u, v  \in \mathcal{S}'(\R^d)$,
\begin{multline*}
 \|  \Delta_N ( v )   \Delta_M (u) \|_{L^2 ( \R ; L^2 ( \R^d ))  } 
\leq  \\
\leq C   \min ( N,M)^{ \frac{d-2}{2} }   \left( \frac{ \min ( N,M)  }{ \max ( N,M )} \right) ^{1/2-\delta }  \| \Delta_N(v) \|_{  \X^{0,b}} \| \Delta_M(u) \|_{  \X^{0,b}}.
\end{multline*}
\end{prop}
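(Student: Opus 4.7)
The plan is the standard transference (extension) argument that promotes the linear bilinear Strichartz estimate of Theorem~\ref{1-bilis} to the Bourgain space $\X^{0,b}$ with $b>1/2$. One first represents any element $w\in\X^{0,b}$ spectrally localised at dyadic frequency $N$ as a continuous superposition of modulated linear solutions: writing $\Delta_Nv(t,x)=\sum_n\psi(\lambda_n^2/N^2)c_n^v(t)h_n(x)$, inverting in time, and changing variables $\sigma=\tau+\lambda_n^2$, one obtains
\begin{equation*}
\Delta_Nv(t,x)=\frac{1}{2\pi}\int_\R\langle\sigma\rangle^{-b}e^{it\sigma}e^{-itH}V_\sigma(x)\,d\sigma,
\end{equation*}
with $V_\sigma(x)=\sum_n\psi(\lambda_n^2/N^2)\langle\sigma\rangle^b\widehat{c_n^v}(\sigma-\lambda_n^2)h_n(x)$ still spectrally localised near $N$ and satisfying the crucial identity $\int_\R\|V_\sigma\|_{L^2(\R^d)}^2\,d\sigma=\|\Delta_Nv\|_{\X^{0,b}}^2$. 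An analogous identity produces amplitudes $U_{\sigma'}$ for $\Delta_Mu$.

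Plugging both representations into the product and applying Cauchy--Schwarz in $(\sigma,\sigma')$ pointwise in $(t,x)$, with the integrability $\int\langle\sigma\rangle^{-2b}\,d\sigma<\infty$ provided by the assumption $b>1/2$, yields
\begin{equation*}
|\Delta_Nv(t,x)\,\Delta_Mu(t,x)|^2\leq C\iint|e^{-itH}V_\sigma(x)|^2|e^{-itH}U_{\sigma'}(x)|^2\,d\sigma\,d\sigma'.
\end{equation*}
Integrating in $(t,x)$ and invoking Theorem~\ref{1-bilis} for the propagator $e^{-itH}$ (which enjoys the same bilinear estimate as $e^{itH}$ by complex conjugation) gives, uniformly in $(\sigma,\sigma')$, a bound by $\min(N,M)^{d-2}(\min(N,M)/\max(N,M))^{1-2\delta}\|V_\sigma\|_{L^2}^2\|U_{\sigma'}\|_{L^2}^2$; integrating in $(\sigma,\sigma')$ and using the identity from Step~1 completes the bilinear bound with the factor $\|\Delta_Nv\|_{\X^{0,b}}^2\|\Delta_Mu\|_{\X^{0,b}}^2$.

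The main obstacle is that Theorem~\ref{1-bilis} supplies the bilinear estimate only on a bounded time interval, whereas the Proposition demands $L^2(\R\times\R^d)$ on the left: since $|e^{-itH}|$ is $\pi$-periodic up to a phase, a naive iteration over translates of $[-\epsilon,\epsilon]$ diverges. The resolution is to marry the transference with a time-localisation. Introducing a smooth partition of unity $\{\chi(t-k)\}_{k\in\Z}$ with $\chi\in C_0^\infty$ supported in $[-1,1]$ and $\sum_k\chi(t-k)^2=1$, one writes
\[
\|\Delta_Nv\,\Delta_Mu\|_{L^2(\R\times\R^d)}^2=\sum_k\bigl\|\chi(t-k)\Delta_Nv\cdot\chi(t-k)\Delta_Mu\bigr\|_{L^2([k-1,k+1]\times\R^d)}^2,
\]
applies the transference argument on each translate to the cut-off factors (whose $\X^{0,b}$-norms are controlled via the translation invariance of the $\X^{0,b}$ norm in $t$ together with Proposition~\ref{1-bourgain4}), and sums using the almost-orthogonality $\sum_k\|\chi(t-k)w\|_{\X^{0,b}}^2\lesssim\|w\|_{\X^{0,b}}^2$, valid for $b\leq 1$. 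This delivers the global-in-time bound without degrading the bilinear factor.
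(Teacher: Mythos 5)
Your proposal is correct and follows essentially the same route as the paper, which reduces to the time-localized estimate (Proposition~\ref{1-bilibourgain5}) by the partition/almost-orthogonality argument of \cite[Lemma 4.4]{burq3} --- your cutoff step, which is where $b\leq 1$ is used --- and then transfers Theorem~\ref{1-bilis} to $\X^{0,b}$, $b>\frac12$, via the superposition representation of \cite[Lemma 2.1]{burq1}, i.e.\ exactly your Cauchy--Schwarz argument in the modulation variables. One cosmetic fix: with $\sum_k\chi(t-k)^2=1$ your displayed identity actually involves $\chi^4$ and should be an inequality up to a constant coming from the finite overlap (or simply normalize $\sum_k\chi(t-k)^4=1$), which changes nothing in the conclusion.
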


 \begin{prop} \label{1-bilibourgain4}
Let $ \psi \in C_0^\infty (\R) $ then for all $ b \in ] \frac{1}{2} , 1 ] $ and $ \delta \in ]0, \frac{1}{2} ] $, there exists a constant $ C > 0 $ such that for all  dyadic numbers $ N,M \geq 1$ and $u_0, v  \in \mathcal{S}'(\R^d)$
\begin{multline*}
  \|  \Delta_N \big( \psi(t) e^{-itH} u_0  \big)   \Delta_M (u) \|_{L^2 ( \R ; L^2 ( \R^d ))  }  \leq \\
  \leq C  \min ( N,M)^{ \frac{d-2}{2} }   \left( \frac{ \min ( N,M)  }{ \max ( N,M )} \right) ^{1/2-\delta }  \| \Delta_N(u_0) \|_{ L^2 (\R^d)} \| \Delta_M(u) \|_{  \X^{0,b}}.
\end{multline*}
\end{prop}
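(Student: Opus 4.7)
\textbf{Proof plan for Proposition \ref{1-bilibourgain4}.} The strategy is to deduce Proposition \ref{1-bilibourgain4} directly from Proposition \ref{1-bilibourgain3} by showing that the modulated free evolution $w(t,x) := \psi(t) e^{-itH} u_0(x)$ belongs to $\X^{0,b}$ with a Bourgain norm controlled purely by $\|u_0\|_{L^2(\R^d)}$. Once this is established the two propositions match up verbatim.

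To carry this out, I unpack the definition of the Bourgain norm. Using the orthogonality of the Hermite basis and Plancherel in time, one has for any distribution $w$
\begin{equation*}
\|w\|_{\X^{0,b}} = \|e^{itH} w(t,\cdot)\|_{H^b_t L^2_x},
\end{equation*}
since the weight $\langle \tau+\lambda_n^2\rangle^b$ in the definition of $\X^{0,b}$ corresponds (on the Fourier side in $t$) to translating by $-\lambda_n^2$, which is exactly what conjugation by $e^{itH}$ performs on each spectral block $P_n w$. Applied to $w(t,x)=\psi(t) e^{-itH} u_0(x)$ this gives $e^{itH} w(t,x) = \psi(t) u_0(x)$, a tensor product, hence
\begin{equation*}
\|w\|_{\X^{0,b}} = \|\psi\|_{H^b(\R)}\,\|u_0\|_{L^2(\R^d)}.
\end{equation*}
Because $\Delta_N=\psi(H/N^2)$ is a function of $H$, it commutes with $e^{-itH}$ and with multiplication by $\psi(t)$; the same computation applied to $\Delta_N u_0$ therefore yields
\begin{equation*}
\bigl\|\Delta_N\bigl(\psi(t) e^{-itH} u_0\bigr)\bigr\|_{\X^{0,b}} = \|\psi\|_{H^b(\R)}\,\|\Delta_N u_0\|_{L^2(\R^d)}.
\end{equation*}

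With this identity in hand, the proof is complete: I apply Proposition \ref{1-bilibourgain3} to the pair $\bigl(\psi(t)e^{-itH}u_0,\,u\bigr)$, which bounds the left-hand side of the desired inequality by the product of the two $\X^{0,b}$ norms; replacing the first factor by $\|\psi\|_{H^b}\|\Delta_N u_0\|_{L^2}$ and absorbing the $\psi$-dependent constant into $C$ gives exactly Proposition \ref{1-bilibourgain4}. There is no real obstacle here once Proposition \ref{1-bilibourgain3} is in hand; the only point that requires any care is the identification $\|w\|_{\X^{0,b}}=\|e^{itH}w\|_{H^b_t L^2_x}$, which rests on the orthogonality of $\{P_n w\}_n$ in $L^2_x$ and on the fact that $e^{itH}$ acts on the $n$-th spectral block as multiplication by $e^{it\lambda_n^2}$ — precisely what the weight $\langle \tau+\lambda_n^2\rangle^b$ is designed to compensate.
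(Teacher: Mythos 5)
Your argument is correct, and it takes a slightly different route from the paper. The paper disposes of Proposition \ref{1-bilibourgain4} (together with Proposition \ref{1-bilibourgain5}) in one stroke, by invoking the transfer principle of Burq--G\'erard--Tzvetkov (\cite[Lemma 2.1]{burq1}) applied to the bilinear estimate for two free evolutions, Theorem \ref{1-bilis}; the case of a truncated free solution measured in $L^2_x$ is handled there directly by the transfer lemma, without passing through Proposition \ref{1-bilibourgain3}. You instead take Proposition \ref{1-bilibourgain3} as a black box and reduce to it via the exact identity $\|\psi(t)e^{-itH}u_0\|_{\X^{0,b}}=\|\psi\|_{H^b(\R)}\|u_0\|_{L^2(\R^d)}$, using that $\Delta_N$ commutes with $e^{-itH}$ and with the time cutoff. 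This is legitimate and non-circular, since in the paper Proposition \ref{1-bilibourgain3} is established through Proposition \ref{1-bilibourgain5} and the transfer lemma, independently of Proposition \ref{1-bilibourgain4}; and your key identity is correctly justified (orthogonality of the spectral blocks $P_n w$ in $L^2_x$, the weight $\langle\tau+\lambda_n^2\rangle^b$ matching the modulation $e^{it\lambda_n^2}$ of $e^{itH}P_n$, and Plancherel to exchange $H^b_tL^2_x$ with $L^2_xH^b_t$). What your route buys is economy and transparency: no second appeal to the external transfer lemma, and the constant's dependence on $\psi$ (through $\|\psi\|_{H^b}$) is explicit. What the paper's route buys is uniformity: Propositions \ref{1-bilibourgain4} and \ref{1-bilibourgain5} both drop out of the same abstract lemma applied to Theorem \ref{1-bilis}, with no need to first have the global-in-time Bourgain-space estimate in hand. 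One cosmetic caution: the letter $\psi$ denotes both the dyadic cutoff in the definition of $\Delta_N$ and the time cutoff in the statement; your argument is unaffected, but you should distinguish them notationally.
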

Indeed, for any $ \epsilon > 0 $, according to Proposition~\ref{1-bourgain2} (with $ \theta = \frac{1}{2} $), we obtain
\begin{eqnarray*}
  \| \Delta_N ( v )   \Delta_M (u)  \|_{L^2(\R ; L^2(\R^d))} & \leq &  \|\Delta_N ( v )\|_{L^4(\R ; L^2(\R^d))}  \|\Delta_M (u)\|_{L^4(\R ; L^\infty(\R^d))}
\\  & \leq &C  \| \Delta_N ( v )\|_{  \X^{0,1/4+\epsilon} }   \| \Delta_M (u) \|_{  \X^{d/2+\epsilon,1/4+\epsilon} },
\end{eqnarray*}

\begin{multline*}
\| \Delta_N \big( \psi(t) e^{itH} u_0 \big)   \Delta_M (u)  \|_{L^2(\R ; L^2(\R^d))}  \leq  \\
\begin{aligned}
   & \leq \|\Delta_N \big( \psi(t) e^{itH} u_0 \big)\|_{L^4(\R ; L^\infty(\R^d))}  \|\Delta_M (u)\|_{L^4(\R ; L^2(\R^d))} \\
    & \leq  C  \| \Delta_N ( u_0 )\|_{  \mathcal{H}^{d/2-1/2+\epsilon}(\R^d) }   \| \Delta_M (u) \|_{  \X^{0,1/4+\epsilon}  } \\
  & \leq  C  \| \Delta_N ( u_0 )\|_{  \mathcal{H}^{d/2+\epsilon}(\R^d) }   \| \Delta_M (u) \|_{  \X^{0,1/4+\epsilon}  },
\end{aligned}
\end{multline*}
and
\begin{multline*}
  \| \Delta_N \big( \psi(t) e^{itH}u_0 \big)   \Delta_M (u)  \|_{L^2(\R ; L^2(\R^d))} \leq \\
  \begin{aligned}
& \|\Delta_N \big( \psi(t) e^{itH} u_0  \big)\|_{L^4(\R ; L^2(\R^d))}  \|\Delta_M (u)\|_{L^4(\R ; L^\infty(\R^d))}\\
&\leq  C  \|\Delta_N \big( \psi(t) e^{itH} u_0  \big)\|_{L^\infty(\R ; L^2(\R^d))}   \| \Delta_M (u) \|_{  \X^{d/2+\epsilon,1/4+\epsilon} }\\
 &\leq  C  \|\Delta_N ( u_0  )\|_{L^2(\R^d)}   \| \Delta_M (u) \|_{  \X^{d/2+\epsilon,1/4+\epsilon} }.
\end{aligned}
\end{multline*}
Therefore, by interpolation, for all $ \theta \in [0,1] $, we obtain
\begin{multline*}
\|  \Delta_N ( v )   \Delta_M (u)  \|_{L^2 ( \R ;  L^2 ( \R^d ))  }  \leq \\
 \leq C  \min( M,N) ^{ \frac{d-2}{2} +\theta ( 1+ \epsilon ) }   \left( \frac{ \min(M,N)  }{ \max(M,N) } \right) ^{(1/2-\delta)(1-\theta) } \\
   \cdot \| \Delta_N(v) \|_{  \X^{0,b(1-\theta)+\theta(1/4+\epsilon)}} \| \Delta_M(u) \|_{  \X^{0,b(1-\theta)+\theta(1/4+\epsilon)}}
\end{multline*}
and
\begin{multline*}
\|  \Delta_N  \big( \psi(t)  e^{itH} u_0  \big)   \Delta_M (u)  \|_{L^2 ( \R ;  L^2 ( \R^d ))  } \leq \\
\leq   C   \min( M,N) ^{ \frac{d-2}{2} +\theta ( 1+ \epsilon ) }
  \left( \frac{ \min(M,N)  }{ \max(M,N) } \right) ^{(1/2-\delta)(1-\theta) }  \\
  \cdot  \| \Delta_N(u_0) \|_{  L^2(  \R^d ) }  \| \Delta_M(u) \|_{  \X^{0,b(1-\theta)+\theta(1/4+\epsilon)}}.
\end{multline*}
Choose  $ \delta = \frac{\epsilon}{2} $ and $ \theta = \frac{\epsilon}{4} $ then 
\begin{eqnarray*}
b(1-\theta)+\theta( \frac{1}{4} +\epsilon) & = & b - \frac{b\epsilon}{4} + \frac{\epsilon}{4} (  \frac{1}{4}+ \epsilon ) \\
 & \leq & b - \frac{\epsilon}{8} + \frac{\epsilon}{16} + \frac{\epsilon^2}{4} \leq b - \frac{\epsilon}{17}.
\end{eqnarray*}
It is then enough to take $ b = \frac{1}{2} + \frac{\epsilon}{34} $ and to set $ b ' = b(1-\theta)+\theta( \frac{1}{4} +\epsilon) < \frac{1}{2} $ to obtain
\begin{multline*}
 \|  \Delta_N ( v )   \Delta_M (u)  \|_{L^2 ( \R ;  L^2 ( \R^d ))  }  \leq \\
\leq   C    \min(N,M)^{ \frac{d-2}{2} + \epsilon }    \left( \frac{ \min(M,N)  }{ \max(N,M) } \right) ^{(1/2-\delta)(1-\theta) }   \| \Delta_N(v) \|_{  \X^{0,b'}} \| \Delta_M(u) \|_{  \X^{0,b'}}
\end{multline*} 
and
\begin{multline*}
 \|  \Delta_N \big( \psi(t) e^{-itH} u_0  \big)   \Delta_M (u)  \|_{L^2 ( \R ;  L^2 ( \R^d ))  } \leq \\
\leq  C   \min(N,M)^{ \frac{d-2}{2} + \epsilon }   \left( \frac{ \min(M,N)  }{ \max(M,N) } \right) ^{(1/2-\delta)(1-\theta) }   \| \Delta_N(u_0) \|_{ L^2(\R^d)} \| \Delta_M(u) \|_{  \X^{0,b'}}.
\end{multline*}
To conclude, it is sufficient to note that
\begin{equation*}
\left( \frac{1}{2} -\delta \right) (1-\theta) = \frac{1}{2} -\frac{5\epsilon}{8} + \frac{\epsilon^2}{8} \geq \frac{1}{2} - \epsilon  
\end{equation*}
and the inequalities \eqref{1-bilibourgain1} and \eqref{1-bilibourgain2} follow with $ \delta_0 = \epsilon $.

Then, as for \cite[Lemma 4.4]{burq3}, to prove Proposition~\ref{1-bilibourgain3}, it is sufficient to establish the following proposition:

 \begin{prop} \label{1-bilibourgain5}
For all $ b \in ]\frac{1}{2} , 1 ] $ and $ \delta \in ]0, \frac{1}{2} ]$, there exists a constant $ C > 0 $ such that for all dyadic numbers $ N,M \geq 1$ and $u, v  \in \mathcal{S}'(\R^d)$
\begin{multline*}
  \|  \Delta_N ( v )   \Delta_M (u) \|_{L^2 ( [0,1] ;  L^2 ( \R^d ))  }  
\leq  \\
\leq C  \min ( N,M)^{ \frac{d-2}{2} }   \left( \frac{ \min ( N,M)  }{ \max ( N,M )} \right) ^{1/2-\delta }   \| \Delta_N(v) \|_{  \X^{0,b}} \| \Delta_M(u) \|_{  \X^{0,b}}.
\end{multline*}
\end{prop}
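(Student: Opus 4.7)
I would prove Proposition~\ref{1-bilibourgain5} by a transference argument that reduces the $\X^{0,b}$ bilinear estimate to the fixed-data bilinear estimate of Theorem~\ref{1-bilis}. Set $V(t,x)=e^{itH}v(t,x)$ and $U(t,x)=e^{itH}u(t,x)$, so that $v(t,\cdot)=e^{-itH}V(t,\cdot)$ and $u(t,\cdot)=e^{-itH}U(t,\cdot)$. Fourier inversion in the time variable, together with the fact that $\Delta_N$ commutes with $e^{\pm itH}$ (both being functions of $H$), yields
\begin{equation*}
\Delta_N v(t,x)=\frac{1}{2\pi}\int_{\R}e^{i\sigma t}\bigl[e^{-itH}\Delta_N\widehat{V}(\sigma,\cdot)\bigr](x)\,d\sigma,
\end{equation*}
and the analogous formula for $\Delta_M u$, where $\widehat{V}$ denotes the Fourier transform in $t$ at fixed $x$.

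Forming the product of the two representations and observing that the phase $e^{i(\sigma_1+\sigma_2)t}$ has unit modulus, Minkowski's integral inequality gives
\begin{equation*}
\|\Delta_N v\,\Delta_M u\|_{L^2([0,1];L^2(\R^d))}\leq C\iint\bigl\|\bigl(e^{-itH}\Delta_N\widehat{V}(\sigma_1)\bigr)\bigl(e^{-itH}\Delta_M\widehat{U}(\sigma_2)\bigr)\bigr\|_{L^2([0,1];L^2(\R^d))}d\sigma_1\,d\sigma_2.
\end{equation*}
Since $[0,1]\subset[-1,1]$, and since Theorem~\ref{1-bilis} transfers from $e^{itH}$ to $e^{-itH}$ by complex conjugation (the Hermite functions being real-valued), that theorem applies at each fixed $(\sigma_1,\sigma_2)$ and bounds the inner norm by
\begin{equation*}
C\min(N,M)^{(d-2)/2}\Bigl(\frac{\min(N,M)}{\max(N,M)}\Bigr)^{1/2-\delta}\|\Delta_N\widehat{V}(\sigma_1)\|_{L^2}\|\Delta_M\widehat{U}(\sigma_2)\|_{L^2}.
\end{equation*}

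To conclude I would estimate $\int\|\Delta_N\widehat{V}(\sigma)\|_{L^2(\R^d)}d\sigma$ by $C_b\|\Delta_N v\|_{\X^{0,b}}$, and likewise for $u$. On each Hermite eigenspace, $\widehat{e^{itH}P_n v}(\sigma)=\widehat{P_n v}(\sigma-\lambda_n^2)$, so the Fourier shift $\sigma=\tau+\lambda_n^2$ yields the identity
\begin{equation*}
\|\Delta_N v\|_{\X^{0,b}}^2=\sum_n\psi\!\left(\frac{\lambda_n^2}{N^2}\right)^{\!2}\int_{\R}\langle\tau+\lambda_n^2\rangle^{2b}|\widehat{P_n v}(\tau)|^2\,d\tau=\int_{\R}\langle\sigma\rangle^{2b}\|\Delta_N\widehat{V}(\sigma)\|_{L^2(\R^d)}^2\,d\sigma.
\end{equation*}
Since $\langle\sigma\rangle^{-b}\in L^2(\R)$ precisely when $b>1/2$, Cauchy--Schwarz produces the desired bound, and the product of the two $\sigma_1,\sigma_2$ integrals completes the proof. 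The only mildly delicate step is this bookkeeping of the Fourier shift on each Hermite block, which replaces the weight $\langle\tau+\lambda_n^2\rangle^b$ in the Bourgain norm by the convenient weight $\langle\sigma\rangle^b$ on the free-evolution side; everything else is routine transference plus Cauchy--Schwarz, once Theorem~\ref{1-bilis} is at hand.
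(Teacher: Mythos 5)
Your proof is correct, and it is essentially the paper's argument: the paper disposes of this proposition by invoking the transference lemma \cite[Lemma 2.1]{burq1} together with Theorem~\ref{1-bilis}, and that lemma's proof is exactly the Fourier-in-time expansion, Minkowski, and Cauchy--Schwarz (using $\langle\sigma\rangle^{-b}\in L^2$ for $b>\frac12$) that you spell out, including the eigenspace bookkeeping $\widehat{P_nV}(\sigma)=\widehat{P_nv}(\sigma-\lambda_n^2)$. Your handling of $e^{-itH}$ versus $e^{itH}$ by conjugation (or time reversal on the symmetric interval $[-1,1]$) is a harmless detail the paper leaves implicit.
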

Finally, to obtain Proposition~\ref{1-bilibourgain4} and Proposition~\ref{1-bilibourgain5}, it is enough to use \cite[Lemma 2.1]{burq1} and Theorem~\ref{1-bilis}.


\section{The smoothing effect for the harmonic oscillator}\label{Sect4}

\subsection{Some preliminary results}
We start by establishing two preliminary lemmas.

\begin{lem}  \label{2commutateur}
Let $ s_1 $ and $ s_2 $ be two real numbers.
\begin{enumerate}[(i)]
 \item If $ \max( s_2,s_1+ s_2 ) \leq 1 $ then there exists a constant $ C > 0 $ such that for all $ u \in L^2(\R^d) $,
\begin{equation*}
\| \, [\sqrt H^{s_1+s_2},\langle x\rangle ^{-s_1}]u \|_{L^2(\R^d) } \leq C \|u \|_ {L^2(\R^d)},
 \end{equation*}
 \item If $ s_2 \geq -1 $ then there exists a constant $ C > 0 $ such that for all $ u \in  H^{s_1-1} (\R^d) $,
\begin{equation*}
 \| \, [   \sqrt{-\Delta} ^{s_1},\langle x\rangle ^{-s_2}]u \|_{L^2(\R^d) } \leq C \|u \|_ {H^{s_1-1} (\R^d)},
\end{equation*}
 \item If $ s_2 \leq 1 $ then there exists a constant $ C > 0 $ such that for all $ u \in  \mathcal{H}^{s_1-s_2} (\R^d) $,
\begin{equation*}
 \| \, [  \sqrt{H}^{s_1},\langle x\rangle ^{-s_2}]u \|_{L^2(\R^d) } \leq C \|u \|_ {\mathcal{H}^{s_1-s_2} (\R^d)}.
\end{equation*}
\end{enumerate}
\end{lem}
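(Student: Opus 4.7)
The plan is to derive all three commutator estimates from pseudodifferential symbolic calculus: the standard Kohn--Nirenberg calculus would handle (ii), while the isotropic Weyl/Shubin calculus adapted to $H$ would handle (i) and (iii). In every case the commutator $[A,B]$ has, modulo Moyal remainders of strictly lower order, principal symbol $\tfrac{1}{i}\{\sigma(A),\sigma(B)\}$, a Poisson bracket that gains one unit of order relative to the product; the hypotheses on $(s_1,s_2)$ are exactly what is needed to convert this gain into the stated mapping property.

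For (ii), I would use that $\sqrt{-\Delta}^{s_1}$ has symbol $|\xi|^{s_1}$ (after a harmless regularization at $\xi=0$ contributing only smoothing remainders), so the principal symbol of the commutator is
\[
\tfrac{1}{i}\{|\xi|^{s_1},\langle x\rangle^{-s_2}\} = -\tfrac{s_1 s_2}{i}|\xi|^{s_1-2}(\xi\cdot x)\langle x\rangle^{-s_2-2},
\]
pointwise bounded by $C|\xi|^{s_1-1}\langle x\rangle^{-s_2-1}$. The hypothesis $s_2\geq -1$ guarantees that $\langle x\rangle^{-s_2-1}$ and all its derivatives are bounded, so the commutator is a pseudodifferential operator of order $s_1-1$ and maps $H^{s_1-1}(\R^d)\to L^2(\R^d)$ by Calder\'on--Vaillancourt.

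For (i) and (iii), I would exploit the functional calculus of $H$, well defined since $\sigma(H)\subset[d,+\infty)$: by a Helffer--Sj\"ostrand formula (or Balakrishnan's identity in the fractional range $0<\alpha<2$), $\sqrt{H}^{\alpha}$ is an isotropic Weyl operator with principal symbol $h^{\alpha/2}$, where $h(x,\xi)=|x|^2+|\xi|^2$. Its commutator with $\langle x\rangle^{-\beta}$ has principal symbol
\[
\tfrac{1}{i}\{h^{\alpha/2},\langle x\rangle^{-\beta}\} = -\alpha\beta\, h^{\alpha/2-1}(\xi\cdot x)\langle x\rangle^{-\beta-2},
\]
and using $|\xi\cdot x|\leq h/2$ it is bounded by $C h^{\alpha/2}\langle x\rangle^{-\beta-2}$. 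I would then split $\R^{2d}$ into $\{|x|\geq|\xi|\}$ and $\{|\xi|\geq|x|\}$: in the first, $h^{\alpha/2}\lesssim\langle x\rangle^{\alpha}$, so the symbol is dominated by $\langle x\rangle^{\alpha-\beta-2}\lesssim \langle x\rangle^{s_2-2}\lesssim 1$ using $s_2\leq 1$ (part of the hypothesis in both (i) and (iii)); in the second, $h^{\alpha/2}\lesssim\langle\xi\rangle^{\alpha}$ and the companion hypothesis (namely $\alpha=s_1+s_2\leq 1$ in (i), or absorption of $\langle\xi\rangle^{s_1-s_2}$ into the target norm $\|u\|_{\mathcal{H}^{s_1-s_2}}$ in (iii)) completes the estimate. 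Alternatively, one may bypass the symbolic bookkeeping by writing the Balakrishnan representation
\[
[\sqrt{H}^{\alpha},\langle x\rangle^{-\beta}] = c_\alpha\int_0^{\infty}t^{\alpha/2}(H+t)^{-1}[H,\langle x\rangle^{-\beta}](H+t)^{-1}\,dt,
\]
reducing the matter to the explicit first-order commutator $[H,\langle x\rangle^{-\beta}] = -\Delta(\langle x\rangle^{-\beta}) - 2\nabla(\langle x\rangle^{-\beta})\cdot\nabla$, sandwiched between resolvents with $L^2$-gain $(d+t)^{-1}$ and with any polynomial weight absorbed via $\langle x\rangle\lesssim\sqrt{1+H}$.

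The main technical obstacle will be the management of the Moyal remainders for fractional exponents and the verification that the principal symbol, after the two-region split, fits into the appropriate Shubin class $\Gamma^{0}_{\mathrm{iso}}$ (or its weighted analogue associated with $\mathcal{H}^{s_1-s_2}$) so that Calder\'on--Vaillancourt applies uniformly. For $\alpha$ outside $(0,2)$ I would reduce via $\sqrt{H}^{\alpha}=H^m\sqrt{H}^{\alpha-2m}$ with $m$ integer, each integer factor contributing a fully explicit polynomial in $x$ and $\nabla$ while the fractional remainder is handled by Balakrishnan; the convergence of the $t$-integral is driven by the hypotheses on $(s_1,s_2)$ in the precise form stated.
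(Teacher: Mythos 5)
Your overall strategy (pseudodifferential calculus adapted to $H$, commutator controlled through the Poisson bracket) is the same as the paper's, which works in the Weyl--H\"ormander classes $S(\mu,m)$ for the metric $\frac{dx^2}{\langle x\rangle^2}+\frac{d\xi^2}{\langle\xi\rangle^2}$, uses $(|x|^2+|\xi|^2)^{\alpha/2}\in S(\alpha,\alpha)$, the fact that a commutator of operators in $S(\mu_1,m_1)$ and $S(\mu_2,m_2)$ lies in $S(\mu_1+\mu_2-1,m_1+m_2-1)$, and the mapping property $S(0,\mu):H^{\mu}\to L^2$; for (iii) it right-composes with $\sqrt H^{\,s_2-s_1}$ rather than ``absorbing'' $\langle\xi\rangle^{s_1-s_2}$. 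Your part (ii) is correct and essentially identical to the paper's. Your extra care in justifying that $\sqrt H^{\alpha}$ is an isotropic pseudodifferential operator (Helffer--Sj\"ostrand/Balakrishnan) addresses a point the paper takes for granted, which is fine.

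However, there is a genuine quantitative gap in your treatment of (i) and (iii): after writing the bracket $-\alpha\beta\,h^{\alpha/2-1}(\xi\cdot x)\langle x\rangle^{-\beta-2}$ you estimate $|\xi\cdot x|\leq h/2$, obtaining $h^{\alpha/2}\langle x\rangle^{-\beta-2}$. This throws away the gain in the $x$-variable, and the resulting bound does not close. Concretely, in your ``second region'' $|\xi|\geq|x|$ the bound becomes $\langle\xi\rangle^{\alpha}\langle x\rangle^{-\beta-2}$, which for case (i) with $0<s_1+s_2\leq 1$ is unbounded as $|\xi|\to\infty$ at fixed $x$; so the hypothesis $s_1+s_2\leq 1$ does \emph{not} ``complete the estimate'' from your stated bound (and similarly in (iii) when $0<s_2\leq1$, after dividing out the target weight $h^{(s_1-s_2)/2}$ one is left with $\langle\xi\rangle^{s_2}\langle x\rangle^{-s_2-2}$, again unbounded). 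The fix is to keep the factor $\xi\cdot x$ and use $|\xi|\leq h^{1/2}$ together with $|x|\,\langle x\rangle^{-\beta-2}\leq\langle x\rangle^{-\beta-1}$, which yields $h^{(\alpha-1)/2}\langle x\rangle^{-\beta-1}$: the Poisson bracket gains one full order in \emph{both} variables simultaneously, which is exactly the content of the Weyl--H\"ormander statement the paper invokes, and with this sharper bound your two-region analysis does close under $\max(s_2,s_1+s_2)\leq1$ (resp.\ $s_2\leq1$ after composing with $\sqrt H^{\,s_2-s_1}$). A secondary, fixable point: pointwise bounds on the principal symbol alone are not enough for Calder\'on--Vaillancourt (nor is discarding the Moyal remainders for free); you need symbol estimates with all derivatives, and for fractional powers of $H$ these are most cleanly obtained exactly as in the paper, from membership of the full symbol in the two-weight class, rather than re-derived by hand. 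Your alternative Balakrishnan route is plausible for $0<\alpha<2$ but is left entirely unexecuted at the delicate points (convergence of the $t$-integral at both ends, handling the $\nabla$ in $[H,\langle x\rangle^{-\beta}]$, and growth of $\langle x\rangle^{-\beta-1}$ when $\beta<-1$), so as written it does not substitute for the missing estimate.
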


\begin{proof} To evaluate the regularity of the previous commutators, we use the Wey-H\"ormander  pseudo-differential calculus  associated with the metric $ \frac{dx^2}{1+|x|^2} + \frac{d\xi^2}{1+|\xi|^2} $. \medskip

 The class of symbols $ S(\mu,m) $ associated to the previous metric is the space of regular functions on $ \R^d \times \R ^d $ which satisfy $ | \partial^\alpha_x  \partial_\xi^\beta a (x , \xi )  | \leq C_{ \alpha, \beta  } \langle x\rangle ^{\mu-\alpha} \langle  \xi \rangle ^{m-\beta }  $. \medskip
 
 Thus, we have (see \cite[Section 18.5]{Hormander}, \cite{Robert} or \cite{Bouclet}) that if $ a_1 \in S(\mu_1,m_1 ) $ and $ a_2 \in S( \mu_2, m_2 ) $  then the commutator $ [ Op(a_1),Op(a_2) ] $ is a pseudo-differential operator with a symbol in the class $ S ( \mu_1+\mu_2-1  , m_2+m_2-1) $.

Here we will use that $a(x,\xi)= (|x|^2+|\xi|^2)^{\alpha/2} \in S(\alpha,\alpha)$.

$(i)$ As a consequence, if $ \max( s_2,s_1+ s_2 ) \leq 1 $
\begin{equation*}
[\sqrt H^{s_1+s_2},\langle x\rangle ^{-s_1}] \in S (s_2-1 , s_1+s_2-1) \subset S ( 0 , s_1 + s_2-1 ).
\end{equation*}
Moreover, as recalled in \cite{Martinez}, if $ q \in S(0,\mu ) $ then for any $ s \in \R $, there exists a constant $ C> 0 $ such that
\begin{equation*}
\|Op(q) u \|_{H^{s-\mu}(\R^d)} \leq C \| u \|_{H^s(\R^d)}.
\end{equation*}
Thus, we can take $ s= \mu = s_1+ s_2 -1 $ to obtain that
\begin{equation*}
\| [\sqrt H^{s_1+s_2},\langle x\rangle ^{-s_1}] u \| _{L^2(\R^d) } \leq C \|u\|_{H ^{s_1+s_2 -1}(\R^d) } \leq \|u\|_{L^2 (\R^d)}.
\end{equation*}

$(ii)$ Similarly
\begin{equation*}
[ \  \sqrt{-\Delta} ^{s_1},\langle x\rangle ^{-s_2}] \in S (-s_2-1,s_1-1) \subset S (0,s_1-1),
\end{equation*}
and we can conclude as previously. \medskip

$(iii)$  We have
\begin{equation*}
[  \sqrt{H}^{s_1}, \langle x\rangle ^{-s_2} ]  \sqrt{H}^{s_2-s_1} \in S (-1,s_2-1) \subset S (0,s_2-1).
\end{equation*}
Then
\begin{equation*}
\| [  \sqrt{H}^{s_1}, \langle x\rangle ^{-s_2} ]  \sqrt{H}^{s_2-s_1} u \|_{L^2(\R^d)} \leq C \|u\|_{L^2(\R^d)},
\end{equation*}
and it is enough to replace $ u $ by $ \sqrt{H}^{s_1-s_2} u $ to obtain the desired result.
\end{proof}

\begin{lem}  \label{2inter}
Let $ a \in S(2\eps,0)$.  Then for all  $ u \in \mathcal{H}^{1/2+\eps}(\R^d) $, 
\begin{equation*}
 \bigg| \int_{\R^d} a(x) \cdot \nabla u (x)   \overline{u} (x)  dx \bigg|  \leq C \|u\|^2_{\mathcal{H}^{1/2+\eps}(\R^d)}.
\end{equation*}
\end{lem}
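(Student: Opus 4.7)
The idea is to interpret the integral as a quadratic form for the Weyl--H\"ormander pseudodifferential operator $A = a(x)\cdot\nabla$, and to reduce the estimate to an $L^2$-boundedness statement via the functional calculus of $H$.

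First, I would observe that $A$ has symbol $i\,a(x)\cdot\xi$, which belongs to the class $S(2\epsilon,1)$ (since $a\in S(2\epsilon,0)$ and $\xi\in S(0,1)$). Then, setting $v := \sqrt{H}^{1/2+\epsilon} u$ and using the self-adjointness of the fractional power $\sqrt{H}^{-(1/2+\epsilon)}$ on $L^2(\R^d)$, I would rewrite
\begin{equation*}
\int_{\R^d} a(x) \cdot \nabla u(x)\,\overline{u}(x)\,dx \;=\; \langle A u, u\rangle_{L^2} \;=\; \langle B v, v\rangle_{L^2},
\end{equation*}
where $B := \sqrt{H}^{-(1/2+\epsilon)}\, A\, \sqrt{H}^{-(1/2+\epsilon)}$. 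Cauchy--Schwarz together with $\|v\|_{L^2} = \|u\|_{\mathcal{H}^{1/2+\epsilon}}$ would then give $|\langle Au, u\rangle| \leq \|B\|_{L^2\to L^2}\,\|u\|_{\mathcal{H}^{1/2+\epsilon}}^2$, reducing the lemma to the $L^2$-boundedness of $B$.

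To prove this boundedness, I would invoke the Helffer--Robert functional calculus for the harmonic oscillator to place $\sqrt{H}^{-(1/2+\epsilon)}$ in the Weyl--H\"ormander class with principal symbol $(|x|^2+|\xi|^2)^{-(1/2+\epsilon)/2} \in S(-1/2-\epsilon,\,-1/2-\epsilon)$. Applying the composition rule of the calculus (the non-commutator version of the rule recalled in the proof of Lemma~\ref{2commutateur}, in which orders simply add), the symbol of $B$ would lie in
\begin{equation*}
S\bigl(-(1/2+\epsilon)+2\epsilon-(1/2+\epsilon),\; -(1/2+\epsilon)+1-(1/2+\epsilon)\bigr) \;=\; S(-1,\,-2\epsilon) \;\subset\; S(0,\,0),
\end{equation*}
and the Calder\'on--Vaillancourt theorem would yield that $B$ is bounded on $L^2(\R^d)$.

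The main technical obstacle is justifying that the non-integer powers $\sqrt{H}^{\pm(1/2+\epsilon)}$ genuinely belong to the Weyl--H\"ormander calculus with the claimed symbols; the paper's uses of this calculus so far (in the proof of Lemma~\ref{2commutateur}) concern only integer-power observations. I would handle this by citing a classical result from the Helffer--Robert functional calculus for the harmonic oscillator as a black box. Once granted, the remainder of the argument is symbol bookkeeping in the same spirit as Lemma~\ref{2commutateur}, and the naive approach of integrating by parts (which only controls the real part $\Re\int a\cdot\nabla u\,\overline u = -\tfrac12\int(\nabla\!\cdot\!a)|u|^2$ easily) is bypassed.
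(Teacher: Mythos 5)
Your proof is correct, and it reaches the conclusion by a slightly different route than the paper. The paper argues by duality: it writes $\big|\int a\cdot\nabla u\,\overline u\big|\le \|au\|_{\mathcal H^{1/2-\eps}}\|\nabla u\|_{\mathcal H^{-1/2+\eps}}$ and then checks the two mapping properties $u\mapsto au:\mathcal H^{1/2+\eps}\to\mathcal H^{1/2-\eps}$ (this is where $a\in S(2\eps,0)$ enters) and $\nabla:\mathcal H^{1/2+\eps}\to\mathcal H^{-1/2+\eps}$, both inside the same Weyl--H\"ormander calculus for the metric $\frac{dx^2}{1+|x|^2}+\frac{d\xi^2}{1+|\xi|^2}$. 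You instead conjugate the whole operator $a(x)\cdot\nabla$, whose symbol $ia(x)\cdot\xi$ lies in $S(2\eps,1)$, by $\sqrt H^{-(1/2+\eps)}$ on both sides, check that the resulting symbol lies in $S(-1,-2\eps)\subset S(0,0)$, and conclude by $L^2$-boundedness plus Cauchy--Schwarz. This is the same order bookkeeping packaged differently: your version treats the operator symmetrically and needs only one boundedness statement, while the paper's version splits it into two elementary mapping bounds and never names the conjugated operator. Note that your ``main technical obstacle'' is not really an extra hypothesis relative to the paper: the black box that fractional powers $\sqrt H^{\,s}$ belong to this calculus with symbol essentially $(|x|^2+|\xi|^2)^{s/2}\in S(s,s)$ is already invoked by the paper in Lemma~\ref{2commutateur} for arbitrary real $s_1,s_2$ (not only integer powers, contrary to your remark), and it also underlies the paper's two mapping bounds here, since e.g. $\|au\|_{\mathcal H^{1/2-\eps}}\le C\|u\|_{\mathcal H^{1/2+\eps}}$ is proved by exactly the kind of conjugation you perform. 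Two cosmetic points: take a smooth elliptic representative such as $(d+|x|^2+|\xi|^2)^{-(1/2+\eps)/2}$ for the symbol of $\sqrt H^{-(1/2+\eps)}$ (recall $H\ge d$, so there is no singularity to worry about), and the $L^2$-boundedness you attribute to Calder\'on--Vaillancourt is available in the form quoted from \cite{Martinez} in the proof of Lemma~\ref{2commutateur} (the case $q\in S(0,0)$).
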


\begin{proof} By duality, we have 
\begin{equation}\label{dual}
 \bigg| \int_{\R^d} a(x) \cdot \nabla u (x)   \overline{u} (x)  dx \bigg|  \leq  \| a u\|_{\mathcal{H}^{1/2-\eps}(\R^d)} \| \nabla u\|_{\mathcal{H}^{-1/2+\eps}(\R^d)}.
\end{equation}
We use again the Weyl-H\"ormander pseudo-differential calculus introduced in the proof of Lemma~\ref{2commutateur}. Since $ a \in S(2\eps,0)$, we deduce that $\| a u\|_{\mathcal{H}^{1/2-\eps}(\R^d)} \leq C \| u\|_{\mathcal{H}^{1/2+\eps}(\R^d)}$. Then we observe that $ \| \nabla u\|_{\mathcal{H}^{-1/2+\eps}(\R^d)} \leq C  \|  u\|_{\mathcal{H}^{1/2+\eps}(\R^d)}$, which implies the result by \eqref{dual}.
\end{proof}

With these different lemmas established, we can proceed to the proof of the smoothing effect.
\medskip
 
\subsection{Proof of \texorpdfstring{\eqref{2effectregularisant1}}{}} ~

\textbf{Step 1:}  Let us  show that for any $ \alpha < 1 $, there exists a constant $ C> 0 $ such that for any  $ u \in \mathcal{H}^{  (2-\alpha)/2 }(\R^d) $
\begin{multline} \label{2fin1}
2(1-\alpha) \Big\| \frac{1}{\langle x\rangle ^{\alpha/2}} \nabla u \Big\|_{L^2(\R^d)}^2 \leq  \\
\leq C \|u\|_{  \mathcal{H}^{(2-\alpha)/2}(\R^d)}^2- \Re \bigg(  \int_{\R^d }  \left[ \frac{x \cdot \nabla}{\langle x\rangle ^\alpha} ,  H   \right] u (x) \overline{u} (x) dx \bigg)  .
\end{multline}
By Theorem~\ref{1-compose} with $h=1$, we obtain, since $\nabla=i Op(\xi)$,
\begin{multline}\label{commut}
  \left[ \frac{x \cdot \nabla}{\langle x\rangle ^\alpha} , \Delta  \right] = \\
  \begin{aligned}
&  =  \big(\frac{x \cdot \nabla}{\langle x\rangle ^\alpha} \big) \Delta-\Delta \big( \frac{x \cdot \nabla}{\langle x\rangle ^\alpha} \big)   \\
& =   i Op \left( \frac{x \cdot\xi}{ \langle x\rangle ^\alpha} \right) Op \left( -|\xi|^2 \right) - i Op \left(  -|\xi|^2 \right) Op \left( \frac{x \cdot \xi }{\langle x\rangle ^\alpha} \right)  \\
& =   2Op \left(      \Big( \frac{|\xi|^2}{\langle x\rangle ^\alpha } -\alpha \frac{(x \cdot \xi)^2}{ \langle x\rangle ^{\alpha+2} } \Big) + i \alpha (d+2) \frac{ x \cdot \xi}{\langle x\rangle ^{\alpha+2}} -  i\alpha ( \alpha+2 ) \frac{ (x \cdot \xi ) |x|^2 }{\langle x\rangle ^{\alpha+4}} \right).
\end{aligned}
\end{multline}
Then, using that $ \alpha < 1 $, we obtain
\begin{multline*}
 \Re  \left(  \int_{\R^d } Op \left(    \Big( \frac{|\xi|^2}{\langle x\rangle ^\alpha } -\alpha \frac{(x \cdot \xi)^2}{ \langle x\rangle ^{\alpha+2} } \Big)  u (x) \,   \overline{u} (x) dx \right) \right) =\\
 \begin{aligned}
 & =  \Re  \int_{\R^d}   \left( \frac{- \Delta u }{\langle x\rangle ^\alpha}     \overline{u} + \alpha \frac{(x \cdot \nabla)^2u -   ( x \cdot \nabla ) u}{\langle x\rangle ^{\alpha+2}} \,  \overline{u}       \right)dx \\
 &=   \Re  \int_{\R^d}  \left(  \nabla u \cdot  \nabla ( \frac{ \overline{u} }{\langle x\rangle ^{\alpha}} ) -\alpha ( x\cdot \nabla u )    \   div \left( \frac{ x  \overline{u} }{\langle x\rangle ^{\alpha+2}}  \right)  - \alpha  \frac{(x \cdot \nabla  u) }{\langle x\rangle ^{\alpha+2}} \, \overline{u}   \right)dx \\
 & =  \int_{\R^d } \left( \frac{|\nabla u |^2}{\langle x\rangle ^\alpha}   -\alpha \frac{| x \cdot \nabla u |^2}{\langle x\rangle ^{\alpha+2}}  \right)dx + \\
 & \hspace*{3cm} + \alpha  \Re   \int_{\R^d}  \left( (\alpha+2) \frac{|x|^2 ( x \cdot \nabla u ) }{\langle x\rangle ^{\alpha+4}}  \,  \overline{u} - (d+2) \frac{ (x \cdot \nabla  u )   }{\langle x \rangle ^{\alpha+2}} \,\overline{u}  \right)dx \\
 & \geq (1-\alpha)   \Big\| \frac{1}{\langle x\rangle ^{\alpha/2}} \nabla u \Big\|^2_{L^2(\R^d)}+ \\
  & \hspace*{3cm} + \alpha  \Re  \int_{\R^d}  \left( (\alpha+2) \frac{|x|^2 ( x \cdot \nabla u ) }{\langle x\rangle ^{\alpha+4}}  \,  \overline{u} - (d+2) \frac{ (x \cdot \nabla  u)    }{\langle x \rangle ^{\alpha+2}} \, \overline{u}   \right)dx.
  \end{aligned}
 \end{multline*}
Thanks to Lemma~\ref{2inter} with $\eps=0$, we establish
\begin{equation*} 
 \Big| \int_{\R^d} Op \Big(   \frac{ x \cdot \xi}{\langle x\rangle ^{\alpha+2}}   \Big) u \,  \overline{u}  dx \Big|,  \; \Big| \int_{\R^d} Op \left(   \frac{ (x \cdot \xi )  |x|^2 }{\langle x\rangle ^{\alpha+4}} \right) u \,  \overline{u}  dx \Big| \leq C \|u\|^2_{\mathcal{H}^{1/2}(\R^d)},
\end{equation*}
and
\begin{equation*} 
\Big|  \int_{\R^d} \Big(  \frac{|x|^2 ( x \cdot \nabla u ) }{\langle x\rangle ^{\alpha+4}}  \,  \overline{u}   \Big)dx\Big| , \; \Big|  \int_{\R^d} \Big(    \frac{ x \cdot \nabla  u    }{\langle x \rangle ^{\alpha+2}}   \overline{u}  \Big)dx\Big| \leq C \|u\|^2_{\mathcal{H}^{1/2}(\R^d)}.
\end{equation*}
Thus by \eqref{commut}, for any $ \alpha < 1 $, there exists a constant $ C> 0 $ such that for any $ u \in H^{1/2}(\R^d) $,
\begin{equation*}
\Re   \bigg( \int_{\R^d }  \left[ \frac{x \cdot \nabla}{\langle x\rangle ^\alpha} ,  \Delta  \right] u (x) \overline{u} (x) dx \bigg) \geq 2(1-\alpha) \Big\| \frac{1}{\langle x\rangle ^{\alpha/2}} \nabla u \Big\|_{L^2(\R^d)}^2 -C \|u\|_{\mathcal{H}^{1/2}(\R^d)}^2.
\end{equation*}

In a similar way, we have  
\begin{equation*}
  \left[  \frac{x \cdot \nabla}{\langle x\rangle ^\alpha} , -|x|^2 \right] = -2Op \left( \frac{|x|^2}{\langle x\rangle ^\alpha} \right),
 \end{equation*}
and therefore
\begin{eqnarray*}
\Re \bigg(  \int_{\R^d}  \left[ \frac{x \cdot \nabla}{\langle x\rangle ^\alpha} , -|x|^2 \right] u(x) \, \overline{u(x)} \bigg) dx & =& -2 \int_{\R^d} \frac{|x|^2}{\langle x\rangle ^\alpha} |u(x)|^2 \  dx 
\\ & \geq &-C \|u\|^2_{   \mathcal{H}^{(2-\alpha)/2}(\R^d)}.
\end{eqnarray*}
This implies \eqref{2fin1}. \medskip

\textbf{Step 2:} We now prove that for any $ \alpha \in ] 0 ,1 [ $, there exists a constant $ C > 0 $ such that for any $ T \geq 0 $ and $ u_0 \in \mathcal{H}^{(2-\alpha)/2}(\R^d)$,
\begin{equation}\label{sl}
\int_0^T  \Big\| \frac{1}{\langle x\rangle ^{\alpha/2 }} \nabla \big(e^{-itH} u_0\big)  \Big\|^2_{L^2(\R^d)} dt \leq   CT \|u_0\|_{  \mathcal{H}^{(2-\alpha)/2}(\R^d)}^2.
\end{equation}

Let us set $ u = e^{-itH} u_0 $, then we get
\begin{multline*}
   -i  \int_{\R^d }  \left[ \frac{x \cdot \nabla}{\langle x\rangle ^\alpha} ,  H  \right] u (t,x)  \, \overline{u} (t,x)  dx =  \\
 \begin{aligned}  
   &  = \int_{\R^d} \frac{x \cdot \nabla \partial_t u(t,x)} {\langle x\rangle ^\alpha} \, \overline{ u(t,x)}  dx +i \int_{\R^d} \frac{x \cdot \nabla}{\langle x\rangle ^\alpha} u(t,x)  \, \overline{Hu(t,x)} dx \\
&   =     \int_{\R^d} \frac{x \cdot \nabla \partial_t u(t,x) }{\langle x\rangle ^\alpha} \, \overline{ u(t,x)}  dx+   \int_{\R^d} \frac{x \cdot \nabla}{\langle x\rangle ^\alpha} u(t,x) \, \overline{\partial_t u(t,x)} dx \\
& =   \partial_t \left( \int_{\R^d }  \frac{x \cdot \nabla}{\langle x\rangle ^\alpha} u (t,x) \, \overline{u(t,x)}  dx  \right) .
 \end{aligned}  
\end{multline*}
Thus, thanks to \eqref{2fin1}, we obtain for $ T \geq 0 $,
\begin{multline*}
   2(1-\alpha) \int_0^T \Big\| \frac{1}{\langle x\rangle ^{\alpha/2}} \nabla( e^{-itH } u_0 )   \Big\|^2_{L^2(\R^d)} dt  \leq \\
 \leq CT\|u_0\|^2_{\mathcal{H}^{(2-\alpha)/2}(\R^d)}  + \Re \left( i \int_{\R^d}   \frac{x \cdot \nabla  u_0 }{\langle x\rangle ^\alpha}  \, \overline{ u_0} - \frac{x \cdot \nabla \big(e^{-iTH} u_0\big) }{\langle x\rangle ^\alpha} \, \overline{e^{-iTH} u_0}   dx  \right) . 
 \end{multline*} 
Then, we can apply  Lemma~\ref{2inter} with $\eps=1-\alpha/2$, to deduce \eqref{sl}. \medskip

\textbf{Step 3 :} In \eqref{sl}, we choose $ \alpha = 1 - 2 \epsilon $ with $ \epsilon \in ] 0 , \frac{1}{2} [ $ so that we have
\begin{equation}\label{born1}
\int_0^T \Big\|   \frac{1}{\langle x\rangle ^{1/2-\epsilon}} \nabla \big(e^{-itH} u_0\big)    \Big\|^2_{L^2(\R^d)} dt \leq   CT \|u_0\|_{ \mathcal{H}^{1/2+\epsilon}(\R^d)}^2.
\end{equation}
Using Lemma~\ref{2commutateur}, we get
\begin{multline*} 
  \int_0^T   \Big\| \frac{1}{\langle x\rangle ^{1/2-\epsilon}} H^{1/2} e^{itH}u_0  \Big\|_{L^2(\R^d)}^2 dt   \leq \\
  \begin{aligned}
 & \leq 2 \int_0^T \Big\|  H^{1/2} \frac{1}{\langle x\rangle ^{1/2-\epsilon}} e^{itH}u_0 \Big\|_{L^2(\R^d)}^2dt +2\int_0^T \Big\|  \Big[ \frac{1}{\langle x\rangle ^{1/2-\epsilon}} , H^{1/2} \Big] e^{itH}u_0 \Big\|_{L^2(\R^d)}^2 dt\\
 & \leq 2 \int_0^T \Big\|  H^{1/2} \frac{1}{\langle x\rangle ^{1/2-\epsilon}} e^{itH}u_0 \Big\| _{L^2(\R^d)}^2dt + CT  \|u_0\|^2_{ \mathcal{H}^{1/2+\epsilon}(\R^d)}.
\end{aligned} 
\end{multline*} 
Then, using  \eqref{1-comparaison} and \eqref{born1}, we obtain
\begin{multline*} 
  \int_0^T \Big\|  H^{1/2} \frac{1}{\langle x\rangle ^{1/2-\epsilon}} e^{itH}u_0 \Big\|_{L^2(\R^d)}^2 \leq \\
\begin{aligned}
 & \leq C\int_0^T \Big\| \langle x\rangle ^{1/2 + \epsilon} e^{itH} u_0 \Big\| ^2_{L^2(\R^d)} dt + C\int_0^T \Big\| (-\Delta)^{1/2} \Big( \frac{1}{\langle x\rangle ^{1/2-\epsilon}}e^{itH} u_0 \Big) \Big\|^2_{L^2(\R^d)} dt  \\
 &\leq  CT \|u_0\|^2_{\mathcal{H}^{1/2+\epsilon}(\R^d)} + C\int_0^T \Big\| \nabla \Big( \frac{1}{\langle x\rangle ^{1/2-\epsilon}}e^{itH} u_0 \Big) \Big\|^2_{L^2(\R^d)} dt  \\
  & \leq CT \|u_0\|^2_{ \mathcal{H}^{1/2+\epsilon}(\R^d)} +C \int_0^T \Big\|  \frac{1}{\langle x\rangle ^{1/2-\epsilon}} \nabla  \big( e^{itH} u_0 \big) \Big\|^2_{L^2(\R^d)} dt  \\
 & \leq CT \|u_0\|^2_{ \mathcal{H}^{1/2+\epsilon}(\R^d)}.
\end{aligned}
\end{multline*} 
Therefore, we obtain 
\begin{equation*}
\int_0^T \Big\| \frac{1}{\langle x\rangle ^{1/2-\epsilon}} H^{1/2} e^{-itH} u_0 \Big\|^2_{L^2(\R^d)} dt   \leq  CT \|u_0\|^2_{ \mathcal{H}^{1/2+\epsilon}(\R^d)}.
\end{equation*}
And we can replace $ u_0 $ by $ H^{-1/4-\epsilon/2} u_0 $ to deduce \eqref{2effectregularisant1}. 
\subsection{Proof of \eqref{2effectregularisant2}} 
Using  Lemma~\ref{2commutateur} and \eqref{2effectregularisant1}, we get
\begin{multline*} 
\Big\| \sqrt H ^{d/2-2\epsilon} \frac{1}{\langle x\rangle ^{1/2-\epsilon} } e^{itH} u_0 \Big\|_{L^2([- \pi,\pi]\times \R^d)}  \leq \\
\begin{aligned}
 & \leq \Big\|  \frac{1}{\langle x\rangle ^{1/2-\epsilon} } \sqrt H ^{d/2-2\epsilon} e^{itH} u_0 \Big\|_{L^2([- \pi,\pi]\times \R^d)} + \\
 &\hspace{5cm}+\Big\|  \Big[ \sqrt H ^{d/2-2\epsilon} , \frac{1}{\langle x\rangle ^{1/2-\epsilon} } \Big]  e^{itH} u_0 \Big\| _{L^2([- \pi,\pi]\times \R^d)} \\
  & \leq C \|u_0\|_{   \mathcal{H}^{  (d-1)/2 } (\R^d)}.
\end{aligned}
\end{multline*} 
Then, using \eqref{1-comparaison}, we establish
\begin{equation*}
\Big\| \sqrt{-\Delta} ^{d/2-2\epsilon}  \Big(  \frac{1}{\langle x\rangle ^{1/2-\epsilon} } e^{itH} u_0  \Big) \Big\|_{L^2([- \pi,\pi]\times \R^d)}  \leq C\|u_0\|_{   \mathcal{H}^{  (d-1)/2  } (\R^d)}.
\end{equation*}
And finally, using Lemma~\ref{2commutateur}, we can conclude that 
\begin{multline*} 
  \Big\|  \frac{1}{\langle x\rangle ^{1/2-\epsilon} } \sqrt{-\Delta}^{d/2-2\epsilon} ( e^{itH} u_0 ) \Big\|_{L^2([- \pi,\pi]\times \R^d)}  \leq \\
\begin{aligned}
 & \leq  \Big\|  \Big[ \frac{1}{\langle x\rangle ^{1/2-\epsilon} } , \sqrt{-\Delta}^{d/2-2\epsilon}  \Big]  e^{itH} u_0 \Big\|_{L^2([- \pi,\pi]\times \R^d)}  +\\
 &\hspace{5cm}+  \Big\|  \sqrt{-\Delta}^{d/2-2\epsilon} \Big( \frac{1}{\langle x\rangle ^{1/2-\epsilon} } e^{itH} u_0 \Big)  \Big\|_{L^2([- \pi,\pi]\times \R^d)} \\
& \leq  C \|u_0\|_{  \mathcal{H}^{  (d-1)/2  } (\R^d)}.  
\end{aligned}
\end{multline*}

\section{Random initial data and Sobolev spaces}\label{Sect5}
This section is devoted to the proof of Theorem \ref{2sobolev2}, which shows that the initial randomized data does not gain any derivative in $ L^2( \R^d )$ and does not enjoy any better localisation property.

The proof of this result will rely on micro-analysis tools. We first step is to establish the following statement which gives a precise description of the phase-space localisation of the Hermite functions.

\begin{prop} For all $ s \geq 0 $, there exist two constants $ C_1,C_2 > 0 $ such that for all $ n \in \N  $,
\begin{equation} \label{2propre2}
C_1 \lambda_n ^ s \leq \|    (-\Delta)^{s/2}  h_n \|_{L^2(\R^d)}  \leq C_2\lambda_n ^ s,
\end{equation}
and 
\begin{equation} \label{3propre2}
C_1 \lambda_n ^ s \leq \|   \langle x\rangle ^{s/2} h_n \|_{L^2(\R^d)}  \leq C_2\lambda_n ^ s.
\end{equation}
\end{prop}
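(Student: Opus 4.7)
My approach is to extract both inequalities from the norm equivalence \eqref{1-comparaison} combined with the Fourier self-duality of the Hermite functions.

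The upper bounds are immediate. Applying \eqref{1-comparaison} with $p=2$ and $u=h_n$, and using $\|h_n\|_{\W^{s,2}(\R^d)} = \|H^{s/2}h_n\|_{L^2(\R^d)} = \lambda_n^{s}$, one gets
\begin{equation*}
\|(-\Delta)^{s/2}h_n\|_{L^2(\R^d)} + \|\langle x\rangle^{s} h_n\|_{L^2(\R^d)} \leq C \lambda_n^{s},
\end{equation*}
which bounds each individual term by $C_2\lambda_n^{s}$.

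For the lower bounds, the key point is the Fourier self-duality of the Hermite functions: in the tensor realisation \eqref{1-tensor}, each one-dimensional $e_k$ satisfies $\widehat{e_k} = (-i)^k e_k$, hence $\widehat{h_n} = c_n h_n$ with $|c_n|=1$. In particular $|\widehat{h_n}(\xi)|=|h_n(\xi)|$ pointwise, and Plancherel gives
\begin{equation*}
\|(-\Delta)^{s/2}h_n\|_{L^2(\R^d)}^2 = \int_{\R^d}|\xi|^{2s}|\widehat{h_n}(\xi)|^2\,d\xi = \int_{\R^d}|x|^{2s}|h_n(x)|^2\,dx = \big\||x|^{s}h_n\big\|_{L^2(\R^d)}^2.
\end{equation*}
Combining this identity with the lower-bound half of \eqref{1-comparaison}, namely $\|(-\Delta)^{s/2}h_n\|_{L^2} + \|\langle x\rangle^{s}h_n\|_{L^2} \geq \tfrac{1}{C}\lambda_n^{s}$, and the elementary pointwise comparison $\langle x\rangle^{s} \leq C_s(1 + |x|^{s})$, one gets $\|\langle x\rangle^{s}h_n\|_{L^2} \leq C_s(1 + \|(-\Delta)^{s/2}h_n\|_{L^2})$ (since $\|h_n\|_{L^2}=1$), and therefore $\|(-\Delta)^{s/2}h_n\|_{L^2} \geq c\,\lambda_n^{s} - c'$. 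For $\lambda_n$ large this yields the lower bound, and the finitely many small $n$ are absorbed into the constant by positivity: each $h_n$ is a non-constant Schwartz function, so both $\|(-\Delta)^{s/2}h_n\|_{L^2}$ and $\|\langle x\rangle^{s}h_n\|_{L^2}$ are strictly positive. The second inequality for $\|\langle x\rangle^{s}h_n\|_{L^2}$ then follows from the same chain: once $\|(-\Delta)^{s/2}h_n\|_{L^2} \geq c\lambda_n^{s}$, the Fourier identity gives $\||x|^{s}h_n\|_{L^2} \geq c\lambda_n^{s}$ and a fortiori $\|\langle x\rangle^{s}h_n\|_{L^2} \geq c\lambda_n^{s}$.

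The main (modest) obstacle is to justify the Fourier identity when $(h_n)_{n\geq 0}$ is not taken to be the tensor basis \eqref{1-tensor}: for a general orthonormal basis of eigenvectors of $H$, $\mathcal{F}$ may act non-diagonally inside each eigenspace. Since $\mathcal{F}$ has order four and commutes with $H$, this is harmless: one may simultaneously diagonalise $H$ and $\mathcal{F}$, obtaining an orthonormal basis of common eigenvectors for which $\widehat{h_n} = c_n h_n$ with $|c_n|=1$, and run the argument above. The resulting estimates depend only on $\lambda_n$ and not on the particular choice of such basis, which is what the statement asserts.
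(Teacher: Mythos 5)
Your treatment of the tensor basis is exactly the paper's ``particularly easy'' case: the upper bound from \eqref{1-comparaison}, and for the lower bound the Plancherel identity $\|(-\Delta)^{s/2}h_n\|_{L^2}=\||x|^{s}h_n\|_{L^2}$ coming from $\widehat{h_n}=e^{i\theta_n}h_n$, together with $\langle x\rangle^{s}\leq C(1+|x|^{s})$ and an absorption of the constant for large $\lambda_n$. (Like the paper, you in effect prove \eqref{3propre2} with the weight $\langle x\rangle^{s}$, which is clearly how that inequality is meant to read.) The point you underestimate is the one to which the paper devotes almost all of its proof: the proposition is used for an arbitrary orthonormal basis of eigenfunctions of $H$ (Section \ref{Sect5} does not assume the tensor basis \eqref{1-tensor}), and since the eigenspaces of $H$ are degenerate, the real content is a lower bound that is uniform over \emph{all} unit vectors of each eigenspace.

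This is where your argument has a genuine gap. Diagonalising $H$ and $\mathcal{F}$ simultaneously produces \emph{some} orthonormal eigenbasis for which your computation works, but the statement concerns the basis $(h_n)$ fixed beforehand, and your closing claim that ``the resulting estimates depend only on $\lambda_n$ and not on the particular choice of such basis'' is precisely what has to be proved: knowing the bound for one orthonormal basis of a degenerate eigenspace does not give it for a unitary recombination, since $\|(-\Delta)^{s/2}u\|_{L^2}$ could a priori drop through cancellations. The gap is easily repaired within your approach, and in a way that shows your worry about non-diagonal action was unfounded: the (unitarily normalised) Fourier transform is not merely diagonalisable on each eigenspace, it is \emph{scalar} there, because $\widetilde{\mathcal{F}}\,e_{n_1}\otimes\cdots\otimes e_{n_d}=(-i)^{n_1+\cdots+n_d}\,e_{n_1}\otimes\cdots\otimes e_{n_d}$ and $n_1+\cdots+n_d=(\lambda_n^2-d)/2$ is determined by the eigenvalue. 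Hence \emph{every} eigenfunction $u$ with $Hu=\lambda_n^2u$, in particular every member of the given basis, satisfies $|\widehat u|=(2\pi)^{d/2}|u|$, and your tensor computation applies verbatim. With that one observation your proof becomes complete and is genuinely lighter than the paper's treatment of the general case, which argues by contradiction with semiclassical defect measures: rescaling $\Phi_h(x)=h^{-d/4}h_n(\lambda_n x)$, $h=\lambda_n^{-2}$, showing the measure is carried by $\{|x|^2+|\xi|^2=1\}$, showing that failure of the lower bound would force it onto $\{\xi=0\}$, and excluding this by invariance under the harmonic-oscillator flow — an argument which is more robust (it uses no exact Fourier symmetry) but much heavier. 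As written, however, your last step does not prove the statement for the given basis.
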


\begin{proof}   Let us first prove this result  in the particular case where $(h_n)_{n \geq 0}$ is the basis of the tensor eigenfunctions, since the argument is then particularly easy. Using~\eqref{1-comparaison}, we have 
\begin{multline*}
 c \big( \| (-\Delta)^{s/2}  h_n \|_{L^2(\R^d)} + \|\langle x\rangle ^s h_n  \| _{L^2(\R^d)} \big)   \leq \lambda_n^s =  \| h_n \|_{ \mathcal{H}^s(\R^d) }  \leq \\
  \leq C \big( \| (-\Delta)^{s/2}  h_n \|_{L^2(\R^d)} + \|\langle x\rangle ^s h_n  \| _{L^2(\R^d)} \big),
 \end{multline*}
then
 \begin{multline*}
 C' \| (-\Delta)^{s/2}  h_n \|_{L^2(\R^d)} \leq \lambda_n^s \leq \\
 \leq C \big( \| (-\Delta)^{s/2}  h_n \|_{L^2(\R^d)} + \|  (-\Delta)^{s/2}  ( \widehat{h}_n  ) \| _{L^2(\R^d)} + \| h_n \|_{L^2(\R^d)} \big).
\end{multline*}
But since the eigenfunctions are tensor functions, then $ h_n(x) = \e^{i \theta_n} \widehat{h}_n(x) $, for some $\theta_n \in \R$, since this equality is true in dimension 1, and \eqref{2propre2} follows. The proof of~\eqref{3propre2} is similar.  \medskip

 Let us now prove \eqref{2propre2} in the general case. We set $ h = \frac{1}{  \lambda_n^2 } $ and $ \Phi_h( x) =  \frac{1}{h^{d/4}}    h_n( \lambda_n x ) $ so that $ ( -h^2 \Delta + |x|^2 -1 )  \Phi_h = 0 $ and $  \| \Phi_h \|_{L^2(\R^d)}  =1$. To prove \eqref{2propre2}, it is sufficient to establish that there exists a constant $ C_1> 0 $ such that for all $ h > 0 $, 
\begin{equation*}
h^s \| (-\Delta)^{s/2}  \Phi_h  \|_{L^2(\R^d)} \geq C_1.
\end{equation*}
Let us proceed by contradiction and suppose that 
\begin{equation} \label{2absurdite}
\underset{h \rightarrow 0}{\underline{\lim}} \ h^s \| (-\Delta)^{s/2}  \Phi  \|_{L^2(\R^d)} = 0.
\end{equation}
By   \cite[Theorem 2]{burq8}, there exists a positive measure $ \mu \in  \mathcal{M}_+ ( \R^d\times\R^d ) $ such that for any function $ a \in C_0^\infty(\R^d\times\R^d   )$ ,
\begin{equation*}
\lim_{ h \rightarrow 0 } \langle  a ( x, h |\nabla| )  \Phi_h , \Phi_h \rangle  _{ L^2(\R^d)\times L ^2(\R^d)   } = \int_{ \R^d \times \R^d  } tr ( a(x, \xi ) )   \mu(  d x d \xi  ).
\end{equation*}
Let us recall that $ (x, \xi) \in Supp( \mu )^c $ if and only if there exists $ r > 0 $ such that for all $ \phi \in C_0^\infty ( B(x,r) \times B(\xi,r)  )$,
\begin{equation*}
\int_{ \R^d\times\R^d   } \phi (x,\xi)  \mu(dx,d\xi) = 0.
\end{equation*}
By Proposition~\ref{1-inverse}, if $ a \in C_0^\infty (\R^d\times\R^d  ) $  with $ Supp ( a)  \cap  \lbrace (x,\xi) :  |x|^2 + |\xi|^2  = 1   \rbrace =  \emptyset $ then  for all  $ N \in \N $, there  exists $ E_N \in Op ( T^{-2} ) $ and $ R_N \in Op ( T^{-(N+1)} ) $ such that 
\begin{equation*}
E_N \circ (-h^2 \Delta + |x|^2 - 1  )   = a ( x, h|\nabla| ) - h^{N+1} R_N.
\end{equation*}
Therefore
\begin{equation*}
\langle  a ( x, h|\nabla| )  \Phi_h , \Phi_h \rangle  _{ L^2(\R^d)\times L ^2(\R^d)   } =  h^{N+1} \langle  R_N \Phi_h , \Phi_h \rangle  _{ L^2(\R^d)\times L ^2(\R^d)   },
\end{equation*}
then 
\begin{equation*}
 \int_{ \R^d \times \R^d  }  a(x, \xi )   \mu(  d x d \xi  ) = 0.
\end{equation*}
And finally, we establish that
\begin{equation*}
Supp ( \mu ) \subset \big\{  (x,\xi) \ :  \ |x|^2  + |\xi|^2 = 1 \big\}.
\end{equation*}
Again, by Proposition~\ref{1-inverse}, if $ a \in C_0^\infty (\R^d\times\R^d  ) $  with  $ Supp ( a)  \cap  \lbrace (x,\xi) : \xi^2  = 0   \rbrace =  \emptyset $ then  for all $ N \in \N $, there exists $ E_N \in Op ( S^{-s} ) $ and $ R_N \in Op ( S^{-(N+1)} ) $ such that
\begin{equation*}
E_N \circ \sum_{j=1}^d | h \partial_{x_j} | ^{s}   = a ( x, h|\nabla| ) - h^{N+1} R_N.
\end{equation*}
But according to \cite{Martinez} and \eqref{2absurdite}, we get
\begin{eqnarray*}
 \lim_{ h \rightarrow 0 }  \big| \langle  E_N \circ \sum_{j=1}^d | h \partial_{x_j} | ^{s} \Phi_h , \Phi_h \rangle  _{ L^2(\R^d)\times L ^2(\R^d)   } \big| & \leq& \lim_{ h \rightarrow 0 } \big\| E_N \circ \sum_{j=1}^d | h \partial_{x_j} | ^{s} \Phi_h \big\|_{L ^2(\R^d)}
 \\ & \leq &\lim_{ h \rightarrow 0 } \big\| \sum_{j=1}^d | h \partial_{x_j}| ^{s} \Phi_h \big\|_{L ^2(\R^d)} = 0.
\end{eqnarray*}
Therefore
\begin{equation*}
 \int_{ \R^d \times  \R^d  }  a(x, \xi )   \mu(  d x d \xi  ) = 0,
\end{equation*}
and we establish that
\begin{equation*}
Supp ( \mu ) \subset \lbrace  (x,\xi)\ : \ |\xi|^2 = 0 \rbrace.
\end{equation*}
Then for $ a \in  C_0^\infty(  \R^d\times\R^d ) $ we have 
\begin{eqnarray*}
0 & =& \int_ {\R^d} [   -h^2 \Delta + |x|^2 - 1 , h^{-1} Op_h(a)   ] \Phi_h \overline{\Phi_h}
\\ & =& \frac{1}{i} \int_{\R^d} \lbrace -h^2 \Delta + |x|^2 - 1 , Op_h(a) \rbrace \Phi_h \overline{\Phi_h} + h    \int_ {\R^d} Op_h(R ) \Phi_h \overline{\Phi_h}.
 \end{eqnarray*}
Thus, we deduce that for any function $ a \in C^\infty_0 ( \R^d) $,
\begin{equation} \label{2support mesure invariant}
\int_{\R^d \times \R^d}  \left( \xi \partial_x a - x \partial_\xi a      \right)   d\mu(x,\xi) = 0.
\end{equation}
Let $ (x,\xi ) \in \R^d\times\R^d $  and set for $ t \in \R $, 
\begin{equation*}
\left\{
    \begin{aligned}
        & x(t) = x \cos(t) + \xi \sin(t) ,  \\ 
      & \xi(t) = \xi \cos(t) - x \sin(t),
    \end{aligned}
\right.
\end{equation*}
{\it i.e.}
\begin{equation*}
  \left\{
      \begin{aligned}
       &  \dot{x}(t)= \xi (t) \mbox{ with } x(0)=x,\\
       &  \dot{\xi}(t)=-x(t) \mbox{ with } \xi(0)= \xi. \\
      \end{aligned}
    \right.
\end{equation*}
Using  \eqref{2support mesure invariant}, we get that for all $ t \in \R $ and $ a \in C_0^\infty(\R^d\times \R^d) $,
\begin{equation*}
\int_{\R^d \times \R^d}  a( x \cos(t) + \xi \sin(t) ,\xi \cos(t) - x \sin(t)) d\mu(x,\xi)  =\int_{\R^d\times \R^d}  a(x,\xi) d\mu(x,\xi).
\end{equation*}
Therefore if $ (x_0,\xi_0) \in Supp( \mu ) $ then for all  $ r > 0 $, there exists $ a \in C^\infty_0 \big( B ( (x_0,\xi_0) ,r )\big) $ such that for all $ t \in \R $,
\begin{equation*}
\int_{\R^d\times\R^d}  a\big( x \cos(t) + \xi \sin(t) ,\xi \cos(t) - x \sin(t)\big)    d\mu(x,\xi)  \neq 0. 
\end{equation*}
However
\begin{align*}
 Supp & \big( a( x \cos(t) + \xi \sin(t) ,\xi \cos(t) - x \sin(t))     \big) 
 \\ & \subset \left\{  (x,\xi) \in \R^d \times  \R^d /  ( x \cos(t) + \xi \sin(t)  , \xi \cos(t) - x \sin(t )  ) \in B ( (x_0,\xi_0) ,r ) \right\} 
 \\ & \subset  B(  x_0\cos(t)  -  \xi_0\sin(t)  ,  2 r )   \times B( x_0 \sin(t)  + \xi_0\cos(t)   ,  2 r  ),
 \end{align*}
and therefore, for all  $ t\in \R$,  $ (x_0  \cos(t) - \xi_0\sin(t)  , x_0\sin(t)  + \xi_0\cos(t)   )  \in Supp( \mu ) $. \medskip

 But for $ \xi_0 = 0$, $x_0^2 = 1 $ then  $x_0 \sin(t)  +\xi_0 \cos(t)  = x_0 \sin(t)  = 0 $  is impossible and therefore the estimate \eqref{2propre2} is proved by contradiction.  The proof of \eqref{3propre2} is analogous.
\end{proof}

Recall the Paley-Zygmund inequality: Let $ X $ be a random variable in $ L^2( \Omega ) $, then for all $ 0 \leq \lambda \leq 1 $,
\begin{equation}\label{zyg}
P\big( X \geq \lambda E(X) \big) \geq (1-\lambda)^2 \frac{E(X)^2}{E(X^2)}.
\end{equation}

For a function $ \chi \in C_0^\infty (\R^d)$ such that  $ 0 \leq \chi \leq 1 $, with $\chi(x)=1 $ if $ |x| \leq 1 $ and $ \chi(x) = 0  $ if $ |x| \geq 2 $, we  define
\begin{align*}
& \sigma_N^2 = \sum_{n \in \N} \chi^2  \left( \frac{ \lambda_n^2 }{N^2} \right) |c_n|^2 \lambda_n^{2s}  \underset{ N \rightarrow + \infty }{\longrightarrow} + \infty,
\\ & S_N = \| \chi \left(  \frac{H}{N^2}  \right) u_0  \|_{H^s(\R^d)},
\\ & M = \displaystyle{  \sup_{N \in \N ^\star} } S_N.
\end{align*}
Let us turn to the proof of Theorem \ref{2sobolev2}.

 \begin{lem} \label{1-kolmogorov} For all $ s \geq 0 $, we have 
\begin{equation*}
P  \Big( \omega \in \Omega \;: \sup_{ N \in \N^\star}   \| \chi \Big( \frac{H}{N^2}  \Big) u_0^\omega \|_{H^s(\R^d)}  = + \infty  \Big)  \in \big\{0,1 \big\}.
\end{equation*}
\end{lem}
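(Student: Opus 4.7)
The plan is a classical application of Kolmogorov's $0$-$1$ law: I will show that the event
$$A = \Big\{ \omega \in \Omega \;:\; \sup_{N \in \mathbb{N}^\star} \big\|\chi(H/N^2) u_0^\omega\big\|_{H^s(\mathbb{R}^d)} = +\infty \Big\}$$
belongs to the tail $\sigma$-algebra of the independent family $(g_n)_{n \in \mathbb{N}}$, hence has probability $0$ or $1$.

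To this end, for every integer $M \geq 1$ I would split the random series as $u_0^\omega = v_M^\omega + w_M^\omega$, where
$$v_M^\omega(x) = \sum_{n<M} c_n g_n(\omega) h_n(x), \qquad w_M^\omega(x) = \sum_{n \geq M} c_n g_n(\omega) h_n(x).$$
The finite sum $v_M^\omega$ is, for $\omega$-almost every fixed $\omega$, an element of $\bigcap_{\sigma \geq 0}\mathcal{H}^\sigma(\mathbb{R}^d)$, and since the multipliers $\chi(H/N^2)$ are uniformly bounded on $L^2(\mathbb{R}^d)$, one has
$$\sup_{N \in \mathbb{N}^\star} \big\|\chi(H/N^2) v_M^\omega\big\|_{H^s(\mathbb{R}^d)} \leq \sum_{n < M} |c_n g_n(\omega)| \, \|h_n\|_{H^s(\mathbb{R}^d)} < +\infty \quad \omega\text{-a.s.}$$
By the triangle inequality, it then follows that
$$\sup_{N} \big\|\chi(H/N^2) u_0^\omega\big\|_{H^s(\mathbb{R}^d)} = +\infty \ \Longleftrightarrow\ \sup_{N} \big\|\chi(H/N^2) w_M^\omega\big\|_{H^s(\mathbb{R}^d)} = +\infty,$$
up to the $\mathbb{P}$-null set on which the supremum for $v_M^\omega$ is infinite. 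Therefore $A$ coincides, up to a negligible set, with an event measurable with respect to $\sigma(g_n : n \geq M)$.

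Since $M$ is arbitrary, $A$ lies in the tail $\sigma$-algebra $\bigcap_{M \geq 1} \sigma(g_n : n \geq M)$. The random variables $(g_n)_{n \in \mathbb{N}}$ being independent, Kolmogorov's $0$-$1$ law yields $\mathbb{P}(A) \in \{0,1\}$, which is precisely the claim. The only mildly delicate point is justifying the finiteness statement for $v_M^\omega$, but this is a straightforward consequence of the fact that $v_M^\omega$ involves only finitely many Hermite modes together with the boundedness of the spectral multiplier $\chi(H/N^2)$ on each eigenspace. Nothing else in the argument is quantitative, and in particular no information about the coefficients $c_n$ is used.
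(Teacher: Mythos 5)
Your proof is correct and follows essentially the same route as the paper: the paper also writes $u_0^\omega$ as a sum of independent terms $X_j=c_jg_jh_j$, each a.s. in $H^s(\R^d)$, observes that the divergence of $\sup_N\|\chi(H/N^2)u_0^\omega\|_{H^s}$ is unchanged when finitely many terms are removed, concludes the event lies in the asymptotic $\sigma$-algebra $\bigcap_K\sigma(X_K,X_{K+1},\dots)$, and invokes Kolmogorov's $0$--$1$ law. Your only addition is making explicit the (harmless) ``up to a null set'' point and the finiteness of the supremum for the finitely many low modes, which the paper treats implicitly.
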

\begin{proof}  The random initial condition can be written $ u_0^\omega = \displaystyle{  \sum _ {j \geq 0 } }  X_j(\omega) $, where   $ (X_j)_{ j \in \N} $ is a sequence of independent random variables satisfying $ X_j(\omega) \in H^s( \R^d )$, $\omega-$almost surely. Therefore, we have
\begin{equation*}
  \sup_{ N \in \N^\star} \  \Big\| \chi \Big( \frac{H}{N^2}  \Big) u_0^\omega \Big\|_{H^s(\R^d)} = + \infty  
  \end{equation*}
if and only if for all $ K \in \N $, 
   \begin{equation*}
       \sup_{ N \in \N^\star}   \Big\|  \chi \Big( \frac{H}{N^2}  \Big) \Big( \sum_{j  \geq K }  X_j(\omega) \Big)   \Big\|_{H^s(\R^d)} = + \infty,
  \end{equation*}
so if we put $ F_j = \sigma ( X_j , X_{j+1} , \cdots  ) $ we have that 
\begin{equation*}
\bigg\{   \omega \in \Omega \;: \sup_{N \in \N^\star}  \| \chi \Big( \frac{H}{N^2}  \Big) u^\omega_0 \|_{H^s(\R^d)}   = + \infty \bigg\} \in \underset{K \in \N^* }{ \bigcap }  F_K.
\end{equation*}
As a consequence, the set  $ \Big\{   \omega \in \Omega \;: \displaystyle{ \sup_{N \in \N^\star} }   \| \chi \Big( \frac{H}{N^2}  \Big) u^\omega_0 \|_{H^s(\R^d)}  = + \infty  \Big\} $ is an element of the   asymptotic $\sigma$-algebra and the lemma is proved by the 0-1 law. 
\end{proof}

 \begin{lem} \label{1-divergence} For $ s \geq 0 $, if $ \underset{n  \in \N }{ \sum  }    |c_n|^2 \lambda_n^{2s} = +   \infty $ then
\begin{equation*}
P \Big( \omega \in \Omega \;:  \sup_{N \in \N^\star} \| \chi \Big( \frac{H}{N^2}  \Big) u^\omega_0 \|_{H^s(\R^d)}  = + \infty   \Big) =1.
\end{equation*}
\end{lem}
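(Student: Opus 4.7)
The strategy is to show that the sequence $S_N^2$ has expectation tending to infinity and variance comparable to the square of its expectation, so that by Paley–Zygmund the event $\{S_N^2 \gtrsim \mathbb{E}[S_N^2]\}$ occurs with positive probability uniformly in $N$; the 0-1 law of Lemma~\ref{1-kolmogorov} will then upgrade this to a probability $1$ statement.

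First I would compute the expectation of $S_N^2$. Since
\begin{equation*}
\chi\Big(\frac{H}{N^2}\Big) u_0^\omega = \sum_{n\in\N} \chi\Big(\frac{\lambda_n^2}{N^2}\Big) c_n g_n(\omega) h_n,
\end{equation*}
expanding the $H^s(\R^d)$ norm, applying Fubini and using $\mathbb{E}[g_n \overline{g_m}] = \delta_{nm}$ yields
\begin{equation*}
\mathbb{E}[S_N^2] = \sum_{n\in\N} \chi^2\Big(\frac{\lambda_n^2}{N^2}\Big) |c_n|^2 \, \|h_n\|_{H^s(\R^d)}^2.
\end{equation*}
By the lower bound \eqref{2propre2}, $\|h_n\|_{H^s(\R^d)}^2 \geq \|(-\Delta)^{s/2} h_n\|_{L^2}^2 \geq C \lambda_n^{2s}$, so $\mathbb{E}[S_N^2] \geq C \sigma_N^2$. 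The hypothesis $\sum_n |c_n|^2 \lambda_n^{2s}  = +\infty$ and monotone convergence imply $\sigma_N^2 \to +\infty$, hence $\mathbb{E}[S_N^2] \to +\infty$.

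Next I would estimate $\mathbb{E}[S_N^4]$. Writing $S_N^2 = \sum_{n,m} A_{nm} g_n \overline{g_m}$ with $A_{nm} = \chi(\lambda_n^2/N^2)\chi(\lambda_m^2/N^2) c_n \overline{c_m} \langle h_n, h_m\rangle_{H^s(\R^d)}$, we see that $S_N^2$ lives in the second complex Wiener chaos. Either by hypercontractivity or by a direct computation based on the Wick formula for $\mathbb{E}[g_n\overline{g_m} g_k \overline{g_l}]$ (only the pairings $(n=m,k=l)$ and $(n=l,k=m)$ contribute), one obtains
\begin{equation*}
\mathbb{E}[S_N^4] \leq C \big(\mathbb{E}[S_N^2]\big)^2
\end{equation*}
with a universal constant $C$. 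The Paley–Zygmund inequality \eqref{zyg} applied to the nonnegative random variable $S_N^2$ with $\lambda = 1/2$ then gives
\begin{equation*}
P\Big(S_N^2 \geq \tfrac{1}{2}\mathbb{E}[S_N^2]\Big) \geq \frac{1}{4}\,\frac{(\mathbb{E}[S_N^2])^2}{\mathbb{E}[S_N^4]} \geq \frac{1}{4C} =: c_0 > 0,
\end{equation*}
uniformly in $N$.

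Finally, for any $A > 0$, since $\mathbb{E}[S_N^2] \to +\infty$, we have $\tfrac{1}{2}\mathbb{E}[S_N^2] \geq A$ for all $N$ large, whence $P(M \geq \sqrt{A}) \geq P(S_N^2 \geq A) \geq c_0$; letting $A \to +\infty$ yields $P(M = +\infty) \geq c_0 > 0$. Lemma~\ref{1-kolmogorov} forces this probability to be either $0$ or $1$, so it equals $1$. The main (mild) obstacle is the variance bound in the second step, which requires a careful two-index Wick computation; everything else is bookkeeping around Paley–Zygmund and the 0-1 law.
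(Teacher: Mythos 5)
Your proof is correct and follows essentially the same route as the paper: a lower bound on $E(S_N^2)$ via the eigenfunction estimate \eqref{2propre2}, a fourth-moment (Wick) computation, the Paley--Zygmund inequality applied to $S_N^2$, and the 0--1 law of Lemma~\ref{1-kolmogorov} to upgrade $P(M=+\infty)>0$ to probability one. The only cosmetic difference is that you compare $E(S_N^4)$ directly with $\big(E(S_N^2)\big)^2$ through the second-chaos/Gram-matrix structure, whereas the paper bounds $E(S_N^2)$ from below by $\sigma_N^2$ and $E(S_N^4)$ from above by $\sigma_N^4$ using the two-sided bound in \eqref{2propre2}; the two computations are equivalent.
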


\begin{proof}  Recall the notation $M = \displaystyle{  \sup_{N \in \N ^\star} } S_N$. Thus by Lemma~\ref{1-kolmogorov} it is  sufficient to establish that 
\begin{equation}\label{posi}
P( M = + \infty ) > 0.
\end{equation}
Thanks to \eqref{2propre2}, we get 
\begin{multline*}
E(S^2_N)=  E \Big(  \| \chi \big( \frac{H}{N^2}  \big)  u_0 \|^2_{H^s(\R^d)} \Big)   \\
\begin{aligned}
&\geq  E \left( \sum_{n,m} \chi \left( \frac{\lambda_n^2}{N^2} \right) \chi \left( \frac{\lambda_m^2}{N^2} \right) c_n \overline{c_m} \ g_n( \omega ) \overline{g_m ( \omega )} \ \int_{\R^d} (-\Delta)^s ( h_n )  (-\Delta)^s( h_m ) dx   \right) \\
&  =  E \left( \sum_{n \in \N } \chi^2 \left( \frac{\lambda_n^2}{N^2} \right) | c_n |^2 | g_n ( \omega )|^2  \| (-\Delta)^s ( h_n )  \|^2_{L^2(\R^d)}   \right) \\
&  \geq   C_1 \sigma_N^2.
\end{aligned}
\end{multline*}
Therefore, thanks to the Zygmund inequality \eqref{zyg} with $X=S^2_N$,  we establish
\begin{eqnarray}
P \Big( M^2 \geq \frac{C_1 \sigma_N^2}{2} \Big)  \geq P \Big( S_N^2 \geq  \frac{ C_1  \sigma_N^2}{2} \Big) &  \geq&  P \Big( S_N^2 \geq  \frac{E(S^2_N) }{2} \Big) \nonumber\\ 
&  \geq & \frac{1}{4}   \frac{E \big( \| \chi \left( \frac{H}{N^2}  \right)  u_0 \|^2_{H^s(\R^d)} \big) ^2}{E  \big( \| \chi \left( \frac{H}{N^2}  \right)  u_0 \|^4_{H^s(\R^d)}\big) } \label{minor}.
\end{eqnarray}
Then, thanks to \eqref{2propre2}, we have
\begin{multline*}
E \Big( \| \chi \big( \frac{H}{N^2}  \big)  u_0 \|^4_{H^s(\R^d)}  \Big)  \leq \\
\begin{aligned}
&  \leq    E \Big( \sum_{n, m}   \chi \big( \frac{\lambda_n^2}{N^2} \big) \chi \big( \frac{\lambda_m^2}{N^2} \big) c_n \overline{c_m} \ g_n( \omega ) \overline{g_m ( \omega )}  \int_{\R^d} (-\Delta)^s ( h_n )  (-\Delta)^s( h_m ) dx  \Big)^2  \\
   & \hspace*{6cm} + E \Big( \sum_{n,m}   \chi \big( \frac{\lambda_n^2}{N^2} \big) \chi \big( \frac{\lambda_m^2}{N^2} \big) c_n \overline{c_m} \ g_n( \omega ) \overline{g_m ( \omega )} \Big)^2   \\
&  \leq   C E \Big(  \sum_{n_1,n_2,n_3,n_4 } \chi \big( \frac{\lambda_{n_1}^2}{N^2} \big) \chi \big( \frac{\lambda_{n_2}^2}{N^2} \big) \chi \big( \frac{\lambda_{n_3}^2}{N^2} \big) \chi \big( \frac{\lambda_{n_4}^2}{N^2} \big)   c_{n_1} \overline{c_{n_2}} c_{n_3} \overline{c_{n_4}} \cdot 
\\  & \hspace*{.2cm}    \cdot\| (-\Delta)^s( h_{n_1} ) \|_{L^2( \R^d )} \| (-\Delta)^s( h_{n_2} ) \|_{L^2( \R^d )}\| (-\Delta)^s( h_{n_3} ) \|_{L^2( \R^d )}\| (-\Delta)^s( h_{n_4} ) \|_{L^2( \R^d )} \Big)    \\
  &  \hspace*{6cm} + C E \Big( \sum_{ n}   \chi^2 \big( \frac{\lambda_n^2}{N^2} \big) | c_n |^2  \Big)^2   \\
& \leq   C_2 \sigma_N ^4.
\end{aligned}
\end{multline*}
Therefore from \eqref{minor} we deduce $\dis P \Big( M^2 \geq \frac{C_1 \sigma_N^2}{2} \Big) \geq \frac{1}{4}   \frac{C_1^2}{C_2}$.
Finally, using the monotone convergence theorem, we obtain $\dis P \left( M = + \infty  \right) \geq \frac{1}{4}   \frac{C_1^2}{C_2}$, which implies \eqref{posi}.
\end{proof}

We are now able to complete the proof of Theorem~\ref{2sobolev2}. Indeed, let us  prove it by contradiction and assume that  $ P\big( \omega \in \Omega: \ u_0^\omega \in H^s( \R^d) \big) > 0 $.  According to Proposition~\ref{1-continuous},
 \begin{equation*}
\forall u \in H^s( \R^d) , \qquad    \sup_{ N \in \N^\star }   \| \chi \Big(  \frac{H}{N^2}  \Big)u \|_{H^s(  \R^d )} \leq C \|u\|_{H^s(\R^d)},
\end{equation*}
and from the latter inequality we deduce that 
$$P \Big( \omega \in  \Omega \;: \ \displaystyle{\sup_{ N \in \N^\star }}   \| \chi \Big(  \frac{H}{N^2}  \Big)u_0^\omega \|_{H^s(  \R^d )} < + \infty  \Big) > 0.$$
 Therefore
\begin{equation*}
P \Big( \omega \in \Omega \;:  \sup_{N \in \N^\star} \| \chi \Big( \frac{H}{N^2}  \Big) u^\omega_0 \|_{H^s(\R^d)}  = + \infty   \Big) <1 
\end{equation*}
which contradicts Proposition~\ref{1-divergence}.

As a consequence, we have proven that if $u_0  \notin \mathcal{H}^s(\R^d)$ then  
$u^\omega_0 \notin H^s(\R^d)$. The fact that  $\langle x\rangle ^s u^\omega_0 \notin L^2(\R^d)$ is obtained similarly, and this concludes the proof of Theorem~\ref{2sobolev2}. \medskip

To finish this part, we evaluate the Sobolev norm of the initial data. This will establish that Theorem \ref{1-thm1}  will hold true for supercritical equations with large initial data. 

\begin{prop}\label{prop54} Let $ \sigma \geq 0, \  u_0 \in \mathcal{H}^\sigma(\R^3) $ and $ s \geq  \sigma $. Assume that for all  $ n \in \N $,
\begin{equation*}
  \lambda_n^{2s} |c_n|^2 \leq 1 
\end{equation*}
then for all $ t \geq 0 $,
\begin{equation*}
\mu \Big( u \;: \| \chi \Big(   \frac{H}{N^2}    \Big)  u \|_{  \mathcal{H}^s(\R^3) } \leq t   \Big)  \leq  
       e^{ t ^2 -   \frac{1}{2} \| \chi \big(   \frac{H}{N^2}    \big)  u_0 \|^2_{\mathcal{H}^s(\R^3) } }  
\end{equation*}
\end{prop}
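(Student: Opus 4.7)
\medskip

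\noindent\textbf{Plan.} The plan is to rewrite the squared norm as a weighted sum of independent $\mathrm{Exp}(1)$ random variables and then run an exponential Chernoff argument for the lower tail. I would set
\begin{equation*}
a_n := \chi^2(\lambda_n^2/N^2)\,\lambda_n^{2s}\,|c_n|^2,
\end{equation*}
so that the hypothesis $\lambda_n^{2s}|c_n|^2 \leq 1$ combined with $0 \leq \chi \leq 1$ gives $0 \leq a_n \leq 1$ for every $n$. Expanding in the Hermite basis yields
\begin{equation*}
\big\|\chi(H/N^2)\,u_0^{\omega}\big\|_{\mathcal{H}^s(\R^3)}^2 = \sum_{n \in \N} a_n\,|g_n(\omega)|^2, \qquad \sum_{n \in \N} a_n = \big\|\chi(H/N^2)\,u_0\big\|_{\mathcal{H}^s(\R^3)}^2.
\end{equation*}

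Next I would apply Markov's inequality in exponential form: for any $\lambda > 0$,
\begin{equation*}
\mu\Big(u\,:\,\|\chi(H/N^2)u\|_{\mathcal{H}^s(\R^3)}\leq t\Big) \leq e^{\lambda t^2}\,E\Big[\exp\!\big(-\lambda \sum_{n} a_n\,|g_n|^2\big)\Big].
\end{equation*}
Since $(g_n)$ is an i.i.d.\ sequence of standard complex Gaussians with density $\frac{1}{\pi}e^{-|z|^2}$, each $|g_n|^2$ follows an $\mathrm{Exp}(1)$ law, so $E[e^{-\mu|g_n|^2}] = (1+\mu)^{-1}$ for every $\mu \geq 0$. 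By independence the right-hand side factors as $e^{\lambda t^2}\prod_{n \in \N} (1+\lambda a_n)^{-1}$. Specialising to $\lambda = 1$ and using the elementary inequality $1+x \geq e^{x/2}$ on $[0,1]$ (immediate since both sides coincide at $x=0$ while the derivative of the left side dominates that of the right throughout the interval), I would conclude
\begin{equation*}
\prod_{n \in \N}\frac{1}{1+a_n} \leq \exp\!\Big(-\tfrac12 \sum_{n \in \N} a_n\Big) = \exp\!\Big(-\tfrac12 \|\chi(H/N^2)u_0\|_{\mathcal{H}^s(\R^3)}^2\Big),
\end{equation*}
which delivers exactly the stated estimate.

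There is no real analytic obstacle in this argument; the entire calculation relies only on the moment generating function of an exponential distribution. The only place where the normalisation hypothesis is used is in the pointwise bound $(1+a_n)^{-1} \leq e^{-a_n/2}$, which requires $a_n \leq 1$ (the numerical inequality eventually fails for large~$x$). Equivalently one could optimise the parameter $\lambda$ in the Chernoff bound, but the choice $\lambda = 1$ already produces the exponent $-\tfrac12$ that appears in the conclusion, so no optimisation is necessary.
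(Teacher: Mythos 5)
Your proof is correct and is essentially the same as the paper's: a Chernoff bound with parameter $1$, factorization by independence using $E[e^{-a|g_n|^2}]=(1+a)^{-1}$ for standard complex Gaussians, and the elementary bound $(1+a_n)^{-1}\leq e^{-a_n/2}$ valid for $a_n\in[0,1]$ (the paper phrases this as $-\ln(1+u)\leq -u/2$ on $[0,1]$), which is exactly where the normalisation hypothesis $\lambda_n^{2s}|c_n|^2\leq 1$ enters. No gaps.
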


\begin{proof}  Using that $ -\ln(1+u) \leq - \frac{u}{2} $ for all $ u \in [0,1] $ and the Markov inequality, we obtain
\begin{multline*}
\mu \Big(  u \in \mathcal{H}^\sigma(\R^3) \;: \| \chi \big(   \frac{H}{N^2}    \big)  u \|_{   \mathcal{H}^s(\R^3) } \leq t   \Big)  = \\
\begin{aligned} 
&= P \Big(   \omega \in \Omega \;:  e^{-\| \chi \big(   \frac{H}{N^2}    \big)  u^\omega_0 \|^2_{  \mathcal{H}^s(\R^3) } }  \geq e^{-t^2 }   \Big)\\
 & \leq   e^{t^2} E \Big( e^{-\| \chi \big(   \frac{H}{N^2}    \big)  u^\omega_0 \|^2_{  \mathcal{H}^s(\R^3) } }  \Big) \\
 & \leq   e^{t^2} \prod_{ n \in \N   } E \Big( e^{- \chi^2 \big(   \frac{\lambda_n^2}{N^2}    \big)  \lambda_n^{2s} | c_n|^2 | X |^2 }   \Big)\\
 & \leq  e^{t^2} \prod_{ n \in \N   } \Big( \frac{1}{1+\chi^2 (   \frac{\lambda_n^2}{N^2}    )  \lambda_n^{2s} | c_n|^2} \Big) \\
 & \leq   e^{t^2} \prod_{ n \in \N   }  \Big( e^{  - \frac{1}{2}\chi^2 \big(   \frac{\lambda_n^2}{N^2}    \big)  \lambda_n^{2s} | c_n|^2 } \Big)\\
 & \leq   e^{ t^2 - \frac{1}{2}  \| \chi \big(   \frac{H}{N^2}    \big)  u_0 \|^2_{\mathcal{H}^s(\R^3) }  }. 
\end{aligned}
\end{multline*}
\end{proof}
\textit{Remarks :} $(i)$ For example, if $ u_0 \notin \mathcal{H}^s(\R^3) $ and $ \lambda_n^{2s}  |c_n|^2 \leq 1, \ \forall n \in \N $, we obtain that for all  $ t \geq 0 $, 
\begin{equation*}
\lim_{N \rightarrow + \infty} \mu \Big( u_0 \in \mathcal{H}^\sigma(\R^3) \;: \big\| \chi \big(   \frac{H}{N^2}    \big)  u_0 \big\|_{  \mathcal{H}^s(\R^d) } \leq t   \Big) = 0 .
\end{equation*}
This does mean that the Sobolev norm of the initial data is not small.

$(ii)$  For example, for $ \epsilon \ll 1 $, we can choose $ c_n = \frac{\epsilon}{ \lambda_n^s  }  \frac{1}{ \sqrt{n}  }   $ and obtain for $ t \geq 0$ ,
\begin{equation*}
\mu \Big( u_0 \in \mathcal{H}^\sigma(\R^3) \;: \big\| \chi \big(   \frac{H}{N^2}    \big)  u_0 \big\|_{  \mathcal{H}^s(\R^d) } \leq t    \Big)  \leq \exp  \left( t^2 - C' \epsilon^2 \ln^2 N  \right) \underset{ N \rightarrow + \infty }{ \longrightarrow  } 0  ,
 \end{equation*}
but $ \|u_0\|_{ \mathcal{H}^\sigma ( \R^3  )   } = C '' \epsilon \ll 1 $.


\section{The fixed point argument in dimension $d=3$, global existence, and scattering}\label{Sect6}

Let us introduce the following equation: 
\begin{equation} \label{1-schrodingerH}  
  \left\{
      \begin{aligned}
     &   i \frac{ \partial u }{ \partial t } - H u = \kappa \cos (2 t) |u|^2 u,   \quad (t,x) \in \R \times \R^3    \\
 & u(0,x)=u_0(x).
      \end{aligned}
    \right.
\end{equation}
We will show that this equation is locally well posed. Then, thanks to the lens transform we will be able to show that \eqref{1-schrodinger} is globally well-posed.

\subsection{Some nonlinear analysis}

In this part, we establish the estimates which will be used to apply a fixed point theorem.

 \begin{lem} \label{1-easy case} 
There exists $ b' < \frac{1}{2} $ such that for all $ \delta > 0 $ and $ K \geq 1 $, there exists  a constant $ C_K>0 $ such that if   $ N_1 \geq N_2^{1+\delta} $ and $ N_2 \geq N_3 \geq N_4 $ then for all $ u_1, u_2, u_3, u_4 \in \X^{0,b'}$,
\begin{equation*}
 \bigg| \int_{\R \times\R^3} \Delta_{N_1}(u_1) \Delta_{N_2}(u_2) \Delta_{N_3}(u_3) \Delta_{N_4}(u_4) \bigg|  \leq C_K  N_1^{-K} \prod_{j=1}^4 \| \Delta_{N_j}(u_j)\|_{ \X^{0,b'} }.
\end{equation*}
\end{lem}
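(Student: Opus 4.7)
The strategy is to expand each $\Delta_{N_j}(u_j)$ in the Hermite basis and exploit the rapid decay of four-fold Hermite integrals provided by Lemma~\ref{1-fast} whenever one frequency dominates. The hypothesis $N_1\geq N_2^{1+\delta}$ is tailored precisely so that $\lambda_{n_1}\gtrsim \lambda_{n_2}^{1+\delta'}$ for some $\delta'>0$ (after absorbing constants), placing us in the regime where Lemma~\ref{1-fast} yields arbitrary polynomial decay in $\lambda_{n_1}$, and hence in $N_1$.

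I would begin by writing $\Delta_{N_j}(u_j)(t,x)=\sum_n\psi(\lambda_n^2/N_j^2)c^{(j)}_n(t)h_n(x)$ (supported in $\lambda_n\in[N_j/2,\sqrt 2 N_j]$), so that the left-hand side of the claimed inequality becomes
\begin{equation*}
\int_\R\sum_{n_1,n_2,n_3,n_4}\prod_{j=1}^{4}\psi\!\left(\tfrac{\lambda_{n_j}^{2}}{N_j^{2}}\right)c^{(j)}_{n_j}(t)\;I(n_1,n_2,n_3,n_4)\,dt,
\end{equation*}
with $I(n_1,\ldots,n_4):=\int_{\R^{3}}h_{n_1}h_{n_2}h_{n_3}h_{n_4}\,dx$. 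For each nontrivial quadruple, $\lambda_{n_j}\sim N_j$, so $\lambda_{n_1}$ dominates the other three and $\lambda_{n_1}\geq c_\delta\lambda_{n_2}^{1+\delta}$. As soon as $\lambda_{n_2}$ exceeds a universal constant this upgrades to $\lambda_{n_1}\geq\lambda_{n_2}^{1+\delta/2}$; the complementary bounded regime of $\lambda_{n_2}$ is handled directly, since a direct integration by parts against $H$ gives $|I|\leq \lambda_{n_1}^{-2L}\|H^L(h_{n_2}h_{n_3}h_{n_4})\|_{L^{2}}$ with a uniformly bounded right-hand side. Lemma~\ref{1-fast} then yields $|I(n_1,\ldots,n_4)|\leq C_{K'}\lambda_{n_1}^{-K'}\leq C_{K'}N_1^{-K'}$ for any $K'\geq 1$.

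Since the number of eigenvalues with $\lambda_n\sim N_j$ is $O(N_j^{6})$ in dimension $d=3$, Cauchy-Schwarz over the spectral index gives at each fixed $t$
\begin{equation*}
\sum_{n_j:\,\lambda_{n_j}\sim N_j}|c^{(j)}_{n_j}(t)|\leq C\,N_j^{3}\,\|\Delta_{N_j}(u_j)(t)\|_{L^{2}_x}.
\end{equation*}
Combining this with the uniform bound on $|I|$ and using $N_j\leq N_1$ produces the pointwise-in-$t$ estimate
\begin{equation*}
\Big|\sum_{\vec n}\prod_{j=1}^{4}c^{(j)}_{n_j}(t)\,I(\vec n)\Big|\leq C_{K'}\,N_1^{-K'+12}\prod_{j=1}^{4}\|\Delta_{N_j}(u_j)(t)\|_{L^{2}_x}.
\end{equation*}
Integrating in $t$ and applying H\"older with four factors in $L^{4}_t$ together with Proposition~\ref{1-bourgain2} (with $\theta=1/2$, yielding the embedding $\X^{0,b'}\hookrightarrow L^{4}_tL^{2}_x$ for any $b'>1/4$) bounds the time integral by $\prod_j\|\Delta_{N_j}(u_j)\|_{\X^{0,b'}}$. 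Choosing $K'=K+12$ and fixing any $b'\in(1/4,1/2)$ then yields the claim.

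The main technical point, and the only place where real care is required, is verifying that Lemma~\ref{1-fast} applies with a uniform constant across all dyadic scales: one must track the numerical constants linking the localisation $\lambda_{n_j}\in[N_j/2,\sqrt 2 N_j]$ with the strict inequality $\lambda_{n_1}\geq\lambda_{n_2}^{1+\delta}$ required there, and treat the small-$\lambda_{n_2}$ regime separately as indicated above. Everything else is bookkeeping: the polynomial loss $N_1^{12}$ is absorbed by the arbitrariness of $K'$, and the Bourgain embedding supplies the $L^{4}_tL^{2}_x$ norms with the exponent $b'<1/2$ needed in the statement.
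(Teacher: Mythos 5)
Your proposal is correct and follows essentially the same route as the paper: expand in the Hermite basis, apply Lemma~\ref{1-fast} to the four-fold integral $I(n_1,\dots,n_4)$, absorb the mode-counting loss $N_1^{12}$ from Cauchy--Schwarz by the arbitrariness of $K'$, and close with H\"older in time and the embedding of Proposition~\ref{1-bourgain2} with $\theta=1/2$, giving $b'\in(1/4,1/2)$. The only differences are organizational (you perform the Cauchy--Schwarz mode summation pointwise in $t$, whereas the paper first treats single modes and then sums using orthogonality in the $\X^{0,b'}$ norm), and you are in fact slightly more careful than the paper about matching the constants in the hypothesis $\lambda_{n_1}\geq\lambda_{n_2}^{1+\delta}$ of Lemma~\ref{1-fast}.
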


\begin{proof} We begin with the case $ u_j(t,x) = c_j (t) h_{n_j}(x) $. We have
\begin{multline*}
 \bigg| \int_{\R \times\R^3} \Delta_{N_1}(u_1) \Delta_{N_2}(u_2)\Delta_{N_3}(u_3) \Delta_{N_4}(u_4) \bigg|   =\\
\begin{aligned}
&= \bigg| \int_{\R \times\R^3} \prod_{j=1}^4 \psi( \frac{ \lambda_{n_j}^2 }{N_j^2} ) c_j (t) h_{n_j}(x) dt dx \bigg| \\
&\leq  \prod_{j=1}^4   \psi( \frac{ \lambda_{n_j}^2 }{N_j^2} )   \int_\R | c_1(t) \cdots c_4 (t) | dt  \bigg| \int_{\R^3}  h_{n_1}(x)  \cdots  h_{n_4}(x) dx \bigg|
\end{aligned}
\end{multline*}
Then by Lemma~\ref{1-fast} and Proposition~\ref{1-bourgain2}, we deduce
\begin{multline}\label{star}
 \bigg| \int_{\R \times\R^3} \Delta_{N_1}(u_1) \Delta_{N_2}(u_2)\Delta_{N_3}(u_3) \Delta_{N_4}(u_4) \bigg|   \leq \\
\begin{aligned}
 &\leq  C_K N_1^{-K}   \prod_{j=1}^4   \psi( \frac{ \lambda_{n_j}^2 }{N_j^2} )    \prod_{j=1}^4 \| c_j\|_{ L_t^4(\R) } \\
  &\leq  C_K  N_1^{-K} \prod_{j=1}^4 \| \Delta_{N_j}( u_j) \|_{ L^4( \R ; L^2(\R^3)) }\\
 &\leq  C_K  N_1^{-K} \prod_{j=1}^4 \| \Delta_{N_j}(u_j)\|_{ \X^{0,b'} }.
\end{aligned}
\end{multline}
For the general case, let $\dis  u_j (t,x) = \underset{ k\geq 0 }{\sum} c_{j,k}(t) h_k(x) $, then by \eqref{star}
\begin{multline*}
\bigg| \int_{\R \times\R^3} \Delta_{N_1}(u_1) \Delta_{N_2}(u_2)\Delta_{N_3}(u_3) \Delta_{N_4}(u_4) \bigg|   \leq \\
\begin{aligned}
 &\leq  \sum_{k_1,k_2,k_3,k_4\geq 0} \bigg| \int_{\R \times\R^3} \Delta_{N_1}(c_{1,k_1} h_{k_1}) \Delta_{N_2}(c_{2,k_2} h_{k_2})\Delta_{N_3}(c_{3,k_3} h_{k_3}) \Delta_{N_4}(c_{4,k_4} h_{k_4}) \bigg| \\
  & \leq   C_K  N_1^{-K}   \sum_{k_1,k_2,k_3,k_4\geq 0} \prod_{j=1}^4 \| \Delta_{N_j}(c_{j,k_j} h_{k_j})\|_{ \X^{0,b'} } \\
 & \leq   C_K  N_1^{-K+12}  \sqrt{  \sum_{k_1,k_2,k_3,k_4\geq 0} \prod_{j=1}^4 \| \Delta_{N_j}(c_{j,k_j} h_{k_j})\|^2_{ \X^{0,b'} } } .
\end{aligned}
\end{multline*}
Then we have
\begin{eqnarray*}
\sum_{k_j\geq 0} \| \Delta_{N_j}(c_{j,k_j} h_{k_j})\|^2_{ \X^{0,b'} } & = &\sum_{k_j\geq 0} \sum_{n\geq 0} \| \langle  \tau + \lambda_n \rangle  ^{b'} \widehat{P_n(c_{j,k_j} h_{k_j}  ) } \|^2_{ L_t^2( \R ; L^2_x(\R^3) ) }\\
  & =&  \sum_{n\geq 0} \| \langle  \tau + \lambda_n \rangle  ^{b'} \widehat{P_n(u_j  ) } \|^2_{ L_\tau^2( \R ; L^2_x(\R^3) ) } \\
 & = & \| u_j \|_{ \X^{0,b'} }^2, 
\end{eqnarray*}
which concludes the proof.
\end{proof}

\begin{prop} There exists $ b' < \frac{1}{2} $ such that for all $ b > \frac{1}{2} $ and $ s > \frac{1}{2} $, there exist  two constants $ C>0$ and $\kappa >0 $ such that for any $ v \in \X^{s,b} $ and all $ N_3  \leq N_2 \leq N_1 $,
\begin{equation} \label{1-estim1}
\| \Delta_{N_1}(v) \Delta _{N_2}(v) \Delta_{N_3} (v) \|_{\X^{s,-b'}} \leq C   N_1^{-\kappa} \|v\|_{\X^{s,b}}^3.
\end{equation}
\end{prop}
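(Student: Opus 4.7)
I will use duality combined with the harmonic-oscillator Littlewood--Paley characterization, reducing the estimate to two bilinear Bourgain estimates and one fast-decay lemma. Writing $F := \Delta_{N_1}(v)\Delta_{N_2}(v)\Delta_{N_3}(v)$, the Littlewood--Paley characterization (immediate from the definition of the $\X^{s,b}$ norm),
\[
\|F\|_{\X^{s,-b'}}^2 \sim \sum_{N_4} N_4^{2s}\,\|\Delta_{N_4}(F)\|_{\X^{0,-b'}}^2,\qquad \|w\|_{\X^{-s,b'}}^2 \sim \sum_{N_4} N_4^{-2s}\,\|\Delta_{N_4}(w)\|_{\X^{0,b'}}^2,
\]
combined with a Cauchy--Schwarz in $N_4$, reduces the problem to showing that
\[
\sum_{N_4} N_4^{2s}\,I(N_4)^2 \le C\,N_1^{-2\kappa}\,\prod_{j=1}^{3}\|\Delta_{N_j}v\|_{\X^{0,b'}}^2,
\]
where
\[
I(N_4) := \sup\Big\{\Big|\textstyle\int_{\R\times\R^3}F\cdot g_{N_4}\,dx\,dt\Big| :\ g_{N_4}\text{ in the }N_4\text{-shell},\ \|g_{N_4}\|_{\X^{0,b'}}\le 1\Big\}.
\]
The subsequent conversion $\|\Delta_{N_j}v\|_{\X^{0,b'}} \le N_j^{-s}\|\Delta_{N_j}v\|_{\X^{s,b'}}\le N_j^{-s}\|v\|_{\X^{s,b}}$ (valid since $b'<b$) then yields the stated inequality.

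I split the $N_4$-sum in two regimes. If $N_4\ge N_1^{1+\delta}$, the dominant frequency in the four-fold integral comes from the $N_4$ factor, and Lemma~\ref{1-easy case} (proven by Hermite expansion and Lemma~\ref{1-fast}) gives $I(N_4)\le C_K\,N_4^{-K}\prod\|\Delta_{N_j}v\|_{\X^{0,b'}}$ for every $K$; this makes the corresponding contribution $O(N_1^{-K'})$ and thus negligible. If $N_4\le N_1^{1+\delta}$, I apply Cauchy--Schwarz with the pairing $(1,3)/(2,4)$,
\[
\Big|\textstyle\int F\cdot g_{N_4}\Big|\le \|\Delta_{N_1}v\cdot\Delta_{N_3}v\|_{L^2_{t,x}}\cdot\|\Delta_{N_2}v\cdot g_{N_4}\|_{L^2_{t,x}},
\]
and invoke Proposition~\ref{1-bilibourgain3} (with $d=3$) on each factor. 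The first pairing supplies the decisive gain $C\,N_3\,N_1^{-1/2+\delta}\,\|\Delta_{N_1}v\|_{\X^{0,b'}}\|\Delta_{N_3}v\|_{\X^{0,b'}}$; the second gives $C\,\min(N_2,N_4)\,\max(N_2,N_4)^{-1/2+\delta}\,\|\Delta_{N_2}v\|_{\X^{0,b'}}\|g_{N_4}\|_{\X^{0,b'}}$.

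Squaring and summing $N_4^{2s}I(N_4)^2$ over dyadic $N_4\le N_1^{1+\delta}$ produces two geometric pieces. For $N_4\le N_2$, dyadic summation of $N_4^{2s+2}$ up to $N_2$ yields a contribution of order $N_3^2\,N_2^{2s+1+2\delta}\,N_1^{-1+2\delta}$. For $N_2\le N_4\le N_1^{1+\delta}$, the exponent $2s-1+2\delta$ is positive (this is where the hypothesis $s>1/2$ enters crucially), so the sum is saturated at the upper endpoint $N_4\sim N_1^{1+\delta}$, giving roughly $N_3^2\,N_2^2\,N_1^{2s-2+O(\delta)}$. Combining with the $\prod N_j^{-2s}$ conversion factor and exploiting $N_3\le N_2\le N_1$, a short case analysis on $s$ (separating $\tfrac12<s<1$ from $s\ge 1$) shows that, for $\delta$ sufficiently small, both pieces are bounded by $C\,N_1^{-2\kappa}\,\|v\|_{\X^{s,b}}^6$ with some $\kappa=\kappa(s)>0$. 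The main obstacle is precisely the sub-regime $N_2<N_4\le N_1^{1+\delta}$, where the $N_4$-sum of $N_4^{2s-1+2\delta}$ is naturally divergent; the saving arises only from the upper cut-off $N_4\le N_1^{1+\delta}$ granted by Lemma~\ref{1-easy case} together with the $N_1^{-1/2+\delta}$ gain extracted by pairing the two largest frequencies $(N_1,N_3)$ in the bilinear estimate.
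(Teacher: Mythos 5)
Your argument is correct and, at its core, it is the paper's own proof: duality in Bourgain spaces, Lemma~\ref{1-easy case} to discard the regime where the dual frequency exceeds $N_1^{1+\delta}$, and the bilinear Bourgain-space estimate for the remaining regime, with dyadic bookkeeping that closes precisely because $s>\frac12$. Where you deviate is in implementation, not in route. First, you group the four factors as $(N_1,N_3)$ and $(N_2,N_4)$, whereas the paper pairs the two largest $v$-frequencies, $(N_1,N_2)$, together with $(N_3,M)$; both groupings extract the same decisive gain $N_1^{-1/2+\delta}$ by pairing $N_1$ against a smaller frequency, but your choice forces the endpoint-saturation discussion for $\sum_{N_2\le N_4\le N_1^{1+\delta}}N_4^{2s-1+2\delta}$, which you correctly single out and which does close (after the $(N_1N_2N_3)^{-2s}$ conversion the two sub-sums are of size $N_3^{2-2s}N_2^{1+2\delta}N_1^{-1-2s+2\delta}$ and $N_3^{2-2s}N_2^{2-2s}N_1^{-2+O(\delta)}$, both $\lesssim N_1^{-2\kappa}$ for $s>\frac12$, $\delta$ small, treating $s<1$ and $s\ge1$ as you indicate), while the paper's pairing yields the summable factor $M^{-\delta}$ directly from $\big(\min/\max\big)^{1/2-\delta}$. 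Second, you sum over the dual frequency via the square-function form of the $\X^{s,-b'}$ norm rather than via an explicit $M^{-\delta}$ gain; this is an equivalent device. One citation should be corrected: in the near regime the dual block $g_{N_4}$ is only controlled in $\X^{0,b'}$ with $b'<\frac12$, so the bilinear estimate you need (and whose numerology, with $\X^{0,b'}$ norms on both factors, you actually use) is \eqref{1-bilibourgain1}, not Proposition~\ref{1-bilibourgain3}, which requires $b>\frac12$ on both factors and cannot be applied to the dual factor.
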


\begin{proof}  By duality, it is sufficient to show that there exists a constant $\delta > 0 $ such that  
\begin{equation*}
\int_{\R \times\R^3} \Delta_{N_1}(v) \Delta _{N_2}(v) \Delta_{N_3} (v) \Delta_M(w) \leq C  N_1^{-\kappa} M ^{-\delta} \|v\|_{\X^{s,b}}^3 \| w \|_{ \X^{-s,b'} }.
\end{equation*}
Thanks to Lemma~\ref{1-easy case}, we only need to treat  the case where $ M \leq N_1 ^ {1+ \delta } $.

$\bullet$ Case $N_3 \leq M \leq N_1^{1+\delta}$. Using \eqref{1-bilibourgain1}, we obtain
\begin{multline*}
 \int_{\R \times\R^3} \Delta_{N_1}(v) \Delta _{N_2}(v) \Delta_{N_3} (v) \Delta_M(w) \leq \\
\begin{aligned}
& \leq  \| \Delta_{N_1}(v) \Delta _{N_2}(v) \|_ {L^2(\R; L^2(\R^3) ) }  \| \Delta_{M}(w) \Delta _{N_3}(v) \|_ {L^2(\R ; L^2(\R^3)) }\\
& \leq  (N_2 N_3)^{1/2+\delta}  (\frac{N_2}{N_1})^{1/2-\delta}  (\frac{N_3}{M})^{1/2-\delta}   \| \Delta_{N_1}(v) \|_{ \X^{0,b} } \| \Delta_{N_2}(v) \|_{ \X^{0,b} } \| \Delta_{N_3}(v) \|_{ \X^{0,b} } \| \Delta_{M}(w) \|_{ \X^{0,b'} }\\
  & \leq  (\frac{N_2}{N_1})^{1/2-\delta} (\frac{N_3}{M})^{1/2-\delta} (N_2 N_3)^{1/2+ \delta-s} (   \frac{M}{N_1} )^{s}  \| v \|_{ \X^{s,b} }^3 \| w \|_{ \X^{-s,b'} }\\ 
  & \leq  (N_2 N_3)^{1-s}  M ^{-1/2+\delta} N_1^{-1/2+(1+s)\delta}  \| v \|_{ \X^{s,b} }^3 \| w \|_{ \X^{-s,b'} }\\
  & \leq M^{-\delta} M^{1/2-s+(1+s)\delta}   N_1^{1/2-s+2\delta}  \| v \|_{ \X^{s,b} }^3 \| w \|_{ \X^{-s,b'} }\\
 & \leq M^{-\delta} N_1^{1-2s+ (3+s)\delta}  \| v \|_{ \X^{s,b} }^3 \| w \|_{ \X^{-s,b'} }.
\end{aligned}
\end{multline*}

$\bullet$ Case $ M \leq N_3 \leq N_1^{1+\delta} $. Using   \eqref{1-bilibourgain1}, we get
\begin{multline*}
\int_{\R \times\R^3} \Delta_{N_1}(v) \Delta _{N_2}(v) \Delta_{N_3} (v) \Delta_M(w)  \leq \\
\begin{aligned}
& \leq  \| \Delta_{N_1}(v) \Delta _{N_2}(v) \|_ {L^2(\R ; L^2(\R^3) ) }   \| \Delta_{M}(w) \Delta _{N_3}(v) \|_ {L^2(\R ; L^2(\R^3) ) }
\\ & \leq  (N_2 M )^{1/2+\delta}  (\frac{N_2}{N_1})^{1/2-\delta} (\frac{M}{N_3})^{1/2-\delta}  \| \Delta_{N_1}(v) \|_{ \X^{0,b} } \| \Delta_{N_2}(v) \|_{ \X^{0,b} } \| \Delta_{N_3}(v) \|_{ \X^{0,b} } \| \Delta_{M}(w) \|_{ \X^{0,b'} }
\\ & \leq  N_2 M (\frac{1}{N_3N_1})^{1/2-\delta}  (   \frac{M}{N_1N_2 N_3} )^{s}  \| v \|_{ \X^{s,b} }^3 \| w \|_{ \X^{-s,b'} }
\\ & \leq  M^{-\delta}  N_2^{1-s} N_3^{1/2+2 \delta}  (   \frac{1}{N_1} )^{1/2+s-\delta}    \| v \|_{ \X^{s,b} }^3 \| w \|_{ \X^{-s,b'} }
\\  & \leq   M^{-\delta}  N_1^{1-2s+3 \delta}     \| v \|_{ \X^{s,b} }^3 \| w \|_{ \X^{-s,b'} },
\end{aligned}
\end{multline*}
which concludes the proof.
\end{proof}

 \begin{prop} There exists $ b' < \frac{1}{2} $ such that for all $ b > \frac{1}{2} $ and $ s > \frac{1}{2} $, there exist two constants $ C,\kappa >0 $ such that if for some $ \lambda>0 $, we have for all $ N$,
\begin{equation*}
\| \Delta_N( e^{itH} u_0 ) \|_{ L^2 (  [- \pi,  \pi] ;  L^\infty ( \R^3 )) } \leq \lambda N^{-1/6}
\end{equation*}
then for all $ v \in \X^{s,b} $ and all  $ N_3 \leq N_2 \leq N_1 $,
\begin{equation} \label{1-estim2}
\| \Delta_{N_1}(v) \Delta _{N_2}(v) \Delta_{N_3} \big(\psi(t) e^{-itH}u_0\big) \|_{\X^{s,-b'}} \leq C N_1^{-\kappa}  ( \|v\|_{\X^{s,b}}^3 + \lambda ^3 ),
\end{equation}
\begin{equation} \label{1-estim3}
\| \Delta_{N_1}(v) \Delta _{N_2}\big(\psi(t) e^{-itH}u_0\big) \Delta_{N_3} (v) \|_{\X^{s,-b'}} \leq C N_1^{-\kappa}  ( \|v\|_{\X^{s,b}}^3 + \lambda ^3 ).
\end{equation}
\end{prop}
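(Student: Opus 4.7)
The plan is to follow the template of the proof of \eqref{1-estim1}. By duality, it suffices to bound
\begin{equation*}
I:=\int_{\R\times\R^3}\Delta_{N_1}(v)\,\Delta_{N_2}(v)\,\Delta_{N_3}\big(\psi(t)e^{-itH}u_0\big)\,\Delta_M(w)\,dt\,dx
\end{equation*}
by $CN_1^{-\kappa}M^{-\delta'}(\|v\|_{\X^{s,b}}^3+\lambda^3)\,\|w\|_{\X^{-s,b'}}$, for $w\in\X^{-s,b'}$. The easy regime in which one frequency is much larger than the others is handled via the natural analogue of Lemma~\ref{1-easy case}: its proof uses only the rapidly decaying Hermite overlap integral and $L^4_t$-bounds on the temporal coefficients, and both are preserved under the unitary linear flow $e^{-itH}$, so the argument carries over and reduces the problem to $M\le N_1^{1+\delta}$.

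In the main regime $M\le N_1^{1+\delta}$, I would perform the same dyadic case analysis on the position of $M$ relative to $N_2$ and $N_3$ as in the proof of \eqref{1-estim1}, using two bilinear tools: the estimate \eqref{1-bilibourgain1} for pairs of factors in $\X^{0,b'}$, and the estimate \eqref{1-bilibourgain2} whenever the pair contains the random linear evolution $\Delta_{N_3}(\psi(t)e^{-itH}u_0)$, in which case one input norm becomes $\|\Delta_{N_3}(u_0)\|_{L^2(\R^3)}$. Rerunning the exponent arithmetic of \eqref{1-estim1} with the $v$-factor at frequency $N_3$ replaced by the random factor yields
\begin{equation*}
|I|\le CN_1^{-\kappa'}M^{-\delta'}\,\|v\|_{\X^{s,b}}^2\,\|\Delta_{N_3}(u_0)\|_{L^2(\R^3)}\,\|w\|_{\X^{-s,b'}}.
\end{equation*}
To bring the parameter $\lambda$ into play, I would run in parallel a second, purely H\"older-based estimate: pull the random factor out via the hypothesis $\|\Delta_{N_3}(\psi(t)e^{-itH}u_0)\|_{L^2_tL^\infty_x}\le C\lambda N_3^{-1/6}$, then control the resulting triple product of $v$- and $w$-factors via a single application of \eqref{1-bilibourgain1} in $L^2_tL^2_x$ combined with the Bourgain embeddings of Propositions~\ref{1-bourgain1}--\ref{1-bourgain2}. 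Combining the two bounds and applying Young's inequality $\|v\|_{\X^{s,b}}^2\,\lambda\le C(\|v\|_{\X^{s,b}}^3+\lambda^3)$ produces the right-hand side of \eqref{1-estim2}. Estimate \eqref{1-estim3}, where the random factor sits at the intermediate frequency $N_2$, follows by exactly the same scheme with the roles of $N_2$ and $N_3$ in the case analysis interchanged; all pairings adapt in the obvious way since \eqref{1-bilibourgain2} is symmetric in its two inputs up to exchanging the roles.

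The main obstacle is the second, H\"older, route. The dual variable $w\in\X^{-s,b'}$ with $b'<\tfrac12$ does not embed into $L^\infty_t$, so the meager time integrability available on $w$ must be very carefully balanced against the ample time regularity $b>\tfrac12$ of $v$ (via the interpolation inequalities of Proposition~\ref{1-bourgain2}) in order to still extract the decay $M^{-\delta'}$ needed to close the duality argument after summation in $M$. I expect this interpolation step, rather than the dyadic combinatorics, to carry the bulk of the technical work.
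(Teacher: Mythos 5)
Your proposal does contain the paper's actual argument, but only as your ``second, purely H\"older-based'' route, and the parts you lean on for the decay factors do not work as stated. The first route is unusable under the hypotheses of this proposition: the bilinear estimate \eqref{1-bilibourgain2} produces the norm $\|\Delta_{N_3}(u_0)\|_{L^2(\R^3)}$ on the right-hand side, and here --- unlike in Proposition~\ref{1-Rfixe} or in the set $E_0(\lambda)$ --- there is no assumption $\|u_0\|_{L^2(\R^3)}\le\lambda$; the only hypothesis is the bound $\|\Delta_N(e^{itH}u_0)\|_{L^2([-\pi,\pi];L^\infty(\R^3))}\le\lambda N^{-1/6}$. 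Consequently your intermediate bound $CN_1^{-\kappa'}M^{-\delta'}\|v\|^2_{\X^{s,b}}\|\Delta_{N_3}(u_0)\|_{L^2(\R^3)}\|w\|_{\X^{-s,b'}}$ cannot be converted into the right-hand side of \eqref{1-estim2}, and you do not specify any way of ``combining'' it with the H\"older bound that would repair this; that route has to be dropped altogether.

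What remains is your second route, which is exactly how the paper proceeds, and it suffices on its own --- but the obstacle you identify there is not the real one, and the step that actually carries the proof is missing. After H\"older in time extracts $\|\Delta_{N_3}(\psi(t)e^{-itH}u_0)\|_{L^2([-\pi,\pi];L^\infty(\R^3))}\le\lambda N_3^{-1/6}$, the dual factor $\Delta_M(w)$ should never be placed alone in a time Lebesgue space (which is what makes you worry about $b'<\frac12$): it enters only through a single application of \eqref{1-bilibourgain1} paired with $\Delta_{N_2}(v)$, which requires nothing beyond $\|\Delta_M(w)\|_{\X^{0,b'}}$, while the remaining factor $\Delta_{N_1}(v)$ is put in $L^\infty([-\pi,\pi];L^2(\R^3))$ via Proposition~\ref{1-bourgain2} using $b>\frac12$; no interpolation balancing the time integrability of $w$ against that of $v$ is needed. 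The genuine content is then the exponent bookkeeping in the two sub-cases $N_2\le M\le N_1^{1+\delta}$ and $M\le N_2\le N_1^{1+\delta}$: trade the weight $M^{s}$ coming from $\|w\|_{\X^{-s,b'}}$ against $(N_1N_2)^{-s}$, use the bilinear gain $(\min(N_2,M)/\max(N_2,M))^{1/2-\delta}$, the factor $N_3^{-1/6}$, the constraint $M\le N_1^{1+\delta}$ and $s>\frac12$ to obtain simultaneously $N_1^{-\kappa}$ and $M^{-\delta}$, and finish with Young's inequality $\lambda\|v\|^2_{\X^{s,b}}\le C(\lambda^3+\|v\|^3_{\X^{s,b}})$; for \eqref{1-estim3} the random factor, now at frequency $N_2$, goes in $L^2([-\pi,\pi];L^\infty(\R^3))$ and $\Delta_M(w)$ is paired with one of the two $v$-factors in the same way. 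Your reduction to $M\le N_1^{1+\delta}$ via Lemma~\ref{1-easy case} matches the paper (the lemma applies verbatim, the random factor being merely an element of $\X^{0,b'}$, and the arbitrary power $N_1^{-K}$ absorbs its norm), so the gaps are precisely the reliance on the uncontrolled norm $\|\Delta_{N_3}(u_0)\|_{L^2}$ and the absent exponent computation, whose difficulty you located in the wrong place.
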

\begin{proof} We show \eqref{1-estim2}, the proof of \eqref{1-estim3} being similar. By duality, it is sufficient to show that there exists a constant $ \delta > 0 $ such that 
\begin{multline*}
\int_{\R \times\R^3}\Delta_{N_1}(v) \Delta _{N_2}(v) \Delta_{N_3} \big(\psi(t) e^{-itH}u_0\big) \Delta_M(w) \leq \\
\leq C N_1^{-\kappa} M^{-\delta}  ( \|v\|_{\X^{s,b}}^3 + \lambda ^3 )  \|w \|_{  \X^{-s,b'} }.
\end{multline*}
Thanks to Lemma~\ref{1-easy case}, we only need to treat the case where $ M \leq N_1^{1+\delta} $. \medskip

$\bullet$ Case $N_2 \leq M \leq N_1^{1+\delta}$.  Using  \eqref{1-bilibourgain1} and Proposition~\ref{1-bourgain1}, we get
\begin{multline*}
 \int_{\R \times\R^3}\Delta_{N_1}(v) \Delta _{N_2}(v) \Delta_{N_3} \big(\psi(t) e^{-itH}u_0\big) \Delta_M(w) \leq \\
 \begin{aligned}
& \leq  \|\Delta_{N_2}(v) \Delta _{M}(w)  \|_{L^2( \R; L^2(\R^3))}    \|\Delta_{N_3} \big(\psi(t) e^{-itH}u_0\big) \|_{L^2( [-\pi, \pi ] ;   L^\infty ( \R^3 )) }   \|\Delta_{N_1}(v)  \|_{L^\infty( [-\pi, \pi ];  L^2(\R^3))}\\
& \leq  N_2^{1/2+\delta} (\frac{N_2}{M})^{1/2-\delta}      \|  \Delta_{N_1}(v) \|_{  \X^{0,b} } \| \Delta _{N_2}(v) \|_{  \X^{0,b} }  \|\Delta_{N_3} ( e^{itH}u_0) \|_{L^2( [-\pi, \pi] ; L^\infty(\R^3))} \|\Delta_{M}(w)  \|_{  \X^{0,b'} } \\
& \leq   N_2^{1/2+\delta} (\frac{N_2}{M})^{1/2-\delta} (\frac{M}{N_1N_2})^{s}   N_3^{-1/6}   \lambda \|  v \|^2_{  \X^{s,b} }    \|w  \|_{ \X^{-s,b'} } \\
& \leq   N_2^{1-s}  M^{s-1/2+\delta}  N_1^{-s}  N_3^{-1/6}  \lambda \|  v \|^2_{  \X^{s,b} }   \|w  \|_{ \X^{-s,b'} } \\
& \leq   M^{-\delta}   N_2^{1-s}  N_1^{-1/2+(3+s) \delta}  N_3^{-1/6}  \lambda \|  v \|^2_{  \X^{s,b} }   \|w  \|_{ \X^{-s,b'} } \\
& \leq    N_1^{1/2-s+(3+s)\delta}  \lambda \|  v \|^2_{  \X^{s,b} }   \|w  \|_{ \X^{-s,b'} }.
 \end{aligned}
\end{multline*}

$\bullet$ Case $ M \leq N_2 \leq N_1^{1+\delta} $. Using \eqref{1-bilibourgain1} and Proposition~\ref{1-bourgain1}, we get
\begin{multline*}
\int_{\R \times\R^3}\Delta_{N_1}(v) \Delta _{N_2}(v) \Delta_{N_3} \big(\psi(t) e^{-itH}u_0\big) \Delta_M(w) \leq  \\
\begin{aligned}
& \leq  \|\Delta_{N_2}(v) \Delta _{M}(w)  \|_{L^2( \R; L^2(\R^3))}   \|\Delta_{N_3} \big(\psi(t) e^{-itH}u_0\big) \|_{L^2( [-\pi, \pi ] ;   L^\infty ( \R^3 )) }    \|\Delta_{N_1}(v)  \|_{L^\infty( [-\pi, \pi ]; L^2(\R^3))} \\
& \leq  M^{1/2+\delta} (\frac{M}{N_2})^{1/2-\delta}       \|  \Delta_{N_1}(v) \|_{  \X^{0,b} } \| \Delta _{N_2}(v) \|_{  \X^{0,b} }  \|\Delta_{N_3} ( e^{itH}u_0) \|_{L^2( [-\pi, \pi] ; L^\infty(\R^3))} \|\Delta_{M}(w)  \|_{  \X^{0,b'} } \\
& \leq  M^{1/2+\delta}  (\frac{M}{N_2})^{1/2-\delta} (\frac{M}{N_1N_2})^{s}   N_3^{-1/6}  \lambda \|  v \|^2_{  \X^{s,b} }    \|w  \|_{ \X^{-s,b'} } \\
& \leq   N_2^{-1/2-s+\delta} M^{1+s} N_1^{-s} N_3^{-1/6}  \lambda \|  v \|^2_{  \X^{s,b} }   \|w  \|_{ \X^{-s,b'} } \\
& \leq   N_2^{1/2+\delta}  N_1^{-s} N_3^{-1/6}  \lambda \|  v \|^2_{  \X^{s,b} }   \|w  \|_{ \X^{-s,b'} } \\
& \leq    M^{-\delta }   N_1^{1/2-s+2\delta}  \lambda \|  v \|^2_{  \X^{s,b} }   \|w  \|_{ \X^{-s,b'} },
\end{aligned}
\end{multline*}
whiv
\end{proof}

  \begin{prop} There exists $ b' < \frac{1}{2} $ such that for all $ b > \frac{1}{2} $ and $ s > \frac{1}{2} $, there exists a constant $ C >0 $ such that if for some $ \lambda > 0 $, we have for all $ N $,
\begin{equation*}
\| \Delta_N( e^{itH} u_0 )  \|_{ L^4 (  [-\pi, \pi] ; L^\infty ( \R^3 )) } \leq \lambda N^{-1/6}\quad \mbox{and} \quad \|  \, [ e^{itH} u_0 ]^2 \,  \|_{ L^4 (  [-\pi, \pi] ; \mathcal{H}^s ( \R^3 )) } \leq \lambda ^2 
\end{equation*}
then for all $ v \in \X^{s,b} $,
\begin{equation} \label{1-estim4}
\| v \big(\psi(t) e^{-itH}u_0\big) \big(  \psi(t) e^{-itH}u_0 \big)\|_{\X^{s,-b'}} \leq C  ( \|v\|_{\X^{s,b}}^3 + \lambda ^3 ).
\end{equation}
\end{prop}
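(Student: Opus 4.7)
The plan is to prove the bilinear form of \eqref{1-estim4}, namely $\|vU^2\|_{\X^{s,-b'}} \leq C\|v\|_{\X^{s,b}}\lambda^2$ where $U := \psi(t)e^{-itH}u_0$, and then recover the stated cubic-plus-cubic form via the AM-GM inequality $ab^2 \leq \tfrac{1}{3}a^3 + \tfrac{2}{3}b^3$. The symmetry of the right-hand side of \eqref{1-estim4} (both terms homogeneous of degree $3$) suggests that this is the content of the statement.

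To carry out the bilinear estimate, I would first apply Proposition \ref{1-bourgain3} with $\delta = 1/3$: this produces a threshold $b' < \tfrac12$ such that the embedding $L^{4/3}(\R;\mathcal{H}^s(\R^3)) \hookrightarrow \X^{s,-b'}$ holds. It thus suffices to control $\|vU^2\|_{L^{4/3}_t\mathcal{H}^s_x}$. The key pointwise-in-time estimate is a fractional Leibniz inequality in the harmonic Sobolev space,
\begin{equation*}
\|vU^2\|_{\mathcal{H}^s_x} \lesssim \|v\|_{L^\infty_x}\|U^2\|_{\mathcal{H}^s_x} + \|v\|_{\W^{s,6}_x}\|U^2\|_{L^3_x},
\end{equation*}
obtained by combining the classical Kato--Ponce inequality (with Hölder pairings $(\infty,2)$ and $(6,3)$) with the norm equivalence \eqref{1-comparaison}, which reduces the weighted-$L^2$ contribution $\|\langle x\rangle^s v U^2\|_{L^2_x}$ to the same two Hölder products after redistributing the weight onto either factor. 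Since $s \geq 1/2$, the Sobolev embedding $\mathcal{H}^s(\R^3)\hookrightarrow L^3(\R^3)$ allows replacing $\|U^2\|_{L^3_x}$ by $\|U^2\|_{\mathcal{H}^s_x}$.

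To pass to the time integration, I would use the Strichartz-admissible pair $(q,r)=(2,6)$ in dimension $3$. By Proposition \ref{1-bourgain1} (with $b > 1/2$) one has $\|v\|_{L^2_t\W^{s,6}_x} \lesssim \|v\|_{\X^{s,b}}$, and since $s > 1/2$ the Sobolev embedding $\W^{s,6}(\R^3)\hookrightarrow L^\infty(\R^3)$ further yields $\|v\|_{L^2_tL^\infty_x} \lesssim \|v\|_{\X^{s,b}}$. Hölder in time with $\tfrac12+\tfrac14=\tfrac34$ then closes the estimate: directly invoking the second hypothesis on $\|U^2\|_{L^4_t\mathcal{H}^s_x}$,
\begin{equation*}
\|vU^2\|_{L^{4/3}_t\mathcal{H}^s_x} \lesssim \bigl(\|v\|_{L^2_tL^\infty_x} + \|v\|_{L^2_t\W^{s,6}_x}\bigr)\|U^2\|_{L^4_t\mathcal{H}^s_x} \lesssim \|v\|_{\X^{s,b}}\lambda^2.
\end{equation*}

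The main obstacle is establishing the fractional Leibniz inequality in $\mathcal{H}^s(\R^3)$ rather than in the flat space $H^s(\R^3)$: the derivative part is immediate from Kato--Ponce, but the weighted-$L^2$ contribution needs to be argued separately using \eqref{1-comparaison}. Otherwise the argument is routine and, notably, only the second hypothesis on $\|U^2\|_{L^4_t\mathcal{H}^s_x}$ is needed here; the pointwise dyadic hypothesis on $\|\Delta_N e^{itH}u_0\|_{L^4_tL^\infty_x}$ is used elsewhere in the paper (for \eqref{1-estim2} and \eqref{1-estim3}) but does not enter this particular estimate.
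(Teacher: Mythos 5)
Your proof is correct and follows the same skeleton as the paper's: embed $L^{p}_t\mathcal{H}^s_x$ into $\X^{s,-b'}$ via Proposition~\ref{1-bourgain3}, estimate the product by a Leibniz rule plus H\"older in time, control $v$ through Proposition~\ref{1-bourgain1} with the admissible pair $(2,6)$ and the embedding $\W^{s,6}(\R^3)\hookrightarrow L^\infty(\R^3)$ for $s>\frac12$, and finish with Young's inequality $\lambda^2\|v\|\lesssim \lambda^3+\|v\|^3$. The one genuine difference is the H\"older pairing inside the Leibniz rule: the paper bounds $\|vU^2\|_{\mathcal{H}^s}$ by $\|v\|_{\mathcal{H}^s}\|e^{itH}u_0\|^2_{L^\infty}+\|v\|_{L^\infty}\|[e^{itH}u_0]^2\|_{\mathcal{H}^s}$, so its first term needs $\|e^{itH}u_0\|_{L^4_tL^\infty_x}\lesssim\lambda$, which comes from summing the dyadic hypothesis over $N$ (the factor $N^{-1/6}$ makes the sum converge); your pairing $(6,3)$ together with $\mathcal{H}^s(\R^3)\hookrightarrow L^3(\R^3)$ reduces both terms to $\|[e^{\pm itH}u_0]^2\|_{L^4_t\mathcal{H}^s_x}$, so only the second hypothesis enters --- your closing remark is accurate, and the fractional Leibniz step you flag is handled exactly as you describe (Kato--Ponce for the derivative part, $\|v\|_{L^\infty}\|\langle x\rangle^s U^2\|_{L^2}$ for the weight, both converted via \eqref{1-comparaison}), at the same level of detail as the paper's own unproved product bound. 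The cost is negligible: fixing $\delta=\frac13$ pins down a particular $b'$, but since the $\X^{s,-b'}$ norms decrease as $b'$ increases, all of \eqref{1-estim1}--\eqref{1-estim6} can be run with the largest of the $b'$'s. Two points worth making explicit in a write-up: the passage from $L^{4/3}(\R;\mathcal{H}^s)$ to the interval $[-\pi,\pi]$ uses that $\psi$ is supported there, and the hypothesis is stated for $e^{itH}u_0$ while the product involves $e^{-itH}u_0$, which is handled by the symmetry of $[-\pi,\pi]$ under $t\mapsto -t$ (the paper glosses over both as well).
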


\begin{proof} Using  Proposition~\ref{1-bourgain3} and Proposition~\ref{1-bourgain1}, we get
\begin{multline*}
\| v \big(\psi(t) e^{-itH}u_0\big) \big(  \psi(t) e^{-itH}u_0 \big)\|_{\X^{s,-b'}}   \leq \\
\begin{aligned}
  & \leq \| v \big(\psi(t) e^{-itH}u_0\big) \big(  \psi(t) e^{-itH}u_0 \big)\|_{L^{1+\delta}( \R ; \mathcal{H}^{s}(\R^3))} 
\\& \leq \| v \big(\psi(t) e^{-itH}u_0\big) \big(  \psi(t) e^{-itH}u_0 \big)\|_{L^{1+\delta}( [-\pi,\pi] ; \mathcal{H}^{s}(\R^3))} 
\\  & \leq  \|v\|_{L^\infty ( [-\pi, \pi ];  \mathcal{H}^s(\R^3))}  \|  e^{itH} u_0 \|^2_{L^4 ( [-\pi, \pi] ; L^\infty(\R^3))} +\\
&\hspace{5cm} +\|v\|_{L^2 ( [-\pi, \pi ] ; L^\infty(\R^3))}  \|  \, [ e^{itH} u_0 ] ^2 \,  \|_{L^4 ( [-\pi, \pi] ;  \mathcal{H}^s(\R^3))} 
\\  & \leq \lambda ^2 \|v\|_{ \overline{ X } ^{s,b}} +  \lambda^2 \|v\|_{L^2 ( \R ;   \W^{s,6} (\R^3))}  
\\  & \leq  C  ( \|v\|_{\X^{s,b}}^3 + \lambda ^3 ),
\end{aligned}
\end{multline*}
hence the result.
\end{proof}

 \begin{prop} 
There exists $ b' < \frac{1}{2} $ such that for all $ b > \frac{1}{2} $ and $ s > \frac{1}{2} $, there exists  a constant $ C >0 $ such that if for some $ \lambda > 0 $, we have 
\begin{equation*}
\| \, [ e^{itH} u_0 ]^3 \,  \|_{L^4 ([-\pi , \pi]; \mathcal{H} ^{s} (\R^3)) } \leq \lambda^3 
\end{equation*}
then for all $ v \in \overline{ X} ^{s,b}$,
\begin{equation} \label{1-estim5}
\big\| \big( \psi(t) e^{-itH}u_0\big) \big(  \psi(t) e^{-itH}u_0\big)  \big(\psi(t) e^{-itH}u_0\big)  \big\|_{\X^{s,-b'}} \leq C \lambda ^3.
\end{equation}
\end{prop}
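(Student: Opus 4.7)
The target quantity is a pure function of $u_0$ (no $v$ appears on the right-hand side), so the plan is simply to convert the $\X^{s,-b'}$ norm into a time-integral norm that can be fed directly by the assumption on $\|[e^{itH}u_0]^3\|_{L^4([-\pi,\pi];\mathcal{H}^s)}$.

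The starting point is Proposition~\ref{1-bourgain3}: for every $\delta \in (0,1)$ there exist $b' < 1/2$ and $C>0$ such that
\begin{equation*}
\|F\|_{\X^{s,-b'}} \leq C \|F\|_{L^{1+\delta}(\R;\mathcal{H}^s(\R^3))}
\end{equation*}
for any $s\in\R$. Apply this with $F = \big(\psi(t) e^{-itH}u_0\big)^3 = \psi(t)^3 \big(e^{-itH}u_0\big)^3$. Since $\psi \in C_0^\infty(\R)$, we may assume $\mathrm{supp}(\psi) \subset [-\pi,\pi]$ (and otherwise replace $\psi$ by a compactly supported cut-off inside $[-\pi,\pi]$, noting that a smaller cut-off only decreases the norm).

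Having localised the time integration to a compact set, I would pick $\delta>0$ small so that $1+\delta < 4$, and apply Hölder's inequality in the time variable:
\begin{equation*}
\|\psi^3 (e^{-itH}u_0)^3\|_{L^{1+\delta}([-\pi,\pi];\mathcal{H}^s)} \leq C \|\psi\|_{L^\infty}^3 \|(e^{-itH}u_0)^3\|_{L^4([-\pi,\pi];\mathcal{H}^s)}.
\end{equation*}
Finally, the $L^4_t \mathcal{H}^s_x$ norm is invariant under the time reversal $t \mapsto -t$, so
\begin{equation*}
\|(e^{-itH}u_0)^3\|_{L^4([-\pi,\pi];\mathcal{H}^s)} = \|(e^{itH}u_0)^3\|_{L^4([-\pi,\pi];\mathcal{H}^s)} \leq \lambda^3
\end{equation*}
by the hypothesis, yielding the desired bound $C\lambda^3$.

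There is essentially no obstacle here; the proposition is the simplest of the cubic trilinear estimates in this section because the factor $v\in\X^{s,b}$ is absent, so no bilinear or dyadic decomposition is required and all three rough factors are already controlled through the assumed $L^4_t\mathcal{H}^s_x$ bound. The only point to watch is choosing $\delta$ small enough that Proposition~\ref{1-bourgain3} applies while $1+\delta\leq 4$, which is trivially compatible.
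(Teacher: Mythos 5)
Your argument is correct and is essentially the paper's own proof: Proposition~\ref{1-bourgain3} to pass from $\X^{s,-b'}$ to $L^{1+\delta}(\R;\mathcal{H}^s)$, then the compact support of $\psi$ and H\"older in time to reach the assumed $L^4([-\pi,\pi];\mathcal{H}^s)$ bound on $[e^{itH}u_0]^3$. The only difference is that you spell out the time-reversal $t\mapsto -t$ and the support/H\"older step, which the paper leaves implicit.
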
 

\begin{proof} Using  Proposition~\ref{1-bourgain3}, we get 
\begin{multline*}
\big\| \big( \psi(t) e^{-itH}u_0\big) \big(  \psi(t) e^{-itH}u_0\big)  \big(\psi(t) e^{-itH}u_0\big)  \big\|_{\X^{s,-b'}}  \leq  \\
\begin{aligned}
& \leq  \big\| \big( \psi(t) e^{-itH}u_0\big) \big(  \psi(t) e^{-itH}u_0\big)  \big(\psi(t) e^{-itH}u_0\big)  \big\|_{L^{1+\delta} ( \R ; \mathcal{H}^{s}(\R^3))}
\\ & \leq C \| \, [ e^{itH}u_0 ]^3  \,  \|_{L^4 ( [-\pi , \pi ] ; \mathcal{H} ^{s}(\R^3))}
\\  & \leq C \lambda ^3, 
\end{aligned}
\end{multline*}
hence the result.
\end{proof}
 
 \begin{prop} \label{1-Rfixe}
There exists $b' < \frac{1}{2}$ such that for all $ b > \frac{1}{2} $ and $ s \in  ] \frac{1}{2} ,1 [ $, there exist  $ C>0, \kappa >0 $ and $ R \in [ 2,+ \infty[ $ such that if for some $ \lambda > 0 $, we have for all $N\geq 1$,
\begin{equation*}
\left\{
    \begin{aligned} 
      &  \|  u_0 \|_{L^2(\R^3) } \leq \lambda, \\
   &    \| \Delta_N(e^{itH} u_0 ) \|_{L^4( [-\pi,\pi] ; L^\infty (\R^3)) } \leq \lambda  N^{-1/6}, \\
    &   \| \Delta_N( e^{itH} u_0 ) \|_{L^R( [-\pi,\pi] ; \W^{s,4} (\R^3)) } \leq \lambda  N^{s-1/4},
    \end{aligned}
\right.
\end{equation*}

then for all $ v \in \overline{ X}^{s,b}$ and all $ N_3 \leq N_2 \leq N_1 $,
\begin{equation} \label{1-estim6} 
\| \Delta_{N_1}\big(\psi(t) e^{-itH}u_0\big) \Delta _{N_2}(v) \Delta_{N_3} (v) \|_{\X^{s,-b'}} \leq C N_1^{-\kappa}  (  \lambda ^3  + \| v \|_{  \overline{ X } ^{s,b}}^3  ).
\end{equation}
\end{prop}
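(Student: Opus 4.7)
The plan is to proceed by duality and case analysis, mirroring the structure of the proofs of \eqref{1-estim1}--\eqref{1-estim3}. By definition of $\X^{s,-b'}$,
\begin{equation*}
\| \Delta_{N_1}\big(\psi(t) e^{-itH}u_0\big) \Delta _{N_2}(v) \Delta_{N_3} (v) \|_{\X^{s,-b'}} = \sup_{\|w\|_{\X^{-s,b'}}\leq 1} \sum_{M\ \text{dyadic}} |I(M)|,
\end{equation*}
where
\begin{equation*}
I(M) = \int_{\R\times\R^3} \Delta_{N_1}\big(\psi(t) e^{-itH}u_0\big)\,\Delta_{N_2}(v)\,\Delta_{N_3}(v)\,\Delta_M(\overline w)\, dt\, dx.
\end{equation*}
The first step is to invoke Lemma~\ref{1-easy case}: when $M \geq N_1^{1+\delta}$ the integrand has one very large dyadic component (namely $M$) well-separated from the others, so $|I(M)| \leq C_K M^{-K}$, which is summable with ample $N_1^{-\kappa}$ gain. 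We are thus reduced to the regime $M \leq N_1^{1+\delta}$.

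Next, I would split this regime into sub-cases according to the ordering of $M$ among $\{N_2, N_3\}$, and in each case apply Cauchy--Schwarz to two $L^2_{t,x}$ factors obtained from the bilinear estimates \eqref{1-bilibourgain1}--\eqref{1-bilibourgain2}. Concretely:
\begin{itemize}
\item If $M \leq N_3 \leq N_2$, pair $\Delta_{N_1}(\psi e^{-itH}u_0)\cdot \Delta_{N_2}(v)$ via \eqref{1-bilibourgain2} (which uses the hypothesis $\|u_0\|_{L^2}\leq \lambda$) and $\Delta_{N_3}(v)\cdot \Delta_M(w)$ via \eqref{1-bilibourgain1}.
\item If $N_3 \leq M \leq N_2$, pair $\Delta_{N_1}(\psi e^{-itH}u_0)\cdot \Delta_M(w)$ via \eqref{1-bilibourgain2} and $\Delta_{N_2}(v)\cdot \Delta_{N_3}(v)$ via \eqref{1-bilibourgain1}.
\end{itemize}
In each of these cases, after converting $\X^{0,b'}$ norms back to $\X^{s,b}$ and $\X^{-s,b'}$ norms through the dyadic frequency localisation (which produces factors $N_j^{-s}$ and $M^s$), one rewrites the resulting $M$-power as $M^{-\delta}\cdot M^{\delta+\text{(rest)}}$ and absorbs the remaining $M$-power into $N_2^{\cdot}$ or $N_3^{\cdot}$ using $M \leq N_j$. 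The final exponent on $N_1$ is then of the form $\tfrac12-s+O(\delta)$, which is strictly negative for $s > \tfrac12$ and $\delta$ small; this yields the required $M^{-\delta} N_1^{-\kappa}$ bound, summable in $M$.

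The main obstacle is the sub-case $N_2 \leq M \leq N_1^{1+\delta}$, where a pure bilinear/Cauchy--Schwarz scheme applied to the pair $\Delta_{N_1}(\psi e^{-itH}u_0)\cdot\Delta_M(w)$ no longer gains enough from $N_2,N_3$ to overcome the $M$-loss. Here I would abandon the bilinear approach for this pair and instead apply a direct Hölder inequality of the form
\begin{equation*}
|I(M)| \leq \| \Delta_{N_1}(\psi e^{-itH}u_0)\|_{L^R_t L^4_x}\,\|\Delta_{N_2}(v)\cdot\Delta_{N_3}(v)\|_{L^{q_1}_t L^{r_1}_x}\, \|\Delta_M(w)\|_{L^{q_2}_t L^{r_2}_x},
\end{equation*}
where the first factor is controlled by $\lambda N_1^{-1/4}$ times the $\W^{s,4}$-derivative cost (i.e.\ the third hypothesis), the middle factor is handled via \eqref{1-bilibourgain1} to recover the $(N_3/N_2)^{1/2-\delta}$ bilinear gain and the $\X^{s,b}$ norms, and the $\Delta_M(w)$ factor is placed in a Strichartz-admissible space using Proposition~\ref{1-bourgain1} applied with $\|\cdot\|_{\X^{-s,b'}}$. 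The exponent $R$ is chosen (sufficiently large, but finite) so that the Hölder relations $1/R + 1/q_1 + 1/q_2 = 1$ and $1/4 + 1/r_1 + 1/r_2 = 1$ close with the admissible Strichartz pair on $\Delta_M(w)$; since $s \in \,]1/2,1[$ the derivative budget $s-1/4$ is strictly positive but less than $s$, leaving room to extract $N_1^{-\kappa}$. The constants and exponents are universal once $s$ is fixed, giving uniform $C,\kappa, R$ as claimed.
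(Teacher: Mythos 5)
Your overall architecture (duality, Lemma~\ref{1-easy case} to dispose of $M\geq N_1^{1+\delta}$, bilinear pairings in some regimes, and a direct H\"older estimate exploiting the $L^R\W^{s,4}$ hypothesis in the remaining one) is in the right spirit, but your case dichotomy is wrong, and this is a genuine gap. In your first two sub-cases you use only $\|\Delta_{N_1}(u_0)\|_{L^2(\R^3)}\leq\lambda$, which carries no decay in $N_1$. Running your own chain of estimates in the regime $M\leq N_3\leq N_2$ (bilinear estimate \eqref{1-bilibourgain2} on $\Delta_{N_1}(\psi e^{-itH}u_0)\Delta_{N_2}(v)$, \eqref{1-bilibourgain1} on $\Delta_{N_3}(v)\Delta_M(w)$, then trading the frequency localisations for $N_2^{-s}N_3^{-s}M^{s}$) yields a bound of the form $M^{-\delta}\,N_2^{1-s}N_3^{1/2+2\delta}N_1^{-1/2+\delta}\,\lambda\|v\|_{\X^{s,b}}^2\|w\|_{\X^{-s,b'}}$, and for $N_2=N_3=N_1$, $M=1$ this is of size $N_1^{1-s+3\delta}$, which \emph{grows} since $s<1$; the same failure occurs in your sub-case $N_3\leq M\leq N_2$. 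So the bilinear scheme cannot produce $N_1^{-\kappa}$ whenever $N_2N_3$ is large relative to $N_1$, irrespective of where $M$ sits: the problematic regime is not $N_2\leq M\leq N_1^{1+\delta}$, but rather $N_1\leq (N_2N_3)^{\frac{1-s}{1-s-4\delta}}$.

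The paper splits exactly along that line. When $N_1\geq (N_2N_3)^{\frac{1-s}{1-s-4\delta}}$, your bilinear pairings (further split according to whether $N_3\leq M$ or $M\leq N_3$) do close, precisely because the case hypothesis gives $(N_2N_3)^{1-s}N_1^{-1+s+4\delta}\leq 1$. When $N_1\leq (N_2N_3)^{\frac{1-s}{1-s-4\delta}}$, duality is abandoned altogether: the $\X^{s,-b'}$ norm is bounded via Proposition~\ref{1-bourgain3} by the $L^{1+\delta}(\R;\mathcal H^s)$ norm, and H\"older puts the data in $L^{R}([-\pi,\pi];\W^{s,4})$ or $L^4([-\pi,\pi];L^\infty)$ (using the hypotheses $\lambda N_1^{s-1/4}$ and $\lambda N_1^{-1/6}$) while the two $v$ factors are placed in Strichartz spaces through Proposition~\ref{1-bourgain1}; the case hypothesis then converts $N_1^{s-1/4}$ into a power of $N_2N_3$ whose total exponent is $-\frac18+o(\delta)<0$, and this H\"older splitting is what fixes $R=\frac{(1+\delta)(1+2\delta)}{\delta}$. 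Note also a secondary flaw in your H\"older step: you propose to put $\Delta_M(w)$ in a Strichartz-admissible space via Proposition~\ref{1-bourgain1}, but that proposition requires $b>\frac12$, whereas $w$ is only controlled in $\X^{-s,b'}$ with $b'<\frac12$; the paper's treatment of the difficult regime never has to estimate a dual function $w$ at all.
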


\begin{proof} Let $ \delta > 0  $ be small enough, to be fixed later. 
\\\textbf{Case $  N_1 \geq (N_2 N_3 )^{  \frac{1-s}{1-s-4 \delta} } $.} By duality, it is sufficient to establish
\begin{equation*}
\int_{\R\times \R^3} \Delta_{N_1}\big(\psi(t) e^{itH}u_0\big) \Delta _{N_2}(v) \Delta_{N_3} (v) \Delta_M(w) \leq C  N_1^{-\kappa} M ^{-\delta}   \|w\|_{\X^{-s,b'}}  ( \lambda ^3 +  \| v \|^3_{ \X^{s,b} } ).
\end{equation*}
Thanks to Lemma~\ref{1-easy case}, we only need to treat the case where $ M \leq N_1^{1+\delta} $.
\\If $ N_3 \leq M $ then using \eqref{1-bilibourgain1} and \eqref{1-bilibourgain2}, we obtain
\begin{multline*}
 \int_{\R \times\R^3} \Delta_{N_1}\big(\psi(t) e^{-itH}u_0\big) \Delta _{N_2}(v) \Delta_{N_3} (v) \Delta_M(w)  \leq  \\
 \begin{aligned}
& \leq   \| \Delta_{N_1}\big(\psi(t) e^{-itH}u_0\big) \Delta _{N_2}(v) \|_{L^2( \R ; L ^2(\R^3)) }  \|  \Delta_{N_3} (v) \Delta_M(w) \|_{L^2( \R ; L ^2(\R^3)) }
\\  & \leq  (N_2 N_3)^{1/2+\delta} (\frac{N_2}{N_1})^{1/2-\delta} (\frac{N_3}{M})^{1/2-\delta} \| \Delta_{N_1}(u_0) \|_{L^2} \| \Delta_{N_2}(v)\|_{ \overline{ X}^{0,b}} \| \Delta_{N_3}(v)\|_{ \overline{ X}^{0,b}} \| \Delta_{M}(w)\|_{ \overline{ X}^{0,b'}}
\\  & \leq (N_2 N_3)^{1/2+\delta} (\frac{N_2}{N_1})^{1/2-\delta} (\frac{N_3}{M})^{1/2-\delta} (\frac{M}{N_2N_3})^s  \lambda \| \Delta_{N_2}(v)\|_{ \overline{ X}^{s,b}} \| \Delta_{N_3}(v)\|_{ \overline{ X}^{s,b}} \| \Delta_{M}(w)\|_{ \overline{ X}^{-s,b'}}
\\  & \leq  M ^{-\delta} N_1^{-\delta} N_1^{-1+s+4\delta} (N_2 N_3)^{1-s}   \lambda   \| v\|^2_{ \overline{ X}^{s,b}} \| w \|_{ \overline{ X}^{-s,b'}}
\\  & \leq  M ^{-\delta} N_1^{-\delta}   \lambda   \| v\|^2_{ \overline{ X}^{s,b}} \| w \|_{ \overline{ X}^{-s,b'}}.
\end{aligned}
\end{multline*}
Then, if $ N_3 \geq M $, using \eqref{1-bilibourgain1} and \eqref{1-bilibourgain2}, we obtain
\begin{multline*}
 \int_{\R \times\R^3} \Delta_{N_1}\big(\psi(t) e^{-itH}u_0\big) \Delta _{N_2}(v) \Delta_{N_3} (v) \Delta_M(w)  \leq \\
 \begin{aligned}
& \leq \| \Delta_{N_1}\big(\psi(t) e^{-itH}u_0\big) \Delta _{N_2}(v) \|_{L^2( \R ; L ^2(\R^3)) }  \|  \Delta_{N_3} (v) \Delta_M(w) \|_{L^2( \R ; L ^2(\R^3)) } \\
  &\leq   (N_2 M)^{1/2+\delta} (\frac{N_2}{N_1})^{1/2-\delta} (\frac{M}{N_3})^{1/2-\delta}  \| \Delta_{N_1}(u_0) \|_{L^2} \| \Delta_{N_2}(v)\|_{ \overline{ X}^{0,b}} \| \Delta_{N_3}(v)\|_{ \overline{ X}^{0,b}} \| \Delta_{M}(w)\|_{ \overline{ X}^{0,b'}}
\\ &  \leq    (N_2 M)^{1/2+\delta} (\frac{N_2}{N_1})^{1/2-\delta} (\frac{M}{N_3})^{1/2-\delta} (\frac{M}{N_2N_3})^s   \lambda \| \Delta_{N_2}(v)\|_{ \overline{ X}^{s,b}} \| \Delta_{N_3}(v)\|_{ \overline{ X}^{s,b}} \| \Delta_{M}(w)\|_{ \overline{ X}^{-s,b'}}
\\  &\leq   M^{-\delta} N_1^{-\delta} N_1^{-1+s+4\delta} (N_2 N_3)^{1-s}   \lambda   \| v\|^2_{ \overline{ X}^{s,b}} \| w \|_{ \overline{ X}^{-s,b'}}
\\&  \leq   M^{-\delta} N_1^{-\delta}   \lambda   \| v\|^2_{ \overline{ X}^{s,b}} \| w \|_{ \overline{ X}^{-s,b'}}.
\end{aligned}
\end{multline*}

\textbf{Case $  N_1 \leq (N_2 N_3 )^{  \frac{1-s}{1-s-4 \delta} } $.} Using Proposition~\ref{1-bourgain3} and Proposition~\ref{1-bourgain1}, we establish
\begin{multline*}
 \| \Delta_{N_1}\big(\psi(t) e^{-itH}u_0\big) \Delta _{N_2}(v) \Delta_{N_3} (v) \|_{  \X^{s,-b'} } \leq \\
\begin{aligned}
& \leq    \| \Delta_{N_1}\big(\psi(t) e^{-itH}u_0\big) \Delta _{N_2}(v) \Delta_{N_3} (v) \|_{  L^{1+\delta}( \R ; \mathcal{H}^s(\R ^3 ) )  }
\\  & \leq   \| \Delta_{N_1}(e^{-itH}u_0) \Delta _{N_2}(v) \Delta_{N_3} (v) \|_{  L^{1+\delta}( [-\pi,\pi] ; \mathcal{H}^s(\R ^3 ) )  }
\\  & \leq  \| \Delta_{N_1}(e^{itH}u_0) \|_{  L^\frac{(1+\delta)(1+2\delta)}{\delta} ( [-\pi, \pi ]  ;  \W^{s,4}(\R^3)) }    \prod_{j=1}^2  \| \Delta _{N_j}(v) \|_{  L^{2(1+2\delta)}( \R ; L^8(\R ^3 ) )  } 
\\ & \hspace*{.3cm} +  \| \Delta_{N_1}(e^{itH}u_0) \|_{  L^4 ( [-\pi, \pi ]  ; L^\infty (\R^3)) }   \| \Delta _{N_2}(v) \|_{  L^\infty( [-\pi, \pi ] ;  \mathcal{H}^s (\R ^3 ) )  } \| \Delta_{N_3} (v) \|_{  L^2( [-\pi, \pi ] ;   \W^{s,6} (\R ^3 ) )  }
\\ & \hspace*{.3cm} +   \| \Delta_{N_1}(e^{itH}u_0) \|_{  L^4 ( [-\pi, \pi ]  ; L^\infty (\R^3)) }   \| \Delta_{N_2} (v) \|_{  L^2( [-\pi, \pi ] ;  \W^{s,6} (\R ^3 ) )  }\| \Delta _{N_3}(v) \|_{  L^\infty( [-\pi, \pi ] ;  \mathcal{H}^s (\R ^3 ) )  }
\\  & \leq  N_1^{s-1/4}  (N_2N_3)^{ \frac{9}{8} - \frac{1}{1+2\delta} -s  } \lambda \|v\|^2_{  \X^{s,b} } +  N_1^{-1/6}   \lambda \|v\|^2_{  \X^{s,b} }
\\ & \leq  (N_2 N_3 )^{  \frac{(1-s)(s-1/4+\delta)}{1-s-4 \delta} } (N_2N_3)^{ \frac{9}{8} - \frac{1}{1+\delta} -s  } N_1^{-\delta}  \lambda \|v\|^2_{  \X^{s,b} } +  N_1^{-1/6} \lambda \|v\|^2_{  \X^{s,b} },
\end{aligned}
\end{multline*}
with 
\begin{eqnarray*}
 \frac{(1-s)(s-1/4+\delta)}{1-s-4 \delta} + \frac{9}{8} - \frac{1}{1+2\delta} -s  = \hspace{5cm}\\
\begin{aligned}
&= s- \frac{1}{4}+  \delta + \frac{4 \delta (s- \frac{1}{4} + \delta )}{1-s-4 \delta} + \frac{9}{8} - \frac{1}{1+2\delta} -s \\
 & = \frac{7}{8}+ \frac{4 \delta (s- \frac{1}{4} + \delta  )}{1-s-4 \delta}  - \frac{1}{1+2\delta}\\
 & = - \frac{1}{8} + o(\delta) < 0.
 \end{aligned}
 \end{eqnarray*}
And finally, the proposition is proved with $ R = \frac{(1+\delta)(1+2\delta)}{\delta} $.
\end{proof}

 \subsection{Local well-posedness of equation (6.1)}
 
Let $ \lambda > 0 $ and define $ E_0( \lambda ) $ as the set of functions $ u_0 \in L^2(\R^3)$ which satisfy
\begin{equation}\label{defE}
\left\{
    \begin{aligned}
 &     \|u_0\|_{ L^2(\R^3) } \leq \lambda  \\ 
  &     \| \, [e^{itH} u_0]^2 \, \|_{  L^4( [-\pi,\pi] ; \mathcal{H}^s (\R^3)) } \leq  \lambda ^2  \\
  &      \| \, [e^{itH} u_0]^3 \, \|_{ L^4( [-\pi,\pi] ; \mathcal{H}^s (\R^3))  } \leq \lambda ^3 \\
      &  \| \Delta_N(e^{itH} u_0 ) \|_{L^4( [-\pi,\pi] ; L^\infty (\R^3)) } \leq \lambda  N^{-1/6} \ , \  \forall N \geq 1 \\
       &      \| \Delta_N( e^{itH} u_0 ) \|_{L^R( [-\pi,\pi] ; \W^{s,4} (\R^3)) } \leq \lambda  N^{s-1/4}   \ , \ \forall N  \geq 1 
    \end{aligned} 
\right.
\end{equation}
where $R>2$ is fixed by Proposition~\ref{1-Rfixe}.

\begin{prop} Let $ \frac{1}{2} < s < 1 $  then there exist   $ C > 0 $ and  $ b > 1/2 $ such that if $ u_0 \in E_0(\lambda ) $ with $ \lambda > 0 $, then for all $ v \in \X^{s,b} $
 \begin{multline*}
  \Big\|  \psi(t)  \int_0^t   e^{-i(t-s)H} \psi(s)  \cos ( 2s ) |  \psi(s) e^{-isH} u_0 + v |^2  (   \psi(s) e^{-isH} u_0 + v ) ds  \Big\|_{  \X^{s,b} } \leq  \\
 \leq C  ( \lambda ^3 +  \|v\|^3_{  \X^{s,b} } ).
\end{multline*}  
\end{prop}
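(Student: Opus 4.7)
The plan is to apply the inhomogeneous Bourgain estimate of Proposition~\ref{1-bourgain5}: with $b>\tfrac12$ chosen close enough to $\tfrac12$, one has
\begin{equation*}
\Big\|\psi(t)\int_0^t e^{-i(t-s)H}F(s)\,ds\Big\|_{\X^{s,b}}\leq C\|F\|_{\X^{s,b-1}},
\end{equation*}
and $b-1=-b'$ with $b'<\tfrac12$ of the form produced by the trilinear estimates \eqref{1-estim1}--\eqref{1-estim6}. Thus it suffices to show that the full nonlinearity, with $u_0^L:=\psi(s)e^{-isH}u_0$, satisfies
\begin{equation*}
\bigl\| \cos(2s)\,|u_0^L+v|^{2}(u_0^L+v)\bigr\|_{\X^{s,-b'}}\leq C\bigl(\lambda^3+\|v\|_{\X^{s,b}}^{3}\bigr).
\end{equation*}
The factor $\cos(2s)$ is smooth and bounded, and the cutoff $\psi$ has already been absorbed into the definition of $u_0^L$, so it plays no role beyond localising time to the support where $E_0(\lambda)$ controls $u_0^L$ (cf.\ Proposition~\ref{1-bourgain4}).

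Next I would expand the nonlinearity algebraically as a sum of eight trilinear expressions, each factor being either $v$ or $u_0^L$ (with appropriate conjugations, which play no role in the estimates). Each such expression is then decomposed dyadically via $w=\sum_N \Delta_N w$, and the three pieces $\Delta_{N_1},\Delta_{N_2},\Delta_{N_3}$ reordered so that $N_1\geq N_2\geq N_3$. Depending on which slots carry $v$ and which carry $u_0^L$, I apply exactly one of the trilinear estimates already proved:
\begin{itemize}
\item three $v$'s: estimate \eqref{1-estim1};
\item two $v$'s and one $u_0^L$ in the smallest frequency slot: \eqref{1-estim2}--\eqref{1-estim3};
\item one $v$ and two $u_0^L$'s: estimate \eqref{1-estim4} (which already incorporates the $L^{R}\W^{s,4}$ and $L^{4}L^{\infty}$ bounds encoded in $E_0(\lambda)$);
\item three $u_0^L$'s: estimate \eqref{1-estim5};
\item the delicate case of $u_0^L$ in the highest-frequency slot with two $v$'s below: estimate \eqref{1-estim6}.
\end{itemize}
In every case the resulting bound carries a gain $N_1^{-\kappa}$ with $\kappa>0$, while the missing decay in $N_2,N_3$ is compensated by the $\X^{s,b}$ (or $E_0(\lambda)$) norms on those factors. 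Summing in $N_3\leq N_2\leq N_1$ using the Cauchy--Schwarz inequality together with the square-summability $\sum_{N_2}\|\Delta_{N_2}v\|_{\X^{s,b}}^{2}\lesssim\|v\|_{\X^{s,b}}^{2}$ (and likewise for $u_0^L$ via $E_0(\lambda)$) converts the $N_1^{-\kappa}$ gain into summability in $N_1$ as well, producing the claimed cubic bound.

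The main obstacle is the term where the linear evolution $u_0^L$ sits at the highest dyadic frequency $N_1$, because there one loses the $L^\infty_xL^2_t$-type control that the Bourgain bilinear estimate gives for free when the highest-frequency factor is $v$. This is precisely the case handled by Proposition~\ref{1-Rfixe}: one splits into the regime $N_1\geq (N_2N_3)^{(1-s)/(1-s-4\delta)}$, treated by the bilinear estimate \eqref{1-bilibourgain2}, and the complementary regime, treated by the $L^{1+\delta}_tH^s_x$ estimate of Proposition~\ref{1-bourgain3} using the $L^R\W^{s,4}$ and $L^4 L^\infty$ components of the $E_0(\lambda)$ norm. All other branches of the expansion are handled by direct application of the preceding propositions, and the assembly is purely bookkeeping.
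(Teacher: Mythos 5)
Your proposal is correct and follows essentially the same route as the paper: Proposition~\ref{1-bourgain4} and Proposition~\ref{1-bourgain5} reduce the Duhamel term to an $\X^{s,b-1}=\X^{s,-b'}$ bound on the nonlinearity, which is then obtained by expanding the cubic term and invoking the trilinear estimates \eqref{1-estim1}--\eqref{1-estim6} (with the dyadic summation absorbed by the $N_1^{-\kappa}$ gain), finally taking $b=1-b'>\tfrac12$. The only cosmetic difference is your bookkeeping of the summation and of which of \eqref{1-estim2}, \eqref{1-estim3}, \eqref{1-estim6} covers which frequency slot of $u_0^L$, which does not affect the argument.
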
 
\begin{proof} For all $b>\frac{1}{2}$, using Proposition~\ref{1-bourgain4} and Proposition~\ref{1-bourgain5}, we obtain
\begin{multline*}
  \big\|  \psi(t)  \int_0^t   e^{-i(t-s)H}  \cos ( 2s ) \psi(s) |  \psi(s) e^{-isH} u_0 + v |^2  (   \psi(s) e^{-isH} u_0 + v ) ds   \big\|_{  \X^{s,b} }  
 \leq\\ 
 \begin{aligned}
 & \leq   C  \|  \cos ( 2s ) \psi(s)  |  \psi(s) e^{-isH} u_0 + v |^2  (   \psi(s) e^{-isH} u_0 + v )   \|_{  \X^{s,b-1} }  
\\  &\leq   C  \|  |  \psi(s) e^{-isH} u_0 + v |^2  (   \psi(s) e^{-isH} u_0 + v )   \|_{  \X^{s,b-1} }.
 \end{aligned}
\end{multline*}
Then using \eqref{1-estim1}, \eqref{1-estim2}, \eqref{1-estim3}, \eqref{1-estim4}, \eqref{1-estim5}, \eqref{1-estim6}, we establish the existence of an integer $ b' < \frac{1}{2} $ such that for all    $ u_0 \in E_0(\lambda) $,
$$
\big\|   |  \psi(s) e^{-isH} u_0 + v |^2  (   \psi(s) e^{-isH} u_0 + v )   \big\|_{  \X^{s,-b'} }  \leq C ( \lambda^3 +   \|v\|^3_{  \X^{s,b} }   ).
$$
It is then sufficient to choose $ b = 1 - b' > \frac{1}{2} $ and the proposition is proved.  
\end{proof}

\begin{prop} \label{1-fp}
Let $ \frac{1}{2} < s < 1 $  then there exists a constant $ C > 0 $ and $ b > 1/2 $ such that if $ u_0 \in E_0(\lambda ) $ with $ \lambda > 0 $, then for all $ v \in \X_T^{s,b} $ 
\begin{multline*}
\Big\|  \psi(t)   \int_0^t   e^{-i(t-s)H}  \cos (2s) \psi(s)   \big|  \psi(s) e^{-isH} u_0 + v \big|^2  (   \psi(s) e^{-isH} u_0 + v ) ds     \Big\|_{  \X_T^{s,b} }   
\leq \\
\leq  C ( \lambda ^3 +  \|v\|^3_{  \X_T^{s,b} } ).
\end{multline*} 
\end{prop}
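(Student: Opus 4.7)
The plan is to deduce this localized estimate directly from the full-line bound established in the preceding proposition, via a soft extension argument based on the infimum definition of the $\X_T^{s,b}$ norm. No additional multilinear analysis is required beyond the trilinear estimates \eqref{1-estim1}--\eqref{1-estim6} that have already been proved.

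Concretely, given $v\in\X_T^{s,b}$, I would first select an extension $\wt v\in\X^{s,b}$ with $\wt v|_{[-T,T]}=v$ and $\|\wt v\|_{\X^{s,b}}\le 2\|v\|_{\X_T^{s,b}}$, which exists by the very definition of the restriction norm. I would then form the full-line Duhamel expression
$$ \Phi(t):=\psi(t)\int_0^t e^{-i(t-s)H}\cos(2s)\psi(s)\,\bigl|\psi(s)e^{-isH}u_0+\wt v(s)\bigr|^2\bigl(\psi(s)e^{-isH}u_0+\wt v(s)\bigr)\,ds,$$
and invoke the preceding proposition to obtain
$$\|\Phi\|_{\X^{s,b}}\le C\bigl(\lambda^3+\|\wt v\|_{\X^{s,b}}^3\bigr)\le C'\bigl(\lambda^3+\|v\|_{\X_T^{s,b}}^3\bigr).$$

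The second step is to observe that $\Phi$ is an admissible extension of the expression appearing in the statement. Indeed, for $t\in[-T,T]$ the Duhamel integral only samples $\wt v(s)$ for $s$ between $0$ and $t$, i.e.\ for $s\in[-T,T]$, where $\wt v=v$ by construction. Consequently the restriction of $\Phi$ to $[-T,T]$ coincides pointwise with the left-hand side of Proposition~\ref{1-fp} (computed using $v$ rather than $\wt v$). The infimum definition of $\|\cdot\|_{\X_T^{s,b}}$ then yields the desired inequality, after taking the infimum over extensions.

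I do not expect any real obstacle here: the substantive work lies entirely upstream, in the preceding $\X^{s,b}$ estimate and the trilinear bounds that rest on the bilinear Strichartz inequality of Theorem~\ref{1-bilis} together with the structural assumptions encoded in $E_0(\lambda)$. The only mild care point is to ensure that the cutoff $\psi$ is fixed once and for all so that its support (combined with the Duhamel integration range $s\in[0,t]$ for $|t|\le T$) keeps us inside $[-T,T]$ where $\wt v=v$; this is automatic provided $\psi$ is chosen to be supported in $[-T,T]=[-\pi/4,\pi/4]$, which can be arranged at the outset.
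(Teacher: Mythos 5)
Your argument is correct and is essentially the paper's own proof: one takes an arbitrary extension $w\in\X^{s,b}$ of $v$, observes that for $|t|\le T$ the Duhamel integral only samples $w$ on $[-T,T]$ where $w=v$ (so the full-line expression extends the localized one), applies the preceding full-line proposition, and takes the infimum over extensions. The only superfluous point is your requirement that $\psi$ be supported in $[-T,T]$ -- this is not needed, since the integration range $s\in[0,t]\subset[-T,T]$ already guarantees that only values of the extension on $[-T,T]$ enter.
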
 
\begin{proof} Let $ w \in \X^{s,b} $  such that  $ w |_{[-T,T]}=v $ then
\begin{multline*}
 \big\|  \psi(t)   \int_0^t   e^{-i(t-s)H}  \cos (2s) \psi(s) \big|  \psi(s) e^{-isH} u_0 + v \big|^2  (   \psi(s) e^{-isH} u_0 + v ) ds   \big\|_{  \X_T^{s,b} }   \leq \\
 \begin{aligned}
& \leq  \|  \psi(t)  \int_0^t   e^{-i(t-s)H} \cos (2s) \psi(s) \big|  \psi(s) e^{-isH} u_0 + w \big|^2  (   \psi(s) e^{-isH} u_0 + w ) ds   \|_{  \X^{s,b} } 
\\ & \leq  C ( \lambda ^3 +  \|w\|^3_{  \X^{s,b} } )
\end{aligned}
\end{multline*}
for all  $w\in \X^{s,b} $.
\end{proof}

In a similar way, one could prove the following result:

 \begin{prop} \label{2-fp}
Let $ \frac{1}{2} < s < 1 $   then there exists a constant $ C > 0 $ and a real $ b > 1/2 $ such that if $ u_0 \in E_0(\lambda ) $ with  $ \lambda > 0 $, then for all  $ v_1,v_2 \in \X_T^{s,b} $
 \begin{align*}
&  \Big\|  \psi(t)   \int_0^t   e^{-i(t-s)H}    \cos (2s) \psi(s) \big|  \psi(s) e^{-isH} u_0 + v_1 \big|^2  (   \psi(s) e^{-isH} u_0 + v_1 ) ds  
\\ & \hspace*{0.1cm} - \psi(t)   \int_0^t   e^{-i(t-s)H}  \cos (2s) \psi(s) \big|  \psi(s) e^{-isH} u_0 + v_2 \big|^2  (   \psi(s) e^{-isH} u_0 + v_2 ) ds    \Big\|_{  \X_T^{s,b} }  
\\  \leq  C  & \|v_1-v_2\|_{ \X^{s,b}_T }  ( \lambda ^2 +  \|v_1\|^2_{  \X_T^{s,b} } + \|v_2\|^2_{  \X_T^{s,b} } ).
\end{align*} 
\end{prop}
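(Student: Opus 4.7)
The plan is to reduce this Lipschitz-type estimate to the multilinear bounds already established in the proof of Proposition \ref{1-fp}. Writing $a = \psi(s)e^{-isH}u_0 + v_1$ and $b = \psi(s)e^{-isH}u_0 + v_2$, Propositions \ref{1-bourgain4} and \ref{1-bourgain5} reduce the problem (exactly as in the proof of Proposition \ref{1-fp}) to controlling
$$\| \cos(2s)\psi(s)\bigl(|a|^2 a - |b|^2 b\bigr)\|_{\X^{s,-b'}} \leq C\|v_1-v_2\|_{\X^{s,b}_T}\bigl(\lambda^2 + \|v_1\|_{\X^{s,b}_T}^2 + \|v_2\|_{\X^{s,b}_T}^2\bigr).$$

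The first step is the elementary algebraic identity
$$|a|^2 a - |b|^2 b = (a-b)|a|^2 + a\,b\,\overline{(a-b)} + |b|^2(a-b),$$
valid for complex-valued functions, which expresses the difference as a sum of three trilinear expressions, each containing the factor $a-b = v_1-v_2$ (or its conjugate) exactly once, with the two remaining factors drawn from $\{a,\bar a,b,\bar b\}$. Expanding $a$ and $b$ as $\psi(s)e^{-isH}u_0 + v_j$ then writes each term as a finite sum of trilinear expressions in $\psi(s)e^{-isH}u_0$, $v_1$, $v_2$, and $v_1-v_2$ (up to complex conjugation), with $v_1-v_2$ appearing in exactly one slot.

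The second step is to estimate each such trilinear expression by invoking precisely the bounds \eqref{1-estim1}--\eqref{1-estim6}, which are genuinely multilinear: their proofs, based on duality, the bilinear Bourgain-type estimates of Propositions \ref{1-bilibourgain3}--\ref{1-bilibourgain4}, and H\"older's inequality, treat the three input functions symmetrically and independently. The slot containing $v_1-v_2$ is bounded by its $\X^{s,b}_T$-norm (respectively its $L^2(\R^3)$-norm when playing the role of the ``$u_0$'' factor of $\psi(t)e^{-itH}u_0$, which here it cannot, since $v_1-v_2$ is not of free form --- so this slot always carries the $\X^{s,b}_T$-norm); each of the other two slots contributes either $\lambda$ when filled with $\psi(s)e^{-isH}u_0$ (using $u_0 \in E_0(\lambda)$ and the defining estimates \eqref{defE}) or $\|v_j\|_{\X^{s,b}_T}$ when filled with $v_j$. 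Summing over the finitely many trilinear terms produced by the algebraic expansion yields the desired bound on $\X^{s,b}$; passing to the truncated space $\X^{s,b}_T$ is done as in Proposition \ref{1-fp} by choosing extensions $w_j \in \X^{s,b}$ of $v_j$ with near-optimal norm.

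The main bookkeeping point, and really the only non-routine aspect, will be to check that in every trilinear piece the factor $v_1-v_2$ can indeed be inserted in the ``rough'' (non-linear solution) slot of the relevant estimate among \eqref{1-estim1}--\eqref{1-estim6}. For \eqref{1-estim1}, \eqref{1-estim4} and \eqref{1-estim5} this is immediate by symmetry; for \eqref{1-estim2}, \eqref{1-estim3} and \eqref{1-estim6}, which single out the factor of free-solution type, one must simply verify that when $v_1-v_2$ occupies a ``nonlinear'' slot the same proof goes through verbatim with $\|v_1-v_2\|_{\X^{s,b}}$ replacing one of the $\|v\|_{\X^{s,b}}$ factors. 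No new harmonic analysis is required beyond what is already in Section \ref{Sect3} and the preceding estimates of Section \ref{Sect6}.
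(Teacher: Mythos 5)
Your proposal is correct and follows essentially the same route the paper intends: the paper gives no separate proof of Proposition~\ref{2-fp} beyond stating that it is obtained ``in a similar way'' to Proposition~\ref{1-fp}, i.e.\ by the telescoping trilinear decomposition of $|a|^2a-|b|^2b$ with $v_1-v_2$ in one slot, the multilinear use of the estimates \eqref{1-estim1}--\eqref{1-estim4} and \eqref{1-estim6} (whose duality proofs indeed treat the three factors independently), Propositions~\ref{1-bourgain4}--\ref{1-bourgain5}, and the extension argument for $\X^{s,b}_T$. Your observation that the difference never produces the pure free-evolution cube (so \eqref{1-estim5} is not needed) and that the product form of the bounds yields the factor $\lambda^2+\|v_1\|^2_{\X^{s,b}_T}+\|v_2\|^2_{\X^{s,b}_T}$ is exactly the intended bookkeeping.
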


\begin{thm}  \label{1-existencebis}
Let $ \frac{1}{2} < s < 1 $ then there exists a constant $ C > 0 $ and a real $ b > 1/2 $ such that if  $ u_0 \in E_0(\lambda ) $ with  $ \lambda <   \frac{1}{ 2  \sqrt{C} } $ then there is a unique solution to the equation~\eqref{1-schrodingerH} with initial data $ u_0 $ in the space $ e^{-itH}u_0 + B_{\X^{s,b}_T } (0, \frac{1}{2} \sqrt{  \frac{1}{C} } ) $.
\end{thm}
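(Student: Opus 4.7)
The plan is to set up a standard Banach fixed point argument for the Duhamel reformulation of the equation. We look for a solution of the form $u = \psi(t) e^{-itH} u_0 + v$ (the cutoff $\psi$ being harmless on $[-T,T]$ for $T\leq \pi/4$), where $v$ is the fixed point of the map
\begin{equation*}
\Phi(v)(t) := -i\kappa\, \psi(t)\int_0^t e^{-i(t-s)H} \cos(2s)\psi(s)\bigl|\psi(s) e^{-isH} u_0 + v(s)\bigr|^2\bigl(\psi(s) e^{-isH} u_0 + v(s)\bigr)\,ds,
\end{equation*}
considered as a mapping $\Phi\colon \X^{s,b}_T \to \X^{s,b}_T$.

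The key stability and Lipschitz estimates are precisely the content of Propositions~\ref{1-fp} and~\ref{2-fp}, which give
\begin{equation*}
\|\Phi(v)\|_{\X^{s,b}_T} \leq C(\lambda^3 + \|v\|^3_{\X^{s,b}_T}),
\end{equation*}
and
\begin{equation*}
\|\Phi(v_1)-\Phi(v_2)\|_{\X^{s,b}_T} \leq C\,\|v_1-v_2\|_{\X^{s,b}_T}\bigl(\lambda^2 + \|v_1\|^2_{\X^{s,b}_T} + \|v_2\|^2_{\X^{s,b}_T}\bigr),
\end{equation*}
for every $u_0 \in E_0(\lambda)$. So the real work has already been done in Sections 3 and 6; all that remains is to choose the radius and verify the contraction conditions.

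Set $R = \tfrac{1}{2}\sqrt{1/C}$ and suppose $\lambda \leq R$. If $\|v\|_{\X^{s,b}_T} \leq R$, then by the stability estimate
\begin{equation*}
\|\Phi(v)\|_{\X^{s,b}_T} \leq C(\lambda^3 + R^3) \leq 2CR^3 = \tfrac{R}{2}\cdot(2CR^2) \cdot \tfrac{1}{1} = \tfrac{R}{2},
\end{equation*}
using $2CR^2 = 1/2$, so $\Phi$ sends the closed ball $\overline{B}_{\X^{s,b}_T}(0,R)$ into itself. Similarly, for $v_1,v_2$ in this ball,
\begin{equation*}
\|\Phi(v_1)-\Phi(v_2)\|_{\X^{s,b}_T} \leq C(3R^2)\,\|v_1-v_2\|_{\X^{s,b}_T} = \tfrac{3}{4}\,\|v_1-v_2\|_{\X^{s,b}_T},
\end{equation*}
so $\Phi$ is a $\tfrac{3}{4}$-contraction on $\overline{B}_{\X^{s,b}_T}(0,R)$.

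The Banach fixed point theorem then produces a unique $v \in \overline{B}_{\X^{s,b}_T}(0,R)$ with $\Phi(v)=v$, giving the unique solution $u=e^{-itH}u_0+v$ to~\eqref{1-schrodingerH} in the desired space on $[-T,T]$. The continuity in time $u \in C^0([-T,T];\mathcal{H}^s(\R^3))$ follows from Proposition~\ref{1-bourgain7}. No step is a real obstacle once the bilinear/trilinear estimates~\eqref{1-estim1}--\eqref{1-estim6} are in hand; the only subtlety worth checking is that the cutoffs $\psi(t), \psi(s)$ are innocuous on $[-T,T]$ (since $\psi \equiv 1$ there by the choice of $\psi$), so that the fixed point of $\Phi$ genuinely solves~\eqref{1-schrodingerH} on the truncated time interval.
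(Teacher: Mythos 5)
Your proposal is correct and is essentially identical to the paper's own argument: both reduce the theorem to the Duhamel map $L(v)=\Phi(v)$, invoke Propositions~\ref{1-fp} and~\ref{2-fp} for the cubic and Lipschitz bounds, and apply the Banach fixed point theorem on the ball of radius $\frac{1}{2}\sqrt{1/C}$ under the smallness condition $\lambda<\frac{1}{2\sqrt{C}}$. The only difference is that you spell out the contraction constants ($2CR^{2}=\frac12$, $3CR^{2}=\frac34$) and the harmlessness of the time cutoff $\psi$, which the paper leaves implicit.
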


\begin{proof} We define
\begin{multline*}
L(v):= \\
  -i \kappa\psi(t)  \int_0^t   e^{-i(t-s)H}  \cos (2s) \psi(s)  \big|  \psi(s) e^{-isH} u_0 + v(s) \big|^2 ( \psi(s) e^{-isH} u_0 + v(s) ) ds,
\end{multline*}
$ u = e^{-itH} u_0 + v $ is the unique solution to \eqref{1-schrodingerH} in the space  $ e^{-itH}u_0 + B_{\X^{s,b}_T}(0 , R ) $ if and only if $ v $ is the unique fixed point of $ L $ in space $ B_{\X^{s,b}_T}(0 , R ) $. According to Proposition~\ref{1-fp} and Proposition~\ref{2-fp}, there exists a constant $ C > 0 $ such that 
\begin{eqnarray*}
 \| L (v)  \|_{  \overline{ X } ^{s,b}_T  } &\leq& C ( \lambda ^3 + \| v\|^3 _{ \overline{ X } ^{s,b}_T }), \\
  \| L (v_1)-L(v_2)  \|_{  \overline{ X } ^{s,b}_T  } &\leq& C \| v_1-v_2\| _{ \overline{ X } ^{s,b}_T } ( \lambda ^2 + \| v_1\|^2 _{ \overline{ X } ^{s,b}_T }+ \| v_2\|^2 _{ \overline{ X } ^{s,b}_T } ).
\end{eqnarray*}
Thus, if $ \lambda < \frac{1}{2 \sqrt{ C} } $ then $ L $ is a contraction of the complete space $ B_{\X^{s,b}_T } (0, \frac{1}{2} \sqrt{  \frac{1}{C} } ) $ and therefore has a unique fixed point.
\end{proof}


\subsection{Global solutions and scattering for the equation \eqref{1-schrodinger}}\label{1-Sect6}
Thanks to the lens transform and the result of Theorem~\ref{1-existencebis}, we are now able to establish the existence of global solutions for the equation \eqref{1-schrodinger}. We also prove the uniqueness of the solution and then show that this solution scatters at $t \longrightarrow \pm \infty$. Recall that the set $E_0(\lambda)$ is defined in~\eqref{defE}.

  \begin{thm}  \label{1-existence}
If $ \frac{1}{2} < s < 1 $ then there exist two constants $ C,c > 0 $ such that if $ u_0 \in E_0(\lambda ) $ with $ \lambda < \frac{1}{2 \sqrt{C} } $ then there exists a unique global solution to the equation~\eqref{1-schrodinger} with initial data $ u_0 $ in the space $ e^{it\Delta}u_0 + B_{X^s } (0, \frac{c}{2} \sqrt{  \frac{1}{C} } ) $.
\end{thm}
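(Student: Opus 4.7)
The plan is to transfer the local-in-time result Theorem~\ref{1-existencebis} on $(-\pi/4,\pi/4)$ to the whole line via the lens transform of Section~\ref{1-214}, which bijectively sends solutions of \eqref{1-schrodingerH} on $(-\pi/4,\pi/4)$ to solutions of \eqref{1-schrodinger} on $\R$ through the change of variable $s=\tan(2t)/2$. The key observation is that the constants appearing in Propositions~\ref{1-fp} and~\ref{2-fp} (and hence in Theorem~\ref{1-existencebis}) come from ambient-line estimates in Bourgain spaces and are therefore independent of $T$, so Theorem~\ref{1-existencebis} is valid with $T=\pi/4$.

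For existence, I would apply Theorem~\ref{1-existencebis} with $T=\pi/4$: whenever $\lambda<1/(2\sqrt C)$ there is a unique $v\in B_{\X^{s,b}_{\pi/4}}(0,R)$, with $R:=1/(2\sqrt C)$, such that $u:=e^{-itH}u_0+v$ solves \eqref{1-schrodingerH} on $(-\pi/4,\pi/4)$ with data $u_0$. Define $U(s,y):=\mathcal{L}^{-1}_{t(s)}(u(t(s)))$, where $t(s)=\tfrac12\arctan(2s)$ maps $\R$ bijectively onto $(-\pi/4,\pi/4)$; the conjugation identities at the end of Section~\ref{1-214} then guarantee that $U$ is a global solution of \eqref{1-schrodinger} on $\R$ with data $u_0$. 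Since $\mathcal L$ is linear and, by \eqref{lens2}, sends $e^{-itH}u_0$ to $e^{is\Delta}u_0$, the decomposition splits as $U=e^{is\Delta}u_0+V$ with $V:=\mathcal L^{-1}(v)$. Proposition~\ref{tvb} then yields
\begin{equation*}
\|V\|_{X^s}\leq c\,\|v\|_{\X^{s,b}_{\pi/4}}\leq cR=\frac{c}{2\sqrt C},
\end{equation*}
placing $U$ in the claimed ball.

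For uniqueness, the main obstacle is that Proposition~\ref{tvb} controls $X^s$ in terms of $\X^{s,b}_{\pi/4}$ but not in the reverse direction. Given a second solution $U'$ in $e^{it\Delta}u_0+B_{X^s}(0,c/(2\sqrt C))$, set $u':=\mathcal L(U')$; then $u'$ solves \eqref{1-schrodingerH} on $(-\pi/4,\pi/4)$ with data $u_0$, and once we show $u'-e^{-itH}u_0\in B_{\X^{s,b}_{\pi/4}}(0,R)$, Theorem~\ref{1-existencebis} yields $u'=u$ and thus $U'=U$. I would close this step by a continuous-induction argument in the time window: defining $\phi(T):=\|u'-e^{-itH}u_0\|_{\X^{s,b}_T}$, the Duhamel formula together with the nonlinear bounds underlying Proposition~\ref{1-fp} gives an inequality of the form $\phi(T)\leq C(\lambda^3+\phi(T)^3)$; combined with $\phi(0)=0$ and the continuity of $T\mapsto\phi(T)$, this confines $\phi$ to the small branch of that cubic bound on $[0,\pi/4]$, yielding $\phi(\pi/4)\lesssim\lambda^3<R$. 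Verifying the finiteness and continuity of $\phi(T)$ — which must be extracted from the smallness of $U'-e^{it\Delta}u_0$ in $X^s$, carried through the lens transform and Duhamel — is the most delicate technical point; the remainder of the proof is a direct transfer of the local result through $\mathcal L$.
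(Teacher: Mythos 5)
Your existence argument is exactly the paper's: apply Theorem~\ref{1-existencebis} (with $T=\pi/4$, which is how $\X^{s,b}_T$ is set up in the paper, so no separate uniformity-in-$T$ observation is needed), push the solution through the lens transform, use linearity together with \eqref{sym2} to identify the linear parts, and invoke Proposition~\ref{tvb} to place the remainder in $B_{X^s}(0,\tfrac{c}{2}\sqrt{1/C})$. That half is correct and is the same proof.

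The uniqueness half is where there is a genuine gap. Your scheme requires showing that for an arbitrary competitor $U'\in e^{it\Delta}u_0+B_{X^s}(0,\tfrac c2\sqrt{1/C})$, the pulled-back difference $v'=\mathcal L(U')-e^{-itH}u_0$ lies in $\X^{s,b}_{\pi/4}$ with a quantitative bound, so that the fixed-point uniqueness of Theorem~\ref{1-existencebis} applies. But the bound $\phi(T)\le C(\lambda^3+\phi(T)^3)$ you invoke rests on the trilinear estimates \eqref{1-estim1}--\eqref{1-estim6}, which require the $v$-factors to be controlled in $\X^{s,b}$ (or to be the free evolution of data in $E_0(\lambda)$); a priori $v'$ is only controlled in Strichartz/$X^s$ norms, so applying those estimates to $v'$ is circular. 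To run your bootstrap you would need (i) an independent proof that $v'\in\X^{s,b}_T$ at all (e.g.\ via the Duhamel formula, Proposition~\ref{1-bourgain3} and a product estimate for $|u'|^2u'$ in $\mathcal H^s$ using only $L^\infty_x$ and $\mathcal H^s$ control of $u'$), and (ii) continuity of $T\mapsto\|v'\|_{\X^{s,b}_T}$ and its vanishing as $T\to0$, which for restriction norms with $b>\tfrac12$ is not automatic and is best obtained again through the Duhamel representation rather than asserted abstractly. None of this is supplied, and you flag it yourself, so the uniqueness claim is not proved. Note that a lighter route, consistent with what the paper does for the case $p\ge5$ (Proposition~\ref{2unicite}), avoids Bourgain spaces entirely: for two solutions in $e^{it\Delta}u_0+X^s$, a Gr\"onwall estimate on $\|U_1-U_2\|_{L^2}^2$ only needs $\|U_j\|_{L^\infty_x}^2\in L^1_{loc}$, which follows from $X^s\subset L^2_{loc}W^{s,6}\hookrightarrow L^2_{loc}L^\infty$ for $s>\tfrac12$ together with the $L^4_tL^\infty_x$ bound on $e^{it\Delta}u_0$ inherited from the definition of $E_0(\lambda)$ through the lens transform; this yields uniqueness in the full affine space $e^{it\Delta}u_0+X^s$, hence in the stated ball.
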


\begin{proof} Let $ u $ be given by Theorem~\ref{1-existencebis} and define
\begin{equation*}
 U (t,x) = \Big( \frac{1}{\sqrt{1+4t^2}} \Big)  ^{3/2}  u \left( \frac{\arctan(2t)}{2}  , \frac{x}{\sqrt{1+4t^2} } \right)    e^{ \frac{i|x|^2t}{1+4t^2}  }.
\end{equation*}
According to Section~\ref{1-214}, since $ u $ is a solution of~\eqref{1-schrodingerH} on $ [-\pi/4, \pi/4] $ then $ U $ is a global solution of~\eqref{1-schrodinger}.

Thus, to obtain the theorem, it suffices to notice that
\begin{equation*}
(e^{it\Delta} u_0)(t,x) = \left( \frac{1}{\sqrt{1+4t^2}} \right) ^{3/2}  ( e^{-itH}  u_0 ) \left( \frac{ \arctan(2t)}{2} , \frac{x}{\sqrt{1+4t^2} } \right)   e^{ \frac{i|x|^2t}{1+4t^2}  },
\end{equation*}
and to use Proposition~\ref{tvb}. 
\end{proof}

 The existence of the solutions being proved, one with analogous estimates, one can show that the solutions are unique.

We then prove that the constructed solutions scatter for  $ t \longrightarrow + \infty $ and $ t \longrightarrow - \infty $.  

  \begin{thm} \label{1-scattering}
Let $ U $ be the unique solution of \eqref{1-schrodinger} constructed in Theorem~\ref{1-existence}, then there exist $ L_+, L_- \in \mathcal{H}^s(\R^3)$ such that 
\begin{equation}\label{scat1}
\begin{aligned}
& \lim_{t \rightarrow + \infty } \| U(t) -e^{it\Delta} \big(u_0 +   L_+\big) \|_{H^s(\R^3)} = 0, 
\\ & \lim_{t \rightarrow - \infty } \| U(t) -e^{it\Delta} \big(u_0 +   L_-\big) \|_{H^s(\R^3)} = 0,
\end{aligned}
\end{equation}
and
\begin{equation}\label{scat2}
\begin{aligned}
& \lim_{t \rightarrow + \infty } \|e^{-it\Delta}  U(t) -\big(u_0 +   L_+\big) \|_{\mathcal{H}^s(\R^3)} = 0, 
\\ & \lim_{t \rightarrow - \infty } \| e^{-it\Delta}  U(t) -  \big(u_0 +   L_-\big) \|_{\mathcal{H}^s(\R^3)} = 0.
\end{aligned}
\end{equation}
\end{thm}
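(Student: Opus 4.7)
The plan is to transfer the scattering question from the free side ($U$) to the harmonic-oscillator side ($u$) via the lens transform, where the existence of boundary values at $\tau = \pm\pi/4$ will follow from the Bourgain-space regularity already established for $v$.

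First, I would record the identity
\begin{equation*}
  e^{-it\Delta} U(t) = u_0 + e^{i\tau H} v(\tau), \qquad \tau := \frac{\arctan(2t)}{2},
\end{equation*}
where $u(\tau) = e^{-i\tau H} u_0 + v(\tau)$ is the solution of \eqref{1-schrodingerH} supplied by Theorem \ref{1-existencebis}, with $v \in \X^{s,b}_{\pi/4}$ for some $b > 1/2$. This identity is an immediate consequence of the conjugation formula stated just before \eqref{sym2} applied to the function $f = e^{-it\Delta}U(t)$: that formula reads $\mathcal{L}_{\tau(t)}(e^{it\Delta} f) = e^{-i\tau(t)H} f$, and plugging in $f$ gives $u(\tau) = e^{-i\tau H}(e^{-it\Delta} U(t))$. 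Under this reformulation, $t \to \pm\infty$ corresponds to $\tau \to \pm\pi/4^{\mp}$, and \eqref{scat2} becomes the assertion that $e^{i\tau H} v(\tau)$ admits limits in $\mathcal{H}^s(\R^3)$ at these two endpoints.

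Next, the candidate limits are $L_\pm := e^{\pm i\pi/4 \cdot H} v(\pm \pi/4) \in \mathcal{H}^s(\R^3)$, with the endpoint values $v(\pm\pi/4)$ well-defined thanks to Proposition \ref{1-bourgain7}, which provides the embedding $\X^{s,b}_{\pi/4} \hookrightarrow C^0([-\pi/4,\pi/4]; \mathcal{H}^s(\R^3))$ for $b > 1/2$, yielding continuity of $v$ up to the closed interval. Splitting
\begin{equation*}
  e^{i\tau H} v(\tau) - L_+ = e^{i\tau H}\bigl(v(\tau) - v(\tfrac{\pi}{4})\bigr) + \bigl(e^{i\tau H} - e^{i\pi/4 \cdot H}\bigr) v(\tfrac{\pi}{4}),
\end{equation*}
I would control the first term by unitarity of $e^{i\tau H}$ on $\mathcal{H}^s$ combined with continuity of $v$ at $\pi/4$, and the second by strong continuity of the group $(e^{i\tau H})_{\tau \in \R}$ applied to the fixed element $v(\pi/4) \in \mathcal{H}^s$. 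This gives \eqref{scat2} as $t \to +\infty$; the case $t \to -\infty$ is symmetric.

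Finally, \eqref{scat1} comes essentially for free from \eqref{scat2}: since $e^{it\Delta}$ is unitary on $H^s(\R^3)$ and the $H^s$ norm is dominated by the $\mathcal{H}^s$ norm via \eqref{1-comparaison}, one has
\begin{equation*}
  \|U(t) - e^{it\Delta}(u_0 + L_\pm)\|_{H^s(\R^3)} = \|e^{-it\Delta}U(t) - (u_0 + L_\pm)\|_{H^s(\R^3)} \leq C\|e^{-it\Delta}U(t) - (u_0 + L_\pm)\|_{\mathcal{H}^s(\R^3)}.
\end{equation*}
The argument uses no estimate beyond what has already been produced for the fixed-point scheme; there is no genuine obstacle. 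The conceptual content is simply that the lens transform converts scattering at $t = \pm\infty$ into continuity of $v$ up to the closed interval $[-\pi/4, \pi/4]$, which is exactly the information encoded in the Bourgain space $\X^{s,b}_{\pi/4}$ with $b > 1/2$.
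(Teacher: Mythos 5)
Your argument is correct and follows essentially the same route as the paper: both proofs reduce scattering to continuity of the Bourgain-space component up to the endpoints $\tau=\pm\pi/4$ via Proposition~\ref{1-bourgain7}, transfer through the lens-transform identity $e^{-it\Delta}U(t)=u_0+e^{i\tau H}v(\tau)$ (the paper writes the same object as the Duhamel integral $F(t)$), and then deduce \eqref{scat1} from \eqref{scat2} using unitarity of $e^{it\Delta}$ on $H^s$ together with \eqref{1-comparaison}. The only difference is cosmetic: the paper phrases the limit in terms of the Duhamel integral $\int_0^{\tau}e^{isH}(\cdots)\,ds$, while you phrase it as $e^{i\tau H}v(\tau)$ with $L_\pm=e^{\pm i\frac{\pi}{4}H}v(\pm\frac{\pi}{4})$, which is the same limit.
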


\begin{proof}  Let us treat the case $t \longrightarrow+\infty$. In the following, we set $T=\frac{\pi}4$. We have shown that
\begin{equation*}
-i \psi(t)   \int_0^t   e^{-i(t-s)H} \cos (2s) \psi(s)   |  \psi(s) e^{-isH} u_0 + v(s) |^2 ( \psi(s) e^{-isH} u_0 + v(s) ) ds \in \X^{s,b}_T.
\end{equation*}
hence by Lemma~\ref{1-bourgain7},
\begin{multline*}
- i e^{-itH} \int_0^t   e^{isH}  \cos (2s) \psi(s) |  \psi(s) e^{-isH} u_0 + v(s) |^2 ( \psi(s) e^{-isH} u_0 + v(s) ) ds \\
 \in C ^0 ( [-T,T]  ; \mathcal{H}^s(\R^3)).
\end{multline*}
And therefore, there exists a function $ L \in \mathcal{H}^s(\R^3) $ such that 
\begin{align*}
& \lim_{ t \rightarrow T } \Big\| L- i \kappa e^{-itH} \int_0^t   e^{isH}  \cos (2s) \psi(s)  |  \psi(s) e^{-isH} u_0 + v(s) |^2 ( \psi(s) e^{-isH} u_0 + v(s) ) ds  \Big\|_{ \mathcal{H}^s(\R^3) } 
\\  = & \lim_{ t \rightarrow T } \Big\|  e^{itH} L - i  \kappa \int_0^t   e^{isH}\cos (2s) \psi(s)  |  \psi(s) e^{-isH} u_0 + v(s) |^2 ( \psi(s) e^{-isH} u_0 + v(s) ) ds   \Big\|_{ \mathcal{H}^s(\R^3) } 
\\  = &\lim_{ t \rightarrow T }  \Big\|   e^{iTH} L  - i  \kappa\int_0^t   e^{isH} \cos (2s) \psi(s)  |  \psi(s) e^{-isH} u_0 + v(s) |^2 ( \psi(s) e^{-isH} u_0 + v(s) ) ds   \Big\|_{ \mathcal{H}^s(\R^3)} 
\\ & \hspace*{4cm} = 0 .
\end{align*}
But for $ t \in [-T,T]$,
\begin{equation*}
 u(t) =  e^{-itH} u_0  -i  \kappa e^{-itH} \int_0^t   e^{isH}  \cos (2s) \psi(s)  |  \psi(s) e^{-isH} u_0 + v(s) |^2 ( \psi(s) e^{-isH} u_0 + v(s) ) ds.
\end{equation*}
So, by \eqref{sym2}, we obtain
\begin{multline*}
U(t)  =  e^{it\Delta} u_0 +\\
\begin{aligned}
  \quad +  e^{it\Delta}  \Big[-i  \kappa \int_0^{ \frac{\arctan (2t)}{2}   }   e^{isH} \cos (2s) \psi(s)  |  \psi(s) e^{-isH} u_0 + v(s) |^2 ( \psi(s) e^{-isH} u_0 + v(s) ) ds \Big] \\
:=  e^{it\Delta} u_0 +  e^{it\Delta}  F( t ).
\end{aligned}
\end{multline*}
Then 
\begin{equation*}
\lim_{t \rightarrow + \infty} \| F(t)- L_+  \|_{ \mathcal{H}^s(\R^3)  } = 0,
\end{equation*}
with $L_+ =  e^{iTH} L \in \mathcal{H}^s(\R^3)$, which proves \eqref{scat2}. Then we have
\begin{eqnarray*}
\lim_{ t \rightarrow + \infty } \| e^{it\Delta}  F( t ) - e^{it \Delta} L_+ \|_{  H^s(\R^3) } & =& \lim_{ t \rightarrow + \infty } \|  F( t ) - L_+ \|_{  H^s(\R^3) } \\
 & \leq &C \lim_{ t \rightarrow + \infty } \|  F( t ) - L_+ \|_{  \mathcal{H}^s(\R^3) } = 0,
 \end{eqnarray*}
 hence \eqref{scat1} follows.
 \end{proof}
 

\section{Estimation of the regularity of the random initial data and proof of Theorem~\ref{1-thm1}}\label{Sect7}

\subsection{Estimation of the regularity of the random initial data} 
In this section, we estimate the regularity of the random data by proving large deviation type estimates. In particular, we establish that $ u^\omega_0 \in \underset{ \lambda \geq 1}{ \bigcup } E_0( \lambda) $, for almost any $ \omega \in \Omega$ (recall that $E_0(\lambda)$ is defined in~\eqref{defE}). 
 
 For $ \Lambda > 0 $ we define,
 \begin{equation}\label{defomega}
\begin{aligned}
\Omega_\Lambda =   \bigg\{ \omega \in \Omega  \ : \ & \left\{  \,  \|u^\omega_0\|_{ \mathcal{H}^\sigma(\R^3) } \leq \Lambda \right\}  \bigcap  \left\{   \,  \| \, [e^{itH} u^\omega_0]^2 \, \|_{  L^4( [-\pi,\pi] ;  \mathcal{H}^s (\R^3)) } \leq  \Lambda ^2 \right\} 
\\ &\bigcap  \left\{   \,  \| \, [e^{itH} u^\omega_0]^3 \, \|_{ L^4( [-\pi,\pi] ;  \mathcal{H}^s (\R^3))  } \leq \Lambda^3\right\}
\\ &     \underset{N \; dyadic}{\bigcap}    \left\{   \,  \| \Delta_N(e^{itH} u^\omega_0 ) \|_{L^4( [-\pi,\pi] ;  L^\infty (\R^3)) } \leq \Lambda  N^{-1/6}  \right\}
\\ &  \underset{N \; dyadic}{\bigcap}  \left\{   \,  \| \Delta_N( e^{itH} u^\omega_0 ) \|_{L^R( [-\pi,\pi] ;  \W^{s,4} (\R^3)) } \leq \Lambda  N^{s-1/4} \right\}  \bigg\}
\end{aligned}
 \end{equation}
and the aim of this part is to prove the following theorem:

\begin{thm} \label{1-deviation}
There are two constants $ C, c > 0$ such that for any $ \Lambda \geq  0 $  and all  $ u_0 \in \mathcal{H}^{\sigma} (\R^3) $, 
\begin{equation} \label{inGauss} 
P( \Omega_\Lambda^ c ) \leq C \exp \big( -  \frac{c \Lambda^2}{ \|u_0\|^2_{ \mathcal{H}^{\sigma} (\R^3)  }   }  \big).
\end{equation}
\end{thm}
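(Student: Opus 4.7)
I would prove this theorem by a standard Gaussian large deviation scheme, decomposing $\Omega_\Lambda = \bigcap_{j=1}^{5}\Omega_\Lambda^{(j)}$ according to the five conditions in~\eqref{defomega}, estimating each $P((\Omega_\Lambda^{(j)})^c)$, and taking a union bound. The core tool is the Wiener chaos / hypercontractivity inequality: for any homogeneous Gaussian chaos $X_\omega$ of order $k$ with values in a Banach space $B$, one has
\begin{equation*}
\|X\|_{L^p_\omega(B)} \leq C_k (p-1)^{k/2}\,\|X\|_{L^2_\omega(B)}, \qquad p \geq 2,
\end{equation*}
which via Markov and optimisation in $p$ yields $P(\|X\|_B > \mu) \leq C\exp(-c(\mu/\|X\|_{L^2_\omega(B)})^{2/k})$. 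In our setting the five conditions correspond to $k=1$ (conditions 1, 4, 5) and $k=2,3$ (conditions 2, 3), with thresholds $\mu \sim \Lambda^k$ and $L^2_\omega$ norms of size $\|u_0\|_{\mathcal{H}^\sigma}^k$; in every case the exponent $\mu^{2/k}/\|X\|_{L^2_\omega}^{2/k}$ comes out to $\Lambda^2/\|u_0\|_{\mathcal{H}^\sigma}^2$, which is exactly the bound we need.

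For condition 1 the computation is immediate: $E\|u_0^\omega\|_{\mathcal{H}^\sigma}^2 = \sum_n |c_n|^2 \lambda_n^{2\sigma} = \|u_0\|_{\mathcal{H}^\sigma}^2$. For conditions 4 and 5, also chaos of order $1$ but parametrised by a dyadic~$N$, I would apply Minkowski's inequality to pull $L^2_\omega$ inside the spatial norm; since the phases $e^{-it\lambda_n^2}$ have modulus one, the $L^2_\omega$ norm is time-independent and equals $\big(\sum_{\lambda_n\sim N}|c_n|^2|h_n(x)|^2\big)^{1/2}$. Using $\|h_n\|_{L^\infty}\lesssim \lambda_n^{-1/6}$ and $\|h_n\|_{L^4}\lesssim \lambda_n^{-1/4}\log^3\lambda_n$ from Proposition~\ref{1-dispersive} (together with the Hermite dilation $\lambda_n^s\sim N^s$ on the dyadic block), one gets $L^2_\omega$ bounds of sizes $N^{-1/6}\|u_0\|_{\mathcal{H}^\sigma}$ and $N^{s-1/4-\sigma+\epsilon}\|u_0\|_{\mathcal{H}^\sigma}$ respectively, matching (up to a favourable factor $N^{-\sigma+\epsilon}$ which allows summation over dyadic $N$) the prescribed scales $\Lambda N^{-1/6}$ and $\Lambda N^{s-1/4}$. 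The chaos inequality then yields $\exp(-c\Lambda^2/\|u_0\|_{\mathcal{H}^\sigma}^2)$ uniformly in $N$, and the geometric decay in $N$ preserves this form after summing.

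For conditions 2 and 3 (chaos of orders $2$ and $3$), I would compute $\|{|e^{itH}u_0^\omega|}^k(t,x)\|_{L^2_\omega}$ pointwise via Wick's formula. Since the complex Gaussians $g_n$ pair only with their conjugates, the cross-phases $e^{-it(\lambda_n^2-\lambda_m^2)}$ in the $L^2_\omega$ calculation cancel on the surviving index-matchings, yielding a time-independent deterministic expression that is a sum of products of $|c_n|^2$ with products of squared Hermite functions (schematically, $\sum_n|c_n|^2 h_n^2$ for $k=2$, and analogous sums for $k=3$). Applying Minkowski to swap $L^2_\omega$ past $L^4_t\mathcal{H}^s_x$ and invoking the multilinear Hermite estimates of Proposition~\ref{1-proper6}, namely $\|h_n h_m\|_{\mathcal{H}^s}\lesssim \max(\lambda_n,\lambda_m)^{s-1/2+\delta}$ and the analogous trilinear bound, one obtains under the assumption $s<1/2+\sigma$ (so $s-1/2+\delta\leq 2\sigma$ for $\delta$ small) the estimates $\||e^{itH}u_0^\omega|^k\|_{L^2_\omega L^4_t\mathcal{H}^s_x}\leq C\|u_0\|_{\mathcal{H}^\sigma}^k$ for $k=2,3$. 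The chaos inequalities of orders $2$ and $3$ then close the argument.

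The main obstacle is the careful bookkeeping in the $L^2_\omega$ computation for conditions 2 and 3: identifying precisely which pairings survive Wick's theorem, matching the resulting deterministic sums to the bilinear/trilinear Hermite estimates of Proposition~\ref{1-proper6}, and verifying that the hypothesis $s<\tfrac12+\sigma$ is sharp enough to absorb the $\lambda_n^{s-1/2+\delta}$ factor against $|c_n|^2 \lambda_n^{2\sigma}$. Everything else is a routine application of Gaussian hypercontractivity, Minkowski's inequality, and the eigenfunction estimates already established in Section~\ref{Sect2}.
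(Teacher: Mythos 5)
Your proposal is correct and follows essentially the same route as the paper: union bound over the five conditions defining $\Omega_\Lambda$, Gaussian chaos moment bounds of orders $1$, $2$, $3$ combined with Markov's inequality and optimisation in the exponent, Minkowski's inequality to reduce to deterministic sums, the Hermite estimates of Propositions~\ref{1-dispersive} and~\ref{1-proper6} (with $s<\tfrac12+\sigma$ absorbing the $\max(\lambda_n,\lambda_m)^{s-1/2+\delta}$ factor), and the extra $N^{-\sigma+\epsilon}$ gain to sum over dyadic blocks. The only cosmetic difference is that the paper works directly with $L^q_\omega$ chaos bounds on the coefficients (Proposition~\ref{prop-chaos}) pointwise in $(t,x)$ rather than invoking Banach-valued hypercontractivity relative to the $L^2_\omega$ norm, which is equivalent bookkeeping.
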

We start by establishing Wiener chaos type inequalities for complex Gaussian random variables.

\begin{prop}\label{prop-chaos}
 Assume that  $ g_n \sim \mathcal{N}_\C(0,1) $  are independent random variables, then there exists a constant $ C> 0 $ such that for all $ q \geq 2 $ and all sequences $ (c_n)_{n} \in \ell^2(\N)$, $(c_{n,m})_{n,m} \in \ell^2(\N \times \N) $ and $ (c_{n,m,k})_{n,m,k} \in \ell^2(\N \times \N \times \N ) $, 
\begin{align}  
& \label{zyg1}   \Big\|  \sum_{ n \in \N   }  c_n  g_n(\omega)   \Big\|_{L^q(\Omega)} \leq C   q^{ \frac{1}{2} }   \sqrt{ \sum_{ n \in \N } |c_n |^2  },
\\ & \label{zyg2}  \Big\|  \ \sum_{ n , m \in \N  }  c_{n,m}   g_n(\omega)   g_m(\omega)     \Big\| _{L^q(\Omega)} \leq C    q   \sqrt{ \sum_{n,m \in \N} |c_{n,m} |^2 }     ,
\\ & \label{zyg3}  \Big\| \sum_{ n,m,k \in \N  }  c_{n,m,k }  g_n(\omega)    g_m(\omega)   g_k(\omega)     \Big\| _{L^q(\Omega)} \leq C    q^{ \frac{3}{2}  }     \sqrt{ \sum_{n,m,k \in \N} |c_{n,m,k} |^2 }.
\end{align}
\end{prop}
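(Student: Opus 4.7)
The plan is to treat the three estimates in increasing order of complexity. The first, \eqref{zyg1}, is immediate: the sum $X := \sum_{n} c_n g_n(\omega)$ is itself a centered complex Gaussian, $X \sim \mathcal{N}_{\mathbb{C}}(0,\sigma^2)$ with $\sigma^2 = \sum_{n} |c_n|^2$ (by independence and closure of $\mathcal{N}_{\mathbb{C}}$ under linear combinations), so $|X|^2/\sigma^2$ has the exponential law of parameter one and $\mathbb{E}[|X|^{2p}] = p!\,\sigma^{2p}$ for every integer $p$. Stirling's formula gives $\|X\|_{L^{2p}(\Omega)} \leq C\, p^{1/2}\sigma$, and monotonicity of $L^q$ norms extends the bound to every real $q \geq 2$.

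For the quadratic and cubic estimates the strategy is Nelson's hypercontractivity: on the Gaussian Hilbert space generated by $\{\Re g_n,\,\Im g_n\}$, the Ornstein--Uhlenbeck semigroup is contractive from $L^p$ to $L^q$ for suitable pairs, whence
\begin{equation*}
\|F\|_{L^q(\Omega)} \leq (q-1)^{k/2}\,\|F\|_{L^2(\Omega)}
\end{equation*}
for any $F$ in the $k$-th homogeneous Wiener chaos $\mathcal{H}_k$ and any $q\geq 2$. The key observation is that each monomial $g_{n_1}\cdots g_{n_k}$ belongs to $\mathcal{H}_k$: indeed, the identity $\mathbb{E}[g_n g_m]=0$ holds even on the diagonal $n=m$ because the $g_n$ are \emph{complex} Gaussians with vanishing ``relation function'', so no lower-order chaos pieces intrude (unlike in the real case, where one would have to subtract $\mathbb{E}[X_n^2]$ to Wick-renormalise $X_n^2$ into the second chaos).

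It remains to bound the $L^2$ norms. The complex-Gaussian Wick (Isserlis) formula,
\begin{equation*}
\mathbb{E}\bigl[g_{n_1}\cdots g_{n_k}\,\bar g_{m_1}\cdots \bar g_{m_k}\bigr] \;=\; \sum_{\sigma \in S_k}\prod_{j=1}^{k}\delta_{n_j,\,m_{\sigma(j)}},
\end{equation*}
in which pairings of the form $(g,g)$ or $(\bar g,\bar g)$ contribute zero, combined with Cauchy--Schwarz, yields
\begin{equation*}
\Big\|\sum_{n_1,\dots,n_k} c_{n_1,\dots,n_k}\,g_{n_1}\cdots g_{n_k}\Big\|_{L^2(\Omega)}^2 \;\leq\; k!\sum_{n_1,\dots,n_k}|c_{n_1,\dots,n_k}|^2.
\end{equation*}
Plugging this into the hypercontractive estimate and using $(q-1)^{k/2}\leq q^{k/2}$ for $q\geq 2$ gives \eqref{zyg2} with $k=2$ and \eqref{zyg3} with $k=3$. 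The only delicate step is the chaos-order verification in the second paragraph; once that is in hand the rest is bookkeeping. An alternative route that bypasses Nelson's theorem is an induction on $k$: conditioning on $(g_n)_{n\geq 2}$ and applying the linear bound \eqref{zyg1} to the resulting centered complex-Gaussian polynomial in $g_1$ fibrewise, then iterating, recovers the $q^{k/2}$ growth directly from elementary Gaussian moment computations.
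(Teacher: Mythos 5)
Your proof is correct and follows essentially the same route as the paper: both reduce the $L^q$ norm to the $L^2$ norm with a factor $q^{k/2}$ via Wiener-chaos estimates (the paper cites these for products of Gaussians; you rederive them from Nelson's hypercontractivity together with the correct observation that holomorphic monomials in complex Gaussians lie in the homogeneous $k$-th chaos), and both bound the $L^2$ norm using the complex-Gaussian pairing structure with $\mathbb{E}[g_n^2]=0$ — your Wick-formula-plus-Cauchy--Schwarz bound $k!\sum|c|^2$ is the closed form of the paper's explicit pairing count, and your direct moment computation for \eqref{zyg1} replaces the paper's citation of Khintchine. (Only the final aside is shaky: after conditioning on $(g_n)_{n\geq 2}$ the sum is quadratic, not linear, in $g_1$, so \eqref{zyg1} cannot be applied fibrewise as stated; but this does not affect your main argument.)
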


\begin{proof} The bound \eqref{zyg1} is the Khintchine inequality, and we refer to  \cite{burq4} for a proof.\medskip

Since the random variables  $ g_n$ are independent complex Gaussians, by \cite[Proposition 2.4]{thomann1}  (Wiener chaos estimates) there exists a constant $ C > 0 $ such that for any $ q \geq 2 $,
\begin{equation} \label{chaos} 
 \Big\|  \sum_{ n,m \in \N   }  c_{n,m}   g_n(\omega)   g_m(\omega)    \Big\|_{L^q(\Omega)} \leq C    q    \Big\|   \sum_{ n,m \in \N   }  c_{n,m}   g_n(\omega)   g_m(\omega)    \Big\|_{L^2(\Omega)}.
\end{equation}
Next,
\begin{multline*}
  \Big\|   \sum_{ n,m \in \N   }  c_{n,m}   g_n(\omega)   g_m(\omega)     \Big\| ^2 _{L^2(\Omega)}=\\
\begin{aligned}
&   =   \sum_{ n, n' , m , m' \in \N   }  c_{n,m}  \overline{c_{n',m'} }   E \left( g_n(\omega)   g_m(\omega)  \overline{ g_{n'}(\omega)   g_{m'}(\omega) } \right)
\\  & \leq  \sum_{ n = n' =  m = m' \in \N   } | \ | + \sum_{ n = n' , m = m' \in \N   } | \ | + \sum_{ n = m  ,   n' = m' \in \N   } | \ | + \sum_{ n = m'  ,   n' = m \in \N   }| \ | .
\end{aligned}
\end{multline*}
Then
\begin{eqnarray*}
 \sum_{ n=n'=m=m'  \in \N   } \bigg| c_{n,m}  \overline{c_{n',m'} }   E \left( g_n(\omega)   g_m(\omega)  \overline{ g_{n'}(\omega)   g_{m'}(\omega) } \right) \bigg|  &=&
   \sum_{ n \in \N   } |c_{n,n}|^2 E ( |g_n(\omega)|^4 )
\\ &\leq &   \sum_{n,m \in \N} |c_{n,m} |^2 ,
 \end{eqnarray*}
and using that  $ E( g_n(\omega)^2  ) = 0 $, we obtain
\begin{multline*}
 \sum_{ n = m , n'= m' \in \N   } \bigg| c_{n,m}  \overline{c_{n',m'} }   E \left( g_n(\omega)   g_m(\omega)  \overline{ g_{n'}(\omega)  g_{m'}(\omega) } \right) \bigg| = \\
\begin{aligned}
& =\sum_{ n,m \in \N   } | c_{n,n} |  | c_{m,m}|  \bigg|  E \left( g_n(\omega)^2   \overline{ g_m(\omega) ^2 } \right)\bigg|\\
 &=   \sum_{ n \in \N   } |c_{n,n}|^2   E \left( |g_n(\omega)|^4 \right) \\
 &\leq   \sum_{n,m \in \N} |c_{n,m} |^2,
\end{aligned}
\end{multline*}
thus \eqref{zyg2} is proved. We can proceed in the same way to obtain \eqref{zyg3} since the inequality \eqref{chaos} is true for any product of random variables. 
\end{proof}

We now prove Theorem \ref{1-deviation}. By definition \eqref{defomega} of the set $\Omega_\Lambda$ we have
\begin{align*}
P( \Omega_\Lambda^c  ) & \leq P \left( \omega \in \Omega \;:  \|u^\omega_0\|_{ \mathcal{H}^\sigma(\R^3) } \geq \Lambda \right)
\\ &\qquad\qquad  + P \left( \omega \in \Omega \;: \| \, [e^{itH} u^\omega_0]^2 \, \|_{  L^4( [-\pi,\pi] ; \mathcal{H}^s (\R^3)) } \geq  \Lambda ^2 \right)
\\ &  \qquad\qquad+  P \left( \omega \in \Omega \;: \| \, [e^{itH} u^\omega_0]^3 \, \|_{ L^4( [-\pi,\pi] ; \mathcal{H}^s (\R^3))  } \geq \Lambda ^3 \right)
\\ & \qquad\qquad+  P \left( \underset{N \; dyadic}{ \bigcup } \left\{ \omega \in \Omega \;: \| \Delta_N(e^{itH} u^\omega_0 ) \|_{L^4( [-\pi,\pi] ; L^\infty (\R^3)) } \geq \Lambda  N^{-1/6} \right\} \right)
\\ & \qquad\qquad+  P \left( \underset{N \; dyadic}{ \bigcup } \left\{ \omega \in \Omega \;:  \| \Delta_N( e^{itH} u^\omega_0 ) \|_{L^R( [-\pi,\pi] ; \W^{s,4} (\R^3)) } \geq \Lambda  N^{s-1/4}  \right\} \right)
\end{align*}
so it is enough to establish the bound \eqref{inGauss} for each of the terms. Let us carry out the proof for the second and the fourth term (for the other terms, the approach is similar). \medskip

  \textbf{ $\bullet$ Case $ \| \, [e^{itH} u^\omega_0]^2 \, \|_{  L^4( [-\pi,\pi] ;  \mathcal{H}^s (\R^3)) } \geq  \Lambda ^2 $.} Since $ s \in ]  \frac{1}{2} , \frac{1}{2} + \sigma [$, it is enough to prove the following lemma:

\begin{lem}  \label{1-lemme-X} For any $ \epsilon > 0 $, there exist two constants $ C,c > 0 $ such that for any $ \Lambda > 0 $ and any function $ u_0 \in  \mathcal{H}^{\sigma} (\R^3) $,
\begin{equation*}
P \left( \omega \in \Omega \;:    \| \, [e^{itH} u^\omega_0]^2 \, \|_{  L^4( [-\pi,\pi] ;  \mathcal{H}^{\sigma + 1 /2 -2\epsilon} (\R^3)) } \geq  \Lambda ^2  \right)  \leq C e ^{ -  \frac{c \Lambda^2}{ \|u_0\|^2_{ \mathcal{H}^{\sigma} (\R^3)  }   }  } .
\end{equation*}
\end{lem}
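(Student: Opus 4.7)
The plan is to obtain a sharp moment estimate $\|F\|_{L^q(\Omega)} \leq Cq\|u_0\|_{\mathcal{H}^\sigma}^2$ for $q\geq 4$ on the random variable $F(\omega) := \|[e^{itH}u_0^\omega]^2\|_{L^4([-\pi,\pi];\mathcal{H}^{\sigma+1/2-2\epsilon})}$, and then convert this into the Gaussian-type tail by a Markov–optimization argument. Three ingredients will be combined: Minkowski's inequality to interchange $L^q(\Omega)$ with the deterministic norm, the Wiener chaos estimate \eqref{zyg2} of Proposition \ref{prop-chaos} (since the square of a Gaussian sum is an order-$2$ chaos), and the bilinear Hermite product estimate \eqref{1-propre3}, which carries the decisive half-derivative gain that matches the threshold $s = \sigma + 1/2 - 2\epsilon$.

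First, I would expand
\[
G(t,x,\omega) := H^{s/2}\bigl([e^{itH}u_0^\omega]^2\bigr)(t,x) = \sum_{n,m} c_n c_m\, g_n(\omega) g_m(\omega)\, e^{-i(\lambda_n^2+\lambda_m^2)t}\, H^{s/2}(h_nh_m)(x),
\]
so that $F = \|G\|_{L^4_tL^2_x}$. For $q \geq 4$, two successive applications of Minkowski's inequality push $L^q_\omega$ inside both $L^4_t$ and $L^2_x$, giving $\|F\|_{L^q(\Omega)} \leq \|G\|_{L^4_t L^2_x L^q_\omega}$. At fixed $(t,x)$, the sum above is a Gaussian chaos of order $2$ in the family $(g_n)$, so \eqref{zyg2} yields $\|G(t,x,\cdot)\|_{L^q(\Omega)} \leq Cq\,\|G(t,x,\cdot)\|_{L^2(\Omega)}$, and therefore
\[
\|F\|_{L^q(\Omega)} \leq Cq\, \bigl\|\,\|G(t,x,\cdot)\|_{L^2(\Omega)}\,\bigr\|_{L^4_t L^2_x}.
\]

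Next, the $L^2(\Omega)$ norm is explicit: by Wick's formula, the only surviving pairings $E\bigl[g_n g_m \overline{g_{n'} g_{m'}}\bigr]$ correspond to $\{n,m\}=\{n',m'\}$, and these automatically satisfy $\lambda_n^2+\lambda_m^2 = \lambda_{n'}^2+\lambda_{m'}^2$, so the oscillatory phases cancel and the variance is $t$-independent. This gives
\[
\|G(t,x,\cdot)\|_{L^2(\Omega)}^2 \lesssim \sum_{n,m} |c_n|^2|c_m|^2\, |H^{s/2}(h_nh_m)(x)|^2.
\]
Integrating in $x$ and applying \eqref{1-propre3} with $\delta = \epsilon$ (valid since $s = \sigma + 1/2 - 2\epsilon \in (1/2,1)$), we get $\|h_nh_m\|_{\mathcal{H}^s}^2 \leq C \max(\lambda_n,\lambda_m)^{2\sigma-2\epsilon}$, hence
\[
\|G\|_{L^4_t L^2_x L^2_\omega}^2 \lesssim \sum_{n,m}|c_n|^2|c_m|^2\bigl(\lambda_n^{2\sigma}+\lambda_m^{2\sigma}\bigr) \lesssim \|u_0\|_{L^2}^2\,\|u_0\|_{\mathcal{H}^\sigma}^2 \leq C\|u_0\|_{\mathcal{H}^\sigma}^4.
\]
Combining the two steps yields the desired moment bound $\|F\|_{L^q(\Omega)} \leq Cq\,\|u_0\|_{\mathcal{H}^\sigma}^2$ for every $q \geq 4$.

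Finally, Markov's inequality gives $P(F \geq \Lambda^2) \leq (Cq\,\|u_0\|_{\mathcal{H}^\sigma}^2/\Lambda^2)^q$, and choosing $q = \Lambda^2/(eC\|u_0\|_{\mathcal{H}^\sigma}^2)$ (when this is $\geq 4$; otherwise the desired bound is trivial after adjusting constants) produces the Gaussian tail $C\exp(-c\Lambda^2/\|u_0\|_{\mathcal{H}^\sigma}^2)$. The only nontrivial input is \eqref{1-propre3}: without the half-derivative gain from the bilinear Hermite estimate, one could only reach $s \leq \sigma$ instead of $s \leq \sigma + 1/2 - 2\epsilon$, so this is precisely where the randomization payoff is extracted; the remaining steps (Minkowski, Wiener chaos, Wick's formula, Markov optimization) are standard.
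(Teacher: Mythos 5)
Your proposal is correct and follows essentially the same route as the paper's proof: Minkowski to push $L^q_\omega$ inside $L^4_tL^2_x$, the order-two Wiener chaos bound, the bilinear Hermite estimate \eqref{1-propre3} with $\delta=\epsilon$ giving $\max(\lambda_n,\lambda_m)^{2(\sigma-\epsilon)}$, and Markov with $q\sim \Lambda^2/\|u_0\|^2_{\mathcal H^\sigma}$. The only cosmetic difference is that you re-derive \eqref{zyg2} inline via hypercontractivity plus Wick pairings instead of quoting it directly.
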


\begin{proof} Note that it is sufficient to prove the estimate for $ \Lambda \geq C \|u_0\|_{ \mathcal{H}^{\sigma}(\R^3)  } $. Thanks to the  Markov and Minkowski inequalities, we obtain for $ q \geq 4 $,
\begin{multline*}
 P \left( \omega \in \Omega \;:    \| \, [e^{itH} u^\omega_0]^2 \, \|_{  L^4( [-\pi,\pi] ;  \mathcal{H}^{\sigma + 1 /2 -2\epsilon} (\R^3)) } \geq  \Lambda ^2  \right)  = \\
\begin{aligned}
& =  P \left( \omega \in \Omega \;: \| \,   H^{  \sigma/2+1/4-\epsilon }  \,  [e^{itH} u^\omega_0]^2 \, \|^q_{  L^4( [-\pi,\pi]; L^2 (\R^3)) } \geq  \Lambda ^{2q} \right)
\\  & \leq   \Lambda^{-2q}  E_\omega  \left( \| \,   H^{  \sigma/2+1/4-\epsilon }  \,  [e^{itH} u^\omega_0]^2 \, \| ^q _{  L^4( [-\pi,\pi] ; L^2 (\R^3)) } \right)
\\ & = \Lambda^{-2q}   \| \,   H^{  \sigma/2+1/4-\epsilon }  \,  [e^{itH} u^\omega_0]^2 \, \| ^q _{ L ^q( \Omega ;   L^4( [-\pi,\pi] ; L^2 (\R^3))) } 
\\  & \leq   \Lambda^{-2q}   \| \,   H^{  \sigma/2+1/4-\epsilon }  \,  [e^{itH} u^\omega_0]^2 \, \| ^q _{   L^4( [-\pi,\pi] ; L^2 (\R^3; L ^q( \Omega) )) } .
\end{aligned}
\end{multline*}
Then with  \eqref{zyg2} we get
\begin{multline*}
 \| \,  H^{  \sigma/2+1/4-\epsilon }  \,  [e^{itH} u^\omega_0]^2 \, \| _{   L ^q( \Omega) }  = \\
\begin{aligned}
&=    \Big\|   \ H^{  \sigma/2+1/4-\epsilon }  \  \Big[  \sum_{n,m \in \N} e^{it(\lambda_n^2+\lambda_m^2)} c_n c_m  h_n(x) h_m(x)   g_n(\omega )   g_m(\omega) \ \Big] \  \Big\| _{   L ^q( \Omega) } 
\\ &\leq   \Big\|  \  \sum_{n,m \in \N} e^{it(\lambda_n^2+\lambda_m^2)} c_n c_m   H^{  \sigma/2+1/4-\epsilon }  \  [  h_n(x) h_m(x)  ]   g_n(\omega )   g_m(\omega)  \  \Big\|  _{   L ^q( \Omega) } 
\\ &\leq C  q  \sqrt{   \sum_{n,m \in \N } |c_n|^2   |c_m|^2 |  \,H^{  \sigma/2+1/4-\epsilon }  \  [  h_n(x) h_m(x)  ] \,   |^2     }.
\end{aligned}
\end{multline*}
 By the triangle inequality, we obtain
\begin{multline*}
  P \left( \omega \in \Omega \;: \| \, [e^{itH} u^\omega_0]^2 \, \|_{  L^4( [-\pi,\pi] ;  \mathcal{H}^{\sigma+1/2-2\epsilon} (\R^3)) } \geq  \Lambda ^2 \right) \\
  \begin{aligned}
 & \leq  \left( \frac{C q }{ \Lambda^2 }  \right) ^q    \Big\| \sum_{ n,m \in \N } |c_n|^2  |c_m|^2  |  \, H^{  \sigma/2+1/4-\epsilon }   [  h_n(x) h_m(x)  ] \,   |^2   \Big\| _{L^2( [-\pi,\pi] ; L^1 (\R^3 ) )     }  ^{q/2}
\\  & = C \left(  \frac{C q }{ \Lambda^2 } \right) ^q  \Big( \sum_{ n , m \in  \N} |c_n|^2  |c_m|^2  \| \,  H^{  \sigma/2+1/4-\epsilon }    [  h_n(x) h_m(x)  ] \,     \|^2_{L^2 (\R^3  )     } \Big) ^{q/2}.
\end{aligned}
\end{multline*}
Then thanks to Proposition~\ref{1-proper6} with $ \delta = \epsilon $, we deduce that
\begin{align*} 
\big\| \,  H^{  \sigma/2+1/4-\epsilon }  \  [  h_n(x) h_m(x)  ] \,     \big\|^2_{L^2 (\R^3 )     }  & \leq C_\epsilon  \max( \lambda_n , \lambda_m )^{2 ( \sigma - \epsilon ) } 
\\ & \leq C  \max( \lambda_n , \lambda_m )^{2  \sigma  }.
\end{align*}
And finally, we have 
\begin{multline*}
  P \left( \omega \in \Omega  \;: \big\| \,[e^{itH} u^\omega_0]^2 \,\big\|_{  L^4( [-\pi,\pi] ; \mathcal{H}^{\sigma+1/2-2\epsilon} (\R^3)) } \geq  \Lambda ^2 \right) \leq \\
    \begin{aligned}
& \leq \left(  \frac{C q }{ \Lambda^2 } \right) ^q  \left( \sum_{n,m \in \N} |c_n|^2  |c_m|^2  \max( \lambda_n , \lambda_m )^{2  \sigma  }     \right)  ^{q/2}
\\  & \leq  \left(  \frac{C q   \|u_0\|^2_{ \mathcal{H}^{\sigma}  (\R^3)} }{ \Lambda^2 }  \right) ^q.
\end{aligned}
\end{multline*}
Then it is enough to  choose $ q = \frac{t^2}{2C\|u_0\|^2_{ \mathcal{H}^{\sigma } (\R^3)  }} \geq 4 $ to conclude.
\end{proof}

\textbf{$\bullet$ Case $ \| \Delta_N[ e^{itH} u^\omega_0 ] \|_{  L^4 ( [-\pi,\pi] ;  L^\infty (\R^3)) } \geq  N^{-1/6 } \Lambda$.} Let us start by establishing the following lemma:

 \begin{lem} \label{LEFT}
  For all $ p_1,p_2 \in [ 2, + \infty [ $ and $ \epsilon > 0 $, there exist two constants $ C,c > 0 $ such that for all $ \Lambda > 0 $, $ N \geq 1 $ and $ u_0 \in \mathcal{H}^{\sigma-\epsilon}(\R^3)$,
\begin{multline*}
 P \left( \omega \in \Omega \;: \| \Delta_N(e^{itH} u^\omega_0 ) \|_{L^{p_1}( [-\pi,\pi] ; W^{\epsilon,p_2} (\R^3)) } \geq \Lambda  N^{-1/6-\sigma+2 \epsilon} \right) \leq  \\
\leq C e^{-  \frac{c \Lambda^2}{ \|  \Delta_N( u_0 ) \|^2_{ \mathcal{H}^{\sigma-\epsilon}(\R^3)  }   }    }.
\end{multline*}
\end{lem}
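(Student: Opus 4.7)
I would follow the standard large-deviation scheme for Gaussian random series, reducing to a moment estimate via Markov plus Minkowski, then applying Khintchine pointwise, and finally optimising the moment. Fix $q\ge\max(p_1,p_2,2)$ and write
$$P\bigl(\|\Delta_N(e^{itH}u_0^\omega)\|_{L^{p_1}_tW^{\epsilon,p_2}_x}\ge M\bigr)\le M^{-q}\,\bigl\|\Delta_N(e^{itH}u_0^\omega)\bigr\|_{L^q(\Omega;\,L^{p_1}_t W^{\epsilon,p_2}_x)}^q.$$
Using \eqref{1-comparaison} to replace $W^{\epsilon,p_2}$ by the harmonic Sobolev norm $\W^{\epsilon,p_2}$, note that $H^{\epsilon/2}\Delta_N=\sum_n\lambda_n^\epsilon\psi(\lambda_n^2/N^2)P_n$ and $\lambda_n\sim N$ on the support of $\psi$, so differentiating costs at most a factor $N^\epsilon$. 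Minkowski's inequality then reduces the problem to estimating $\|\Delta_N(e^{itH}u_0^\omega)\|_{L^{p_1}_tL^{p_2}_xL^q(\Omega)}$, up to this harmless $N^\epsilon$ multiplier.

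At fixed $(t,x)$ the quantity $\Delta_N(e^{itH}u_0^\omega)(t,x)=\sum_n\psi(\lambda_n^2/N^2)c_ne^{-it\lambda_n^2}h_n(x)g_n(\omega)$ is a Gaussian series in $\omega$, and Khintchine's inequality \eqref{zyg1} gives
$$\bigl\|\Delta_N(e^{itH}u_0^\omega)(t,x)\bigr\|_{L^q(\Omega)}\le C\sqrt q\,\sigma(x),\qquad \sigma(x)^2=\sum_n\psi^2(\lambda_n^2/N^2)|c_n|^2|h_n(x)|^2,$$
with $\sigma$ independent of $t$. The question thus reduces to estimating $\|\sigma\|_{L^{p_2}(\R^3)}$. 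Two deterministic bounds are available: the trivial $L^2$-identity $\|\sigma\|_{L^2}^2=\sum_n\psi^2|c_n|^2$, and the pointwise bound coming from Koch's estimate $\|h_n\|_{L^\infty(\R^3)}\le C\lambda_n^{-1/6}$ from \eqref{1-propre2}, which together with $\lambda_n\sim N$ on $\mathrm{supp}\,\psi$ yields $\sigma(x)^2\le CN^{-1/3}\sum_n\psi^2|c_n|^2$ uniformly in $x$. Using also $\sum_n\psi^2|c_n|^2\le CN^{-2(\sigma-\epsilon)}\|\Delta_Nu_0\|_{\mathcal H^{\sigma-\epsilon}}^2$ (again because $\lambda_n\sim N$ on the support of $\psi$) and interpolating between these two bounds via the spectrally-localised Bernstein structure of $\psi(H/N^2)$, one obtains $\|\sigma\|_{L^{p_2}}\le C\,N^{-1/6-\sigma+\epsilon}\|\Delta_Nu_0\|_{\mathcal H^{\sigma-\epsilon}}$, and combining with the $N^\epsilon$ loss from the derivative produces the desired factor $N^{-1/6-\sigma+2\epsilon}$.

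Putting everything together,
$$P\bigl(\|\Delta_N(e^{itH}u_0^\omega)\|_{L^{p_1}_tW^{\epsilon,p_2}_x}\ge \Lambda N^{-1/6-\sigma+2\epsilon}\bigr)\le \Bigl(\frac{C\sqrt q\,\|\Delta_Nu_0\|_{\mathcal H^{\sigma-\epsilon}}}{\Lambda}\Bigr)^q$$
for every $q\ge\max(p_1,p_2,2)$, and optimising with $q=c\Lambda^2/\|\Delta_Nu_0\|_{\mathcal H^{\sigma-\epsilon}}^2$ (for $\Lambda$ large enough that $q$ satisfies the constraint; the small-$\Lambda$ regime is absorbed into the constant $C$) yields the claimed exponential bound $Ce^{-c\Lambda^2/\|\Delta_Nu_0\|_{\mathcal H^{\sigma-\epsilon}}^2}$. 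The main technical obstacle is the deterministic step for $\sigma$: the pointwise $L^\infty$ bound alone is not integrable in $x$, so one must really carry both the $L^2$ and the $L^\infty$ information through a Bernstein/interpolation argument in order to preserve the full $N^{-1/6}$ gain across all $p_2\in[2,+\infty[$, rather than losing a $p_2$-dependent fraction of that gain.
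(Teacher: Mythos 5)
Your overall scheme---Markov plus Minkowski, pointwise Khintchine via \eqref{zyg1}, then the choice $q\sim c\Lambda^{2}/\|\Delta_N u_0\|^{2}_{\mathcal H^{\sigma-\epsilon}}$ with the small-$\Lambda$ regime absorbed into $C$---is exactly the paper's, and your treatment of the derivative (a harmless $N^{\epsilon}$ on the spectrally localised block) also matches. The gap is in the deterministic step that you yourself flag as the ``main technical obstacle'' and then leave open: after Khintchine you must bound $\|\sigma\|_{L^{p_2}}$ with $\sigma(x)^{2}=\sum_n\psi^{2}(\lambda_n^{2}/N^{2})\lambda_n^{2\epsilon}|c_n|^{2}|h_n(x)|^{2}$, and you propose to interpolate between $\|\sigma\|_{L^{2}}$ (no gain in $N$) and $\|\sigma\|_{L^{\infty}}\lesssim N^{\epsilon-1/6}\big(\sum_n\psi^{2}|c_n|^{2}\big)^{1/2}$, hoping that some Bernstein-type property of $\psi(H/N^{2})$ restores the full $N^{-1/6}$ for every $p_2\ge 2$. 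This step is not merely missing; it cannot be carried out as stated. At $p_2=2$ one has $\|\sigma\|_{L^{2}}^{2}=\sum_n\psi^{2}\lambda_n^{2\epsilon}|c_n|^{2}$ exactly, with no decay in $N$, and if $u_0$ is spectrally concentrated at $\lambda_n\sim N$ then $\|\Delta_N(e^{itH}u_0^{\omega})\|_{L^{p_1}_tH^{\epsilon}_x}\gtrsim\big(\sum_n\psi^{2}|c_n|^{2}|g_n|^{2}\big)^{1/2}$ exceeds the threshold $\Lambda N^{-1/6-\sigma+2\epsilon}$ with probability bounded below for $\Lambda$ as large as $N^{1/6-\epsilon}\|\Delta_N u_0\|_{\mathcal H^{\sigma-\epsilon}}$, while the claimed right-hand side is then $C\exp(-cN^{1/3-2\epsilon})$. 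So no Bernstein argument can preserve the full $N^{-1/6}$ gain down to $p_2=2$; the estimate in this form is only available for $p_2$ large enough, which is all the paper uses later ($p_2=\frac{3}{\epsilon}+\epsilon$).

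The paper closes the step differently and more simply: after Khintchine it applies the triangle (Minkowski) inequality in $L^{p_2/2}_x$ to pull the sum over $n$ outside the spatial norm, reducing matters to $\sum_n\psi^{2}\lambda_n^{2\epsilon}|c_n|^{2}\|h_n\|^{2}_{L^{p_2}(\R^{3})}$, and then uses the individual Hermite bounds \eqref{1-propre1} and \eqref{1-propre2}, namely $\|h_n\|_{L^{4}}\lesssim\lambda_n^{-1/4}\log^{3}\lambda_n$ and $\|h_n\|_{L^{\infty}}\lesssim\lambda_n^{-1/6}$, whose interpolation gives $\|h_n\|_{L^{p_2}}\lesssim\lambda_n^{-1/6}$ for $p_2\ge 4$; combined with $\lambda_n\sim N$ on the support of $\psi$ this yields the factor $N^{-1/6-\sigma+2\epsilon}\|\Delta_N u_0\|_{\mathcal H^{\sigma-\epsilon}}$ with no $p_2$-dependent loss. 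To repair your write-up, replace the interpolation of the square function $\sigma$ by this termwise Minkowski step plus the eigenfunction $L^{p_2}$ bounds, and restrict to the range of $p_2$ for which $\|h_n\|_{L^{p_2}}\lesssim\lambda_n^{-1/6}$ actually holds.
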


\begin{proof}  We replace $ u_0 $ by $ \Delta_N (u _ 0 ) $, therefore we are led to prove that  
\begin{equation*}
 P \left( \omega \in \Omega \;: \| \Delta'_N(e^{itH} u^\omega_0 ) \|_{L^{p_1}( [-\pi,\pi] ; W^{\epsilon,p_2} (\R^3)) } \geq \Lambda  N^{-1/6-\sigma+2 \epsilon} \right) \leq C e^{-  \frac{c \Lambda^2}{ \| u_0  \|^2_{ \mathcal{H}^{\sigma-\epsilon}(\R^3)  }   }    }.
\end{equation*}
As for  Lemma~\ref{1-lemme-X}, it is sufficient to prove the estimate for $ \Lambda \geq C \|u_0\|_{ \mathcal{H}^{\sigma-\epsilon}(\R^3) } $.  Thanks to the Markov and Minkowski inequalities, we obtain for $ q \geq p_1,p_2 $,
\begin{multline*}
  P \left( \omega \in \Omega  \, :\,  \| \Delta'_N(e^{itH} u^\omega_0 ) \|_{L^{p_1}( [-\pi,\pi] ; W^{\epsilon,p_2} (\R^3)) } \geq  \Lambda N^{-1/6-\sigma+2\epsilon}  \right) \leq \\
  \begin{aligned}
  &   \leq \left( \frac{N^{1/6+\sigma-2\epsilon}   E_\omega (    \| \Delta'_N(e^{itH} u^\omega_0 ) \|_{L^{p_1}( [-\pi,\pi] ; W^{\epsilon,p_2} (\R^3)) }  ) }{\Lambda} \right) ^ q\\
& \leq  \left( \frac{ N^{1/6+\sigma-2\epsilon} }{\Lambda} \right) ^q      \| H^{  \epsilon/2 }  \Delta'_N(e^{itH} u^\omega_0 ) \|_{ L^q(\Omega)  ,  L^{p_1}( [-\pi,\pi] ) ; L^{p_2} (\R^3) }   ^q\\
& \leq  \left( \frac{ N^{1/6+\sigma-2\epsilon} }{\Lambda} \right) ^q        \| H^{  \epsilon/2 }  \Delta'_N(e^{itH} u^\omega_0 ) \|_{  L^{p_1}( [-\pi,\pi] ) ; L^{p_2} (\R^3) ; L^q(\Omega) }   ^q.
\end{aligned}
\end{multline*}
But using \eqref{zyg1},
\begin{eqnarray*}
\| H^{  \epsilon/2 }  \Delta'_N(e^{itH} u^\omega_0 ) \|_{   L^q(\Omega) } &  \leq & \Big\|  \sum_{ n \in \N }\phi \big( \frac{\lambda_n^2}{N^2} \big)\lambda_n^{\epsilon}  e^{it\lambda_n^2} c_n  h_n(x) g_n( \omega )  \Big\|_{   L^q(\Omega) }    \\
& \leq & \Big\|  \sum_{ \lambda_n \sim  N } \phi   \big( \frac{\lambda_n^2}{N^2} \big) \lambda_n^{\epsilon}  e^{it\lambda_n^2} c_n h_n(x) g_n ( \omega )  \Big\| _{   L^q(\Omega) }  \\
 & \leq &C  \sqrt{q}   \sqrt{ \sum_{ \lambda_n \sim  N } \phi^2 \big( \frac{\lambda_n^2}{N^2} \big) \lambda_n^{2 \epsilon}   | c_n |^2  | h_n(x) |^2 }.
\end{eqnarray*}
Then, using \eqref{1-propre1} and \eqref{1-propre2}, we obtain
\begin{multline*}
 \| H^{  \epsilon/2 }   \Delta'_N(e^{itH} u^\omega_0 ) \|_{  L^{p_1}( [-\pi,\pi] ) ; L^{p_2} (\R^3) ; L^q(\Omega) }  \leq 
 \\ 
 \begin{aligned}
&\leq  C  \sqrt{q}  \Big\| \sum_{ \lambda_n \sim  N } \phi^2 \big( \frac{\lambda_n^2}{N^2} \big)   \lambda_n^{2 \epsilon}   | c_n |^2  | h_n(x) |^2  \Big\| ^{1/2}_{ L^{p_1/2}( [-\pi,\pi] ) ; L^{p_2/2} (\R^3) }
\\ 
&\leq  C  \sqrt{q}  \sqrt{  \sum_{ \lambda_n \sim  N } \phi^2 \big( \frac{\lambda_n^2}{N^2} \big)   \lambda_n^{2 \epsilon}  | c_n |^2     \| h_n(x)  \|^2_{ L^{p_2} (\R^3) } }
\\ &\leq C  \sqrt{q}  \sqrt{  \sum_{ \lambda_n \sim  N } \phi^2 \big( \frac{\lambda_n^2}{N^2} \big)   \lambda_n^{2 \epsilon-1/3}   | c_n |^2   }
\\  &\leq C  \sqrt{q}  N^{-\sigma - 1/6 + 2\epsilon}   \sqrt{  \sum_{ \lambda_n \sim  N } \phi^2 \big( \frac{\lambda_n^2}{N^2} \big) \lambda_n^{2 (\sigma - \epsilon )  }   | c_n |^2   }
\\ &\leq C  \sqrt{q}  N^{-\sigma - 1/6 + 2\epsilon}   \|  \Delta'_N (  u_0 ) \|_{ \mathcal{H}^{\sigma-\epsilon}(\R^3) }
\\ &\leq C  \sqrt{q}  N^{-\sigma - 1/6 + 2\epsilon}   \|   u_0  \|_{ \mathcal{H}^{\sigma-\epsilon}(\R^3) }.
\end{aligned}
\end{multline*}
Finally, we have for all $q \geq p_1,p_2 $,
\begin{multline*}
 P  \left( \omega \in \Omega \;: \| \Delta'_N(e^{itH} u^\omega_0 ) \|_{L^{p_1}( [-\pi,\pi] ; W^{\epsilon,p_2} (\R^3)) }   \geq 
 \Lambda N^{-1/6-\sigma+2\epsilon}  \right) \leq \\
 \leq  \left( \frac{C  \sqrt{q}  \|  u_0 \|_{ \mathcal{H}^{\sigma-\epsilon}(\R^3) } }{\Lambda}  \right) ^q.
\end{multline*}
It is then sufficient to choose $ q =  \left( \frac{\Lambda}{4C \|u_0\|_{ \mathcal{H}^\sigma(\R^3) }   } \right) ^2 \geq p_1,p_2 $ to prove Lemma~\ref{LEFT}.
\end{proof}

Then for $ p_2 = \frac{3}{\epsilon} + \epsilon  $ , we have
\begin{equation*}
W^{ \epsilon, p_2 } (\R^3) \hookrightarrow L^{\infty } ( \R^3).
\end{equation*}
Hence for all  $ p_1 \in [2,+ \infty[$ and $ \epsilon > 0  $, there exist two constants $ C,c >0$ such that for all $ \Lambda > 0 $, $ N\geq 1 $ and $ u_0 \in \mathcal{H}^{\sigma-\epsilon}(\R^3)   $,
\begin{equation*}
 P \left( \omega \in \Omega \;: \| \Delta_N(e^{itH} u^\omega_0 ) \|_{L^{p_1}( [-\pi,\pi] ; L^{\infty} (\R^3)) } \geq \Lambda N^{-1/6-\sigma+2\epsilon} \right) \leq C e^{-  \frac{c \Lambda^2}{ \|  \Delta_N ( u_0 ) \|^2_{ \mathcal{H}^{\sigma-\epsilon}(\R^3)  }   }    }.
\end{equation*}
Then we can choose $ p_1 = 4 $ and use that $ \| \Delta_N( u_0 ) \|^2_{  \mathcal{H}^{\sigma-\epsilon} ( \R^3 ) }  \leq N^{-2\epsilon}  \|u_0\|^2_{  \mathcal{H}^{\sigma} ( \R^3 ) } $ to obtain for any $ \epsilon > 0 $ the existence of two constants $ C,c > 0 $ such that for all  $ \Lambda > 0 $, $ N\geq 1 $ and $ u_0 \in \mathcal{H}^{\sigma}(\R^3)   $,
\begin{align} \label{1-inegalite gauss 1}
 P \left( \omega \in \Omega \;: \| \Delta_N(e^{itH} u^\omega_0 ) \|_{L^4( [-\pi,\pi] ; L^{\infty} (\R^3)) } \geq \Lambda  N^{-1/6} \right) \leq C e^{-  \frac{c N^{2(\sigma-\epsilon)} \Lambda^2}{ \| u_0  \|^2_{ \mathcal{H}^{\sigma}(\R^3)  }   }    }.
\end{align}
We have to prove that there exist two constants $ C,c > 0 $ such that for all $ \Lambda > 0 $, $ N\geq 1 $ and $ u_0 \in \mathcal{H}^{\sigma}(\R^3)   $,
\begin{multline} \label{1-inegalite gauss 2}
P  \left( \underset{ N }{ \bigcup }  \left\{ \omega \in \Omega \;: \| \Delta_N(e^{itH} u^\omega_0 ) \|_{L^4( [-\pi,\pi] ; L^\infty (\R^3)) } \geq \Lambda  N^{-1/6} \right\} \right) \leq \\
\leq C e^{-  \frac{c \Lambda^2}{ \|  u_0  \|^2_{ \mathcal{H}^{\sigma}(\R^3)  }   }    }.
\end{multline}
We can assume that $ \Lambda \geq C \|u_0\|_{  \mathcal{H}^{\sigma}(\R^3) } $, and we set  $ \alpha =  \left( \frac{\Lambda}{C\|u_0\|_{  \mathcal{H}^{\sigma}(\R^3) } }  \right) ^2 $  and choosing $ \epsilon < \sigma $ in~\eqref{1-inegalite gauss 1}, it is enough to show that 
\begin{equation*} 
\forall \delta >0, \ \exists \ C,c >0 / \ \forall \alpha \geq 1 , \ \sum_{N \; dyadic} e^{-\alpha N^\delta } \leq C e^{-c\alpha }.
\end{equation*}
Using that there exists $c>0$, such that $ck \leq 2^{\delta k}-1$
\begin{equation*}
\sum_{N \; dyadic} e^{-\alpha N^\delta }  =e^{-\alpha } \sum_{ k \geq 0}e^{-\alpha (2^{\delta k} -1)}  \leq e^{-\alpha } \sum_{ k \geq 0}e^{-c \alpha k}  \leq C   e^{-\alpha },
\end{equation*}
and \eqref{1-inegalite gauss 2} is proved. This completes the proof of Theorem~\ref{1-deviation}.


\subsection{Proof of Theorem \ref{1-thm1}}
Recall the definition \eqref{defomega} of the set $\Omega_{\Lambda}$. To show Theorem \ref{1-thm1}, it is sufficient to show that for any $ \Lambda> 0$, $P( \Omega_\Lambda ) > 0 $. To do this, we first establish that it is sufficient to show the result for a finite number of terms in the initial data.  For $ u_0 = \displaystyle{ \sum_{n \in \N} c_n h_n }   \in  L^2 ( \R^3 ) $, we define for $ {K \in \N^\star}$,
\begin{equation*}
 [u_0]_K = \sum_{\lambda_n < K } c_n h_n, \qquad   {[u_0]}^K = \sum_{\lambda_n  \geq K} c_n h_n.
\end{equation*}
 
 By independence of the random variables, we obtain
\begin{align*}
 P( \Omega_\Lambda ) \geq  & \mu \bigg(  \|\, [ u_0]^K \|_{ \mathcal{H}^\sigma(\R^3) } \leq \frac{\Lambda}{2}  , \   \| \ (e^{itH} [u_0]^K ) ^2  \|_{  L^4( [-\pi,\pi] ; \mathcal{H}^s (\R^3)) } \leq  \frac{\Lambda^2}{2}  , 
\\ & \hspace*{1cm}  \|\,(e^{itH} [u_0]^K)^3\,\|_{ L^4( [-\pi,\pi] ;  \mathcal{H}^s (\R^3))  }  \leq \frac{\Lambda^3}{2}, \
\\  & \hspace*{2cm} \underset{ N }{ \bigcap }  \left\{  \| \Delta_N(e^{itH} [u_0]^K ) \|_{L^4( [-\pi,\pi] ; L^\infty (\R^3)) } \leq \frac{\Lambda  N^{-1/6} }{ 2 } \right\} , 
\\ & \hspace*{3cm} \underset{ N }{ \bigcap } \left\{  \| \Delta_N( e^{itH} [u_0]^K ) \|_{L^R( [-\pi,\pi] ; \W^{s,4} (\R^3)) } \leq \frac{\Lambda  N^{s-1/4} }{2 } \right\} \bigg) \cdot
\\   &\cdot  \mu \bigg(  \| \, [ u_0]_K \|_{ \mathcal{H}^\sigma(\R^3) } \leq \frac{\Lambda}{2}, \  \|\,(e^{itH} [u_0]_K )^2\,\|_{  L^4( [-\pi,\pi] ; \mathcal{H}^s (\R^3)) } \leq  \frac{\Lambda^2}{2},
\\ & \hspace*{1cm}  \|\,(e^{itH} [u_0]_K)^3\,\|_{ L^4( [-\pi,\pi] ; \mathcal{H}^s (\R^3))  } \leq \frac{\Lambda^3}{2} , 
\\ & \hspace*{2cm} \underset{ N }{ \bigcap }  \left\{  \| \Delta_N(e^{itH} [u_0]_K ) \|_{L^4( [-\pi,\pi]; L^\infty (\R^3)) } \leq \frac{\Lambda  N^{-1/6} }{2} \right\},
\\ &  \hspace*{3cm} \underset{ N }{ \bigcap } \left\{  \| \Delta_N( e^{itH} [u_0]_K ) \|_{L^R( [-\pi,\pi] ; \W^{s,4} (\R^3)) } \leq \frac{\Lambda  N^{s-1/4} }{2}  \right\}  \bigg) .
\end{align*}
Let us denote by $ P_{\Lambda,K} $ the first probabilistic term of this inequality. Then by Theorem~\ref{1-deviation}, for all $ \Lambda\geq 0 $ and $ K \in \N^\star$,  
\begin{equation*}
P_{\Lambda,K} \geq 1 - C \exp\big(  - \frac{c \Lambda ^2 } {\| \, [u_0]^K \, \|^2_{ \mathcal{H}^\sigma ( \R^3 ) }} \big),
\end{equation*}
with
\begin{equation*}
 \underset{K \rightarrow + \infty }{\lim} \| \, [u_0]^K \, \|^2_{ \mathcal{H}^\sigma ( \R^3 ) } = 0.
\end{equation*}
Therefore, there is an integer $ K \geq 1  $ such that $\dis  C \exp(  - \frac{c \Lambda ^2 } {\| \, [u_0]^K \, \|^2_{ \mathcal{H}^\sigma ( \R^3 ) }} ) \leq \alpha$, and we deduce that 
\begin{multline}\label{1-deviation2}
 \frac{P( \Omega_\Lambda )}{1-\alpha} \geq \\
\begin{aligned}
&      \mu \bigg(  u_0 \in \mathcal{H}^\sigma(\R^3 ) :     \|\, [u_0]_K\|_{ \mathcal{H}^\sigma(\R^3) } \leq \frac{\Lambda}{2} , \, \| \, (e^{itH} [u_0]_K )^2 \, \|_{  L^4( [-\pi,\pi] ; \mathcal{H}^s (\R^3)) } \leq  \frac{\Lambda^2}{2} ,\\
& \hspace*{5cm}   \| \, (e^{itH} [u_0]_K)^3 \, \|_{ L^4( [-\pi,\pi] ; \mathcal{H}^s (\R^3))  } \leq \frac{\Lambda^3}{2} ,  \\
  & \hspace*{5cm} \underset{ N }{ \bigcap } \left\{   \| \Delta_N(e^{itH} [u_0]_K ) \|_{L^4( [-\pi,\pi] ; L^\infty (\R^3)) } \leq   \frac{\Lambda N^{-1/6}}{2} \right\} , \\
 & \hspace*{4cm} \left. \underset{ N }{ \bigcap } \left\{   \| \Delta_N( e^{itH} [u_0]_K ) \|_{L^R( [-\pi,\pi] ; \W^{s,4} (\R^3)) } \leq  \frac{\Lambda N^{s-1/4}}{2} \right\} \right) .
\end{aligned}
\end{multline}

As a result, we are reduced to prove the result for a finite number of terms in the initial data. We notice that there are two constants $ C_1,C_2 > 0 $ such that $ C_1 n^{ \frac{1}{3} } \leq \lambda^2_n \leq C_2 n^{ \frac{1}{3} } $ then that $  | \lbrace \lambda_n \leq K  \rbrace | \leq C K^{ 6 }$. Thus by Proposition~\ref{1-dispersive} and by the Cauchy-Schwarz inequality

\begin{equation}\label{1-esti6}
 \bigg\{ \omega \in \Omega \;: \big\| \, [ u^\omega_0]_K \, \big\|_{ \mathcal{H}^\sigma(\R^3) } \leq \frac{\Lambda}{2} \bigg\} = \bigg\{  \omega \in \Omega \;: \sum_{\lambda_n \leq  K} \lambda_n^{2\sigma } | c_n |^2 |g_n(\omega) |  ^2 \leq \frac{\Lambda ^2}{C K^6 }  \bigg\}  \subset A_\Lambda 
\end{equation}
where
\begin{equation*}
\begin{aligned}
A_\Lambda = \bigcap & \ \ \bigg\{ \omega \in \Omega \;: \| \, (e^{itH} [ u^\omega_0]_K)^2 \, \|_{  L^4( [-\pi,\pi] ; \mathcal{H}^s (\R^3)) } \leq \frac{\Lambda^2}{2} \bigg\} ,
\\  
 \bigcap  & \ \ \bigg\{ \omega \in \Omega \;: \| \, (e^{itH} [ u^\omega_0]_K )^3 \, \|_{  L^4( [-\pi,\pi] ; \mathcal{H}^s (\R^3)) } \leq \frac{\Lambda^3}{2} \bigg\} ,
\\  
  \bigcap & \ \ \underset{N} \bigcap   \bigg\{   \omega \in \Omega \;:  \| \Delta_N(e^{itH} [ u^\omega_0]_K ) \|_{L^4( [-\pi,\pi] ; L^\infty (\R^3)) } \leq \frac{\Lambda}{2}  N^{-1/6} \bigg\} ,
\\  
  \bigcap  & \ \ \underset{N} \bigcap  \bigg\{  \omega \in \Omega \;:   \| \Delta_N(e^{itH} [ u^\omega_0]_K ) \|_{L^R( [-\pi,\pi] ; \W^{s,4} (\R^3)) } \leq \frac{\Lambda}{2}  N^{s-1/4}  \bigg\} .
\end{aligned}
 \end{equation*}

We are now able to complete the proof of  Theorem \ref{1-thm1}. By   \eqref{1-deviation2} and   \eqref{1-esti6}, it is sufficient to show that for any integer $ K \geq 1 $ and any $ \Lambda > 0 $,
\begin{equation*}
P \Big(  \omega \in \Omega \;: \sum_{\lambda_n \leq  K} \lambda_n^{2\sigma } | c_n |^2 |g_n(\omega) |  ^2 \leq \frac{ \Lambda^2}{C K^6 }  \Big) > 0.
\end{equation*}
But, by independence,
\begin{multline*}
P \Big(  \omega \in \Omega \;: \sum_{\lambda_n \leq  K} \lambda_n^{2\sigma } | c_n |^2 |g_n(\omega) |  ^2 \leq \frac{ \Lambda^2}{C K^6 }  \Big) \geq  \\
\begin{aligned}
& \geq  P \Big(  \underset{\lambda_n \leq K }{\bigcap}  \Big\{ \omega \in \Omega \;:  |g_n(\omega) |  ^2 \leq \frac{ \Lambda^2}{C K ^{12} \|u_0\|^2_{  \mathcal{H}^\sigma(\R^3)  }  } \Big\} \Big) \\
 &  =   \underset{\lambda_n \leq K }{\prod} \ P \Big( \omega \in \Omega \;:   |g_n(\omega) |  ^2 \leq \frac{ \Lambda^2}{C K ^{12} \|u_0\|^2_{  \mathcal{H}^\sigma(\R^3)  } } \Big)   > 0,
\end{aligned}
\end{multline*}
because for all $ R > 0 $ and all  $ n  \in \N$,   $P\big( \omega  \in \Omega \;: |g_n(\omega)| \leq R  \big) > 0$.
 
It remains to show \eqref{1-proba1}. By  Theorem \ref{1-existence} and Theorem \ref{1-scattering},  it is sufficient to establish that for any  $ \Lambda > 0 $,
\begin{equation*}
\lim _{  \eta \rightarrow 0 } \ P \left( \omega \in \Omega_\Lambda    \ : \ \|u^\omega_0\|_{\mathcal{H}^\sigma {(\R^3)}   }   \leq \eta   \right) = 1.
\end{equation*}
Actually, by adapting the proof of \cite[Appendix A.2]{burq7}, we can  obtain that 
\begin{equation*}
P \left( \omega \in \Omega^c_\Lambda   \   :\ \|u_0^\omega\|_{ \mathcal{H}^\sigma(\R^3) } \leq \eta \right) \leq C e ^{  -c \frac{\Lambda^2}{\eta^2} },
\end{equation*}
and \eqref{1-proba1} is proven.

\subsection{Proof of  \eqref{quanti}}
To prove  \eqref{quanti}, it is enough to show that for any $ \Lambda> 0 $ and any $ \alpha \in ]0,1] $,   $ P( \Omega_\Lambda ) \geq 1 - \alpha $. This result is clear from Theorem~\ref{1-deviation} since
\begin{equation*}
P( \Omega_\Lambda  ) \geq 1 -  C_1 \exp\big( -  \frac{C_2}{  \|u_0\|^2_{ \mathcal{H}^{\sigma} (\R^3)  }   }  \big) \geq 1 - \alpha 
\end{equation*}
if $\|u_0\|_{ \mathcal{H}^{\sigma} (\R^3)  } $ is small enough.


\section{The fixed point argument for equation \eqref{2schrodinger}}\label{Sect8}

We introduce the following equation:
\begin{equation} \label{2schrodingerH} 
  \left\{
      \begin{aligned}
         & i \frac{ \partial u }{ \partial t } - H u = \kappa \cos (2 t)^{   \frac{d(p-1)}{2} -2 }   |u|^{p-1} u, \quad (t,x) \in \R \times \R^d,
       \\  &  u(0,x)  =u_0(x),
      \end{aligned}
    \right.
\end{equation}
where $ p \geq 5 $ denotes an odd integer and $ \kappa \in \lbrace -1, 1 \rbrace $. We will first show that this equation is almost surely locally well-posed, and then deduce global well-posedness of \eqref{2schrodinger} using the lens transform.

\subsection{Some nonlinear estimates and local well-posedness of equation \eqref{2schrodingerH}}

In this section, we establish estimates that will be useful to apply the Picard fixed point theorem. We start by showing two preliminary lemmas.
\begin{lem} \label{2injection}
Let  $ (q,r) \in [2,+ \infty [ \times [2,+ \infty ]$, $s,s_0 \geq 0 $  and assume that $ s-s_0 >  \frac{d}{2} - \frac{2}{q} - \frac{d}{r} $, then there are two constants $ \kappa,C > 0 $ such that for all $ T \geq 0 $ and $  u \in \overline{X}_T^s $,
 \begin{equation*}
\| u \|_{ L^q( [-T,T] ;  \W^{s_0,r}(\R^d)) }  \leq  C T ^\kappa \|u\|_{  \overline{X}_T^s }.
\end{equation*}
\end{lem}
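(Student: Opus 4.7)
The strategy is to select an auxiliary admissible pair $(q_0,r_0)$ for which the Strichartz space $\overline{X}_T^{s}$ already controls $L^{q_0}([-T,T];\W^{s,r_0}(\R^d))$, then lose some time integrability via H\"older on the bounded interval $[-T,T]$ in order to gain a factor $T^{\kappa}$, and finally trade the extra spatial derivatives $s - s_0$ for a gain in Lebesgue exponent through a Sobolev embedding in the harmonic Sobolev scale.

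First, set $\alpha := \tfrac{d}{2} - (s-s_0) - \tfrac{d}{r}$, so that the hypothesis becomes $\tfrac{2}{q} > \alpha$. Since $s\geq s_0$ we also have $\alpha \leq \tfrac{d}{2} - \tfrac{d}{r}$. Choose any value $\tfrac{2}{q_0} \in (\alpha, \tfrac{2}{q})\cap[0,\tfrac{d}{2}-\tfrac{d}{r}]$; the strictness in the hypothesis guarantees that this interval is non-empty. Then define $r_0\in[2,+\infty]$ by imposing admissibility $\tfrac{2}{q_0}+\tfrac{d}{r_0}=\tfrac{d}{2}$, which gives $\tfrac{d}{r_0}-\tfrac{d}{r}<s-s_0$ and $r_0 \leq r$, and $(q_0,r_0)\neq(2,+\infty,2)$ can be arranged by shrinking the interval further if necessary.

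With this choice in hand, for each $t$ I would use the equivalence \eqref{1-comparaison} to reduce the embedding $\W^{s,r_0}(\R^d)\hookrightarrow \W^{s_0,r}(\R^d)$ to bounding $\|(-\Delta)^{s_0/2}u(t)\|_{L^r}$ and $\|\langle x\rangle^{s_0}u(t)\|_{L^r}$ by $\|(-\Delta)^{s/2}u(t)\|_{L^{r_0}}+\|\langle x\rangle^s u(t)\|_{L^{r_0}}$. The first piece is controlled by classical Sobolev embedding on $\R^d$ (since $r\geq r_0$ and $s-s_0 \geq d(1/r_0-1/r)$), while the second piece is handled by writing $\langle x\rangle^{s_0} = \langle x\rangle^{-(s-s_0)}\langle x\rangle^s$ and applying H\"older in $x$ with conjugate pair $\tfrac{1}{\tilde r}=\tfrac{1}{r_0}-\tfrac{1}{r}$, using the strict inequality $(s-s_0)\tilde r>d$ that comes from our selection of $q_0$.

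It then remains to combine a H\"older inequality in time with the Strichartz estimate \eqref{1-11}. More precisely, since $q_0>q$,
\begin{equation*}
\|u\|_{L^q([-T,T];\W^{s_0,r})}\leq (2T)^{\frac{1}{q}-\frac{1}{q_0}}\|u\|_{L^{q_0}([-T,T];\W^{s_0,r})}\leq C(2T)^{\frac{1}{q}-\frac{1}{q_0}}\|u\|_{L^{q_0}([-T,T];\W^{s,r_0})},
\end{equation*}
and the admissibility of $(q_0,r_0)$ together with the definition of $\overline{X}_T^{s}$ as an intersection of Strichartz norms yields $\|u\|_{L^{q_0}([-T,T];\W^{s,r_0})}\leq \|u\|_{\overline{X}_T^{s}}$. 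Setting $\kappa=\tfrac{1}{q}-\tfrac{1}{q_0}>0$ completes the argument.

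The main technical obstacle is the first step: verifying that the interval in which $\tfrac{2}{q_0}$ must lie is non-empty while also respecting the constraints $r_0\in[2,+\infty]$ and the exclusion of the forbidden endpoint $(2,+\infty,2)$. This requires using the strict inequality $s-s_0 > \tfrac{d}{2}-\tfrac{2}{q}-\tfrac{d}{r}$ to produce strict room and treating separately the low-dimensional and limiting cases (for instance $r=+\infty$, which forces $q_0$ close to an extremal admissible exponent, and the case $r=2$ which requires $q_0=+\infty$ and hence a direct use of the $L^\infty_t\mathcal{H}^s_x$ control inside $\overline{X}_T^s$).
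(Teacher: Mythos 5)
Your overall strategy is exactly the paper's: pass from $q$ to a slightly larger admissible exponent $q_0=q+\epsilon$, use H\"older in time on $[-T,T]$ to produce the factor $T^{\kappa}$, invoke the Strichartz norm defining $\X^s_T$ for the admissible pair $(q_0,r_0)$, and conclude with the harmonic Sobolev embedding $\W^{s,r_0}(\R^d)\hookrightarrow\W^{s_0,r}(\R^d)$, which is available precisely because the hypothesis $s-s_0>\frac d2-\frac2q-\frac dr$ is strict. The paper states this embedding without proof; you attempt to prove it, and this is where there is a genuine flaw.

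Your treatment of the weighted piece is wrong: you cannot bound $\|\langle x\rangle^{s_0}u\|_{L^r}$ by $\|\langle x\rangle^{s}u\|_{L^{r_0}}$ when $r>r_0$ by H\"older with the decaying weight $\langle x\rangle^{-(s-s_0)}$. H\"older with $\frac1{\tilde r}=\frac1{r_0}-\frac1r$ runs in the opposite direction (it lowers the Lebesgue exponent, from $L^r$ to $L^{r_0}$), and the inequality you need is in fact false: a bump of width $\delta$ near the origin gives $\|\langle x\rangle^{s_0}u\|_{L^r}/\|\langle x\rangle^{s}u\|_{L^{r_0}}\sim\delta^{d/r-d/r_0}\to+\infty$; spatial decay alone never upgrades integrability, only smoothness does. (There is also a minor issue in the derivative piece: with the strict inequality $s-s_0>d(\frac1{r_0}-\frac1r)$ the purely homogeneous estimate $\|(-\Delta)^{s_0/2}u\|_{L^r}\lesssim\|(-\Delta)^{s/2}u\|_{L^{r_0}}$ fails by scaling, so one must use the inhomogeneous Sobolev norm, which is harmless since the $L^{r_0}$ norm is also controlled.) The correct and short way to get the embedding is to apply \eqref{1-comparaison} not to $u$ but to $v=H^{s_0/2}u$: classical Sobolev gives $\|v\|_{L^r}\lesssim\|v\|_{W^{s-s_0,r_0}}$, and then
\begin{equation*}
\|v\|_{W^{s-s_0,r_0}}\lesssim \|(-\Delta)^{(s-s_0)/2}v\|_{L^{r_0}}+\|\langle x\rangle^{s-s_0}v\|_{L^{r_0}}\lesssim \|H^{(s-s_0)/2}v\|_{L^{r_0}}=\|u\|_{\W^{s,r_0}},
\end{equation*}
which yields $\|u\|_{\W^{s_0,r}}\lesssim\|u\|_{\W^{s,r_0}}$ as needed. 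With that replacement (and your endpoint discussion for $r=2$, where one simply uses the $L^\infty_t\mathcal H^s_x$ component of $\X^s_T$), your argument coincides with the paper's proof.
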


\begin{proof}  Let  $ \epsilon > 0  $ then there exists  $ \kappa_\epsilon > 0  $  such that  
\begin{align*}
\| u \|_{ L^q( [-T,T] ;  \W^{s_0,r}(\R^d)) }  \leq T^{\kappa_\epsilon} \| u \|_{ L^{q+\epsilon}( [-T,T] ;  \W^{s_0,r}(\R^d)) }.
\end{align*}
Now the couple  $(q+\epsilon, \frac{2d(q+\epsilon)}{dq+d\epsilon-4})$ is admissible with 
\begin{equation*}
   \W^{s,\frac{2d(q+\epsilon)}{dq+d\epsilon-4}}(\R^d)  \hookrightarrow \W^{s_0,r}(\R^d)\quad  \mbox{ if } \quad s-s_0 \geq  \frac{d}{2} - \frac{2}{q+\epsilon} - \frac{d}{r}.
\end{equation*}
But, as  $ s-s_0 >  \frac{d}{2} - \frac{2}{q} - \frac{d}{r} $ then there is $ 0 < \epsilon \ll 1 $  such that  $ s-s_0 \geq  \frac{d}{2} - \frac{2}{q+\epsilon} - \frac{d}{r}$. 
\end{proof}

\begin{lem} \label{2gradient2} Let $ s \geq 0 $ then there exists a constant $ C > 0 $ such that for all functions $f$ and $g$ in~$ \mathcal{S}   (\R^d) $,
\begin{equation*}
\| (-\Delta)^{s/2}(  f g  ) \|_{L^2(\R^d)} \leq C   \big( \|  | g  (-\Delta)^{s/2}f       \|_{L^2(\R^d)} + \|  f     (-\Delta)^{s/2} g \|_{L^2(\R^d)} \big) .
\end{equation*}
\end{lem}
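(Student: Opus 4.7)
The plan is to prove this fractional Leibniz inequality via Bony's paraproduct decomposition, writing
\begin{equation*}
fg = T_f g + T_g f + R(f, g),
\end{equation*}
where $T_f g = \sum_j S_{j-N}f \cdot \Delta_j g$ is the paraproduct with high frequencies carried by $g$, and $R(f,g) = \sum_{|j-k|\leq N} \Delta_j f \, \Delta_k g$ is the diagonal remainder. For the paraproduct $T_f g$, each summand has Fourier support in a dyadic annulus $|\xi|\sim 2^j$, so $(-\Delta)^{s/2}$ acts up to a bounded Fourier multiplier as $\Delta_j (-\Delta)^{s/2}$ on that piece. This yields an identity $(-\Delta)^{s/2}(T_f g) = T_f\bigl((-\Delta)^{s/2} g\bigr) + E(f,g)$ modulo lower-order error terms.

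Next I would invoke the pointwise control of paraproducts by maximal functions: $|T_f h(x)| \lesssim Mf(x) \cdot Mh(x)$ in the sense of Coifman--Meyer, and then combine with Fefferman--Stein's vector-valued maximal inequality on $L^2$ to bound $\|T_f((-\Delta)^{s/2} g)\|_{L^2}$ by $C\|f(-\Delta)^{s/2} g\|_{L^2}$; the symmetric piece $T_g f$ contributes the term $\|g(-\Delta)^{s/2} f\|_{L^2}$. This converts the paraproduct pieces into bounds in the target norm.

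The main obstacle will be the remainder $R(f, g)$, where both factors live at comparable frequencies and there is no canonical high-frequency factor to absorb $(-\Delta)^{s/2}$. For $0 < s < 2$, I would use the explicit singular-integral representation
\begin{equation*}
(-\Delta)^{s/2}(fg)(x) = f(x)(-\Delta)^{s/2}g(x) + g(x)(-\Delta)^{s/2}f(x) - c_{d,s}\!\int \frac{(f(x)-f(x+y))(g(x)-g(x+y))}{|y|^{d+s}}\,dy,
\end{equation*}
obtained from the pointwise identity $fg(x)-fg(x+y) = f(x)(g(x)-g(x+y)) + (f(x)-f(x+y))g(x+y)$ after symmetrization. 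Cauchy--Schwarz in $y$ bounds the remainder pointwise by a product $\mathcal{D}^{\alpha}f(x)\cdot \mathcal{D}^{s-\alpha}g(x)$ of fractional difference operators (of total order $s$), and Stein's identity $\|\mathcal{D}^{\alpha}h\|_{L^2}\sim \|(-\Delta)^{\alpha/2}h\|_{L^2}$ together with an application of Hölder converts this into the desired $L^2$ bound. For $s \geq 2$, I would induct on $\lfloor s/2 \rfloor$ using $(-\Delta)^{s/2} = (-\Delta)\, (-\Delta)^{(s-2)/2}$ and the ordinary Leibniz rule for $\Delta$, so that the problem reduces to the base case $s\in[0,2)$.

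The delicate step is ensuring that the remainder estimate is expressed in terms of the \emph{pointwise} products $\|g(-\Delta)^{s/2}f\|_{L^2}$ and $\|f(-\Delta)^{s/2}g\|_{L^2}$, rather than the separate $L^p\times L^q$ products of classical Kato--Ponce; this is where frequency localization must be used carefully to keep one factor of $f$ or $g$ outside the fractional derivative at each stage.
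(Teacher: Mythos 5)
Your proposal takes a genuinely different --- and much heavier --- route than the paper. The paper's proof is a two-line Fourier argument: by Plancherel, $\| (-\Delta)^{s/2}(fg)\|_{L^2}$ is comparable to $\| \, |\xi|^s (\widehat{f}\star\widehat{g})\,\|_{L^2}$; the elementary inequality $|\xi|^s\le C_s(|\eta|^s+|\xi-\eta|^s)$ splits the convolution into $(|\cdot|^s\widehat{f})\star\widehat{g}$ and $\widehat{f}\star(|\cdot|^s\widehat{g})$, which are identified with the Fourier transforms of $g\,(-\Delta)^{s/2}f$ and $f\,(-\Delta)^{s/2}g$, and one concludes by Plancherel again. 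No paraproducts, maximal functions, or case distinction in $s$ are needed, and it is precisely this direct symbol-splitting inside the convolution that produces the pointwise-product right-hand side.

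As written, your scheme has genuine gaps at the steps that matter. First, there is no pointwise bound $|T_f h(x)|\lesssim Mf(x)\,Mh(x)$: one only has $|S_{j}f\,\Delta_j h|\le C\,Mf\,|\Delta_j h|$, and the sum over $j$ is controlled by a square function, not by $Mh$; more importantly, even granting such a bound, Fefferman--Stein cannot give $\|T_f((-\Delta)^{s/2}g)\|_{L^2}\le C\|f(-\Delta)^{s/2}g\|_{L^2}$, because the maximal function destroys support and sign information --- for $f$ and $h$ (essentially) disjointly supported one can make $\|fh\|_{L^2}$ arbitrarily small while $T_f h$ is not, so no inequality of the form $\|T_f h\|_{L^2}\lesssim\|fh\|_{L^2}$ holds. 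Second, your treatment of the remainder for $0<s<2$ ends, after Cauchy--Schwarz, with a bound by $\|\mathcal{D}_{\alpha}f\cdot\mathcal{D}_{s-\alpha}g\|_{L^2}$ and then H\"older, i.e.\ with Kato--Ponce-type products of intermediate fractional derivatives in split Lebesgue norms --- not with the stated norms $\|g(-\Delta)^{s/2}f\|_{L^2}$, $\|f(-\Delta)^{s/2}g\|_{L^2}$; your final sentence concedes this passage is unresolved, and that passage is exactly the content of the lemma in this form. Third, the induction for $s\ge 2$ does not close: $\Delta(fg)=f\Delta g+g\Delta f+2\nabla f\cdot\nabla g$, and applying the inductive hypothesis to the cross term produces terms with derivatives distributed over both factors, which are not of the required form where all $s$ derivatives fall on a single factor. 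So the proposal does not prove the statement; the paper's convolution/triangle-inequality argument is both simpler and the one that actually yields the claimed right-hand side.
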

\begin{proof} Using the  Fourier transform   we obtain
\begin{eqnarray*}
\| (-\Delta)^{s/2}(  f g  ) \|_{L^2(\R^d)} & \leq & C   \|  |\xi|^s  \mathcal{F} (  f g   ) \|_{L^2(\R^d)} \\
 & =&  C   \|  |\xi|^s   \mathcal{F} (  f ) \star  \mathcal{F} ( g )    \|_{L^2(\R^d)}.
\end{eqnarray*}
But for all  $ \xi , \eta  \in  \R^d $, 
\begin{align*}
|\xi|^s \leq ( |\eta| + |\xi-\eta| )^s \leq C_s   ( |\eta|^s +  |\xi-\eta|^s  ),
\end{align*}
therefore 
\begin{eqnarray*}
\| (-\Delta)^{s/2}(  f g  ) \|_{L^2(\R^d)} & \leq &C_s   \big(  \|   ( |.|^s \mathcal{F} (  f ) ) \star   \mathcal{F} ( g )      \|_{L^2(\R^d)} + \|  \mathcal{F} (  f )  \star  ( |.|^s \mathcal{F} (  g ) )      \|_{L^2(\R^d)} \big) \\
  & = &C_s   \big(  \|   \mathcal{F} (  g(-\Delta)^{s/2} f     )      \|_{L^2(\R^d)} + \|  \mathcal{F} (f (-\Delta)^{s/2} g )      \|_{L^2(\R^d)} \big)  \\
 & = &C_s   \big(  \|  g  (-\Delta)^{s/2} f           \|_{L^2(\R^d)} + \|    f      (-\Delta)^{s/2} g       \|_{L^2(\R^d)}  \big) ,
\end{eqnarray*}  
which was the claim.
\end{proof}

Then, the expected estimates are established.

\begin{prop} \label{2estim1}
Let $ s > \frac{d}{2} - \frac{2}{p-1} $  then there exist two constants $ C > 0 $ and $ \kappa > 0 $ such that if we assume
\begin{equation*}
 \| e^{-itH} u_0 \|_{ L^p  (  [-\pi, \pi ] ;   L^ \infty ( \R^d )  )  } \leq \lambda 
\end{equation*}
for some $ \lambda >0$, then for any $ 0 < T \leq 1 $, $ v \in \overline{X}^s_T $ and $ f_j = v $ or $ f_j = e^{-itH} u_0 $,
\begin{equation*}
\|  (\sqrt{-\Delta} ^s v  )     \prod_{ j = 2 } ^p  f_j  \|_{  L^1( [-T,T]  ; L ^2(\R^d)) } \leq C  T ^\kappa(  \lambda ^p + \|v\|^p_{\overline{X}^s_T}  ),
\end{equation*}
and
\begin{equation*}
\| \langle x\rangle ^s     v      \prod_{ j = 2 } ^p  f_j  \|_{  L^1( [-T,T]  ; L ^2(\R^d)) } \leq C  T ^\kappa(  \lambda ^p + \|v\|^p_{\overline{X}^s_T}  ).
\end{equation*}
\end{prop}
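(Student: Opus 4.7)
The strategy is to use Hölder's inequality to place the factor carrying $s$ derivatives (or the weight $\langle x \rangle^s$) in $L^2_x$ and each of the remaining $p-1$ factors in $L^\infty_x$, then distribute the time integral through exponents tailored to Lemma~\ref{2injection}. First, Hölder in $x$ gives
$$
\Big\|(\sqrt{-\Delta}^s v)\prod_{j=2}^p f_j\Big\|_{L^2(\R^d)} \leq \|\sqrt{-\Delta}^s v\|_{L^2(\R^d)}\prod_{j=2}^p\|f_j\|_{L^\infty(\R^d)},
$$
and analogously for the weighted version; in both cases the $L^2_x$ norm of the main factor is controlled by $\|v\|_{\mathcal{H}^s(\R^d)}$ via~\eqref{1-comparaison}.

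Let $k \in \{0,\dots,p-1\}$ denote the number of indices $j\in\{2,\dots,p\}$ with $f_j = v$. I would then apply Hölder in time with exponent $p-1$ on each $v$--factor of the product, exponent $p$ on each $e^{-itH}u_0$--factor, and the conjugate exponent $c$ on the main factor, determined by
$$
\frac{1}{c} = 1 - \frac{k}{p-1} - \frac{p-1-k}{p} = \frac{p-1-k}{p(p-1)}.
$$
The three ingredients to conclude are: the hypothesis $\|e^{-itH}u_0\|_{L^p([-\pi,\pi];L^\infty(\R^d))} \leq \lambda$ for each $e^{-itH}u_0$--factor (using $[-T,T]\subset[-\pi,\pi]$); Lemma~\ref{2injection} with $(s_0,r,q)=(0,\infty,p-1)$ for each $v$--factor of the product, applicable precisely because of the hypothesis $s > d/2 - 2/(p-1)$, giving $\|v\|_{L^{p-1}([-T,T];L^\infty(\R^d))} \leq CT^{\kappa_0}\|v\|_{\overline{X}^s_T}$; and Lemma~\ref{2injection} with $(s_0,r,q)=(s,2,c)$ for the main factor when $c<\infty$, yielding $\|v\|_{L^c([-T,T];\mathcal{H}^s(\R^d))} \leq CT^{\kappa'}\|v\|_{\overline{X}^s_T}$. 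This produces a bound of the form $CT^\kappa\lambda^{p-1-k}\|v\|_{\overline{X}^s_T}^{k+1}$, and Young's inequality absorbs it into $CT^\kappa(\lambda^p + \|v\|_{\overline{X}^s_T}^p)$.

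The only configuration requiring care is the borderline case $k = p-1$, where $c = \infty$ and Lemma~\ref{2injection} does not directly control the main factor. In this case I would use instead the embedding $\overline{X}^s_T \hookrightarrow C^0([-T,T];\mathcal{H}^s(\R^d))$ to obtain the $L^\infty_t L^2_x$ bound with no gain in $T$; the required strictly positive power of $T$ is nevertheless provided by the $p-1 \geq 4$ factors of $v$ in the product, each contributing $T^{\kappa_0}$. The weighted inequality is treated identically, with $\sqrt{-\Delta}^s$ replaced by $\langle x\rangle^s$ in the main factor and the same norm comparison from~\eqref{1-comparaison}.
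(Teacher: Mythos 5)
Your proof is correct and follows essentially the same route as the paper's: H\"older in $x$ placing the derivative (or weight) factor in $L^2$, the norm equivalence \eqref{1-comparaison}, Lemma~\ref{2injection} with $(s_0,r,q)=(0,\infty,p-1)$ for the $v$-factors, and the hypothesis on $e^{-itH}u_0$, followed by Young's inequality. The paper's bookkeeping is just slightly simpler: it always keeps the main factor in $L^\infty([-T,T];L^2(\R^d))$ and gains the power of $T$ on each $e^{-itH}u_0$-factor by H\"older in time, $\| e^{-itH}u_0\|_{L^{p-1}([-T,T];L^\infty)}\le T^{1/(p(p-1))}\lambda$, which avoids your $k$-dependent exponent $c$ and the separate borderline case $k=p-1$ (and note that your application of Lemma~\ref{2injection} with $(s_0,r)=(s,2)$ is really just H\"older in time combined with the admissible pair $(\infty,2)$ already contained in the $\overline{X}^s_T$ norm).
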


\begin{proof} According to the H\"older inequality and   \eqref{1-comparaison},
\begin{multline*}
 \| ( \sqrt{-\Delta}^s v  )     \prod_{ j = 2 } ^p  f_j  \|_{  L^1( [-T,T]  ; L ^2(\R^d)) }   \leq  \\
\begin{aligned}
&\leq  \|  \sqrt{-\Delta}^s v \|_{  L^\infty ( [-T,T] ;  L^2(\R^d)) }   \prod_{ j = 2 } ^p \| f_j \|_{  L^{p-1} ( [-T,T] ;  L^\infty(\R^d)) } \\
& \leq  C \| v \|_{  L^\infty ( [-T,T] ;  \mathcal{H}^s (\R^d)) }   \prod_{ j = 2 } ^p \| f_j \|_{  L^{p-1} ( [-T,T] ;  L^\infty(\R^d)) },
\end{aligned}
\end{multline*}
and
\begin{eqnarray*}
  \| \langle x\rangle ^s    v       \prod_{ j = 2 } ^p  f_j  \|_{  L^1( [-T,T]  ; L ^2(\R^d)) }  &\leq &  \| \langle x\rangle ^s v   \|_{  L^\infty ( [-T,T] ;  L^2(\R^d)) }   \prod_{ j = 2 } ^p \| f_j \|_{  L^{p-1} ( [-T,T] ;  L^\infty(\R^d)) } \\
& \leq & \| v \|_{  L^\infty ( [-T,T] ;  \mathcal{H}^s (\R^d)) }   \prod_{ j = 2 } ^p \| f_j \|_{  L^{p-1} ( [-T,T] ;  L^\infty(\R^d)) }.
\end{eqnarray*}
If $ f_j = v $ then as  $ s > \frac{d}{2} - \frac{2}{p-1} $, we can use Lemma~\ref{2injection} to get
\begin{equation*}
\| v \|_{  L^{p-1} ( [-T,T] ;  L^\infty(\R^d)) }  \leq C T^{\kappa  } \| v \|_{  \overline{X}^s_T }.
\end{equation*}
If $ f_j = e^{-itH} u_0 $ then according to the H\"older inequality, 
\begin{eqnarray*}
\| e^{-itH} u_0  \|_{  L^{p-1} ( [-T,T] ;  L^\infty(\R^d)) } & \leq & T^{   \frac{1}{p-1} - \frac{1}{p} } \| e^{-itH} u_0 \|_{ L^p  (  [-\pi,  \pi ] ;   L^ \infty ( \R^d)  )  } \\ 
& \leq & T^{   \frac{1}{p(p-1)} }  \lambda,
\end{eqnarray*}
which completes the proof.
\end{proof}

\begin{lem} \label{2estim2}
If $ 0< s< \frac{d}{2}$ then there exist two constants $ C > 0 $ and $ \kappa > 0 $ such that if we assume that
\begin{equation*}
 \| e^{-itH} u_0 \|_{ L^p  (  [-\pi,  \pi ] ;    \W^{  \frac{1}{8} , \infty } ( \R^d )  )  } \leq \lambda, 
\end{equation*}
and
\begin{equation*}
 \| u_0 \|_{   \mathcal{H}^{  \frac{d-1}{2} } ( \R^d   )  } \leq \lambda 
\end{equation*}
for some $\lambda>0$, then for all $ 0 < T \leq 1 $,
\begin{equation*}
\| \langle x\rangle ^s    ( e^{-itH } u_0 )^p \|_{  L^1( [-T,T]  ; L ^2(\R^d)) } \leq C  T ^\kappa  \lambda ^p.
\end{equation*}
\end{lem}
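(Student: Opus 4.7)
Write $u(t,x) = e^{-itH}u_0(x)$. The main idea is to distribute the weight $\langle x\rangle^s$ unevenly among the $p$ factors of $u$, concentrating enough of it on a single factor that the remaining $p-1$ copies only need a weight $\langle x\rangle^{1/8}$ — which is precisely the amount built into the hypothesis on $e^{-itH}u_0$ in $\mathcal{W}^{1/8,\infty}$. More precisely, since
\[
\langle x\rangle^{s} = \langle x\rangle^{s-(p-1)/8}\cdot \prod_{j=2}^{p}\langle x\rangle^{1/8},
\]
H\"older's inequality (one factor in $L^2$, the other $p-1$ factors in $L^\infty$) gives
\[
\bigl\|\langle x\rangle^{s}u^p\bigr\|_{L^2(\R^d)}\le \bigl\|\langle x\rangle^{s-(p-1)/8}u\bigr\|_{L^2(\R^d)}\,\bigl\|\langle x\rangle^{1/8}u\bigr\|_{L^\infty(\R^d)}^{p-1}.
\]

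The $L^\infty$ factors are controlled directly by the equivalence \eqref{1-comparaison}: $\|\langle x\rangle^{1/8} u\|_{L^\infty}\le C\|u\|_{\mathcal{W}^{1/8,\infty}}$. For the $L^2$ factor, observe that the assumption $p\ge 5$ implies $(p-1)/8\ge 1/2$, so that $s<d/2$ forces
\[
s-(p-1)/8 \;<\; \tfrac{d}{2}-\tfrac{p-1}{8}\;\le\;\tfrac{d-1}{2}.
\]
If $s-(p-1)/8\le 0$ we simply bound $\langle x\rangle^{s-(p-1)/8}\le 1$, obtaining $\|\langle x\rangle^{s-(p-1)/8}u(t)\|_{L^2}\le \|u_0\|_{L^2}\le \|u_0\|_{\mathcal{H}^{(d-1)/2}}\le \lambda$. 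If $0<s-(p-1)/8\le (d-1)/2$, we use \eqref{1-comparaison} and the fact that $e^{-itH}$ preserves every harmonic Sobolev norm:
\[
\|\langle x\rangle^{s-(p-1)/8}u(t)\|_{L^2}\le C\|u(t)\|_{\mathcal{H}^{s-(p-1)/8}}=C\|u_0\|_{\mathcal{H}^{s-(p-1)/8}}\le C\|u_0\|_{\mathcal{H}^{(d-1)/2}}\le C\lambda.
\]

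Combining the two bounds yields a pointwise-in-time estimate
\[
\bigl\|\langle x\rangle^{s}u^p(t)\bigr\|_{L^2(\R^d)}\le C\lambda\,\bigl\|u(t)\bigr\|_{\mathcal{W}^{1/8,\infty}(\R^d)}^{p-1}.
\]
Integrating over $t\in[-T,T]$ and applying H\"older's inequality in time with the pair $(p/(p-1),p)$ gives
\[
\int_{-T}^{T}\|u(t)\|_{\mathcal{W}^{1/8,\infty}}^{p-1}\,dt\le (2T)^{1/p}\,\|u\|_{L^p([-\pi,\pi];\mathcal{W}^{1/8,\infty})}^{p-1}\le C\,T^{1/p}\lambda^{p-1},
\]
so that $\|\langle x\rangle^s u^p\|_{L^1([-T,T];L^2)}\le C\,T^{1/p}\lambda^{p}$, i.e.\ the claim holds with $\kappa=1/p$.

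\textbf{Main obstacle.} The only point where the estimate could fail is in absorbing the weight $\langle x\rangle^{s-(p-1)/8}$ in the $L^2$-factor; this is precisely where the specific gain $(p-1)/8\ge 1/2$ provided by the hypothesis $p\ge 5$, combined with the restriction $s<d/2$, is used to reduce to regularity $\le(d-1)/2$, i.e.\ to the regularity available on the initial datum. All other steps are standard applications of H\"older's inequality and the weighted/derivative equivalence \eqref{1-comparaison}.
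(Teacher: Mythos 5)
Your proof is correct and follows essentially the same route as the paper: a spatial H\"older splitting of $\langle x\rangle^s(e^{-itH}u_0)^p$ into one weighted $L^2$ factor controlled by the conserved $\mathcal{H}^{(d-1)/2}$ norm (via \eqref{1-comparaison}) and $p-1$ weighted $L^\infty$ factors controlled by the $\W^{1/8,\infty}$ hypothesis, followed by H\"older in time to gain $T^{1/p}$. The only difference is bookkeeping: the paper bounds $\langle x\rangle^s\le\langle x\rangle^{d/2}$ and splits the weight as $(d-1)/2+(p-1)\cdot\frac{1}{2(p-1)}$, whereas you put the full $\frac18$ on each $L^\infty$ factor and show the leftover $s-(p-1)/8\le (d-1)/2$ using $p\ge 5$ and $s<d/2$ — the same constraints used in the same way.
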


\begin{proof}  According to the H\"older inequality and   \eqref{1-comparaison}, we obtain
\begin{multline*}
  \| \langle x\rangle ^s    ( e^{-itH } u_0 )^p \|_{  L^1( [-T,T]  ; L ^2(\R^d)) }  \leq \\
 \begin{aligned}
& \leq  \| \langle x\rangle ^{ \frac{d}{2}  }   ( e^{-itH } u_0 )^p \|_{  L^1( [-T,T]  ; L ^2(\R^d)) }\\
& \leq   \| \langle x\rangle ^{ \frac{d-1}{2}  }     e^{-itH } u_0  \|_{  L^\infty( [-T,T]  ; L ^2(\R^d)) }    \| \langle x\rangle ^{ \frac{1}{2(p-1)}  }   e^{-itH } u_0  \|_{  L^{p-1}( [-T,T]  ; L ^\infty(\R^d)) }  ^{p-1} \\
&  \leq  C T^{1/p}  \|  u_0  \|_{  \mathcal{H} ^{  \frac{d-1}{2}}(\R^d) }    \|   e^{-itH } u_0  \|_{  L^{p}( [-T,T]  ; \W^{ \frac{1}{8} , \infty}(\R^d))  }  ^{p-1}\\
& \leq  C  T^{1/p} \lambda ^p.  
 \end{aligned}
\end{multline*}
\end{proof}

\begin{lem} \label{2estim3}
There exist $ s \in  ]  \frac{d}{2}   - \frac{2}{p-1} ; \frac{d}{2} [ $  and constants $ C, \kappa > 0 $  such that if we assume that
\begin{align*}
 \| e^{-itH} u_0 \|_{ L^{2p}  (  [- \pi,  \pi ] ;   \W ^ { \frac{1}{7}  ;  \infty }  ( \R^d )  )  } & \leq \lambda,
 \end{align*}
and
 \begin{align*}
 \|u_0\|_{  \mathcal{H}^{ \frac{d-1}{2} } (\R^d)} & \leq \lambda  
\end{align*}
for some  $ \lambda>0 $, then for all $ 0 < T \leq 1 $, $ v \in \overline{X}^s_T $ and $ f_j = v $ or $ f_j = e^{-itH} u_0 $,
\begin{equation*}
\|  \sqrt{-\Delta}^s( e^{-itH} u_0   )     \prod_{j = 2 } ^p  f_j  \|_{  L^1( [-T,T]  ; L ^2(\R^d)) } \leq C  T ^\kappa(  \lambda ^p + \|v\|^p_{\overline{X}^s_T}  ).
\end{equation*}
\end{lem}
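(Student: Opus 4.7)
\emph{Proof sketch.} The key idea is to exploit the Kato-type smoothing effect \eqref{2effectregularisant2}, which allows us to drop $s$ derivatives onto $e^{-itH}u_0$ at the cost of a spatial weight $\langle x\rangle^{-(1/2-\epsilon)}$. Fix a small $\epsilon>0$ to be chosen, and set $s = \frac{d}{2} - 2\epsilon$; the constraint $s > \frac{d}{2} - \frac{2}{p-1}$ then reduces to $\epsilon < \frac{1}{p-1}$.

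By H\"older in time and space,
\begin{equation*}
\Big\|\sqrt{-\Delta}^{s}(e^{-itH}u_0) \prod_{j=2}^{p} f_j\Big\|_{L^1([-T,T]; L^2(\R^d))} \le A \cdot J,
\end{equation*}
where $A=\big\| \langle x\rangle^{-(1/2-\epsilon)}\sqrt{-\Delta}^s (e^{-itH}u_0)\big\|_{L^2([-T,T]\times\R^d)}$ and $J = \big\|\langle x\rangle^{1/2-\epsilon}\prod_{j=2}^p f_j\big\|_{L^2([-T,T];L^\infty(\R^d))}$. The smoothing estimate \eqref{2effectregularisant2} gives $A \leq C\|u_0\|_{\mathcal{H}^{(d-1)/2}}\leq C\lambda$. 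For $J$, spread the weight evenly as $\langle x\rangle^{1/2-\epsilon} = \prod_{j=2}^p \langle x\rangle^{a}$ with $a = (1/2-\epsilon)/(p-1)$, then apply H\"older in time with exponents $(q_j)$ satisfying $\sum_j q_j^{-1} = 1/2$. Each factor with $f_j = e^{-itH}u_0$ is treated via \eqref{1-comparaison} and the hypothesis: since $p \geq 5$ gives $a \leq 1/8 < 1/7$, one has $\|\langle x\rangle^a e^{-itH}u_0\|_{L^\infty_x} \leq C\|e^{-itH}u_0\|_{\mathcal{W}^{1/7,\infty}}$, hence $\|\langle x\rangle^a e^{-itH}u_0\|_{L^{q_j}_tL^\infty_x} \leq CT^\kappa \lambda$ whenever $q_j \leq 2p$. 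Each factor with $f_j = v$ is handled by Lemma~\ref{2injection} applied with $(s_0, q, r) = (a, q_j, \infty)$, which yields $\|\langle x\rangle^a v\|_{L^{q_j}_tL^\infty_x} \leq CT^\kappa \|v\|_{\overline{X}^s_T}$ provided $s - a > \frac{d}{2} - \frac{2}{q_j}$. Balancing time exponents equally within each species of factor, one can always take $q_j \leq 2(p-1) \leq 2p$.

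The one delicate point is the exponent matching: the Lemma~\ref{2injection} condition at the worst case $q_j = 2(p-1)$ simplifies to $\epsilon(2p-3) < 1/2$, and combined with $\epsilon < 1/(p-1)$ this defines a nonempty interval of admissible $\epsilon$ for every $p \geq 5$ (note $\frac{1}{2(2p-3)} < \frac{1}{p-1}$ for $p \geq 2$). Correspondingly $s \in\; ]\frac{d}{2}-\frac{1}{2p-3}, \frac{d}{2}[\; \subset\; ]\frac{d}{2}-\frac{2}{p-1},\frac{d}{2}[$. Combining $A \leq C\lambda$ with $J \leq CT^\kappa(\lambda + \|v\|_{\overline{X}^s_T})^{p-1}$ yields the stated bound. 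No single step is harder than those in Lemma~\ref{2estim2}; the novelty is only that the derivative on the linear factor is absorbed by the smoothing estimate rather than by a direct Sobolev embedding.
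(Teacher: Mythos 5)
Your skeleton is the same as the paper's: H\"older with the weight $\langle x\rangle^{1/2-\epsilon}$, the choice $s=\frac d2-2\epsilon$, the smoothing estimate \eqref{2effectregularisant2} to absorb $\sqrt{-\Delta}^{s}$ on the linear factor, Lemma~\ref{2injection} for the $v$-factors and the $L^{2p}_t$ hypothesis for the remaining linear factors, with the same bookkeeping in $\epsilon$. The point where your write-up has a genuine gap is the passage from harmonic Sobolev norms to weighted $L^\infty_x$ norms. You invoke \eqref{1-comparaison} to claim $\|\langle x\rangle^{a}e^{-itH}u_0\|_{L^\infty_x}\leq C\|e^{-itH}u_0\|_{\W^{1/7,\infty}}$, but \eqref{1-comparaison} is stated (and true) only for $1<p<\infty$; the $p=\infty$ endpoint is exactly what it does not give. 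The same issue is hidden in your treatment of the $v$-factors: Lemma~\ref{2injection} with $r=\infty$ controls $\|v\|_{L^{q_j}_t\W^{a,\infty}}$, i.e.\ $\|H^{a/2}v\|_{L^\infty_x}$, not $\|\langle x\rangle^{a}v\|_{L^\infty_x}$, so the conversion is needed there too. As written, the two spatial estimates you rely on are unjustified.

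The gap is repairable in two ways. The paper's own route avoids $L^\infty$-based harmonic Sobolev spaces altogether: it bounds $\|\langle x\rangle^{a}f_j\|_{L^\infty_x}$ by $\|f_j\|_{\W^{a+\epsilon,\,d/\epsilon+1}}$ (finite exponent, so \eqref{1-comparaison} plus Sobolev embedding apply), and for $f_j=e^{-itH}u_0$ it controls this norm by interpolating the hypothesis $\|e^{-itH}u_0\|_{L^{2p}_t\W^{s_0,\infty}}$ against the conserved $\|u_0\|_{\mathcal H^{(d-1)/2}}$ bound, with $s_0\approx\frac1{2(p-1)}\leq\frac17$ — this is precisely where the exponent $1/7$ in the hypothesis and the second hypothesis enter a second time. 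Alternatively, the one-sided inequality you need, $\|\langle x\rangle^{2\sigma}g\|_{L^\infty}\leq C\|H^{\sigma}g\|_{L^\infty}$ for small $\sigma>0$, is in fact true and can be proved directly from the positivity of the Mehler kernel: writing $H^{-\sigma}=\Gamma(\sigma)^{-1}\int_0^\infty t^{\sigma-1}e^{-tH}\,dt$ and using $|e^{-tH}g(x)|\leq C(\cosh 2t)^{-d/2}e^{-\frac{|x|^2}{2}\tanh 2t}\|g\|_{L^\infty}$ gives $|H^{-\sigma}g(x)|\leq C\langle x\rangle^{-2\sigma}\|g\|_{L^\infty}$, which yields your claim with $2\sigma=1/7\geq a$; but this argument (or the paper's finite-exponent interpolation) must be supplied, since \eqref{1-comparaison} alone does not cover it. With that step filled in, the rest of your exponent checking ($\epsilon(2p-3)<1/2$, $q_j\leq 2(p-1)<2p$) is correct and the lemma follows.
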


\begin{proof} For all $ \epsilon \in ] 0, \frac{1}{2} [ $, according to the H\"older inequality, we have 
\begin{multline*}
  \|  \sqrt{-\Delta}^s( e^{-itH} u_0   )     \prod_{ j = 2 } ^p  f_j  \|_{  L^1( [-T,T]  ; L ^2(\R^d)) }  \leq \\
    \leq  \Big\|   \frac{ \sqrt{-\Delta}^s( e^{-itH} u_0   ) }{ \langle x\rangle ^{1/2-\epsilon} }\Big\|_{  L^2 ( [-T,T] ; L^2 (\R^d)) } \prod_{ j = 2 } ^p \|  \langle x\rangle ^{\frac{1}{p-1} ( \frac{1}{2}  - \epsilon )} f_j \|_{  L^{2(p-1)} ( [-T,T] ;  L^{ \infty }(\R^d)) }.
\end{multline*}
Then, we choose $ s = \frac{d}{2} - 2\epsilon $ with $ \epsilon \ll 1 $ to obtain by using~\eqref{2effectregularisant2} that
\begin{multline*}
  \| \sqrt{-\Delta}^s( e^{-itH} u_0   )     \prod_{ j = 2 } ^p  f_j  \|_{  L^1( [-T,T]  ; L ^2(\R^d)) }  \leq \\
  \begin{aligned}
&\leq   \lambda   \prod_{ j = 2 } ^p \| \langle x\rangle ^{\frac{1}{p-1} ( \frac{1}{2}  - \epsilon )} f_j \|_{  L^{2(p-1)} ( [-T,T] ;  L^{  \infty }(\R^d)) } \\
&\leq   \lambda   \prod_{ j = 2 } ^p \|  f_j \|_{  L^{2(p-1)} ( [-T,T] ;   \W ^{ \frac{1}{p-1} ( \frac{1}{2}  - \epsilon ) + \epsilon , \frac{d}{\epsilon} +1 }(\R^d)) }.
\end{aligned}
\end{multline*}
If $ f_j = e^{-itH} u_0 $, by interpolation, we  there exists a constant $ \kappa > 0 $ such that
\begin{multline*}
  \| e^{-itH} u_0  \|_{   L^{2(p-1)} ( [-T,T] ;    \W ^{ \frac{1}{p-1} ( \frac{1}{2}  - \epsilon ) + \epsilon , \frac{d}{\epsilon} +1 }(\R^d))  }  \leq \\
   \leq  C   T ^\kappa    \| e^{-itH} u_0  \|^\theta_{   L^{2p} ( [-T,T] ;  \W ^{ s_0 , \infty }(\R^d))  }   \| e^{-itH} u_0  \|^{1-\theta}_{   L^{\infty} ( [-T,T] ;    \mathcal{H} ^{ \frac{d-1}{2} }(\R^d))  }
\end{multline*}
where $ \theta = \frac{d-\epsilon}{d+\epsilon} $ and $ s_0 = (  \frac{1-\theta}{\theta} )( \frac{d-1}{2}  )+ \frac{1}{\theta(p-1)} ( \frac{1}{2} - \epsilon  ) + \frac{\epsilon}{\theta} $. \medskip

Firstly,  $ \| e^{-itH} u_0  \|_{   L^{\infty} ( [-T,T] ;  \mathcal{H} ^{ \frac{d-1}{2}  }(\R^d))  } =\| u_0  \|_{   \mathcal{H} ^{ \frac{d-1}{2} }(\R^d)  } \leq \lambda $. Then as 
\begin{align*}
s_0 = \frac{1}{2(p-1)} + C \epsilon + o (\epsilon) \leq \frac{1}{7}
\end{align*}
we deduce $ \| e^{-itH} u_0  \|_{   L^{2p} ( [-T,T] ;   \W ^{ s_0 , \infty }(\R^d))  } \leq \lambda $ and   \\$ {\| e^{-itH} u_0  \|_{   L^{2(p-1)} ( [-T,T] ;   \W ^{ \frac{1}{p-1} ( \frac{1}{2}  - \epsilon ) + \epsilon , \frac{d}{\epsilon} +1 }(\R^d))  } \leq \lambda} $. \medskip

If  $ f_j = v $, since $ s - \frac{1}{p-1} (  \frac{1}{2} - \epsilon ) > \frac{d}{2} - \frac{1}{p-1} - \frac{d\epsilon}{d+\epsilon} $ (if $ \epsilon \ll \frac{1}{2(p-2)} $) then by Lemma~\ref{2injection}, we obtain 
$$ \| v  \|_{  L^{2(p-1)} ( [-T,T] ;   \W ^{ \frac{1}{p-1} ( \frac{1}{2}  - \epsilon ) + \epsilon , \frac{d}{\epsilon} +1 }(\R^d))  }  \leq C  T^\kappa  \|v\|_{\overline{X}^s_T },$$
hence the result.
\end{proof}

We therefore introduce the natural set of initial conditions. 
Let $ \lambda \geq 0 $ and define $ F_0( \lambda ) $ as the set of functions $ u_0 \in \mathcal{H}^{ \frac{d-1}{2} } (\R^d) $ which satify
\begin{equation}\label{defF}
\left\{
    \begin{array}{ll}
      \|u_0\|_{  \mathcal{H}^{  \frac{d-1}{2} } (\R^d) } & \leq \lambda,   
       \\  \|  e^{-itH} u_0 \|_{ L^{2p}( [-\pi,\pi] ; \W^{  \frac{1}{7} , \infty } (\R^d))  } & \leq \lambda.
    \end{array} 
\right.
\end{equation}

Then, we can establish the two main results of this section.
\begin{prop} \label{2pointfixe1} There exists $ s \in ] \frac{d}{2}   - \frac{2}{p-1}  ; \frac{d}{2} [ $ and constants  $ C, \kappa > 0 $ such that if $ u_0 \in F_0(\lambda ) $  for some $ \lambda > 0 $ then for all $ v, v_1,v_2 \in \overline{X}_T^s $ and $ 0 < T \leq 1 $,
 \begin{multline*}
\Big\|  \int_0^t   e^{-i(t-s)H}   \cos ( 2s )^{ \frac{d(p-1)}{2} -2  } |  e^{-isH} u_0 + v |^{p-1}   (   e^{-isH} u_0 + v ) ds   \Big\|_{  \overline{X}_T^s }   \leq \\
\leq C   T^\kappa   ( \lambda ^p +  \|v\|^p_{  \overline{X}_T^s } ),
\end{multline*} 
and 
 \begin{align*}
& \Big\|  \int_0^t   e^{-i(t-s)H}   \cos (2s)^{ \frac{d(p-1)}{2} -2  }  |  e^{-isH} u_0 + v_1 |^{p-1}   (  e^{-isH} u_0 + v_1 ) ds  
\\ & \hspace*{2cm} -  \int_0^t   e^{-i(t-s)H}  \cos (2s)^{ \frac{d(p-1)}{2} -2  }  |  e^{-isH} u_0 + v_2 |^{p-1}   (  e^{-isH} u_0 + v_2 ) ds  \Big\|_{  \overline{X}_T^s }  
\\ & \leq C  T ^\kappa   \|v_1-v_2\|_{ \overline{X}^s_T }   ( \lambda ^{p-1} +  \|v_1\|^{p-1}_{  \overline{X}_T^s } + \|v_2\|^{p-1}_{  \overline{X}_T^s } ).
\end{align*} 
\end{prop}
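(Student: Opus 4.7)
The plan is to reduce the Duhamel term to $L^1_t\mathcal{H}^s_x$ via the inhomogeneous Strichartz estimate, then expand the nonlinearity into multilinear pieces and bound each one with Proposition~\ref{2estim1}, Lemma~\ref{2estim2}, or Lemma~\ref{2estim3}.

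\textbf{Step 1: Strichartz reduction.} By Proposition~\ref{1-Strichartz} (dual estimate \eqref{1-12} used for every admissible pair together with the pair $(q',r')=(1,2)$), we have
\begin{equation*}
\Big\|\int_0^t e^{-i(t-s)H}F(s)\,ds\Big\|_{\overline{X}_T^s}\leq C\,\|F\|_{L^1([-T,T];\mathcal{H}^s(\R^d))}.
\end{equation*}
Applied with $F(s)=\cos(2s)^{\frac{d(p-1)}{2}-2}|e^{-isH}u_0+v|^{p-1}(e^{-isH}u_0+v)$, and using that the cosine factor is bounded, it suffices to estimate $\||u|^{p-1}u\|_{L^1_T\mathcal{H}^s_x}$ with $u=e^{-isH}u_0+v$. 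By the norm equivalence \eqref{1-comparaison}, this in turn is controlled (up to a constant) by
\begin{equation*}
\|(-\Delta)^{s/2}(|u|^{p-1}u)\|_{L^1([-T,T];L^2(\R^d))}+\|\langle x\rangle^{s}|u|^{p-1}u\|_{L^1([-T,T];L^2(\R^d))}.
\end{equation*}

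\textbf{Step 2: Multilinear expansion.} Since $p$ is odd, $|u|^{p-1}u=u^{(p+1)/2}\bar u^{(p-1)/2}$ is a polynomial of degree $p$ in $(u,\bar u)$. Substituting $u=e^{-isH}u_0+v$ and expanding yields a finite sum of monomials of the form $\prod_{j=1}^p f_j$ with each $f_j\in\{v,\bar v,e^{-isH}u_0,\overline{e^{-isH}u_0}\}$. All the estimates that follow are insensitive to complex conjugation, so we may treat $f_j\in\{v,e^{-isH}u_0\}$.

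\textbf{Step 3: The gradient term.} For each monomial, apply iteratively Lemma~\ref{2gradient2} to write
\begin{equation*}
\|(-\Delta)^{s/2}\textstyle\prod_{j=1}^p f_j\|_{L^2}\leq C\sum_{k=1}^p\big\|((-\Delta)^{s/2}f_k)\textstyle\prod_{j\neq k}f_j\big\|_{L^2},
\end{equation*}
so the $L^1_TL^2_x$ norm splits into a finite sum of terms of the form $\|\sqrt{-\Delta}^{s}f_k\cdot\prod_{j\neq k}f_j\|_{L^1_TL^2_x}$. If $f_k=v$ the first estimate of Proposition~\ref{2estim1} applies; if $f_k=e^{-isH}u_0$ this is exactly the situation of Lemma~\ref{2estim3} (which provides the smoothing-effect gain), and the hypotheses on $u_0\in F_0(\lambda)$ are chosen precisely so that these two lemmas apply.

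\textbf{Step 4: The weight term.} The operator $\langle x\rangle^s$ is a simple multiplier, so
\begin{equation*}
\|\langle x\rangle^s\textstyle\prod_{j=1}^p f_j\|_{L^2}=\|(\langle x\rangle^s f_1)\textstyle\prod_{j\geq 2}f_j\|_{L^2}.
\end{equation*}
If at least one factor is $v$, we place the weight on that factor and invoke the second estimate of Proposition~\ref{2estim1}. If all $f_j=e^{-isH}u_0$, then the monomial is $(e^{-isH}u_0)^p$ and Lemma~\ref{2estim2} gives the required control.

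\textbf{Step 5: Lipschitz estimate.} For the second inequality, write
\begin{equation*}
|u_1|^{p-1}u_1-|u_2|^{p-1}u_2=(v_1-v_2)\cdot G(v_1,v_2,e^{-isH}u_0),
\end{equation*}
where $G$ is a polynomial of degree $p-1$ (this follows by integrating $\partial_\theta |w_\theta|^{p-1}w_\theta$ along the segment $w_\theta=u_2+\theta(u_1-u_2)$ and using that $p$ is odd). Expanding $G$ into monomials reduces the Lipschitz estimate to the same multilinear estimates as in Steps 3--4, now with one distinguished factor equal to $v_1-v_2$. The very same applications of Proposition~\ref{2estim1}, Lemma~\ref{2estim2}, and Lemma~\ref{2estim3} (with one of the $f_j$'s being $v_1-v_2$) yield the claim.

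The main technical point is Step~3 when the derivative falls on $e^{-isH}u_0$: there one must genuinely exploit the Kato smoothing effect \eqref{2effectregularisant2} encoded in Lemma~\ref{2estim3}, which is why the assumption $u_0\in\mathcal{H}^{(d-1)/2}(\R^d)$ is essential. All H\"older pairings produce a positive power of $T$ via Lemma~\ref{2injection} and the sub-endpoint H\"older trick used in the proofs above, which delivers the $T^\kappa$ factor needed to run the contraction argument.
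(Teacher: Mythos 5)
Your proof follows the paper's argument essentially verbatim: the inhomogeneous Strichartz estimate \eqref{1-12} reduces the Duhamel term to $L^1([-T,T];\mathcal H^s)$, the equivalence \eqref{1-comparaison} splits this into the $(-\Delta)^{s/2}$ and $\langle x\rangle^s$ contributions, the Leibniz-type Lemma~\ref{2gradient2} distributes the derivative onto one factor, and then Proposition~\ref{2estim1}, Lemma~\ref{2estim2} and Lemma~\ref{2estim3} handle the resulting monomials exactly as in the paper, with the standard difference factorization supplying the Lipschitz bound that the paper omits as similar. The only cosmetic caveat is that Lemma~\ref{2gradient2} should be used in its $p$-factor form (which follows from the same Fourier argument, via $|\xi|^s\leq C_p\sum_k|\eta_k|^s$) rather than literally iterated, since the two-factor statement does not iterate inside the $L^2$ norm; this is precisely the level of detail the paper itself leaves implicit.
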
 

\begin{proof} We only prove the first estimate. Using Proposition~\ref{1-Strichartz} and~\eqref{1-comparaison}, we obtain
\begin{multline*}
 \Big\|  \int_0^t   e^{-i(t-s)H}  \cos ( 2s )^{  \frac{d(p-1)}{2} -2  }  |  e^{-isH} u_0 + v |^{p-1}   (   e^{-isH} u_0 + v ) ds    \Big\|_{  \overline{X}_T^s }  \leq \\
\begin{aligned}
&  \leq   C  \|  \cos ( 2s )^{  \frac{d(p-1)}{2} -2  }    |  e^{-isH} u_0 + v |^{p-1}   (   e^{-isH} u_0 + v )   \|_{   L^1( [-T,T] ;  \mathcal{H}^s(\R^d) )   }  \\
& \leq  C  \| \,   |  e^{-isH} u_0 + v |^{p-1}   (   e^{-isH} u_0 + v )   \|_{   L^1( [-T,T] ;  \mathcal{H}^s(\R^d) )   }  \\
& \leq  C   \| \,  (-\Delta)^s  \big(   |  e^{-isH} u_0 + v |^{p-1}   (   e^{-isH} u_0 + v ) \big)   \|_{   L^1( [-T,T] ;  L^2 (\R^d) )   }  \\
  & \qquad \qquad \qquad+  C \| \langle x\rangle ^s     |  e^{-isH} u_0 + v |^{p-1}   (   e^{-isH} u_0 + v )   \|_{   L^1( [-T,T] ;  L^2 (\R^d) )   }.   
\end{aligned}
\end{multline*}
Then, using Lemma~\ref{2gradient2} and Proposition~\ref{2estim1}, Lemma~\ref{2estim2}, and Lemma~\ref{2estim3}, we can obtain a constant $ \kappa > 0 $ such that for all  $ u_0 \in F_0(\lambda) $, $ 0 < T \leq 1 $ and $ v  \in \overline{X}^s_T $,
\begin{align*}
\|   (-\Delta)^s  \big(  |  e^{-isH} u_0 + v |^{p-1}   (   e^{-isH} u_0 + v )  \big)   \|_{   L^1( [-T,T] ;  L^2 (\R^d) )   }  \leq C T ^\kappa ( \lambda^p +   \|v\|^p_{  \overline{X}_T^s }   ), 
\end{align*}
and
\begin{align*}
\|  \langle x\rangle ^s     |  e^{-isH} u_0 + v |^{p-1}   (   e^{-isH} u_0 + v )   \|_{   L^1( [-T,T] ;  L^2 (\R^d) )   }    \leq C T ^\kappa ( \lambda^p +   \|v\|^p_{  \overline{X}_T^s }   ),
\end{align*}
which was to prove.
\end{proof}

 We are now able to state the local well-posedness result for equation~\eqref{2schrodingerH}.

\begin{thm}  \label{2existencebis}
There exist $ s \in ] \frac{d}{2}   - \frac{2}{p-1} ; \frac{d}{2} [  $, $ C > 0 $ and $ \delta > 0 $ such that for any $ 0 < T \leq 1 $, if $ u_0 \in F_0(\lambda ) $ with $ \lambda < C     T ^{ - \delta  } $ then there is a unique solution to the equation~\eqref{2schrodingerH} on $ [-T,T] $ in the space $ e^{-itH}u_0 + B_{\overline{X}^s_T } (0,  \lambda   ) $.
\end{thm}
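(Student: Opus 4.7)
The plan is to set up a standard contraction mapping argument in the ball $B_{\overline{X}^s_T}(0,\lambda)$, using Proposition~\ref{2pointfixe1} as the sole nontrivial input. Writing any prospective solution of~\eqref{2schrodingerH} as $u=e^{-itH}u_0+v$ with $v(0)=0$, Duhamel's formula shows that $v$ must be a fixed point of
\[
L(v)(t):=-i\kappa\int_0^t e^{-i(t-s)H}\,\cos(2s)^{\frac{d(p-1)}{2}-2}\,\big|e^{-isH}u_0+v(s)\big|^{p-1}\big(e^{-isH}u_0+v(s)\big)\,ds.
\]
Conversely, any fixed point $v\in\overline{X}^s_T$ of $L$ produces a solution $u=e^{-itH}u_0+v$ of~\eqref{2schrodingerH} on $[-T,T]$. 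So it suffices to exhibit a unique fixed point of $L$ inside the closed ball $B_{\overline{X}^s_T}(0,\lambda)$, for $s$ chosen as in Proposition~\ref{2pointfixe1}.

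To this end I would apply Proposition~\ref{2pointfixe1} with $u_0\in F_0(\lambda)$: it gives constants $C_0>0$ and $\kappa>0$, independent of $T\leq 1$ and $\lambda$, such that for all $v,v_1,v_2\in\overline{X}^s_T$,
\begin{align*}
\|L(v)\|_{\overline{X}^s_T}&\leq C_0T^\kappa\big(\lambda^p+\|v\|^p_{\overline{X}^s_T}\big),\\
\|L(v_1)-L(v_2)\|_{\overline{X}^s_T}&\leq C_0T^\kappa\|v_1-v_2\|_{\overline{X}^s_T}\big(\lambda^{p-1}+\|v_1\|^{p-1}_{\overline{X}^s_T}+\|v_2\|^{p-1}_{\overline{X}^s_T}\big).
\end{align*}
Restricting $v,v_1,v_2$ to the ball $\|v\|_{\overline{X}^s_T}\leq\lambda$, these bounds become
\[
\|L(v)\|_{\overline{X}^s_T}\leq 2C_0T^\kappa\lambda^p,\qquad \|L(v_1)-L(v_2)\|_{\overline{X}^s_T}\leq 3C_0T^\kappa\lambda^{p-1}\|v_1-v_2\|_{\overline{X}^s_T}.
\]

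The stability condition $L\big(B_{\overline{X}^s_T}(0,\lambda)\big)\subset B_{\overline{X}^s_T}(0,\lambda)$ becomes $2C_0T^\kappa\lambda^{p-1}\leq 1$, while the contraction condition requires $3C_0T^\kappa\lambda^{p-1}\leq\tfrac12$; both are implied by
\[
\lambda^{p-1}\leq\frac{1}{6C_0}\,T^{-\kappa},
\]
which is the form $\lambda<CT^{-\delta}$ of the hypothesis, with $\delta=\kappa/(p-1)$ and $C=(6C_0)^{-1/(p-1)}$. Under this condition $L$ is a strict contraction on the complete metric space $B_{\overline{X}^s_T}(0,\lambda)\subset\overline{X}^s_T$, so by the Banach fixed point theorem it admits a unique fixed point $v$, which yields the desired solution $u=e^{-itH}u_0+v\in e^{-itH}u_0+B_{\overline{X}^s_T}(0,\lambda)$.

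The main substantive work has already been absorbed into Proposition~\ref{2pointfixe1}; the genuine obstacle is therefore upstream (the trilinear/multilinear estimates, the Kato smoothing effect, and the control of the randomised free evolution encoded in the definition of $F_0(\lambda)$), and the argument above is really just bookkeeping once those are in hand. The one small point worth noting is the choice of the ball's radius: one is free to replace $\lambda$ by a radius $R=\eta\lambda$ with $\eta\in(0,1]$, which lets one absorb any extra absolute constants that appear when combining the stability and contraction conditions; since the statement only asks for the ball of radius $\lambda$, this cosmetic freedom is enough.
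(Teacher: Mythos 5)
Your proposal is correct and follows essentially the same route as the paper: Duhamel reformulation, the two estimates of Proposition~\ref{2pointfixe1}, and a Banach fixed point argument on the ball $B_{\overline{X}^s_T}(0,\lambda)$ under a smallness condition of the form $\lambda^{p-1}\lesssim T^{-\kappa}$ (the paper uses $\lambda<(8CT^{\kappa})^{-1/(p-1)}$, you use $(6C_0T^{\kappa})^{-1/(p-1)}$, which is the same up to constants). Your explicit check of the stability condition $L(B)\subset B$ is a point the paper leaves implicit, but nothing differs in substance.
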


\begin{proof} Let us define 
\begin{equation*}
L ( v) = -i\kappa  \int_0^t   e^{-i(t-s)H}  \cos (2s)^{  \frac{d(p-1)}{2} -2  }  |   e^{-isH} u_0 + v(s) |^{p-1} (  e^{-isH} u_0 + v(s) ) ds,
\end{equation*}
and note that $ u = e^{-itH} u_0 + v $ is the unique solution of~\eqref{2schrodingerH} on $ [-T,T] $ in the space $ e^{-itH}u_0 + B_{\overline{X}^s_T}(0 , R ) $ if and only if $v$ is the unique fixed point of $L$ on $ B_{\overline{X}^s_T}(0 , R ) $. \medskip

According to Proposition~\ref{2pointfixe1}, there are two constants $ C > 0 $ and $ \kappa > 0 $ such that 
\begin{align*}
& \| L (v)  \|_{  \overline{ X } ^s_T  } \leq C T^\kappa ( \lambda ^p + \| v\|^p _{ \overline{ X } ^s_T }) 
\\ & \| L (v_1)-L(v_2)  \|_{  \overline{ X } ^s_T  } \leq C T^\kappa \| v_1-v_2\| _{ \overline{ X } ^s_T } ( \lambda ^{p-1} + \| v_1\|^{p-1}_{ \overline{ X } ^s_T }+ \| v_2\|^{p-1}_{ \overline{ X } ^s_T } ).
\end{align*}
Therefore if $ \lambda <   (  \frac{1}{8 C  T^\kappa }  )^{  \frac{1}{p-1} }$ then $L$ is a contraction of $ B_{\overline{X}^{s}_T } (0, \lambda   ) $ and the theorem follows.
\end{proof}


\subsection{Global solutions and scattering for the equation \eqref{2schrodinger}}
Thanks to the lens transform and the results of the previous section, we are able obtain the existence of global solutions for the equation \eqref{2schrodinger}.

\begin{thm}  \label{2existence1}
There exist $  s \in ]  \frac{d}{2}   - \frac{2}{p-1} ; \frac{d}{2} [ $ and  $ C_1,C_2 > 0 $ such that if $ u_0 \in F_0(\lambda ) $ with $ \lambda < C_1 $ then there is a global solution to \eqref{2schrodinger} in the space $ e^{it\Delta}u_0 + B_{X^s } (0, C_2 ) $.
\end{thm}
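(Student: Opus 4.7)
The plan is to deduce this global-in-time result directly from the local-in-time result of Theorem~\ref{2existencebis} (for the harmonic-oscillator equation~\eqref{2schrodingerH}) via the lens transform recalled in Section~\ref{1-214}. Since the lens transform conjugates the flow of~\eqref{2schrodinger} on $\R$ to that of~\eqref{2schrodingerH} on the bounded interval $(-\pi/4,\pi/4)$, it suffices to produce a solution of~\eqref{2schrodingerH} on $[-\pi/4,\pi/4]$, and the fact that $\pi/4<1$ means Theorem~\ref{2existencebis} may be applied at $T=\pi/4$ under the requirement $\lambda<C(\pi/4)^{-\delta}$; defining $C_1:=C(\pi/4)^{-\delta}$ yields the smallness condition $\lambda<C_1$ stated in the theorem.

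More precisely, for $u_0\in F_0(\lambda)$ with $\lambda<C_1$, Theorem~\ref{2existencebis} furnishes a unique solution of~\eqref{2schrodingerH} of the form $u=e^{-itH}u_0+v$ on $[-\pi/4,\pi/4]$, with $v\in \overline{X}^s_{\pi/4}$ and $\|v\|_{\overline{X}^s_{\pi/4}}\leq\lambda$. I then define $U$ on $\R\times\R^d$ by inverting the lens formula~\eqref{lens}, namely
\begin{equation*}
U(t,x)=\Big(\frac{1}{\sqrt{1+4t^2}}\Big)^{d/2} u\Big(\frac{\arctan(2t)}{2},\frac{x}{\sqrt{1+4t^2}}\Big) e^{\frac{i|x|^2t}{1+4t^2}}.
\end{equation*}
Because $\arctan(2t)/2\in(-\pi/4,\pi/4)$ for every $t\in\R$, the function $U$ is defined globally, and the conjugation identity discussed in Section~\ref{1-214}, together with the prefactor $\cos^{d(p-1)/2-2}(2t)$ present in~\eqref{2schrodingerH}, ensures that $U$ solves~\eqref{2schrodinger} on all of $\R\times\R^d$ with initial data $u_0$.

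It remains to show $U-e^{it\Delta}u_0\in X^s$ with norm $\leq C_2$. Applying the inverse lens formula termwise to the decomposition $u=e^{-itH}u_0+v$ and using~\eqref{sym2} (which states exactly that the inverse lens transform intertwines $e^{-itH}$ with $e^{it\Delta}$), I obtain the decomposition $U=e^{it\Delta}u_0+V$, where $V$ is the image of $v$ under the same inverse lens transform. Proposition~\ref{tvb} then gives $\|V\|_{X^s}\leq C\|v\|_{\overline{X}^s_{\pi/4}}\leq C\lambda< CC_1=:C_2$, which places $U$ in the ball $e^{it\Delta}u_0+B_{X^s}(0,C_2)$ as claimed.

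The whole argument is essentially a matter of bookkeeping; there is no genuine obstacle, since every nontrivial ingredient (the local Cauchy theory in $\overline{X}^s_T$, the conjugation of the two nonlinear flows by the lens transform, and the mapping property $\overline{X}^s\hookrightarrow X^s$ of Proposition~\ref{tvb}) has already been established earlier. The only delicate point to verify carefully is that the $X^s$-norm bound survives the inverse lens change of variables on the \emph{linear} piece $e^{-itH}u_0$, which is precisely what identity~\eqref{sym2} guarantees. Uniqueness of the global solution in the stated class follows at once by transporting the uniqueness statement of Theorem~\ref{2existencebis} back through the bijective lens transform.
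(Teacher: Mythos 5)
Your argument is correct and is essentially the paper's own proof: take the local solution from Theorem~\ref{2existencebis} at $T=\pi/4$ (the smallness $\lambda<C(\pi/4)^{-\delta}$ indeed absorbs into the constant $C_1$), push it through the inverse lens transform, identify the linear part with $e^{it\Delta}u_0$ via \eqref{sym2}, and transfer the norm bound to $X^s$. The only slip is a citation: since $v$ lies in the Strichartz-type space $\overline{X}^s_{\pi/4}$ rather than a Bourgain space, the transfer estimate $\|V\|_{X^s}\leq C\|v\|_{\overline{X}^s_{\pi/4}}$ is Proposition~\ref{tvb1}, not Proposition~\ref{tvb}; with that correction (and noting that the uniqueness claim at the end is not needed for, and not quite immediate in, the stated class — the paper handles it separately in Proposition~\ref{2unicite}), everything matches the paper.
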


\begin{proof} Let $u$ be given by Theorem \ref{2existencebis} with $ T = \frac{\pi}{4} $. We apply to $u$ the lens transformation defined in Section~\ref{1-214}  to obtain a function $ U $ which, according to Proposition~\ref{tvb1}, satisfies the conditions of the theorem.
\end{proof}

\begin{thm}  \label{2existence2}
There exist  $ s \in ]  \frac{d}{2}   - \frac{2}{p-1} ; \frac{d}{2} [ $, $ C_1,C_2 > 0 $ and $ \delta > 0 $ such that for any  $ 0 < T \leq 1 $, if $ u_0 \in F_0(\lambda ) $ with  $ \lambda < C_1 ( \arctan 2 T ) ^ {- \delta  }  $ then there is a solution to~\eqref{2schrodinger} on $ [-T,T] $ in the space $ e^{it\Delta}u_0 + B_{X_T^s } (0, C_2 \lambda ^p  ) $.
\end{thm}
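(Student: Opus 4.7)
The proof mimics Theorem~\ref{2existence1}: construct a solution to the harmonic-oscillator equation~\eqref{2schrodingerH} using Theorem~\ref{2existencebis}, then conjugate back to~\eqref{2schrodinger} via the lens transform. The new feature is that we apply Theorem~\ref{2existencebis} on a strictly shorter interval $[-T',T']$ with $T'<\pi/4$ chosen so that the lens-transform time variable $s(t)=\frac{\tan(2t)}{2}$ reaches exactly $T$ at $t=T'$, namely $T'=\frac{\arctan(2T)}{2}$. Since $T\leq 1$, we have $T'\leq \frac{\arctan 2}{2}<\frac{\pi}{4}$, so the range of the lens transform on $[-T',T']$ is exactly $[-T,T]$, and the time constraint $T'\leq 1$ of Theorem~\ref{2existencebis} is automatic.

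Apply Theorem~\ref{2existencebis} with this $T'$: assuming $\lambda<C(T')^{-\delta}$, we obtain a unique solution $u=e^{-itH}u_0+v$ of~\eqref{2schrodingerH} on $[-T',T']$ with $v\in B_{\overline{X}^s_{T'}}(0,\lambda)$. Because $T'=\arctan(2T)/2$ is comparable to $\arctan(2T)$ up to a constant, the hypothesis translates to $\lambda<C_1(\arctan 2T)^{-\delta}$. Moreover, inspecting the fixed-point argument via Proposition~\ref{2pointfixe1}, the fixed point satisfies the stronger quantitative bound
\begin{equation*}
\|v\|_{\overline{X}^s_{T'}}\leq C(T')^\kappa\big(\lambda^p+\|v\|_{\overline{X}^s_{T'}}^p\big)\leq C\lambda^p,
\end{equation*}
using $(T')^\kappa\leq(\pi/4)^\kappa$ and $\|v\|_{\overline{X}^s_{T'}}\leq\lambda\leq 1$ (after possibly shrinking~$C_1$). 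So $v$ in fact lies in a ball of radius $O(\lambda^p)$ rather than merely $\lambda$.

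To transfer $v$ to the Schrödinger side in the space $X^s_T$, choose an extension $\widetilde v\in\overline{X}^s$ of $v$ with $\|\widetilde v\|_{\overline{X}^s}\leq 2\|v\|_{\overline{X}^s_{T'}}$, and set $\widetilde u=e^{-itH}u_0+\widetilde v$. By Proposition~\ref{tvb1}, the lens image $\widetilde U$ of $\widetilde u$ lies in $X^s$ with $\widetilde U=e^{is\Delta}u_0+\widetilde V$ and $\|\widetilde V\|_{X^s}\leq C\|\widetilde v\|_{\overline{X}^s}$. Since the lens transformation acts pointwise in $t$ and $\widetilde v|_{[-T',T']}=v$, the restriction $U:=\widetilde U|_{[-T,T]}$ coincides with the lens image of $u$ on $[-T,T]$; by the conjugation identities of Section~\ref{1-214}, $U$ therefore solves~\eqref{2schrodinger} on $[-T,T]$. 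Writing $U=e^{is\Delta}u_0+V$, we deduce
\begin{equation*}
\|V\|_{X^s_T}\leq\|\widetilde V\|_{X^s}\leq C\|\widetilde v\|_{\overline{X}^s}\leq C_2\lambda^p,
\end{equation*}
which is the announced estimate.

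The main obstacle is the bookkeeping in the last step: the statement is set in $X^s_T$, whereas Proposition~\ref{tvb1} is formulated on the full interval $[-\pi/4,\pi/4]$ on the oscillator side. The extension argument above is the cleanest fix, and it works precisely because $T'<\pi/4$ ensures we can extend $v$ to $\overline{X}^s$ while losing only a constant in norm. The only other point to check is the comparability $T'\asymp\arctan(2T)$ uniformly in $T\in(0,1]$, which converts the small-time assumption $\lambda<C(T')^{-\delta}$ into the form $\lambda<C_1(\arctan 2T)^{-\delta}$ stated in the theorem.
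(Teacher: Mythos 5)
Your proposal is correct and follows essentially the same route as the paper: apply Theorem~\ref{2existencebis} with the time parameter replaced by $T'=\frac{1}{2}\arctan(2T)$ (which indeed satisfies $T'\leq 1$ and $T'<\pi/4$, and turns the hypothesis into $\lambda<C_1(\arctan 2T)^{-\delta}$), then conjugate by the lens transform and use Proposition~\ref{tvb1} to pass from $\overline{X}^{s}_{T'}$ to $X^s_T$; your extension-by-zero bookkeeping and the fixed-point estimate $\|v\|_{\overline{X}^s_{T'}}\leq C(T')^{\kappa}(\lambda^p+\|v\|_{\overline{X}^s_{T'}}^p)\leq C\lambda^p$ are exactly the details the paper leaves implicit. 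The only blemish is the parenthetical ``$\lambda\leq 1$ after possibly shrinking $C_1$'', which cannot actually be arranged (for small $T$ the hypothesis permits large $\lambda$) but is also unnecessary, since $\|v\|_{\overline{X}^s_{T'}}\leq\lambda$ already yields $\|v\|_{\overline{X}^s_{T'}}^p\leq\lambda^p$ and hence the stated $O(\lambda^p)$ bound.
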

\begin{proof} Let $u$ be given by Theorem \ref{2existencebis} at $T$ replaced by $ \frac{1}{2} \arctan 2T $. Then, as for the previous proof, we apply to $u$ the lens transformation, which yields the result. 
\end{proof}

 We then prove the uniqueness of the constructed solutions.
\begin{prop} \label{2unicite}
Let $  s \in ]  \frac{d}{2}   - \frac{2}{p-1} ; \frac{d}{2} [ $, $ u_0 \in F_0(\lambda) $ and $ T \in ]0,1 ] $. Let  $ U_1 $ and $ U_2 $ be two  solutions of~\eqref{2schrodinger} on $[-T,T]$ on the space $  e^{it\Delta} u_0 + X_T^s $ then,
\begin{equation*}
 U_1(t) = U_2(t) \ \mbox{in} \ L^2(\R^d), \ \forall t \in [-T,T].
\end{equation*} 
\end{prop}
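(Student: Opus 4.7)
The plan is to derive a Gronwall estimate for $\|U_1 - U_2\|_{L^2(\R^d)}$ directly, bypassing the $\overline{X}^{s,b}$ framework. Setting $V_i = U_i - e^{it\Delta}u_0 \in X^s_T$, both $U_i$ satisfy the Duhamel formulation of \eqref{2schrodinger}, and subtracting, using that $e^{it\Delta}$ is an $L^2$-isometry together with the elementary pointwise bound $\bigl||a|^{p-1}a - |b|^{p-1}b\bigr| \leq p\bigl(|a|^{p-1}+|b|^{p-1}\bigr)|a-b|$, yields, for $t \in [0,T]$,
\begin{equation*}
\|U_1(t) - U_2(t)\|_{L^2} \leq C \int_0^t \bigl(\|U_1(r)\|_{L^\infty}^{p-1} + \|U_2(r)\|_{L^\infty}^{p-1}\bigr)\|U_1(r) - U_2(r)\|_{L^2}\, dr
\end{equation*}
(the case $t<0$ being symmetric). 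Provided the weight $r \mapsto \|U_1\|_{L^\infty}^{p-1} + \|U_2\|_{L^\infty}^{p-1}$ lies in $L^1([-T,T])$, Gronwall's inequality immediately forces $U_1 \equiv U_2$.

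The core of the argument is thus to show that $U_i \in L^{p-1}([-T,T]; L^\infty(\R^d))$. For the remainder $V_i$ I would observe that the pair $(q,r_0) = \bigl(p-1,\, \tfrac{2d(p-1)}{d(p-1)-4}\bigr)$ is Strichartz-admissible (since $p\geq 5$, $d\geq 2$), and the Sobolev embedding $W^{s,r_0}(\R^d) \hookrightarrow L^\infty(\R^d)$ holds precisely when $sr_0 > d$, i.e.\ $s > d/2 - 2/(p-1)$, which is our hypothesis on $s$; hence $V_i \in L^{p-1}([-T,T]; W^{s,r_0}) \hookrightarrow L^{p-1}([-T,T]; L^\infty)$ by definition of $X^s_T$. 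For the linear part, I would invoke the lens transform identity \eqref{sym2},
\begin{equation*}
\|e^{it\Delta}u_0(t)\|_{L^\infty_x} = (1+4t^2)^{-d/4}\,\|e^{-i\tau H}u_0(\tau)\|_{L^\infty_x}, \quad \tau = \tfrac{1}{2}\arctan(2t),
\end{equation*}
and change variables via $dt = (1+4t^2)\,d\tau$ to obtain
\begin{equation*}
\int_{-T}^T \|e^{it\Delta}u_0\|_{L^\infty}^{p-1}\,dt = \int_{-T'}^{T'} \bigl(1+4t(\tau)^2\bigr)^{1 - d(p-1)/4}\,\|e^{-i\tau H}u_0(\tau)\|_{L^\infty}^{p-1}\,d\tau,
\end{equation*}
with $T' = \tfrac{1}{2}\arctan(2T) < \pi/4$. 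The exponent $1 - d(p-1)/4$ is non-positive since $d(p-1)\geq 8$, so the Jacobian factor is bounded by $1$; combining with H\"older in time on $[-T',T']$ and the embedding $\W^{1/7,\infty}(\R^d) \hookrightarrow L^\infty(\R^d)$, the right-hand side is controlled by the $L^{2p}_\tau\W^{1/7,\infty}_x$-norm appearing in the definition \eqref{defF} of $F_0(\lambda)$, which is finite by assumption.

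The main obstacle is precisely the second ingredient: because $u_0$ lies only in $\mathcal{H}^{(d-1)/2}(\R^d)$, the free flow $e^{it\Delta}u_0$ does not belong to any $L^q_t L^\infty_x$ on purely deterministic grounds, so the argument genuinely exploits the probabilistic regularization encoded in $F_0(\lambda)$; transferring it from $e^{-i\tau H}u_0$ back to $e^{it\Delta}u_0$ is what forces the use of the lens transform and the favorable sign of the Jacobian exponent, itself a consequence of the assumption $p\geq 5$, $d\geq 2$. Everything else in the argument is a standard $L^2$ energy estimate closed by Gronwall.
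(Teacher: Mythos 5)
Your proposal is correct and follows essentially the same route as the paper: an $L^2$ Gronwall argument reduced to showing $U_j\in L^{p-1}([-T,T];L^\infty(\R^d))$, with the remainder $V_j$ handled by Strichartz admissibility plus Sobolev embedding (the condition $s>\frac d2-\frac2{p-1}$ entering exactly as you use it) and the free part transferred to $e^{-i\tau H}u_0$ and the $F_0(\lambda)$ bound via the lens transform. The only cosmetic differences are that the paper differentiates $\|U_1-U_2\|_{L^2}^2$ in time rather than using the Duhamel formulation, and it quotes Proposition~\ref{tvb1} for the linear part instead of redoing the change of variables, whose favorable Jacobian exponent you verify directly.
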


\begin{proof} By reversibility of the equation,  is enough to consider the case  $ t \in [0,T] $. For all $ t \in \R $, we have 
\begin{multline*}
\frac{d}{dt} \| U_1(t) - U_2(t) \|^2_{ L^2(\R^d) } =\\
\begin{aligned}
&=  2 \Re ( \langle   \partial_t(U_1(t)-U_2(t)) , U_1(t)-U_2(t)  \rangle _{L^2(\R^d)   L^2(\R^d) } ) \\
&=   2 |   \langle   | U_1(t)| ^{p-1} U_1(t) - |U_2(t)|^{p-1}  U_2(t)  , U_1(t)-U_2(t)  \rangle _{L^2(\R^d)   L^2(\R^d) } | \\
& \leq  2 \| U_1(t) - U_2(t) \|_{L^2(\R^d)}   \|  |U_1(t)| ^{p-1} U_1(t) - |U_2(t)|^{p-1} U_2(t) \|_{L^2(\R^d)} \\
& \leq  C \|U_1(t) - U_2(t) \|^2_{L^2(\R^d)}   \big(  \|  U_1(t) \|^{p-1}_{L^\infty(\R^d)} +   \|  U_2(t) \|^{p-1}_{L^\infty(\R^d)}  \big).
\end{aligned}
\end{multline*}
Then, by the Gr\"onwall Lemma, the result is proved if $ \| U_j(t) \|^{p-1}_{L^\infty(\R^d)}   \in L^1_{loc} $, since $ \| U_1(0) - U_2(0) \|^2_{ L^2(\R^d) } = 0 $.  By using Proposition~\ref{tvb1}, we get
\begin{eqnarray*}
\| U_j \|_{ L^{p-1}([0,T]), L^\infty(\R^d)  } & \leq   & \| e^{it\Delta} u_0 \|_{ L^{p-1}([0,T] ; L^\infty(\R^d) )  } +\| V_j \|_{ L^{p-1}([0,T];  L^\infty(\R^d) ) }\\
& \leq & C_T  (     \| e^{-itH} u_0 \|_{ L^{p-1}([- \pi, \pi ];  L^\infty(\R^d) )  } + \| V_j \|_{ X_T^s } ) \\
 & \leq & C_T  (    \lambda + \|  V_j \|_{ X_T^s } ),
\end{eqnarray*}
hence the result.
\end{proof}

Finally, we prove that the global solutions constructed scatter  at $ + \infty $ and at $ - \infty $, namely \eqref{2scat11} and \eqref{2scat12}. The proof is the same as the proof of Theorem~\ref{1-scattering} and  is left here.


\section{Estimation of the regularity of the initial random data  and proof of  Theorem~\ref{2thm1}}\label{Sect9}

\subsection{Estimation of the regularity of the initial random data}
For  $ \Lambda > 0 $, recall the definition~\eqref{defF} of the set $F_0(\Lambda)$, and define
\begin{equation*}
\Omega_\Lambda = \big\{ \omega \in \Omega \;:  u_0 ^\omega  \in F_0( \Lambda )  \big\}.
\end{equation*}

The purpose of this part is to establish the following theorem:

\begin{thm} \label{2cond2} There exist constants $  C, c > 0 $ such that for any $ \Lambda > 0 $,
\begin{equation*} 
P(  \Omega^c_\Lambda  )\leq C \exp \Big(  - c  \frac{  \Lambda^2}{   \|u_0\|^2_{     \mathcal{H}^{ (d-1)/2  } (\R^d)  } }  \Big).
\end{equation*}
\end{thm}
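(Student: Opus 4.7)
The strategy follows the proof of Theorem~\ref{1-deviation}. One decomposes
\[
\Omega_\Lambda^c\subset\big\{\omega\,:\,\|u_0^\omega\|_{\mathcal{H}^{(d-1)/2}(\R^d)}>\Lambda\big\}\cup\big\{\omega\,:\,\|e^{-itH}u_0^\omega\|_{L^{2p}([-\pi,\pi];\W^{1/7,\infty})}>\Lambda\big\}
\]
and bounds each of these two bad events by a Gaussian tail of the form $C\exp(-c\Lambda^2/\|u_0\|^2_{\mathcal{H}^{(d-1)/2}})$, whence the conclusion by a union bound.

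For the first event, the computation is direct. Expanding $\|u_0^\omega\|_{\mathcal{H}^{(d-1)/2}}^2=\sum_n\lambda_n^{d-1}|c_n|^2|g_n(\omega)|^2$, I would apply the Markov inequality with power $q\geq 2$ together with the Khintchine estimate \eqref{zyg1} of Proposition~\ref{prop-chaos} to obtain
\[
\big\|\,\|u_0^\omega\|_{\mathcal{H}^{(d-1)/2}(\R^d)}\big\|_{L^q(\Omega)}\leq C\sqrt{q}\,\|u_0\|_{\mathcal{H}^{(d-1)/2}(\R^d)},
\]
and optimizing $q\sim(\Lambda/\|u_0\|_{\mathcal{H}^{(d-1)/2}})^2$ yields the desired Gaussian bound, exactly as in Lemma~\ref{1-kolmogorov}.

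For the second event, I would mimic Lemma~\ref{LEFT}. A Sobolev embedding $\W^{1/7+\epsilon,r}(\R^d)\hookrightarrow \W^{1/7,\infty}(\R^d)$ with $r>d/\epsilon$ reduces matters to estimating $\|H^{(1/7+\epsilon)/2}e^{-itH}u_0^\omega\|_{L^{2p}([-\pi,\pi];L^r)}$. Markov with power $q\gg r,2p$, followed by Minkowski to bring the $L^q(\Omega)$ norm innermost, and pointwise Khintchine \eqref{zyg1} at fixed $(t,x)$ gives
\[
\big\|H^{(1/7+\epsilon)/2}e^{-itH}u_0^\omega(t,x)\big\|_{L^q(\Omega)}\leq C\sqrt{q}\Big(\sum_n\lambda_n^{2(1/7+\epsilon)}|c_n|^2|h_n(x)|^2\Big)^{1/2},
\]
which is $t$-independent since the time dependence is a pure phase. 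Taking $L^r_x$ via the triangle inequality on $L^{r/2}_x$ reduces the proof to the deterministic bound
\[
\sum_n\lambda_n^{2(1/7+\epsilon)}|c_n|^2\|h_n\|_{L^r(\R^d)}^2\leq C\|u_0\|^2_{\mathcal{H}^{(d-1)/2}(\R^d)}.
\]

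The main obstacle is precisely this last summability estimate: the naive Sobolev bound $\|h_n\|_{L^r}\lesssim\lambda_n^{d/2-d/r}$ forces the constraint $r\leq 14d/(9+14\epsilon)$, which is incompatible with $r>d/\epsilon$. I would instead use the sharp Hermite $L^\infty$ estimate $\|h_n\|_{L^\infty(\R^d)}\leq C\lambda_n^{-1/6}$ (as in Proposition~\ref{1-dispersive} for the tensor basis) and interpolate with $\|h_n\|_{L^2}=1$ to obtain $\|h_n\|_{L^r}\leq C\lambda_n^{-(1-2/r)/6}$. The required exponent inequality $-(1-2/r)/3+2/7+2\epsilon\leq d-1$ is then trivially satisfied for $d\geq 2$ once $\epsilon$ is small, since $(d-1)/2>1/7$, so the sum is controlled by $\|u_0\|^2_{\mathcal{H}^{(d-1)/2}}$ with plenty of room. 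Optimizing $q\sim(\Lambda/\|u_0\|_{\mathcal{H}^{(d-1)/2}})^2$ produces the Gaussian tail and completes the argument.
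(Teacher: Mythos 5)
Your overall architecture is the same as the paper's: split $\Omega_\Lambda^c$ into the two bad events, handle the $\mathcal H^{(d-1)/2}$ event by Markov plus Gaussian moment growth (your claimed bound $\bigl\|\,\|u_0^\omega\|_{\mathcal H^{(d-1)/2}}\bigr\|_{L^q(\Omega)}\le C\sqrt q\,\|u_0\|_{\mathcal H^{(d-1)/2}}$ is correct, though it follows from the triangle inequality in $L^{q/2}(\Omega)$ applied to $\sum_n\lambda_n^{d-1}|c_n|^2|g_n|^2$ rather than from Lemma~\ref{1-kolmogorov}, which is the $0$--$1$ law), and handle the $L^{2p}\W^{1/7,\infty}$ event by Sobolev embedding, Markov, Minkowski, pointwise Khintchine, and an $L^r$ bound on the Hermite functions, followed by the choice $q\sim(\Lambda/\|u_0\|_{\mathcal H^{(d-1)/2}})^2$. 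Up to the choice of embedding ($\W^{1/7+\epsilon,r}$ versus the paper's $\W^{1/6,r}$) this is the paper's proof.

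The genuine gap is the eigenfunction bound you use to close the summability estimate. You invoke $\|h_n\|_{L^\infty(\R^d)}\le C\lambda_n^{-1/6}$ and interpolate with $L^2$. That estimate is proved in the paper (Proposition~\ref{1-dispersive}) only for the \emph{tensor} eigenbasis in dimension $3$, and it is genuinely false for a general orthonormal basis of eigenfunctions of $H$ in dimension $d\ge 2$, which is the setting of Theorem~\ref{2thm1} and of this deviation estimate (the tensor assumption is made only in the part of the paper devoted to the cubic $3$D problem). For general eigenfunctions the sharp sup-norm bound is $\|h_n\|_{L^\infty}\lesssim\lambda_n^{(d-2)/2}$ — radial-type eigenfunctions concentrating at the origin saturate it — so sup norms can grow with $\lambda_n$ as soon as $d\ge 3$, and already in $d=2$ they need not decay like $\lambda_n^{-1/6}$. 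The repair is exactly what the paper does: use the Koch--Tataru bound \eqref{2propre1}, $\|h_n\|_{L^r(\R^d)}\le C\lambda_n^{d/2-1}$ for all $r\in[4,\infty]$ and $d\ge2$. Your key sum is then controlled by
\begin{equation*}
\sum_{n}\lambda_n^{2(\frac17+\epsilon)}|c_n|^2\|h_n\|_{L^r}^2\le C\sum_n\lambda_n^{\frac27+2\epsilon+d-2}|c_n|^2\le C\|u_0\|^2_{\mathcal H^{(d-1)/2}(\R^d)},
\end{equation*}
since $\frac27+2\epsilon\le 1$ for $\epsilon$ small; the rest of your argument (Minkowski, triangle inequality in $L^{r/2}_x$, optimization in $q$) then goes through unchanged and yields the stated Gaussian tail. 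So the idea is right, but as written the crucial step rests on an estimate that is unavailable — indeed false — at the level of generality of the theorem.
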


\begin{proof} By the triangle inequality, we can write
\begin{multline} \label{2deuxtermes}
P(  \Omega^c_\Lambda  )  \leq \\
  P \big( \omega \in \Omega \ : \ \|u^\omega_0\|_{     \mathcal{H}^{ (d-1)/2  }(\R^d) } \geq \Lambda\big)   +  P  \big( \omega \in \Omega \ : \ \|  e^{-itH} u_0 \|_{ L^{2p}( [-\pi,\pi] ;  \W^{  \frac{1}{7} , \infty } (\R^d))  }  \geq \Lambda  \big)
\end{multline}
and it suffices to bound each of the previous  two terms.  
\medskip
 
We first estimate the first term of \eqref{2deuxtermes}. It is sufficient to establish the estimate for $  \Lambda \geq C \|u_0\|_{  \mathcal{H}^{ (d-1)/2 } (\R^d)    }  $. Let $ q \geq 1$ then according to the Markov inequality, we get
\begin{align*}
P \Big(  \omega \in \Omega \ : \   \|  u_0^\omega  \|_{   \mathcal{H}^{ (d-1)/2 } (\R^d)    }  \geq \Lambda \Big) & = P   \Big(  \omega \in \Omega \ : \  \sum_{n \in \N} \lambda_n^{d-1} |c_n|^2 |g_n(\omega)|^2  \geq \Lambda^2 \Big)
\\ & \leq   \Lambda^{-2 q}     \Big\| \sum_{n \in \N} \lambda_n^{d-1} |c_n|^2 |g_n|^2   \Big\|_{L^q( \Omega)}^q 
\\ & \leq   \Lambda^{-2 q}   \Big(  \sum_{n \in \N} \lambda_n^{d-1} |c_n|^2 \| g_n \|^2_{L^{2q}( \Omega)}   \Big)  ^q 
\\ & \leq     \Big(  C  \big( \frac{q}{ c } \big) ^{ \frac{1}{2}  }   \frac{ \|u_0\|_{ \mathcal{H}^{ (d-1)/2 } ( \R^d ) }     }{\Lambda}     \Big) ^{2q} .
\end{align*}
Then we can choose $ q = c  \Big( \frac{ \Lambda }{2C\|u_0\|_{ \mathcal{H}^{ (d-1)/2 } ( \R^d ) } } \Big) ^2 \geq  1  $ to get 
\begin{equation*}
P \Big(  \omega \in \Omega \ : \   \|  u_0^\omega  \|_{   \mathcal{H}^{ (d-1)/2 } (\R^d)    }  \geq \Lambda \Big)  \leq  \frac{1}{2^{2q}} =  e^{ -2\ln(2)q  } \leq   
  \exp \Big( -\frac{c \Lambda^2}{\|u_0\|^2_{  \mathcal{H}^{ (d-1)/2 } (\R^d)    } } \Big) .
\end{equation*}

From then on, the second term of \eqref{2deuxtermes} remains to be estimated. For this, let us recall the estimates of the eigenfunctions of the harmonic oscillator whose proof can be found in \cite[Corollary~3.2]{koch}. 
For all $ p \in [4 , + \infty ]  $, there exists a constant $ C > 0 $ such that for all $ n \in \N $,
\begin{equation} \label{2propre1}
 \\   \| h_n \|_{L^p(\R^d)}  \leq C \lambda_n^{-1 + \frac{d}{2} } \quad \mbox{ if }   d \geq 2.
\end{equation}
  Since $\W^{  \frac{1}{6} , r } ( \R^d )  \hookrightarrow \W^{  \frac{1}{7} , \infty  } (\R^d)$  if    $\frac{1}{6} - \frac{1}{7} > \frac{d}{r}$, we have to prove that there exist two constants $ C,c > 0 $ such that for all $ \Lambda \geq 0 $ and $ r \geq 2 $,
\begin{equation}  \label{2terme2bis}
 P \left(  \omega \in \Omega \ :  \| e^{-itH} u_0^\omega  \|_{  L^{2p}( [-\pi,  \pi ]  ;  \W^{ \frac{1}{6}  , r  }(\R^d))   }  \geq \Lambda \right) \leq C  \exp \Big( -  c \frac{ \Lambda^2 }{\|u_0\|^2_{  \mathcal{H}^{ (d-1)/2 } (\R^d)    } }   \Big).
\end{equation} 
It is sufficient to show the estimate for $ t \geq C \|u_0\|_{ \mathcal{H}^\sigma(\R^d) } $. According to the Markov and Minkowski inequalities, we obtain for $ q \geq  \max( 2p,r , 2 )$,
\begin{multline*}
P \left(  \omega \in \Omega \ : \  \| e^{-itH} u_0^\omega  \|_{  L^{2p}( [- \pi,  \pi ]  ;  \W^{ \frac{1}{6}  , r  }(\R^d))   }  \geq \Lambda \right) \leq \\
\begin{aligned}
&  \leq  \Lambda^{-q }  \| e^{-itH} u_0^\omega  \|^q_{ L^q( \Omega, L^{2p}( [- \pi,  \pi ]  ;  \W^{ \frac{1}{6}  , r  }(\R^d)) )   } \\
& \leq  \Lambda^{-q }  \| H^{\frac{1}{12} } e^{-itH} u_0^\omega  \|^q_{ L^{2p}( [- \pi,  \pi ]  ; L^ r  (\R^d , L^q( \Omega)) )   }.
\end{aligned}
\end{multline*}
Then, thanks to \eqref{zyg1}, we obtain
\begin{multline*}
\| H^{\frac{1}{12} } e^{-itH} u_0^\omega  \|_{ L^q( \Omega )   }  = \Big\| \sum_{n \in \N} \lambda_n^{\frac{1}{6} } c_n e^{-it\lambda_n^2} h_n(x) g_n(\omega )  \Big\|_{ L^q( \Omega )   } \leq \\
\leq C q ^{   \frac{1}{2} }   \sqrt{ \sum_{n \in \N} \lambda_n^{\frac{1}{3} } |c_n|^2 | h_n(x)|^2   }.
\end{multline*}
And finally, by the triangle inequality and   \eqref{2propre1}, we have 
\begin{multline*}
 P \left(  \omega \in \Omega \ :  \  \| e^{-itH} u_0^\omega  \|_{  L^{2p}( [- \pi,  \pi ]  ;  \W^{ \frac{1}{6}  , r  }(\R^d))   }  \geq \Lambda \right)  \leq  \\
\begin{aligned}
&\leq   \Big(  \frac{Cq^{ \frac{1}{2} }}{\Lambda}  \Big)^q  \Big\| \sqrt{ \sum_{n \in \N} \lambda_n^{\frac{1}{3} } |c_n|^2 | h_n(x)|^2   } \Big\|^q_{L^r(\R^d)} \\
& \leq  \Big(  \frac{Cq^{ \frac{1}{2} }}{\Lambda}  \Big)^q  \Big\|  \sum_{n \in \N} \lambda_n^{\frac{1}{3} } |c_n|^2 | h_n(x)|^2    \Big\| ^{q/2}_{L^{r/2}(\R^d)} \\
& \leq  \Big(  \frac{Cq^{ \frac{1}{2} }}{\Lambda}  \Big)^q   \Big( \sum_{n \in \N} \lambda_n^{\frac{1}{3} } |c_n|^2  \|   h_n(x)   \|^2_{L^{r}(\R^d)} \Big)^{q/2} \\
& \leq  \Big(  \frac{Cq^{ \frac{1}{2} }  \|u_0\|_{  \mathcal{H}^{(d-1)/2}(\R^d) } }{\Lambda}  \Big)^q.
\end{aligned}
\end{multline*}
Thus, it is enough to choose $ q = \Big( \frac{\Lambda}{2C\|u_0\|_{  \mathcal{H}^{(d-1)/2}(\R^d) } } \Big)^{2} $ to get \eqref{2terme2bis}.
\end{proof}

 
\subsection{Proof of Theorem \ref{2thm1}}
To complete the proof of this result, it is sufficient to establish that for all $ \Lambda > 0 $,
\begin{equation} \label{2cond1}
P( \Omega_\Lambda  ) > 0.
\end{equation}
For $ u_0 =   \displaystyle  \sum_{n \in \N} c_n h_n  \in  L^2 ( \R^d ) $, we define for $ N \in \N^\star$,
\begin{equation*}
 [u_0]_N = \sum_{\lambda_n < N } c_n h_n, \quad [u_0]^N = \sum_{\lambda_n   \geq N} c_n h_n.
\end{equation*}
\begin{lem} There exists $ N \in \N^ * $ such that 
\begin{multline*}
  P(  \Omega _\Lambda )  \geq  \\
   \frac{1}{2}    P \big(\,  \omega \in \Omega \; :  \;  \| \, [u^\omega_0]_N \|_{  \mathcal{H}^{(d-1)/2}  (\R^d) }  \leq \frac{\Lambda}{2}  , \  \|  e^{-itH} [u^\omega_0]_N \|_{ L^{2p}( [-\pi,\pi] ;  \W^{  \frac{1}{7} , \infty } (\R^d))   }  \leq \frac{\Lambda}{2} \big) . 
\end{multline*}
\end{lem}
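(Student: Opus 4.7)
The plan is to split the randomised initial datum into a low-frequency part and a high-frequency part (relative to the harmonic oscillator), exploit their independence, and use the large deviation estimate of Theorem~\ref{2cond2} to show that the high-frequency part contributes negligibly in probability.

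First I would write $u_0^\omega = [u_0^\omega]_N + [u_0^\omega]^N$, and observe that the two summands depend on disjoint families of Gaussians $(g_n)_{\lambda_n < N}$ and $(g_n)_{\lambda_n \geq N}$, hence are independent as random variables with values in $\mathcal{H}^{(d-1)/2}(\R^d)$ (and in the other norms considered in the definition of $F_0(\Lambda)$). By the triangle inequality and the linearity of $e^{-itH}$, if both $[u_0^\omega]_N$ and $[u_0^\omega]^N$ satisfy the two defining bounds of $F_0(\Lambda)$ with threshold $\Lambda/2$, then $u_0^\omega \in F_0(\Lambda)$. Combining this inclusion of events with the independence of the two parts yields
\begin{equation*}
P(\Omega_\Lambda) \geq P\big(A_N\big) \cdot P\big(B_N\big),
\end{equation*}
where $A_N$ is the event appearing in the statement (bounds on $[u_0^\omega]_N$) and $B_N$ is the analogous event for $[u_0^\omega]^N$.

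The key step is then to show that $P(B_N) \geq 1/2$ for some $N$ large enough. Since $u_0 \in \mathcal{H}^{(d-1)/2}(\R^d)$, the tail $[u_0]^N$ satisfies $\|[u_0]^N\|_{\mathcal{H}^{(d-1)/2}(\R^d)} \to 0$ as $N \to +\infty$. Applying Theorem~\ref{2cond2} with the initial datum $u_0$ replaced by $[u_0]^N$ (which produces exactly the random variable $[u_0^\omega]^N$ under the randomisation procedure) and with threshold $\Lambda/2$, one obtains
\begin{equation*}
P(B_N^c) \leq C \exp\Big(-c\,\frac{\Lambda^2}{\|[u_0]^N\|_{\mathcal{H}^{(d-1)/2}(\R^d)}^2}\Big).
\end{equation*}
The right-hand side tends to $0$ as $N \to +\infty$, so one may fix $N$ large enough that $P(B_N) \geq 1/2$.

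There is no real obstacle here beyond verifying that the large deviation bound of Theorem~\ref{2cond2} is insensitive to truncation of the initial datum, which is immediate from its proof (the bound depends only on the $\mathcal{H}^{(d-1)/2}$-norm of whatever function is randomised, and the truncation $[u_0]^N$ is itself a legitimate initial datum in this space). Plugging the choice of $N$ back into the product estimate concludes the proof.
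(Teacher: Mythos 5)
Your proposal is correct and follows essentially the same route as the paper: decompose into $[u_0^\omega]_N$ and $[u_0^\omega]^N$, use independence of the two Gaussian blocks together with the triangle inequality to bound $P(\Omega_\Lambda)$ below by the product of the two events at threshold $\Lambda/2$, and then apply Theorem~\ref{2cond2} to the tail $[u_0]^N$ (whose $\mathcal{H}^{(d-1)/2}$-norm tends to $0$) to make its probability at least $1/2$ for $N$ large. This matches the paper's argument, where the tail bound appears as $1-C\exp\big(-c\Lambda^2/\sum_{\lambda_n\geq N}\lambda_n^{d-1}|c_n|^2\big)$.
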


\begin{proof} By independence, using Theorem \ref{2cond2}, we obtain
\begin{multline*}
  P(  \Omega _\Lambda  )  =   P \Big(  \omega \in \Omega \ :  \  \| \, [u^\omega_0]_N + [u^\omega_0]^N \|_{  \mathcal{H}^{(d-1)/2}  (\R^d) }  \leq \Lambda ,
\\  \hspace*{4cm}  \   \| e^{-itH} [u^\omega_0]_N + e^{-itH} [u^\omega_0]^N  \|_{  L^{2p}( [-\pi,\pi] ;  \W^{ \frac{1}{7} , \infty  } (\R^d)) } \leq  \Lambda \Big)  \\
\begin{aligned}
& \geq  P \Big( \omega \in \Omega \ :  \  \| \, [u^\omega_0]_N \|_{   \mathcal{H}^{(d-1)/2} (\R^d)  } \leq \frac{\Lambda}{2}   , \  \| e^{-itH} [u^\omega_0]_N \|_{  L^{2p}( [-\pi,\pi] ;  \W^{ \frac{1}{7} , \infty  } (\R^d)) } \leq  \frac{\Lambda}{2} \Big)  \cdot 
\\ &  \hspace*{.4cm} \cdot  P \Big(  \omega \in \Omega \ :  \  \| \, [u^\omega_0]^N \|_{  \mathcal{H}^{(d-1)/2}   (  \R^d ) } \leq \frac{\Lambda}{2}    ,  \   \| e^{-itH} [u^\omega_0]^N \|_{  L^{2p}( [-\pi,\pi] ;  \W^{ \frac{1}{7} , \infty  } (\R^d)) } \leq  \frac{\Lambda}{2} \Big)  \\
& \geq P \Big(  \omega \in \Omega \ :  \  \| \, [u^\omega_0]_N  \|_{  \mathcal{H}^{(d-1)/2}   (  \R^d ) } \leq \frac{\Lambda}{2}     , \   \| e^{-itH} [u^\omega_0]_N \|_{  L^{2p}( [-\pi,\pi] ;  \W^{ \frac{1}{7} , \infty  } (\R^d)) } \leq  \frac{\Lambda}{2}  \Big) \cdot
\\ & \hspace*{7cm}   \cdot \Big( 1-  C \exp \big(  - \frac{c \Lambda ^{2}}{    \underset{\lambda_n \geq N  }{\sum}   \lambda_n ^{d-1}  |c_n |^2   }   \big)  \Big) .
\end{aligned}
\end{multline*}
But we have $\dis  \lim_{N \longrightarrow + \infty } 1-  C \exp \Big(  - \frac{c \Lambda ^2}{     \underset{\lambda_n \geq N  }{\sum}   \lambda_n ^{d-1}  |c_n |^2    }   \Big) = 1$,
thus there is $ N \in \N^\star $ such that  $ \dis 1-  C \exp \Big(  - \frac{c \Lambda ^2}{    \underset{\lambda_n \geq N  }{\sum}   \lambda_n ^{d-1}  |c_n |^2   }   \Big) \geq 1/2$. 
\end{proof}

Therefore, to prove \eqref{2cond1}, it is sufficient to prove the following proposition:
\begin{lem}  
For all  $ \Lambda > 0 $ and $ N \in \N ^\star $,
\begin{align*} 
P \Big(  \omega \in \Omega \ :  \  \big\| \, [u^\omega_0]_N \big\|_{  \mathcal{H}^{ (d-1)/2 }  (\R^d) }  \leq \Lambda     , \   \big\| e^{-itH} [u^\omega_0]_N  \big\|_{  L^{2p}( [-\pi,\pi] ;  \W^{ \frac{1}{7} , \infty  } (\R^d)) } \leq  \Lambda \Big) > 0 .
\end{align*}
\end{lem}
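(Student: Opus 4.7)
The key point is that $[u_0^\omega]_N$ is a \emph{finite} linear combination of Hermite functions: only indices $n$ with $\lambda_n < N$ contribute, and the set $\mathcal{I}_N = \{n \in \N : \lambda_n < N\}$ is finite. Hence $[u_0^\omega]_N$ depends on only finitely many independent complex Gaussian random variables $\{g_n(\omega)\}_{n \in \mathcal{I}_N}$, and both norms of interest can be controlled by a deterministic constant (depending on $N$ and $u_0$) times $\max_{n \in \mathcal{I}_N} |g_n(\omega)|$. Concretely, for the first norm,
\begin{equation*}
\| [u_0^\omega]_N \|_{\mathcal{H}^{(d-1)/2}(\R^d)}^2 = \sum_{n \in \mathcal{I}_N} \lambda_n^{d-1} |c_n|^2 |g_n(\omega)|^2 \leq A_N \max_{n \in \mathcal{I}_N} |g_n(\omega)|^2,
\end{equation*}
with $A_N = \sum_{n \in \mathcal{I}_N} \lambda_n^{d-1} |c_n|^2 < +\infty$. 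For the second norm, by the triangle inequality,
\begin{equation*}
 \| e^{-itH} [u_0^\omega]_N \|_{L^{2p}([-\pi,\pi]; \W^{1/7,\infty}(\R^d))} \leq \sum_{n \in \mathcal{I}_N} |c_n| \, |g_n(\omega)| \, \| e^{-it\lambda_n^2} h_n \|_{L^{2p}([-\pi,\pi]; \W^{1/7,\infty}(\R^d))},
\end{equation*}
and each factor $\|e^{-it\lambda_n^2} h_n\|_{L^{2p}([-\pi,\pi]; \W^{1/7,\infty}(\R^d))} = (2\pi)^{1/(2p)} \|h_n\|_{\W^{1/7,\infty}(\R^d)}$ is finite since $h_n \in \mathcal{S}(\R^d)$. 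Thus this norm is also bounded by $B_N \max_{n \in \mathcal{I}_N} |g_n(\omega)|$ for some finite constant $B_N$.

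Set $\eps = \Lambda / \max(\sqrt{A_N}, B_N)$ (taking $\eps = 1$ if this quantity is zero, which happens only in the trivial case). On the event $\Omega_{N,\eps} := \bigcap_{n \in \mathcal{I}_N} \{\omega : |g_n(\omega)| \leq \eps\}$, both norms are $\leq \Lambda$ simultaneously. By independence of the Gaussians,
\begin{equation*}
P(\Omega_{N,\eps}) = \prod_{n \in \mathcal{I}_N} P(|g_n| \leq \eps).
\end{equation*}
Each factor is strictly positive because the density $\frac{1}{\pi} e^{-|z|^2}$ of $g_n$ assigns positive mass to every open neighbourhood of $0$ in $\C$, and the product is finite, hence strictly positive. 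This gives the desired lower bound.

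The argument is entirely elementary once one observes the finite-dimensional nature of $[u_0^\omega]_N$; there is no genuine obstacle. The only subtlety is verifying that the deterministic constants $A_N$ and $B_N$ are indeed finite, which reduces to the facts that $\mathcal{I}_N$ is finite and that Hermite functions are Schwartz (so $\|h_n\|_{\W^{1/7,\infty}(\R^d)} < +\infty$ for each $n$).
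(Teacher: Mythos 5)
Your proof is correct and follows essentially the same route as the paper: reduce both norms of the finite sum $[u_0^\omega]_N$ to the event that the finitely many Gaussians $g_n$, $\lambda_n<N$, are all small, then use independence and the positive mass of each Gaussian near $0$. The only difference is bookkeeping — the paper tracks quantitative constants via the eigenvalue count $|\{\lambda_n<N\}|\lesssim N^{2d}$ and Hermite eigenfunction bounds, while you use the qualitative fact that each $h_n$ is Schwartz, which is all the lemma requires.
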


\begin{proof} We get
\begin{multline*}
 P \Big(  \omega \in \Omega \ :  \  \big\| \, [u^\omega_0]_N \big\|_{  \mathcal{H}^{ (d-1)/2 }  (\R^d) }  \leq \Lambda    , \   \big\| e^{-itH} [u^\omega_0]_N  \big\|_{  L^{2p}( [-\pi,\pi] ;  \W^{ \frac{1}{7} , \infty  } (\R^d)) } \leq  \Lambda \Big) \geq \\
 \begin{aligned}
 & \geq   P \Big( \, \omega \in \Omega \ :  \   \sum_{ \lambda_n  < N   } \lambda_n^{d-1} |c_n|^2  | g_n(  \omega  )|^2  \leq   \frac{ C \Lambda ^2 }{N^{2d}} \Big) \\
&  \geq  P \Big( \, \omega \in \Omega \ :  \   \sum_{ \lambda_n < N   }  | g_n(  \omega  )|^2  \leq   \frac{ C \Lambda^2  }{ N^{2d}  \| u _0 \|^2_{ \mathcal{H}^{d-1}(\R^d)   }  } \Big) \\
&  \geq  P   \Big(  \underset{\lambda_n < N }{\bigcap}  \Big\{ \, \omega \in \Omega \ : \  | g_n(  \omega  )|^2  \leq   \frac{ C  \Lambda^2 }{N^{4d}  \| u _0 \|^2_{ \mathcal{H}^{d-1}(\R^d)   }  } \Big\} \Big) \\
&  =  \prod_{\lambda_n < N} P  \Big( \, \omega \in \Omega \ :  \  | g_n(  \omega  )|^2  \leq \frac{ C  \Lambda^2 }{N^{4d}  \| u _0 \|^2_{ \mathcal{H}^{d-1}(\R^d)   }  } \Big)  > 0. & 
\end{aligned}
\end{multline*}
\end{proof}


\appendix

\section{Tools of pseudo-differential operators and applications to eigenfunctions}
 For $ m \in \R$, we define $ T^m $ as the vector space of symbols $ q(x,\xi) \in C^\infty( \R^d \times \R^d ) $ such that  for all $ \alpha \in \N^d $ and $ \beta \in \N^d $, there exists a  constant $ C_{\alpha, \beta} >0$ such that for all $ ( x , \xi ) \in \R^d \times \R^d $, we have 
\begin{equation*}
  | \partial ^\alpha_x  \partial ^\beta_\xi q(x, \xi ) | \leq C_{\alpha, \beta}  (1+|x|+|\xi|)^{m-\beta}.
 \end{equation*} 
 
 Similarly, let $S^m $ be the vector space of symbols satisfying
\begin{equation*}
  | \partial ^\alpha_x  \partial ^\beta_\xi q(x, \xi ) | \leq C_{\alpha, \beta}  (1+|\xi|)^{m-\beta}.
 \end{equation*} 
 
For $ q \in S^m \cup T^m $ and $ h> 0 $, let $ Op_h(q) $ be the operator defined by
\begin{eqnarray*}
Op_h (q) f(x) &=& (2 \pi h )^{-d} \int_{\R^d \times \R^d} e^{i(x-y)\cdot  \xi / h } q(x , \xi ) f(y) dy d \xi \\
&=& (2 \pi h )^{-d} \int_{\R^d } e^{ix \cdot \xi/h } q(x , \xi ) \hat{f}(\xi/h) d \xi\\
 & =& (2 \pi )^{-d} \int_{\R^d } e^{ix \cdot \xi } q(x , h\xi ) \hat{f}(\xi) d \xi.
\end{eqnarray*}
 
 Let $ p(x, \xi) = |\xi|^2 + |x|^2 -1 $ and define $ H_h = Op_h(p) \in Op_h( T^2 ) $.

 \subsection{Microlocal analysis for the harmonic oscillator}
In \cite{Martinez}, we can then obtain the following two results:

 \begin{thm}  \label{1-compose}
If $ q_1 \in S^{m_1} $ (respectively $ T^{m_1} $) and $ q_2 \in S ^{m_2} $ (respectively $ T^{m_2} $) then there exists a symbol $ q \in S^{m_1+m_2} $ (respectively $ T^{m_1+m_2} $) such that
\begin{equation*}
 Op_h(q_1) \circ Op_h(q_2) = Op_h(q) 
 \end{equation*}
with
 \begin{equation*}
q = \sum _{| \alpha | \leq N } \frac{ h^{| \alpha | } } { i^{| \alpha| }} \ \partial^\alpha_\xi q_1 \partial^\alpha_x q_2 + h^{N+1} r_N  
\end{equation*}
where $r_N \in S^{m_1+m_2-(N+1) }$  (respectively  $T^{m_1+m_2-(N+1) }$).
\end{thm}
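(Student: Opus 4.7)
The plan is to compute the Schwartz kernel of the composition and extract its symbol by Fourier inversion, then obtain the asymptotic expansion through Taylor expansion combined with integration by parts. Writing the kernel of $Op_h(q)$ as $K_q(x,z) = (2\pi h)^{-d}\int e^{i(x-z)\xi/h}q(x,\xi)\,d\xi$, the composition $Op_h(q_1)\circ Op_h(q_2)$ has kernel
\[
K(x,z) = (2\pi h)^{-2d} \iiint e^{i[(x-y)\xi + (y-z)\eta]/h}\, q_1(x,\xi)\,q_2(y,\eta)\,dy\,d\xi\,d\eta.
\]
To recover a candidate symbol I would take the partial Fourier transform in $w = x-z$, then change variables $u = y-x$. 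The phase becomes $u(\eta-\xi)+w(\eta-\zeta)$, and the integration in $w$ produces a Dirac mass fixing $\eta = \zeta$. After the further translation $\xi \leftarrow \xi + \zeta$ this yields the Moyal-type formula
\[
q(x,\zeta) = (2\pi h)^{-d} \iint e^{-iu\xi/h}\, q_1(x,\zeta+\xi)\,q_2(x+u,\zeta)\,du\,d\xi.
\]

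Next I would Taylor expand $q_2(x+u,\zeta)$ in $u$ about $u=0$ up to order $N$. Using the identity $u^\alpha e^{-iu\xi/h} = (ih)^{|\alpha|}\partial_\xi^\alpha e^{-iu\xi/h}$ together with $|\alpha|$ integrations by parts in $\xi$ transfers the monomial $u^\alpha$ onto $q_1$ as $(-ih)^{|\alpha|}\partial_\xi^\alpha q_1(x,\zeta+\xi)$. The remaining $u$-integration produces $(2\pi h)^d\delta(\xi)$, so evaluation at $\xi=0$ returns
\[
\sum_{|\alpha| \leq N} \frac{h^{|\alpha|}}{i^{|\alpha|}\alpha!}\, \partial_\xi^\alpha q_1(x,\zeta)\, \partial_x^\alpha q_2(x,\zeta),
\]
matching the stated principal expansion (modulo the $\alpha!$ which is missing from the typeset formula). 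The contribution of the integral Taylor remainder of order $N+1$ is then gathered into the error term $h^{N+1} r_N$.

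The final task is to verify that $r_N$ lies in $S^{m_1+m_2-(N+1)}$ (respectively $T^{m_1+m_2-(N+1)}$), with seminorm bounds uniform in $h\in(0,1]$. Explicitly, $r_N$ is a linear combination over $|\alpha|=N+1$ of oscillatory integrals of the form
\[
(2\pi h)^{-d}\iint e^{-iu\xi/h}\, \partial_\xi^\alpha q_1(x,\zeta+\xi)\int_0^1 (1-t)^N \partial_x^\alpha q_2(x+tu,\zeta)\,dt\, du\,d\xi.
\]
To obtain the symbol estimates I would dyadically decompose in $(u,\xi)$: on the compact region direct symbol bounds suffice, while on the non-compact region I would gain arbitrary polynomial decay in $(u,\xi)$ by repeated integration by parts, exploiting the identities $(1-h^2\Delta_\xi)e^{-iu\xi/h} = (1+|u|^2)e^{-iu\xi/h}$ and $(1-h^2\Delta_u)e^{-iu\xi/h} = (1+|\xi|^2)e^{-iu\xi/h}$ to convert oscillation into decay.

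The principal obstacle is this last step, particularly for the $T^m$ class: there each derivative in $\xi$ gains one order only in the joint weight $1+|x|+|\xi|$, so one must carefully keep track of the interplay between the $N+1$ derivatives in $x$ falling on $q_2$, the polynomial growth of $q_1$ and $q_2$ in their own variables, and the shift $x \mapsto x+tu$ with $t\in[0,1]$ which couples decay in $u$ to growth in $x$. Closing these estimates is the classical technical core of the semiclassical symbol calculus, carried out in detail in Martinez's monograph referenced by the authors.
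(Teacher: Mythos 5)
The paper offers no proof of this statement at all: Theorem \ref{1-compose} is quoted from Martinez's monograph (``In \cite{Martinez}, we can then obtain the following two results''), so there is no internal argument to compare yours against. Your outline is the standard derivation found in that reference: the exact formula $q(x,\zeta)=(2\pi h)^{-d}\iint e^{-iu\cdot\xi/h}\,q_1(x,\zeta+\xi)\,q_2(x+u,\zeta)\,du\,d\xi$ for the composed symbol, Taylor expansion of $q_2$ in $u$, conversion of $u^\alpha$ into $\xi$-derivatives on $q_1$ by integration by parts, and non-stationary-phase bounds on the integral remainder. The algebra is correct, and your remark that the displayed expansion in the statement is missing the factor $1/\alpha!$ is also correct (harmless for the paper's applications, which only use terms with $|\alpha|\le 1$, but needed in the general formula). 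The one point you flag without closing --- that $r_N$ lies in $T^{m_1+m_2-(N+1)}$ for the anisotropic class in which only $\xi$-derivatives improve the joint weight $1+|x|+|\xi|$, while the second symbol is evaluated at the shifted point $x+tu$ --- is indeed the only genuinely delicate part: the gain of $N+1$ orders must come entirely from $\partial_\xi^{\alpha}q_1(x,\zeta+\xi)$ with $|\alpha|=N+1$, and the growth in $u$ produced by the Taylor remainder and by the weight at $x+tu$ has to be absorbed, uniformly in $h\in(0,1]$, by the decay obtained from your $(1-h^2\Delta_\xi)$ and $(1-h^2\Delta_u)$ integrations by parts. Since the authors themselves delegate the whole theorem to \cite{Martinez}, deferring that estimate to the same reference is consistent with the paper's level of detail; just be aware that, as written, your text is a correct proof outline rather than a self-contained proof.
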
 

 \begin{thm} \label{1-operator}
If $ q(x,\xi) \in S^0 $ then for any $ s \in \R $, there exists a constant $ C > 0 $ such that for any $ h \in ]0,1] $ and any  $ u \in H^s(\R^d) $,
\begin{equation*}
\|   Op_h( q ) u \|_{H^s(\R^d)} \leq C \|u\|_{H^s(\R^d)}.
\end{equation*}
\end{thm}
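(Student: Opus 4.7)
\textbf{Proof plan for Theorem \ref{1-operator}.} The plan is to reduce to the case $s=0$ and then invoke the Calderón-Vaillancourt / Cotlar-Stein machinery, following the standard semiclassical calculus (as developed in Martinez).

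First I would reduce to $s=0$, i.e.\ to $L^2$-boundedness, by conjugation. The idea is to write
\begin{equation*}
Op_h(q) = \langle D\rangle^{-s} \bigl( \langle D\rangle^{s} Op_h(q) \langle D\rangle^{-s} \bigr) \langle D\rangle^{s},
\end{equation*}
so it suffices to show that $B_h := \langle D\rangle^{s} Op_h(q) \langle D\rangle^{-s}$ is bounded on $L^2(\R^d)$ with a constant independent of $h\in]0,1]$. Using the extended symbolic calculus (or equivalently Theorem \ref{1-compose} applied in the Weyl-Hörmander calculus for the metric $dx^2 + d\xi^2/\langle\xi\rangle^2$, which accommodates mixing of the scales $h$ and $1$), the operator $B_h$ is itself a pseudodifferential operator whose symbol $\tilde q_h$ lies in $S^0$ with seminorms bounded uniformly in $h\in]0,1]$. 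For an integer $s=2N>0$ this can alternatively be checked by hand: $(1-\Delta)^N \circ Op_h(q) \circ (1-\Delta)^{-N}$ is obtained from $Op_h(q)$ by a finite number of commutators, each of which is an $Op_h$ of a symbol in $S^0$ with seminorms controlled by those of $q$ uniformly in $h\in]0,1]$. The general real $s$ case then follows by complex interpolation and duality.

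Once the reduction is in place, the task is to show that $\|Op_h(q)\|_{L^2\to L^2}\le C$ uniformly in $h\in]0,1]$ when $q\in S^0$. This is the semiclassical Calderón-Vaillancourt theorem. I would carry it out via Cotlar-Stein: decompose $q=\sum_{j\ge 0}q_j$ where $q_j(x,\xi)=\chi_j(\xi)q(x,\xi)$ for a dyadic partition of unity $\chi_j$ supported in $|\xi|\sim 2^j$ (and $|\xi|\le 2$ for $j=0$). For each $j$, one writes the Schwartz kernel of $Op_h(q_j)$ as the oscillatory integral
\begin{equation*}
K_j(x,y) = (2\pi h)^{-d}\int_{\R^d} e^{i(x-y)\cdot\xi/h}q_j(x,\xi)\,d\xi,
\end{equation*}
and integrates by parts in $\xi$ using the operator $(1+|(x-y)/h|^2)^{-N}(1-(h/i)^2\Delta_\xi)^N$ to get the pointwise bound $|K_j(x,y)|\le C_N\, 2^{jd}(1+2^j|x-y|/h)^{-2N}$, from which Schur's lemma yields $\|Op_h(q_j)\|_{L^2\to L^2}\le C$ with $C$ uniform in $j$ and $h$. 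The almost-orthogonality required by Cotlar-Stein, namely estimates on $\|Op_h(q_j)^*Op_h(q_k)\|$ and $\|Op_h(q_j)Op_h(q_k)^*\|$ by $C\,2^{-\delta|j-k|}$, follows from the composition formula of Theorem \ref{1-compose} together with the disjointness of the $\xi$-supports of $q_j$ and $q_k$ when $|j-k|\ge 2$ (which forces the composed symbol to be arbitrarily small in the relevant seminorms).

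The main obstacle is the reduction step rather than the $L^2$ bound itself: one must ensure that conjugation by $\langle D\rangle^{s}$ (a pseudodifferential operator ``at scale $1$'') does not destroy the uniformity in $h\in]0,1]$ of the seminorms of the resulting symbol. This is why it is cleanest to phrase the argument in the Weyl-Hörmander framework with a metric independent of $h$, where the interaction between the two scales is built into the calculus and the uniform control is automatic.
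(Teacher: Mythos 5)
Your proposal is correct in outline, but it is worth noting that the paper does not prove Theorem \ref{1-operator} at all: it is quoted from Martinez's book (together with Theorem \ref{1-compose}), so you are supplying a standard textbook argument where the authors simply cite the literature. Your route — conjugation by $\langle D\rangle^{s}$ to reduce to $s=0$, then the semiclassical Calder\'on--Vaillancourt theorem via a Littlewood--Paley decomposition in $\xi$, kernel bounds by non-stationary phase, Schur's test, and Cotlar--Stein — is the classical proof of $L^2$- and $H^s$-boundedness for $S^0_{1,0}$ symbols and does work. Two remarks would streamline and tighten it. First, the scale-mixing worry that leads you to the Weyl--H\"ormander framework can be avoided entirely: with the paper's definition $Op_h(q)u(x)=(2\pi)^{-d}\int e^{ix\cdot\xi}q(x,h\xi)\widehat u(\xi)\,d\xi$, one has $Op_h(q)=Op_1(q_h)$ with $q_h(x,\xi)=q(x,h\xi)$, and since $h(1+h|\xi|)^{-1}\leq (1+|\xi|)^{-1}$ for $h\leq 1$, the rescaled symbol $q_h$ lies in $S^0$ with seminorms bounded uniformly in $h\in\,]0,1]$; the theorem is then literally the non-semiclassical statement, and conjugation by $\langle D\rangle^{s}$ stays inside the ordinary $S^m$ calculus with uniform constants. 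Second, some details of your sketch need repair, though none is fatal: the kernel bound should read $|K_j(x,y)|\leq C_N\,h^{-d}2^{jd}(1+2^{j}|x-y|/h)^{-2N}$ (your version drops the $h^{-d}$, and with it Schur would give a vanishing rather than uniform bound), the integrating-by-parts operator should carry $(1-h^2\Delta_\xi)$ rather than $1-(h/i)^2\Delta_\xi$, and for the almost-orthogonality it is cleaner to observe that $Op_h(q_j)=Op_h(q_j)\tilde\chi_j(hD)$ exactly, so that $Op_h(q_j)Op_h(q_k)^{*}=0$ for $|j-k|$ large, while $Op_h(q_j)^{*}Op_h(q_k)$ requires a quantitative remainder estimate exploiting the separation $|\xi-\eta|\gtrsim 2^{\max(j,k)}$ of the supports — the mere vanishing of the asymptotic expansion from Theorem \ref{1-compose} does not by itself give the $2^{-\delta|j-k|}$ decay you invoke.
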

We then state the following property which will allow us to invert the harmonic oscillator modulo a very regular remainder term.

Let $ \delta > 0 $ and define a function $ \eta \in C^\infty ( \R^d ) $ such that  
\begin{equation*}
\eta (x) = \left\{
    \begin{aligned}
  &       0 \ \mbox{ if } \ |x| \leq 1 + \delta,  \\
   &     1 \ \mbox{ if } \ |x| \geq 1 + 2 \delta.
    \end{aligned}
\right.
 \end{equation*}

 \begin{prop}  \label{1-inverse}
For any $ N \in \N^\star$, there exist two pseudo-differential operators $ E_N \in Op_h( T^{-2} ) $ and $ R_N \in Op_h( T^{-(N+1)}) $ such that 
\begin{equation*}
E_N \circ H_h = \eta + h^{N+1 } R_N.
\end{equation*}
\end{prop}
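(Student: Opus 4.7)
Since $\eta$ vanishes for $|x| \leq 1+\delta$, on $\operatorname{supp}(\eta)$ we have $|x|^2 \geq (1+\delta)^2$, hence
\[
p(x,\xi) := |x|^2 + |\xi|^2 - 1 \geq 2\delta + \delta^2 + |\xi|^2,
\]
so $p$ is comparable to $1 + |x|^2 + |\xi|^2$ on $\operatorname{supp}(\eta) \times \R^d$. The plan is the standard elliptic parametrix construction away from the characteristic variety $\{p = 0\} = \{|x|^2+|\xi|^2 = 1\}$: set $e_0(x,\xi) := \eta(x)/p(x,\xi)$, extended by zero, which belongs to $T^{-2}$, and correct iteratively in powers of $h$.

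I would first apply Theorem~\ref{1-compose} with $q_1 = e_0$ and $q_2 = p$. Crucially, $p$ is a polynomial of degree $2$ in $(x,\xi)$, so $\partial_x^\alpha p = 0$ for $|\alpha|\geq 3$ and the asymptotic expansion of the composition terminates exactly at $|\alpha| = 2$, yielding
\[
Op_h(e_0) \circ H_h = Op_h(\eta) + h\, Op_h(a_1) + h^2\, Op_h(a_2),
\]
with $a_1, a_2$ explicit symbols built from derivatives of $e_0$ and $p$, still supported in $\operatorname{supp}(\eta)$. The leading term is already the target $\eta$; the two remaining terms must be absorbed by an iteration.

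Proceeding by induction on $k$, suppose we have constructed $e_0, \ldots, e_{k-1}$ such that
\[
Op_h\Bigl(\sum_{j=0}^{k-1} h^j e_j\Bigr) \circ H_h = Op_h(\eta) + h^k\, Op_h(b_k) + h^{k+1}\, Op_h(c_k),
\]
with $b_k, c_k$ supported in $\operatorname{supp}(\eta)$. I set $e_k := -b_k/p$, which is legitimate because $p$ is bounded below by a positive constant times $1+|x|^2+|\xi|^2$ on $\operatorname{supp}(\eta)$, so the division is smooth and improves the symbol order by two. Applying Theorem~\ref{1-compose} to $Op_h(h^k e_k) \circ H_h$ produces $-h^k\, Op_h(b_k)$ at leading order (cancelling the $h^k$ error) together with corrections of orders $h^{k+1}$ and $h^{k+2}$, which are absorbed into the updated error. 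After $N$ iterations, set $E_N := Op_h\bigl(\sum_{j=0}^{N-1} h^j e_j\bigr) \in Op_h(T^{-2})$ and collect the residual into $R_N$; tracking the orders shows $R_N \in Op_h(T^{-(N+1)})$.

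The main technical subtlety is the symbol-class bookkeeping through the iteration: in $T^m$, each $\xi$-derivative improves decay by one order while multiplication by $\partial_x^\alpha p$ worsens growth by $2-|\alpha|$, so one must verify that the iterates $e_k$ (and hence the error symbols $b_k, c_k$) do lie in classes whose orders tend to $-\infty$. What makes this work is that division by $p$ on $\operatorname{supp}(\eta)$ gains exactly two orders of decay, which more than compensates the loss from the next composition with $H_h$, so the remainder at order $h^{N+1}$ can be placed in $T^{-(N+1)}$ (indeed with room to spare).
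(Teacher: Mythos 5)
Your argument is the same elliptic-parametrix construction as the paper's: since $p=|x|^2+|\xi|^2-1$ is bounded below by $c(1+|x|+|\xi|)^2$ on the support of $\eta$, one sets $e_0=\eta/p$ and recursively divides the accumulated error by $p$, using Theorem~\ref{1-compose}; the paper writes this as the explicit recursion $f_n=-p^{-1}\sum_{|\alpha|+j=n,\ j\neq n} i^{-|\alpha|}\,\partial_\xi^\alpha f_j\,\partial_x^\alpha p$ and checks the telescoping sum directly, which is just a repackaged version of your induction (your extra observation that the expansion terminates at $|\alpha|=2$ because $p$ is quadratic is true but not needed, since the remainder from Theorem~\ref{1-compose} already lands at order $h^{N+1}$ in a good class). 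The only correction is an index shift: stopping at $e_{N-1}$ leaves an error of size $h^{N}$, so to get the stated $\eta+h^{N+1}R_N$ you must take $E_N=Op_h\big(\sum_{j=0}^{N}h^j e_j\big)$ as in the paper; with that fixed, your order bookkeeping indeed gives $R_N\in Op_h(T^{-(N+1)})$.
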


\begin{proof} Recall that  $ p(x, \xi) = |\xi|^2 + |x|^2 -1 $ and set 
\begin{equation*}
f_0  =  \frac{\eta}{p} \in T^{-2}.
\end{equation*}
For $ n \geq 1 $, we define $ f_n $ by induction in the following way: 
\begin{equation*}
f_n  = -   \frac{1}{p} \sum_{|\alpha|+j =n , j \neq n } \frac{1}{i^{|\alpha|}} \partial^\alpha_\xi f_j \partial^\alpha_x p \ \ \in T^{-2-n}.
\end{equation*}
Finally we set
\begin{equation*}
 E_N = Op_h \Big( \sum _{0 \leq j \leq N } h^j f_j \Big).
\end{equation*}
Then, by Proposition~\ref{1-compose},
\begin{eqnarray*}
E _N   \circ  H_h & =& Op_h \Big( \sum_{0 \leq j \leq N } h^j f_j \Big) \circ Op_h( p ) \\
 & =& Op_h \Big( \sum_{ |\alpha| + j \leq N } \frac{h^{|\alpha|+j}}{i^{|\alpha|}}  \partial^\alpha_\xi  f_{j}  \partial^\alpha_x p  + h^{N+1} r_N  \Big) \\
 & =& Op_h   \Big( \sum_{ |\alpha| + j \leq N } \frac{h^{|\alpha|+j}}{i^{|\alpha|}}  \partial^\alpha_\xi  f_{j}  \partial^\alpha_x  p  \Big)  + h^{N+1} R_N  
\end{eqnarray*}
with  $ R_N = Op_h(r_N) \in Op_h \left( T^{-(N+1)}  \right)$. Therefore
\begin{eqnarray*}
\sum_{ |\alpha| + j \leq N } \frac{h^{|\alpha|+j}}{i^{\ell\alpha|}}  \partial^\alpha_\xi  f_{j}  \partial^\alpha_x  p  & = &f_0 p + \sum_{  1 \leq \ell   \leq N } \sum_{|\alpha|+j = \ell }\frac{h^{\ell}}{i^{|\alpha|}} \ \partial^\alpha_\xi  f_j  \partial^\alpha_x  p 
\\ & =& \eta + \sum_{  1 \leq \ell   \leq N} h^{\ell} \Big(  \sum_{|\alpha|+j = \ell , j \neq  \ell }\frac{1}{i^{|\alpha|}} \ \partial^\alpha_\xi  f_j  \partial^\alpha_x  p  +  f_{\ell}.p \Big)
\\ & =& \eta,
\end{eqnarray*}
which was the claim.
 \end{proof}

 \subsection{A localisation property of the eigenfunctions of the harmonic oscillator in dimension $d \geq1$}
We can now establish a localisation property of the eigenfunctions of the harmonic oscillator.
  \begin{prop} \label{1-propre}
 Let $d\geq 1$.  For all integers $ K , N \geq 1$, all $ c > 1 $ and all $ 1 \leq p \leq + \infty $, there exists a constant $ C > 0 $ such that for all $ n \geq 0 $,
\begin{equation*}
\| \langle x \rangle ^{K} h_n \|_{L^p (|x| \geq c \lambda_n) } \leq C  \lambda_n ^
{-N}.
\end{equation*}
\end{prop}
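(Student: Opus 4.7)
The strategy is to reduce the estimate to a semiclassical statement via the rescaling $h := 1/\lambda_n^2$ and then use Proposition~\ref{1-inverse} to invert $H_h$ modulo an arbitrarily smoothing remainder, exploiting that its symbol $p(y,\xi) = |\xi|^2 + |y|^2 - 1$ is elliptic outside the classically allowed region $\{|y|\leq 1\}$.

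First I would introduce the rescaled eigenfunction $\Phi_h(y) := h^{-d/4} h_n(\lambda_n y)$, which is $L^2$-normalized and satisfies $H_h \Phi_h = 0$ with $H_h = -h^2\Delta + |y|^2 - 1$. A direct change of variable $x = \lambda_n y$ shows that, for $\lambda_n\geq 1$,
\begin{equation*}
\|\langle x\rangle^K h_n\|_{L^p(|x|\geq c\lambda_n)} \leq C\,\lambda_n^{K + d/p - d/2}\,\|\langle y\rangle^K \Phi_h\|_{L^p(|y|\geq c)},
\end{equation*}
so that it suffices to establish, for every integer $M\geq 1$, a bound
\begin{equation*}
\|\langle y\rangle^K \Phi_h\|_{L^p(|y|\geq c)} \leq C_M\, h^{M};
\end{equation*}
one then chooses $M$ large enough (depending on $N$, $K$, $d$, $p$) to absorb the polynomial prefactor, since $h^M = \lambda_n^{-2M}$.

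Next, since $c > 1$, I would pick $\delta \in (0, (c-1)/2)$ so that the cutoff $\eta$ of Proposition~\ref{1-inverse} satisfies $\eta \equiv 1$ on $\{|y|\geq c\}$. For any $M \geq 1$ that proposition produces $E_M \in Op_h(T^{-2})$ and $R_M \in Op_h(T^{-(M+1)})$ with $E_M\circ H_h = \eta + h^{M+1}R_M$. Applying both sides to $\Phi_h$ and using $H_h\Phi_h = 0$ gives the key identity
\begin{equation*}
\eta(y)\Phi_h(y) = -h^{M+1}\,(R_M\Phi_h)(y),
\end{equation*}
which on $\{|y|\geq c\}$ reads $\Phi_h(y) = -h^{M+1}(R_M\Phi_h)(y)$, reducing the problem to a uniform $L^p$-bound for $\langle y\rangle^K R_M \Phi_h$.

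For this last step, Theorem~\ref{1-compose} (applied to multiplication by $\langle y\rangle^K \in T^K$ composed with $R_M \in Op_h(T^{-(M+1)})$) shows that $\langle y\rangle^K R_M \in Op_h(T^{K-(M+1)})$; taking $M$ much larger than $K+d$ places this class inside $S^{-J}$ for any prescribed $J$, uniformly in $h\in(0,1]$. Combining Theorem~\ref{1-operator} with Sobolev embedding yields $L^2 \to L^q$ continuity for every $q\in[2,\infty]$ uniformly in $h$, and the weighted $L^\infty$-bound $\|\langle y\rangle^{K+d+1}R_M\Phi_h\|_{L^\infty} \leq C\|\Phi_h\|_{L^2} = C$ then controls every $L^p$-norm on $\{|y|\geq c\}$ via Hölder's inequality, since $\langle y\rangle^{-d-1}$ is integrable to any power. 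The main obstacle is precisely this uniform weighted $L^p$-continuity for $p\notin\{2,\infty\}$; it is overcome by making $M$ sufficiently large, so that the symbol of $R_M$ decays rapidly enough in both variables to give Sobolev regularity well beyond $d/2$ and hence recover all $L^p$-norms by interpolation and weight integration.
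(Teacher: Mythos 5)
Your argument is correct in substance and rests on the same ingredients as the paper's appendix proof: the semiclassical rescaling $h=\lambda_n^{-2}$, the elliptic parametrix of Proposition~\ref{1-inverse}, the calculus of Theorem~\ref{1-compose}, the uniform $S^0$-boundedness of Theorem~\ref{1-operator}, and the absorption of all polynomial losses in $h^{-1}$ by taking the order of the parametrix large. The genuine difference is the decomposition. You apply $E_M\circ H_h=\eta+h^{M+1}R_M$ directly to the rescaled eigenfunction and use the exact relation $H_h\Phi_h=0$, so that everything reduces to the single remainder term $h^{M+1}\langle y\rangle^K R_M\Phi_h$. The paper instead applies the parametrix to $\chi\Phi$ with a cutoff $\chi$ vanishing on the classically allowed region; this produces two extra terms, $E_M(h^2\Phi\,\Delta\chi)$ and $E_M(2h^2\nabla\chi\cdot\nabla\Phi)$, which are then estimated by writing out the kernel of $E_M$ and integrating by parts in $\xi$, exploiting $|x-y|>\delta$ between the supports of $\widetilde\chi$ and $\nabla\chi$; its remainder term is handled exactly as you handle yours. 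Since the eigenvalue equation holds globally, the cutoff is not needed, and your route trades those two non-stationary phase estimates for nothing; your treatment of $p<2$ via the extra weight $\langle y\rangle^{-d-1}$ and H\"older is also cleaner than the paper's direct appeal to $H^{d/2+1}\hookrightarrow L^p$.

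One step is not correct as you state it: the claim that Theorem~\ref{1-operator} combined with Sobolev embedding gives $L^2\to L^q$ continuity uniformly in $h$, and in particular $\|\langle y\rangle^{K+d+1}R_M\Phi_h\|_{L^\infty}\le C\|\Phi_h\|_{L^2}$. Theorem~\ref{1-operator} only yields uniform $H^s\to H^s$ bounds for $S^0$ symbols; gaining the $d/2+1$ (non-semiclassical) derivatives needed for $L^\infty$ costs a power of $h^{-1}$, and indeed $\|Op_h(q)\|_{L^2\to L^\infty}$ can be of size $h^{-d/2}$ even for $q\in S^{-J}$ with $J$ arbitrary, so no choice of $M$ makes that operator bound uniform. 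The fix is exactly the paper's: estimate $\|\langle y\rangle^{K+d+1}R_M\Phi_h\|_{L^\infty}\le C\|\langle y\rangle^{K+d+1}R_M\Phi_h\|_{H^{d/2+1}}\le C\|\Phi_h\|_{H^{d/2+1}}\le C h^{-(d/2+1)}$, since each derivative of $\Phi_h$ costs a factor $\lambda_n^2=h^{-1}$. This is only a polynomial loss, which the mechanism you already set up (choosing $M$ large in terms of $N$, $K$, $d$, $p$) absorbs; with that one correction the proof is complete.
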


\begin{proof}
By definition, we have $ ( -\Delta + |x|^2 -\lambda_n^2) h_n = 0 $. Therefore, if we set   $ h =  \frac{1}{\lambda_n^2} $ and $ \Phi ( x) = h_n( \lambda_n x ) $ then  
$$ ( -h^2 \Delta + |x|^2 -1 )  \Phi = 0.$$

Let  $ \delta \ll 1 $ and set $ \chi \in  C^\infty ( \R^d ) $ such that 
\begin{equation*}
\chi(x) = \left\{
    \begin{aligned}
      &  0 \ \mbox{ if } \ |x| \leq 1, \\
     &   1 \ \mbox{ if } \ |x| \geq 1 + \delta,
    \end{aligned}
\right.
\end{equation*}
and set also $ \widetilde{\chi} \in  C^\infty ( \R^d)  $ such that
\begin{equation*}
\widetilde{\chi}(x) = \left\{
    \begin{aligned}
   &     0 \ \mbox{ if  } \ |x| \leq 1 + 2 \delta, \\
   &     1 \ \mbox{ if } \ |x| \geq 1 + 3\delta.
    \end{aligned}
\right.
\end{equation*}
Then
\begin{equation*}
H_h ( \chi \Phi ) = - h^2 \Phi \Delta  \chi   -2h^2  \nabla \chi \cdot  \nabla \Phi .
\end{equation*}
Next, thanks to Proposition~\ref{1-inverse}, we obtain 
\begin{equation*}
 \eta \chi \Phi = - E_N ( h^2 \Phi \Delta \chi   + 2 h^2 \nabla \chi \cdot \nabla \Phi )- h^{N+1} R_N ( \chi \Phi ) .
  \end{equation*} 
And finally
\begin{equation*}
\langle x\rangle ^K \widetilde{\chi} \eta \chi \Phi = - \langle x\rangle ^K\widetilde{\chi} E_N ( h^2 \Phi \Delta \chi  + 2 h^2 \nabla \chi \cdot \nabla \Phi )- h^{N+1} \langle x\rangle ^K \widetilde{\chi} R_N ( \chi \Phi ) .
\end{equation*}
\medskip 

$\bullet$ Estimate of $ \langle x\rangle ^K \widetilde{\chi} E_N (\Phi \Delta \chi) $: We have
\begin{equation*}
\widetilde{\chi}(x) E_N (  \Phi \Delta \chi  ) (x) = \frac{ \widetilde{\chi} (x)  }{ (2 \pi h )^d} \int_{\xi,1 \leq  |y| \leq 1+ \delta } e^{i(x-y)\cdot  \xi / h } E_N(x, \xi ) (\Delta \chi \Phi )(y) dy d \xi. 
\end{equation*}
Since $ |x| > 1 + 2 \delta $, then $ |x-y | > \delta $.   Next, observe that we have
$$ \int_{|x-y| >  \delta } \leq \int_{|x_1-y_1| > \delta }+ \int_{|x_2-y_2| >  \delta }+ \cdots + \int_{|x_d-y_d| >  \delta },$$
so that we are  reduced to treat the term where $ |x_1-y_1 | >  \delta $. \medskip

From $ \frac{h^M}{i^M(x_1-y_1)^M}  \partial^M_{\xi_1}  e^{i(x-y)\cdot  \xi / h  } = e^{i(x-y) \cdot \xi / h  }  $ and from an integration by parts, we deduce
\begin{multline*}
 \langle x\rangle ^K \widetilde{\chi}(x) E_N ( \Phi  \Delta \chi  ) (x) =\\
=  (-1)^M\frac{ \overline{ \chi  }(x) }{(2 \pi h )^d}   \int_{\substack{ \xi,1 \leq  |y| \leq 1+ \delta \\    |x_1-y_1 | >  \delta     }} \frac{h^M   e^{i(x-y)\cdot \xi / h } \langle x\rangle ^K \partial ^M_{\xi_1} E_N(x, \xi ) ( \Phi\Delta \chi )(y)}{i^M(x_1-y_1)^M}  dy d \xi.
  \end{multline*}
Therefore, as $ E_N \in T^{-2} $, we obtain
\begin{equation*}
\big|  \langle  x \rangle  ^K  \widetilde{\chi}(x)   E_N  (\Phi \Delta \chi  ) (x) \big| \leq C   h^{M-d}  | \widetilde{\chi}(x) |  \int_{\xi,1 \leq  |y| \leq 1+ \delta } \frac{ | ( \Phi\Delta \chi)(y)| }{(1+|x|+| \xi| )^{2+M-K}} d \xi dy.
\end{equation*}
We can then assume that $M\geq 1$ is such that $M> 2d+K-1 $, and therefore 
\begin{multline*}
\big|  \langle  x \rangle  ^K  \widetilde{\chi}(x)   E_N(\Phi  \Delta \chi  ) (x)   \big| \leq \\
\begin{aligned}
 & \leq C   h^{M-d}   \| \Delta \chi  \Phi \|_{L^1(\R^d)}   \frac{| \widetilde{\chi}(x) | }{(1+|x|)^{1+M-K-d}}  \int_{\R^d } \frac{d \xi }{(1+ | \xi|)^{d+1}} \\
&\leq C   h^{M-d}   \|   \Phi \|_{L^2(\R^d)}   \frac{1 }{(1+|x|)^{1+M-K-d}} .
\end{aligned}
\end{multline*}
And finally, for all integers $M, K \geq 1$, there exists a constant $ C > 0 $ such that for all $ h \in ]0,1] $,
\begin{equation*}
\|\langle  x \rangle  ^K \widetilde{\chi} E_N (\Phi  \Delta \chi  ) \|_{L^p(\R^d)} \leq C   h^{M-d}  \|   \Phi \|_{L^2(\R^d)}.
\end{equation*}
\medskip

$\bullet$ Estimate of $  \langle x\rangle ^K \widetilde{\chi} E_N ( \nabla \chi \cdot \nabla \Phi ) $: Since $ \Phi $ satisfies $ (-h^2 \Delta + |x|^2) \Phi = \Phi $, then $ h \| \nabla \Phi \|_{L^2(\R^d) } \leq \| \Phi \|_{L^2(\R^d)}$. As a consequence we can proceed as the first term to get the same kind of estimate. \medskip
 
$\bullet$ Estimate of $ h^{N+1} \langle x\rangle ^K  \widetilde{\chi} R_N ( \chi \Phi ) $: We have
\begin{equation*}
h^{N+1} \langle x\rangle ^K \widetilde{\chi}(x)  R_N ( \chi \Phi )(x) =
 h^{N+1}   \frac{1}{(2 \pi )^d}  \int_{\R^d } e^{i(x-y)\cdot \xi  } \langle x\rangle ^K \widetilde{\chi}(x) r_N(x,h\xi)   \mathcal{F} (  \chi  \Phi )(\xi) d \xi  
\end{equation*}
with
\begin{equation*}
 \langle x\rangle ^K \widetilde{\chi}(x) r_N (x, \xi ) \in T^{-N-1+K}  \subset T^0 \subset S^0 \quad \mbox{for}\quad  N \geq K-1.
 \end{equation*}
Using Theorem \ref{1-operator} and the Sobolev embeddings, we obtain 
\begin{eqnarray*}
 \| h^{N+1} \langle x\rangle ^K \widetilde{\chi}  R_N ( \chi \Phi ) \|_{L^p(\R^d)}
& \leq  &h^{N+1} \|  \langle x\rangle ^K \widetilde{\chi}  R_N ( \chi \Phi ) \|_{H^{d/2+1}(\R^d)}\\
& \leq  & C  h^{N+1}   \|  \chi \Phi \|_{H^{d/2+1}(\R^d)}\\
& \leq  & C   h^{N+1}  \|  \Phi \|_{H^{d/2+1}(\R^d)}.
\end{eqnarray*}
Finally, we obtain that for all integers $K, N \geq 1$, for all $ p \in [1,+ \infty] $ and $ c > 1 $, that there exists a  constant $ C> 0 $ such that for all $ 0 < h \leq 1$, we have  
\begin{equation*}
 \| \langle x\rangle ^K \Phi \|_{L^p( |x| \geq c ) } \leq C    h^N  \|  \Phi \|_{H^{d/2+1}(\R^d)} .
\end{equation*}
Then, returning to the initial variable, we get that  for all integers $K, N \geq 1$, for all $ p \in [1,+ \infty] $ and $ c > 1 $, there exists a constant $ C > 0 $ such that for all $ 0 < h \leq 1$ and $ n \in \N $ with $ h = \frac{1}{\lambda_n^2} $, we have
\begin{eqnarray*}
 \| \langle  \sqrt{h} x \rangle ^K h_n \|_{L^p( |x| \geq c \lambda_n ) } & \leq & C    h^{N-d/(2p)-d/4-1/2}  \|  h_n \|_{H^{d/2+1}(\R^d)} \\
 & \leq  &C    h^{N-d/(2p)-d/4-1/2}  \|  h_n \|_{\mathcal{H}^{d/2+1}(\R^d)} \\
 & \leq & C    h^{N-d/(2p)-d/2-1}  \|  h_n \|_{L^2(\R^d)}.
\end{eqnarray*}
But
\begin{eqnarray*}
\| \langle  x \rangle ^K h_n \|_{L^p( |x| \geq c \lambda_n ) } & \leq &\| h_n \|_{L^p( |x| \geq c \lambda_n ) } + h^{-K/2} \| \langle  \sqrt{h} x \rangle ^K h_n \|_{L^p( |x| \geq c \lambda_n ) } \\
 & \leq & C    h^{N-d/(2p)-d/2-1-K/2}  \|  h_n \|_{L^2(\R^d)}\\
  & \leq& C    \lambda_n^{-2N+d/p+d+K+2}  \|  h_n \|_{L^2(\R^d)}
\end{eqnarray*}
which concludes the proof. 
\end{proof}

   \subsection{Functional and pseudo-differential calculus}
The following  property of functional calculus  explains that some operators can be approximated by pseudo-differential operators.
 
 \begin{prop}  \label{1-approx} Let $ \Phi \in C^\infty_0 ( \R ) $ and $ \chi_2 \in C^\infty_0 ( \R^d ) $ with $ \chi_2 (x) = 1 $ for $ x \in B(0,1^+ )$. Then for all $ N \in \N^\star$ and  $ s \geq 0 $, there is a constant $ C_{N,s } > 0  $ such that for all $ h \in ]0,1 ] $ and $ u \in L^2(\R^d) $,
\begin{equation*}
 \| \Phi \big( |x|^2 + |h\nabla|^2 \big)u - \sum_{j=0}^{N-1} h^j  Op_h(  \Psi_j ) \chi_2  u \|_{H^s(\R^d)} \leq C_{N,s} h^{N-s} \|u\|_{L ^2(\R^d)},  
 \end{equation*} 
where $ \Psi_0(x, \xi ) = \Phi ( |x|^2 +  |\xi| ^2  ) $, $ Supp ( \Psi_j ) \subset \big\{ (x, \xi ) : |x|^2 + |\xi|^2 \in Supp ( \Phi ) \big\} $ and $ \Psi_j \in T^{-j} \subset S^0 $.
\end{prop}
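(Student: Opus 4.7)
The plan is to combine the Helffer--Sjöstrand functional calculus with a semiclassical parametrix for the resolvent of $A_h := -h^2\Delta + |x|^2 = Op_h(a)$, where $a(x,\xi) = |x|^2+|\xi|^2$. First, I would pick an almost-analytic extension $\tilde\Phi \in C_0^\infty(\C)$ of $\Phi$ satisfying $|\bar\partial \tilde\Phi(z)| \leq C_M |\Im z|^M$ for arbitrarily large $M$, and write
$$\Phi(A_h) = -\frac{1}{\pi}\int_\C \bar\partial \tilde\Phi(z)\,(A_h-z)^{-1}\,dL(z).$$

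Next I would build a parametrix in the class $T^m$. Setting $b_0(x,\xi,z) = (a(x,\xi)-z)^{-1}$, one has for $z\notin [0,\infty)$ that $b_0 \in T^{-2}$ with seminorms polynomially controlled in $|\Im z|^{-1}$. Iterating the composition formula of Theorem~\ref{1-compose}, I would construct $b_j(x,\xi,z)\in T^{-2-j}$ such that
$$(A_h - z)\circ \sum_{j=0}^{N-1} h^j Op_h(b_j(\cdot,\cdot,z)) = I + h^N Op_h(r_N(\cdot,\cdot,z)),$$
with $r_N \in T^{-N}$, again polynomially bounded in $|\Im z|^{-1}$. Solving for $(A_h-z)^{-1}$, inserting in the Helffer--Sjöstrand formula, and using $\|(A_h-z)^{-1}\|_{L^2\to L^2}\leq |\Im z|^{-1}$ yields
$$\Phi(A_h) = \sum_{j=0}^{N-1} h^j Op_h(\Psi_j) + h^N R_N,\qquad \Psi_j(x,\xi) := -\frac{1}{\pi}\int_\C \bar\partial\tilde\Phi(z)\, b_j(x,\xi,z)\,dL(z),$$
with $R_N$ bounded on $L^2$ uniformly in $h$ provided $M$ is taken larger than the polynomial growth of $r_N$ near the real axis. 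The Cauchy--Pompeiu formula gives $\Psi_0 = \Phi(a)$; induction on the $b_j$ shows $\Psi_j \in T^{-j}$ and $Supp(\Psi_j) \subset \{(x,\xi): a(x,\xi)\in Supp(\Phi)\}$, which is in particular bounded in~$x$.

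To insert the cutoff $\chi_2$, I would write $Op_h(\Psi_j)u = Op_h(\Psi_j)\chi_2 u + Op_h(\Psi_j)(1-\chi_2)u$. Since each $\Psi_j$ has $x$-support inside a fixed ball on which $\chi_2 \equiv 1$, the Schwartz kernel of $Op_h(\Psi_j)(1-\chi_2)$ is supported away from the diagonal, and repeated integration by parts in $\xi$ in the oscillatory integral defining this kernel yields $\|Op_h(\Psi_j)(1-\chi_2)\|_{L^2 \to H^s} = O(h^\infty)$, absorbed into the remainder. Finally, the passage from an $L^2\to L^2$ bound to the $H^s$ bound with the loss $h^{-s}$ follows from the comparison $(1+|\xi|^2)^{s/2} \leq h^{-s}(1+h^2|\xi|^2)^{s/2}$ combined with the fact that $R_N$ is actually bounded on the semiclassical Sobolev space $H^s_h$ uniformly in $h$, all composition errors being smoothing modulo $O(h^N)$.

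The main technical obstacle will be the careful bookkeeping of $z$-dependence: each application of Theorem~\ref{1-compose} introduces extra factors of $(a-z)^{-1}$, so the $T^{-2-j}$-seminorms of $b_j$ and the $T^{-N}$-seminorms of $r_N$ blow up like $|\Im z|^{-K(N,\alpha,\beta)}$ as $\Im z \to 0$. Matching these divergences against the almost-analytic decay $|\bar\partial\tilde\Phi| = O(|\Im z|^M)$ uniformly so that the resulting $\Psi_j$ lie in $T^{-j}$ with the claimed support properties, and that the remainder $R_N$ is bounded from $L^2$ into $H^s_h$ (not merely $L^2$) with norm independent of $h$, is the delicate point of the argument.
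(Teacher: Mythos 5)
Your proposal is correct in outline but takes a genuinely different route from the paper. The paper does not redo the semiclassical functional calculus at all: it invokes \cite[Proposition 2.1]{burq2} (stated there for a semiclassical Laplacian, with the remark that the proof adapts to $-h^2\Delta+|x|^2$), which already yields $\| \Phi(|x|^2+|h\nabla|^2)\chi_1 u - \sum_{j<N} h^j Op_h(\Psi_j)\chi_2 u\|_{H^s}\le C h^{N-s}\|u\|_{L^2}$ whenever $\chi_1\chi_2=\chi_1$, and then reduces the proposition to the single estimate $\|\Phi(|x|^2+|h\nabla|^2)(1-\chi_1)u\|_{H^s}=\mathcal O(h^\infty)\|u\|_{L^2}$, which it deduces from the localisation of the rescaled Hermite functions, Proposition~\ref{1-propre}. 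You instead rebuild the calculus from scratch: Helffer--Sj\"ostrand formula, a resolvent parametrix in the confining class $T^m$ built with Theorem~\ref{1-compose}, identification of $\Psi_0=\Phi(|x|^2+|\xi|^2)$ and of the supports, and you dispose of the cutoff by a non-stationary-phase bound on $Op_h(\Psi_j)(1-\chi_2)$ rather than by eigenfunction decay. Both are sound. Your version is self-contained and exploits that $a=|x|^2+|\xi|^2$ is elliptic at infinity jointly in $(x,\xi)$, so the $\Psi_j$ are automatically compactly supported in $x$ and no cutoff is needed on the operator side; the paper's version is much shorter because the parametrix work is outsourced and the only new input is the Hermite localisation already proved in the appendix. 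The points you flag as delicate are indeed the technical heart but are standard: the polynomial blow-up in $|\Im z|^{-1}$ is beaten by the almost-analytic decay, and the uniform $L^2\to H^s_h$ bound for the remainder follows, writing $A_h:=-h^2\Delta+|x|^2$, from the spectral-theorem bound $\|A_h^{s/2}(A_h-z)^{-1}\|_{L^2\to L^2}\lesssim \langle z\rangle^{s/2}|\Im z|^{-1}$ together with the comparison of the semiclassical $H^s_h$ norm with the domain norm of $A_h^{s/2}$ (a scaled version of \eqref{1-comparaison}). One caveat, which you share with the paper: your step ``each $\Psi_j$ has $x$-support inside a fixed ball on which $\chi_2\equiv 1$'' requires $\chi_2\equiv 1$ on a ball of radius larger than the square root of $\max\, Supp\,\Phi$, which does not follow from the stated hypothesis ``$\chi_2=1$ on $B(0,1^+)$'' for an arbitrary $\Phi\in C_0^\infty(\R)$; the paper's proof makes the same implicit compatibility assumption (it needs $\chi_1\chi_2=\chi_1$ with $\chi_1\equiv 1$ on $B(0,R)$, $R>1$ large enough), and it is harmless in the application, where $Supp\,\Phi$ is a fixed neighbourhood of $[1/4,2]$, but it is worth stating explicitly if you write the argument out.
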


\begin{proof} We use  \cite[Proposition 2.1]{burq2} (notice that {\em stricto sensu} \cite[Proposition~2.1]{burq2} is only for a semi-classical Laplace operator, but the proof given applies also here). If  $ \chi_1 \in C^\infty_0 ( \R^d ) $ is such that $ \chi _1 \chi_2 =  \chi_1 $  then
\begin{equation*}
\big\| \Phi \big( |x|^2 + |h\nabla|^2 \big) \chi_1 u - \sum_{j=0}^{N-1} h^j Op_h( \Psi_j(x , \xi ) ) \chi_2 u \big\|_{H^s(\R^d)} \leq C_{N,s} h^{N-s} \|u\|_{L ^2(\R^d)}, 
\end{equation*} 
with $\Psi_0(x, \xi ) = \Phi ( |x|^2 + |\xi|^2  ) $, $ Supp ( \Psi_j ) \subset \big\{ (x, \xi ) : |x|^2 + |\xi|^2 \in Supp ( \Phi ) \big\}$ and $ \Psi_j \in T^{-j} $. Then, it is enough to show that  
\begin{equation*}
 \big\| \Phi ( |x|^2 + |h\nabla| ^2 )(1- \chi_1 ) u \big\|_{H^s(\R^d)} = \mathcal{O}(h^\infty )  \|u\|_{L ^2(\R^d)}.
 \end{equation*}
We choose  $ \chi _1 =1 $ on $ B(0,R), R>1$, and the result then follows from Proposition~\ref{1-propre}.
\end{proof}

  \subsection{A continuity result}

Thanks to the functional calculus, it is clear that $\dis \| \chi \big(  \frac{H}{N^2}  \big) u \|_{\mathcal{H}^s(  \R^d )} \leq C \|u\|_{\mathcal{H}^s(\R^d)}$, however the continuity result stated in the next proposition is not straightforward and relies on Proposition~\ref{1-approx}.
   \begin{prop} \label{1-continuous}
For any $ s \geq 0 $, there exists a constant $ C > 0 $ such that for any $ N \in \N ^\star$ and any function $ u \in H^s(\R^d) $,
 \begin{equation*}
\| \chi \Big(  \frac{H}{N^2}  \Big) u \|_{H^s(  \R^d )} \leq C \|u\|_{H^s(\R^d)}.
\end{equation*}
\end{prop}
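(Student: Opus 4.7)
The plan is to use the unitary $L^2$ dilation $(D_N u)(x) = N^{-d/2} u(x/N)$ to conjugate the spectral localiser into an operator to which Proposition~\ref{1-approx} directly applies. The identities $D_N^{-1}(-\Delta)D_N = N^{-2}(-\Delta)$ and $D_N^{-1}|x|^2 D_N = N^2|x|^2$ yield
$$\chi(H/N^2) \;=\; D_N\,\chi(|x|^2+|h\nabla|^2)\,D_N^{-1}, \qquad h = 1/N^2.$$
(A direct use of $\|D_N^{-1}\|_{H^s\to H^s}\lesssim N^s$ would lose $N^s$, so the conjugation must be carried out inside the symbol class.)

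First I would apply Proposition~\ref{1-approx} with parameter $h=1/N^2$ and $\Phi=\chi$, writing
$$\chi(|x|^2+|h\nabla|^2) \;=\; \sum_{j=0}^{M-1} h^j\,Op_h(\Psi_j)\,\chi_2 \;+\; R_M,$$
with $\Psi_j\in T^{-j}\subset S^0$ supported in $\{|x|^2+|\xi|^2\in\mathrm{Supp}(\chi)\}$ and $\|R_M v\|_{H^s}\le C_{M,s}h^{M-s}\|v\|_{L^2}$. A short computation with the integral representation shows
$$D_N\,Op_h(a)\,D_N^{-1} = Op\bigl(a(x/N,\xi/N)\bigr), \qquad D_N\,\chi_2\,D_N^{-1} = \chi_2(\cdot/N),$$
where $Op$ denotes the standard (non-semiclassical) quantization; here one uses the algebraic fact $hN=1/N$. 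Consequently
$$\chi(H/N^2) \;=\; \sum_{j=0}^{M-1} N^{-2j}\,Op\bigl(\Psi_j(x/N,\xi/N)\bigr)\,\chi_2(x/N) \;+\; D_N\,R_M\,D_N^{-1}.$$

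The crucial observation is that each scaled symbol $a_N(x,\xi):=\Psi_j(x/N,\xi/N)$ lies in $S^0$ with seminorms uniform in $N\ge 1$: the chain rule produces a factor $N^{-|\alpha|-|\beta|}$ under each derivative, and on the compact support $\{|x|,|\xi|\lesssim N\}$ this is controlled by $\langle\xi\rangle^{-|\beta|}$. By the Calderón--Vaillancourt theorem, $Op(a_N):H^s\to H^s$ is bounded uniformly in $N$; likewise $\chi_2(x/N)$ is a uniformly bounded multiplier of $H^s$ since its $C^k$-norms are controlled by those of $\chi_2$. For the remainder, take $M=\lceil s\rceil+1$: since $D_N$ is unitary on $L^2$ and $\widehat{D_N w}(\xi)=N^{d/2}\hat w(N\xi)$ gives the pointwise inequality
$$\|D_N w\|_{H^s}^2 = \int\langle\eta/N\rangle^{2s}|\hat w(\eta)|^2\,d\eta \;\le\; \int\langle\eta\rangle^{2s}|\hat w(\eta)|^2\,d\eta = \|w\|_{H^s}^2$$
for $N\ge 1$, $s\ge 0$, one finds $\|D_N R_M D_N^{-1} u\|_{H^s}\le \|R_M D_N^{-1}u\|_{H^s}\le C N^{-2(M-s)}\|u\|_{L^2}\le C\|u\|_{H^s}$.

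Summing the finitely many main terms with coefficients $N^{-2j}\le 1$ then delivers the desired bound $\|\chi(H/N^2)u\|_{H^s}\le C\|u\|_{H^s}$ uniformly in $N$. The subtle point of the argument is verifying the uniform-in-$N$ Calderón--Vaillancourt bound for the scaled symbols $\Psi_j(x/N,\xi/N)$: their supports expand with $N$, but the $N^{-1}$ gained per derivative from the chain rule exactly absorbs the derivative loss on that expanding support, a feature directly inherited from the phase-space compactness granted by Proposition~\ref{1-approx}.
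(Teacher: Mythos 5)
Your proof is correct, and it rests on the same two ingredients as the paper's argument, namely the semiclassical parametrix of Proposition~\ref{1-approx} and an $H^s$ mapping bound for $S^0$ symbols; the genuine difference lies in how the dilation is handled. The paper keeps the expansion in the semiclassical quantization and transfers the estimate through the scaling at the level of norms: it splits $\|\cdot\|_{H^s}$ into its $L^2$ and homogeneous $\dot H^s$ pieces, for which the dilation factors cancel exactly, so the $N^s$ loss you warn against never occurs; the uniform-in-$h$ bound $\|\chi(|x|^2+|h\nabla|^2)u\|_{H^s}\leq C\|u\|_{H^s}$ then only requires Theorem~\ref{1-operator} applied to the finitely many \emph{fixed} symbols $\Psi_j\in S^0$ (uniformity in $h$ being part of that statement), at the cost of an interpolation reducing to integer $s$. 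You instead conjugate the operators themselves, turning $Op_h(\Psi_j)$ into the $h=1$ quantization of the rescaled symbols $\Psi_j(x/N,\xi/N)$; this buys you a proof with no norm bookkeeping and no interpolation (arbitrary $s\geq 0$ is handled directly by taking $M>s$), but the price is that you must bound an $N$-dependent \emph{family} of symbols uniformly, which needs the standard, though not contained in Theorem~\ref{1-operator} as literally stated, fact that the Calder\'on--Vaillancourt/$H^s$ operator norm is controlled by finitely many $S^0$ seminorms. Your verification that $\Psi_j(x/N,\xi/N)$ stays in a bounded subset of $S^0$ — the chain-rule gain $N^{-|\alpha|-|\beta|}$ against $\langle\xi\rangle^{|\beta|}\lesssim N^{|\beta|}$ on the support inherited from $\{|x|^2+|\xi|^2\in \mathrm{Supp}\,\chi\}$ — is exactly the right point, the conjugation identities $D_N\,Op_h(a)\,D_N^{-1}=Op(a(x/N,\xi/N))$ and $D_N\chi_2D_N^{-1}=\chi_2(\cdot/N)$ are correct for the paper's quantization convention, and the remainder is treated correctly using unitarity on $L^2$ and the contractivity $\|D_Nw\|_{H^s}\leq\|w\|_{H^s}$ for $N\geq1$.
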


\begin{proof}   Using \eqref{1-semi-classique1} and \eqref{1-semi-classique2}, it is sufficient to show that: $ \forall s \geq 0 , \ \exists \, C > 0 $ and $ h_0 $ such that  $ \forall \ 0 < h \leq h_0  , \ \forall u \in H^s(\R^d) $
 \begin{equation} \label{1-continuitebis}
\|  \chi (  |x|^2 + |h\nabla|^2   )  u \|_{H^s(  \R^d )} \leq C \|u\|_{H^s(\R^d)}.
\end{equation}
Indeed, recalling that  $\dis \mathfrak{u}(x) =u( \frac{x}{     \sqrt{ h }  } ) $, then 
\begin{multline*}
 \|  \chi \Big(   \frac{H}{N^2}  \Big) u \|_{H^s(  \R ^d )}  \leq \\
\begin{aligned}
& \leq   \|  [ \chi (  |x|^2 + |h\nabla|^2   )  ] \mathfrak{u}  ( \sqrt{h} .  ) \|_{H^s(  \R^d )}\\
& \leq   \|   [ \chi (  |x|^2 + |h\nabla|^2   )  ] \mathfrak{u}  ( \sqrt{h} .  ) \|_{L^2(  \R^d )} + \|  \sqrt{-\Delta}^s  [ \chi (  |x|^2 + |h\nabla|^2   )  ] \mathfrak{u}  ( \sqrt{h} .  ) \|_{L^2(  \R^d )}\\
& \leq   h^{-d/4} \| \mathfrak{u} \|_{L^2( \R^d)} + h^{s/2 -d/4} \|  [ \chi (  |x|^2 + |h\nabla|^2   )  ] \mathfrak{u}   \|_{H^s(  \R^d )}\\
& \leq   \|u\|_{L^2(\R^d)}+ h^{s/2 -d/4} \| \mathfrak{u}  \|_{H^s(   \R^d  ) }\\
& \leq   \|u\|_{  H^s( \R^d ) }.
\end{aligned}
\end{multline*}
By interpolation, we can limit the proof to the case where $ s $ is an integer. Thanks to Proposition~\ref{1-approx} (with $ N=s $), there exists a constant $ C > 0 $ such that for any $ h \in ]0,1] $ and any function $ u \in L^2(\R^d) $, we have 
 \begin{equation*} 
\| \chi (  |x|^2 + |h\nabla|^2    ) u  - \sum_{j=0}^N h^j Op_h (     \Psi_j   ) \chi_2 u \|_{H^s(  \R^d )} \leq C \|u\|_{L^2(\R^d)},
\end{equation*}
with $ Supp ( \Psi_j(x,\xi) ) \subset \big\{ (x,\xi) \;:\, |x|^2 + |\xi|^2 \in Supp ( \chi ) \big\}  $. \medskip

 Thus, to obtain \eqref{1-continuitebis}, it suffices to obtain that for any $ s \geq 0 $, there exist two constants $ C> 0 $ and $ h_0 \geq 1 $ such that for any $ h \in ]0,h_0] $ and any function $ u \in H^s(\R^d) $,
 \begin{equation} \label{1-continuitetris}
\| Op_h (     \Psi_j   ) u \|_{H^s(  \R^d )} \leq C \|u\|_{H^s(\R^d)}.
\end{equation}
Finally, to establish \eqref{1-continuitetris}, it is enough to use Theorem \ref{1-operator} and to notice that
\begin{equation*}
( x , \xi  ) \longrightarrow \chi( |x|^2 +  |\xi| ^2 ) \in S^0 \; \mbox{ and } \;  ( x , \xi  ) \longrightarrow \Psi_j ( x , \xi ) \in S^0.  
\end{equation*}
\end{proof}


\end{document}